\documentclass[pdflatex,sn-mathphys-num]{sn-jnl}

\usepackage{amsmath}
\usepackage{changes}
\usepackage{amsfonts}
\usepackage{amssymb}
\usepackage{natbib}
\usepackage{stmaryrd}

\usepackage{graphicx}%
\usepackage{multirow}%
\usepackage{amsmath,amssymb,amsfonts}%
\usepackage{amsthm}%
\usepackage{mathrsfs}%
\usepackage[title]{appendix}%
\usepackage{xcolor}%
\usepackage{textcomp}%
\usepackage{manyfoot}%
\usepackage{booktabs}%
\usepackage{algorithm}%
\usepackage{algorithmicx}%
\usepackage{algpseudocode}%
\usepackage{listings}%
\usepackage{stmaryrd}
\usepackage{graphicx}
\usepackage{epstopdf}
\usepackage{caption}
\usepackage{subcaption}
\usepackage{tikz}
\usepackage{algorithm}
\usepackage{algpseudocode}
\usepackage[shortlabels]{enumitem}

\usepackage{mathrsfs}
\usepackage{colortbl}
\usepackage{titlesec}
\usepackage{geometry}

\usepackage[dvipsnames]{xcolor}

\definechangesauthor[color=blue]{HL}

\tikzset{
  frame/.style={thin},
  axis/.style={->,thin},
  char/.style={ultra thin},
  shock/.style={line width=1.5pt},
}
\newcommand{\xtframe}[2]{%
  \def\Xmax{#1}%
  \def\Tmax{#2}%
  \draw[axis] (0,0) -- (0,\Tmax+0.45) node[above] {$t$};
  \draw[axis] (0,0) -- (\Xmax+0.45,0) node[right] {$x$};
  \draw[frame] (0,\Tmax) -- (\Xmax,\Tmax);
  \draw[frame] (\Xmax,0) -- (\Xmax,\Tmax);
  \draw[frame] (-0.10,0) -- (0.10,0);
  \draw[frame] (-0.10,\Tmax) -- (0.10,\Tmax);
  \node[left] at (0,0) {0};
  \node[left] at (0,\Tmax) {$T$};
}

\theoremstyle{thmstyleone}%
\newtheorem{theorem}{Theorem}
\theoremstyle{thmstyletwo}%
\newtheorem{remark}{Remark}%

\theoremstyle{thmstylethree}%

\newtheorem{lemma}[theorem]{Lemma}
\newtheorem{assumption}{Assumption}
\newtheorem{prop}[theorem]{Proposition}

\def\R {\mathbb{R}}
\def\L{\mathcal{L}}
\def\J{\mathcal{J}}
\def\d{\mathrm{d}}
\def\sgn{\operatorname{sgn}}
\def\dist{\operatorname{dist}}
\def\x{\mathbf{x}}
\def\z{\mathbf{z}}
\def\n{\mathbf{n}}
\DeclareMathOperator\supp{supp}
\def\relu{\mathrm{ReLU}}

\begin{document}
\newgeometry{
  left=1.05in,
  right=1.05in,
  top=1.05in,
  bottom=1.05in
}

\title[Neural network for conservation laws]{\bf\Large A neural network method for scalar conservation laws with convergence rates for shock-wave solutions}


\author[1]{\fnm{Jiachuan} \sur{Cao}}\email{jiachuan.cao@kit.edu}

\author[2]{\fnm{Buyang} \sur{Li}}\email{buyang.li@polyu.edu.hk}

\author[2]{\fnm{Hao} \sur{Li}}\email{hao94.li@polyu.edu.hk}

\affil[1]{\orgdiv{Institute for Applied and Numerical Mathematics}, \orgname{Karlsruhe Institute of Technology}, \orgaddress{\city{Karlsruhe}, \country{Germany}}\vspace{5pt}}

\affil[2]{\orgdiv{Department of Applied Mathematics}, \orgname{The Hong Kong Polytechnic University}, \orgaddress{\city{Hong Kong}}}


\abstract{
{\unboldmath
We propose a new entropy-compatible neural network method for scalar hyperbolic conservation laws and establish, to our knowledge, the first explicit \(L^1\) convergence rates in this setting that apply to piecewise smooth entropy solutions, including those with discontinuities. The method is based on a computable approximation of the Kru\v{z}kov entropy residual that sits between the strong and weak forms of the entropy inequality.
For piecewise smooth entropy solutions containing shocks, rarefactions, compound waves, regular shock interactions, and, in one space dimension, nondegenerate shock formation from smooth initial data, we construct explicit neural networks with provably small loss by combining shock-adapted continuous piecewise linear functions with known approximation properties of \(\tanh\) neural networks. Together with entropy-based stability estimates, this gives rigorous \(L^1\) error bounds for minimizers of the proposed loss. In particular, when the network size grows in proportion to the number of degrees of freedom of a space--time mesh of size \(h\), the analysis recovers the classical Kuznetsov rate \(O(h^{1/2})\) in shock-dominated cases.
Numerical experiments in one and two space dimensions support the theory and suggest that the actual accuracy of the method can be better than the rate guaranteed by the analysis.
}
}
\keywords{Hyperbolic conservation laws,
entropy solutions, piecewise smooth, 
neural networks, 
error estimates.}


\pacs[MSC Classification]{65M12, 65M15, 65M60, 35L65, 68T07}

\maketitle

\section{Introduction}
This paper is devoted to the neural network approximation of scalar hyperbolic conservation laws (HCLs), a canonical class of partial differential equations that model transport and wave propagation phenomena arising in applications such as compressible fluid flow and traffic dynamics. Specifically, we consider the Cauchy problem
\begin{equation}\label{eq:scalar-HCL}
\left\{
\begin{aligned}
&\partial_t u + \nabla_{\x}\cdot \mathbf{f}(u)=0,
&&(\x,t)\in \mathbb{R}^d\times (0,T),\\
&u(\x,0)=u_0(\x),
&&\x\in \mathbb{R}^d,
\end{aligned}
\right.
\end{equation}
where \(u: \mathbb{R}^d\times (0,T) \to \mathbb{R}\) denotes the unknown conserved quantity, \(\mathbf{f}:\mathbb{R}\to\mathbb{R}^d\) is a prescribed flux function, and \(u_0\) is a given initial datum. It is well known that solutions to \eqref{eq:scalar-HCL} propagate with finite speed; consequently, compactly supported initial data give rise to solutions whose support remains confined to a bounded region over any finite time interval. 
Throughout the paper we assume that \(u_0\in L^\infty_c(\mathbb R^d)\).
By finite speed of propagation, we choose a bounded Lipschitz domain
\(\Omega\subset\mathbb R^d\) such that $\operatorname{supp} u(\cdot,t)\subset \Omega$, $0\le t\le T $. 
In particular, the entropy solution has zero trace on the lateral
boundary \(\partial\Omega\times(0,T)\). This assumption allows the weak formulation and the associated numerical schemes to be posed on the bounded computational domain \(\Omega_T := \Omega\times(0,T)\). In particular, a function \(u \in L^{\infty}(\Omega_T)\) is called a \emph{weak solution} of \eqref{eq:scalar-HCL} with initial datum \(u_0\) if 
\begin{equation}\label{eq:weak_formulation}
\int_{\Omega_T} \big(u \,\partial_t \varphi +  \mathbf{f}(u)\cdot \nabla_{\x}\varphi\big) \, \d \x\,\d t
+ \int_{\Omega}  u_0(\x)\,\varphi(\x, 0) \, \d \x = 0,\quad \forall\,\varphi\in C_c^\infty \left(\Omega\times [0,T)\right).
\end{equation}

A fundamental difficulty in the analysis of \eqref{eq:scalar-HCL} is that, even for smooth initial data, shocks may develop in finite time, so classical solutions generally fail to exist globally. One must therefore work with weak solutions, supplemented by an entropy condition to single out the physically relevant one. A standard choice of entropy condition is the Kru\v{z}kov family \(\eta_k(u)=|u-k|=(u-k)\sgn\!\left(u-k\right)\), \(k\in \mathbb{R}\), with associated entropy fluxes \(\boldsymbol{\psi}_k\) satisfying \(\boldsymbol{\psi}_k^{\prime}(u)=\eta_k^{\prime}(u)\mathbf{f}^{\prime}(u)\). The corresponding entropy inequalities
\begin{equation}\label{eq:entropy_inequality}
\partial_t \eta_k(u) + \nabla_{\x}\cdot \boldsymbol{\psi}_k(u) \leq 0, \quad \forall\, k\in\R,
\end{equation}
are required to hold in the distributional sense on \(\Omega_T\). Equivalently, \(u\) is an \emph{entropy solution} of \eqref{eq:scalar-HCL} with initial datum \(u_0\) if, for every \(k\in\mathbb{R}\) and every nonnegative test function \(\varphi\in C_c^\infty(\Omega\times[0,T))\),
\begin{equation}\label{eq:entropy_solution}
\int_0^T\!\!\int_{\Omega}
\Big( |u-k|\,\partial_t\varphi
+\sgn\!\left(u-k\right)\big(\mathbf{f}(u)-\mathbf{f}(k)\big)\cdot\nabla_{\x}\varphi\Big)\,\d\x\,\d t
+\int_{\Omega}|u_0-k|\,\varphi(\x,0)\,\d\x
\ge 0,
\end{equation}
which is precisely the distributional formulation of the entropy inequalities \eqref{eq:entropy_inequality}.

Over the past several decades, considerable effort has been devoted to 
the development of numerical schemes for hyperbolic conservation laws. 
Early foundational methods, such as the Lax--Friedrichs and Godunov 
schemes, provide robust shock capturing but are restricted to first-order accuracy: numerical diffusion smears discontinuities, and Godunov's theorem rules out linear monotone schemes of higher order. Increasing the formal order of accuracy improves the resolution of smooth regions, but generally introduces spurious oscillations near discontinuities. Reconciling these two requirements---high accuracy in smooth regions and non-oscillatory behavior near shocks---has been a central theme in the field. Early remedies include flux- and slope-limited approaches such as total variation diminishing (TVD) schemes and the monotonic upstream-centered schemes for conservation laws (MUSCL). Subsequent developments led to a family of nonlinear high-resolution methods, including essentially non-oscillatory (ENO) schemes~\cite{Harten1987}, weighted ENO (WENO) schemes~\cite{liu1994weighted, JiangShu1996}, Runge--Kutta discontinuous Galerkin (RKDG) methods~\cite{CockburnShu2001}, and entropy-stable schemes~\cite{tadmor2016entropy}. On the theoretical side, the convergence analysis of numerical schemes can be traced back to Kru\v{z}kov's seminal work~\cite{kruvzkov1970first}, which established the notion of entropy solutions ensuring uniqueness and stability. Building upon this, Kuznetsov~\cite{kuznetsov1976accuracy}
derived the first a~priori \(L^\infty(L^1)\) error estimate of order \(\mathcal{O}(h^{1/2})\) for monotone schemes on uniform grids, with Sanders~\cite{sanders1983convergence} extending the result to non-uniform Cartesian meshes. Subsequent contributions, such as those by Cockburn and Gremaud~\cite{cockburn1996priori} and Makridakis and Perthame~\cite{makridakis2003optimal}, proposed alternative techniques achieving the same convergence order. Moreover, Nessyahu and Tadmor \cite{nessyahu1992convergence} proved that several finite volume schemes 
converge at rate $\mathcal{O}(h)$ in the $\text{Lip}'$ norm (the Wasserstein distance), for Lip$^+$-bounded initial data. Another way to prove convergence to the entropy solution is DiPerna's theory of measure-valued function convergence \cite{diperna1985measure}, which was first applied by Szepessy in 1989 \cite{szepessy1989convergence} to the streamline diffusion method and was later applied to finite difference, finite volume, and various space--time DG methods. 


Despite the remarkable maturity of these classical schemes, the rapid 
rise of modern artificial intelligence has created a strong impetus to 
develop neural-network-based numerical methods for partial differential 
equations. As AI-specific hardware (GPUs, TPUs, and emerging neural 
accelerators), automatic differentiation frameworks, and large-scale 
foundation models become the dominant computational infrastructure, it 
is increasingly desirable to have PDE solvers that are natively 
compatible with these platforms. Neural-network solvers are mesh-free, 
differentiable end-to-end, and can be seamlessly integrated into larger 
AI pipelines---for example, as differentiable physics modules inside 
foundation models, as surrogates for inverse problems and optimal 
control, or as building blocks for scientific large models. From this 
perspective, the goal of developing neural-network methods for 
hyperbolic conservation laws is not necessarily to outperform highly 
optimized classical schemes on standard benchmarks, but rather to 
provide reliable, theoretically grounded PDE solvers that fit naturally 
into the emerging AI-centric computational ecosystem. This perspective 
motivates a careful study of neural-network approximations to entropy 
solutions, supported by rigorous error analysis, which is the focus of 
the present work.

A wide range of neural-network methods for PDEs has emerged in recent 
years. Representative examples include physics-informed neural networks 
(PINNs) and their 
variants~\cite{Raissi2019, akrivis2025runge}, the deep Ritz 
method~\cite{E2018, cai2025efficient}, operator learning approaches 
such as the Fourier neural operator (FNO)~\cite{Li2020} and 
DeepONet~\cite{Lu2021}, and dynamically parametrized neural networks 
for time-evolving 
problems~\cite{du2021evolutional, feischl2024regularized, su2025spike}. 
Among these, PINNs are particularly natural for boundary- and 
initial-value problems, as they directly penalize the PDE residual; 
their theoretical properties have been investigated 
in~\cite{de2024numerical, gazoulis2025stability}. For hyperbolic 
conservation laws, however, this strong-form formulation runs into a 
fundamental difficulty. Letting $U_{\mathrm{NN}}$ denote a neural 
network approximation, standard PINN losses penalize the strong-form 
residual 
$\partial_{t} U_{\mathrm{NN}} + \nabla_{\mathbf{x}}\!\cdot 
\mathbf{f}(U_{\mathrm{NN}})$ in the $L^{2}$ norm, which cannot in 
general be made small along a sequence approximating a discontinuous 
entropy solution; see~\cite{chaumet2024improving}. This obstruction has 
motivated the design of alternative loss 
functionals~\cite{cai2024least}. One line of work introduces artificial 
viscosity to regularize the residual, while another enforces the PDE 
in weak or finite-volume form~\cite{liu2026least}, both of which are 
compatible with discontinuous solutions. Since weak solutions are 
generally non-unique, one must additionally impose an entropy condition 
to select the physically relevant solution, which has led to 
formulations that incorporate entropy constraints directly into the 
loss~\cite{patel2022thermodynamically}. On the analytical side, 
rigorous approximation and generalization results for neural-network 
approximations of parametric hyperbolic conservation laws were 
established in~\cite{de2024error}. More recently, under the assumption 
that the loss can be made arbitrarily small, \cite{oubarka2026weak} 
derived an a posteriori error estimate by invoking the Kru\v{z}kov-type 
stability estimates of Bouchut and Perthame~\cite{bouchut1998kruzkov}.

For scalar HCLs, the Kru\v{z}kov entropy family becomes particularly important. Specifically, the associated Kru\v{z}kov entropy residual plays two roles simultaneously. First, it recovers the conservation law in the weak sense. Indeed, by taking $k\in \mathbb{R}$ large or small enough, the full family of Kru\v{z}kov entropy inequalities \eqref{eq:entropy_inequality} recovers the weak formulation \eqref{eq:weak_formulation}. Second, for general $k\in\R$,
\eqref{eq:entropy_inequality} enforces entropy admissibility and thereby selects the physically relevant solution, restoring uniqueness and the classical $L^1$-stability property. Thus, the Kru\v{z}kov entropy residual simultaneously measures \emph{weak consistency} and \emph{entropy admissibility}. 
This observation is the common starting point of both weak PINNs and the present work.

The weak PINN (wPINN) framework \cite{de2024wpinns} exploits these dual effects by enforcing the entropy inequality in a distributional sense. Schematically, upon inserting a neural network ansatz \(U_{\mathrm{NN}}\) into \eqref{eq:entropy_solution} and temporarily suppressing the initial and boundary terms, one seeks to enforce \eqref{eq:entropy_inequality} weakly in the form
\begin{equation}\label{wpinn_loss}
-\int_0^T\!\!\int_{\Omega}
\Big( \eta_k(U_{\mathrm{NN}})\,\partial_t\varphi
+\boldsymbol{\psi}_k(U_{\mathrm{NN}})\cdot\nabla_{\x}\varphi\Big)\,\d\x\,\d t
\le o(1),
\quad \forall\, k\in \mathbb{R},
\end{equation}
for every nonnegative test function \(\varphi\in C_c^\infty(\Omega_T)\) subject to suitable normalization constraints. In practice, the test functions \(\varphi\) are themselves parametrized by auxiliary neural networks, which gives rise to a min--max optimization problem. Error estimates have been established for neural network solutions that remain bounded in \(C^1\). A modified wPINN was subsequently proposed in \cite{chaumet2024improving} to decouple weak consistency from entropy admissibility, thereby simplifying the training procedure and rendering the framework more amenable to systems. 

The objective of this paper is to develop a novel neural network method that
\emph{(i)} matches the computational cost of wPINN, \emph{(ii)} is provably convergent, with an \emph{explicit convergence rate},
to a broad class of discontinuous entropy solutions. To the best of our knowledge, no existing neural network method for scalar hyperbolic conservation laws satisfies both properties simultaneously. 

Our starting point is a simple observation: since the neural network ansatz $U_{\mathrm{NN}}$ is at least continuous, fully relaxing the entire Kru\v{z}kov family \eqref{eq:entropy_inequality} into the distributional form \eqref{wpinn_loss} is excessive in the neural network setting. Motivated by this, we propose a new enforcement strategy that sits
\emph{between} the strong and weak formulations of
\eqref{eq:entropy_inequality}. Its key ingredient is the following
discrete surrogate of the Kru\v{z}kov entropy residual:
\begin{equation}\label{eq:residual_entropy}
\mathcal{L}_{\mathrm{ent}}(U_{\mathrm{NN}})
:= \sup_{k_h\in V_h^c}
\int_{\Omega_T}
\Big(\partial_t U_{\mathrm{NN}}+\nabla_{\x}\cdot \mathbf f(U_{\mathrm{NN}})\Big)\,
\sgn\!\left(U_{\mathrm{NN}}-k_h\right)\,\d\x\,\d t,
\end{equation}
where the finite-dimensional space $V_h^c$ (defined in
Section~\ref{sec:notation}) is chosen rich enough to probe local entropy
inequality violations. The motivation comes from the pointwise identity
\begin{equation}\label{eq:entropy_production}
\partial_t \eta_k(U_{\mathrm{NN}})
+ \nabla_{\x}\cdot \boldsymbol \psi_k(U_{\mathrm{NN}})
= \Big(\partial_t U_{\mathrm{NN}}+\nabla_{\x}\cdot
\mathbf f(U_{\mathrm{NN}})\Big)\,
\sgn\!\left(U_{\mathrm{NN}}-k\right),
\qquad (\x,t)\in\Omega_T,\ k\in\mathbb{R},
\end{equation}
which shows that the integrand in \eqref{eq:residual_entropy} is the
entropy production associated with the Kru\v{z}kov entropy pair
$(\eta_k,\boldsymbol{\psi}_k)$. 

The crucial design choice is to replace the scalar parameter $k\in\mathbb{R}$ by a space--time function $k_h\in V_h^c$. This serves two purposes. First, it keeps the loss computable while making the constraint local: the supremum over $k_h$ provides a discrete surrogate for testing the continuum family $k\in\mathbb{R}$ \emph{at every space--time point}, so that the locality and richness of $V_h^c$ directly control how effectively
\eqref{eq:residual_entropy} detects local entropy violations. Second,
this choice aligns with the doubling-of-variables technique used in our later stability analysis, where $k_h$ is taken to approximate the
entropy solution itself. Accordingly, the integrand
in~\eqref{eq:residual_entropy} should be viewed as an
\emph{entropy residual surrogate} rather than as the exact Kru\v{z}kov entropy residual associated with a constant state.

In the idealized limit $h\to 0$, in which $V_h^c$ becomes sufficiently
rich and local, driving
$\mathcal{L}_{\mathrm{ent}}(U_{\mathrm{NN}})$ to zero suppresses
violations of the entropy inequalities
\eqref{eq:entropy_inequality} and steers $U_{\mathrm{NN}}$ toward the
entropy solution in the sense of \eqref{eq:entropy_solution}. Hence
minimizing \eqref{eq:residual_entropy} simultaneously promotes weak
consistency and entropy admissibility, which is precisely what
underlies the entropy-based stability estimate developed later
(cf.\ Remark~\ref{rem:motivation}).

The main contributions of this paper are as follows.\vspace{-5pt}
\begin{enumerate}
    \item[(C1)] \textbf{A new entropy-residual loss between strong and weak
    enforcement.} We introduce the entropy component of the computable loss
\eqref{eq:loss_func}, which is a surrogate of the Kru\v{z}kov entropy
residual obtained by replacing the constant parameter $k\in\mathbb{R}$ by a space--time test function $k_h$ in a finite-dimensional space $V_h^c$. The resulting loss promotes both weak consistency and entropy admissibility, while retaining a comparable computational cost to wPINN.\vspace{5pt}
    
    \item[(C2)] \textbf{A constructive bridge between finite element
    approximations and neural networks.} We construct explicit
    continuous piecewise linear (CPwL) competitors on shock-adapted
    simplicial meshes that produce provably small loss, and then transfer these constructions to the neural setting via representability and approximation results for $\tanh$ neural networks. This yields, for the first time in this context, an a priori upper bound
    on the loss attained by a minimizer of the training problem.\vspace{5pt}
    
    \item[(C3)] \textbf{Explicit $L^{1}$ convergence rates for discontinuous
    entropy solutions.} Combining (C1)--(C2) with entropy-based stability
    estimates, we prove rigorous $L^{1}$ error bounds for piecewise smooth
    entropy solutions containing shocks, rarefactions, compound waves, and
    regular shock interactions, in both one and several space dimensions. In
    one space dimension, we also cover nondegenerate shock formation from
    smooth initial data. When the network size scales like $O(h^{-(d+1)})$,
    the rate is $O(h^{1/2})$ in shock-dominated regimes,
    $O(h|\ln h|)$ in rarefaction-dominated regimes, and
    $O(h^{1/2}|\ln h|)$ for the shock-formation case.
    \vspace{5pt}
    
    \item[(C4)] \textbf{Numerical validation.} We provide numerical
    experiments in one and two space dimensions on Burgers' equation, the Buckley--Leverett equation, and conservation laws with non-convex fluxes. The experiments support the theory and, in fact, exhibit nearly first-order convergence, suggesting that the rigorous half-order bound may be a consequence of the present proof technique rather than an intrinsic limitation of the method.\vspace{-5pt}
\end{enumerate}

The rest of this paper is organized as follows. Section~\ref{sec:notation} introduces the problem setting, the proposed loss functional, and the neural network class. Section~\ref{sec:1d} develops the one-dimensional convergence analysis, including shocks, rarefactions, compound waves, regular interactions, and shock formation from smooth initial data. Section~\ref{sec:multiD} treats several space dimensions. Section~\ref{sec:numerics} reports numerical experiments. Some technical proofs and auxiliary approximation results are collected in the appendix.

\section{Neural network methods for scalar conservation laws}
\label{sec:notation}

This section sets the stage for our neural network methods for scalar
hyperbolic conservation laws (HCLs). We first recall the continuous
problem and the entropy framework on which the analysis is built
(Section~\ref{subsec:scl}), and then introduce the discrete spaces and
mesh notation that underpin the entropy-residual loss
\eqref{eq:residual_entropy} and its analysis
(Section~\ref{subsec:notation}).

\subsection{Scalar conservation laws and entropy solutions}
\label{subsec:scl}

Let $\x\in\mathbb{R}^d$ denote the spatial variable and $t\in[0,T]$
the time. Throughout the paper, $\Omega\subset\mathbb{R}^d$ is a
bounded spatial domain, taken large enough that
$\supp u(\cdot,t)\subset\Omega$ for every $t\in[0,T]$. We write
$\Omega_T:=\Omega\times(0,T)$ for the space--time domain and
$\z:=(\x,t)$ for a generic space--time point. The flux
$\mathbf{f}\in C^2(\mathbb{R};\mathbb{R}^d)$ is fixed, and it is convenient to introduce the \emph{space--time flux}
\[
\mathbf{F}(v):=\bigl(\mathbf{f}(v),\,v\bigr)\in\mathbb{R}^{d+1},
\qquad
\nabla\cdot \mathbf{F}(v)=\partial_t v+\nabla_{\x}\cdot \mathbf{f}(v),
\]
where \(\nabla\) denotes the full space--time gradient
\(\nabla_{(\x,t)}\).

Entropy solutions of \eqref{eq:scalar-HCL} are generally only
$L^\infty$ and may contain shocks, so in
Sections~\ref{sec:1d}--\ref{sec:multiD} we will work under the
piecewise smoothness hypotheses summarized in
Assumption~\ref{assump:piecewise_smooth}. Under these hypotheses,
the singular set of $u$ consists of $C^2$ shock hypersurfaces. Let
$\Gamma\subset\Omega_T$ denote one such hypersurface, that is, a
$d$-dimensional embedded $C^2$ manifold in space--time, and let
$\mathbf{n}_\Gamma(\z)$ be a chosen unit normal at $\z\in\Gamma$. We
define the one-sided traces and the jump of a function $v$ across
$\Gamma$ by
\[
v^{\pm}(\z) := \lim_{\varepsilon\to 0^+}
              v\bigl(\z\pm \varepsilon\, \mathbf{n}_\Gamma(\z)\bigr),
\qquad
\llbracket v \rrbracket := v^{+}-v^{-}.
\]

In this piecewise smooth setting, the entropy inequality
\eqref{eq:entropy_inequality} reduces to a pointwise admissibility
condition on the shock set: for every $\z\in\Gamma$ and every
$k\in\mathbb{R}$,
\begin{equation}\label{eq:reformulated_entropy_condition}
\llbracket
  \bigl(\mathbf{F}(u) - \mathbf{F}(k)\bigr)\sgn(u-k)
\rrbracket \cdot \mathbf{n}_\Gamma(\z) \le 0.
\end{equation}
The condition is independent of the orientation of $\mathbf{n}_\Gamma$,
since reversing $\mathbf{n}_\Gamma$ exchanges the roles of $+$ and $-$.
Throughout the paper we take for granted that the entropy solution
$u$ admits one-sided traces along its shock set satisfying both the
Rankine--Hugoniot relation and the
admissibility condition~\eqref{eq:reformulated_entropy_condition}.

\subsection{Discrete setting and notation}\label{subsec:notation}

We now introduce the discrete objects on which the entropy-residual
loss \eqref{eq:residual_entropy} is built. The guiding principle is
that the supremum over $k_h\in V_h^c$ in
\eqref{eq:residual_entropy} should be both \emph{rich enough} to
detect entropy violations and \emph{computable} on a mesh; both
features are encoded in the choice of $V_h^c$ below.

\paragraph{Space--time meshes.}
Let $\Lambda_h$ be a conforming partition of $\overline{\Omega_T}$
into closed space--time cells, each of which is either a
$(d{+}1)$-dimensional tensor-product
hyperrectangle or a simplex. We assume
$\Lambda_h$ is shape-regular and quasi-uniform in the standard finite
element sense, with mesh size
\[
h:=\max_{K\in\Lambda_h} h_K,
\qquad
h_K:=\operatorname{diam}(K).
\]
In our numerical experiments $\Lambda_h$ is taken to be a uniform
Cartesian partition. We write
$\partial\Lambda_h:=\bigcup_{K\in\Lambda_h}\partial K$ for the
space--time skeleton of the mesh and, for each cell $K$, we denote by
$\mathbf{n}$ the unit outward normal on $\partial K$.

\paragraph{Discrete test space.}
The test functions $k_h$ in the entropy-residual loss
\eqref{eq:residual_entropy} are drawn from the discontinuous
piecewise multilinear space
\[
V_h := \Bigl\{ v_h\in L^\infty(\Omega_T) :
              v_h|_K\in\mathbb{Q}^1(K), \forall K\in\Lambda_h \Bigr\},
\]
where $\mathbb{Q}^1(K)$ is the space of polynomials of degree at most
one in each space--time variable on $K$. (For a simplicial partition,
$\mathbb{Q}^1(K)$ is replaced by $\mathbb{P}^1(K)$.) To control the size of \(k_h\) and its variation inside each cell, while
still allowing jumps across mesh faces, we equip \(V_h\) with the
mesh-dependent norm
\begin{equation}\label{eq:W1infty_kh}
\|k_h\|_{W_h^{1,\infty}}
:= \max_{K\in\Lambda_h}\Bigl(
    \|k_h\|_{L^\infty(K)} + h\,\|\nabla k_h\|_{L^\infty(K)}
   \Bigr).
\end{equation}
For the scalar conservation law under consideration, by the maximum principle, we fix once and for all
a number \(c>0\), independent of \(h\), such that
\[
        \|u\|_{L^\infty(\Omega_T)}
        \le \|u_0\|_{L^\infty(\Omega)} < \frac c2 .
\]
Motivated by this bound, in the stability analysis, we restrict the class of discontinuous
piecewise polynomial comparison functions to
\[
V_h^{c} := \bigl\{ k_h\in V_h : \|k_h\|_{W_h^{1,\infty}}\le c \bigr\}.
\]

The bound \eqref{eq:W1infty_kh} ensures that $k_h$ remains
\emph{$L^\infty$-bounded uniformly in $h$} while still allowing
$O(1/h)$ slopes per cell, exactly the regime needed to approximate
the entropy solution and its discontinuities. 

\paragraph{Terminology and refinements.}
We refer to a function $k_h\in V_h^c$ as a \emph{discontinuous
piecewise polynomial} (DPwP). At several points in the analysis, in
particular when constructing competitors near shocks, we will also
use auxiliary \emph{continuous piecewise linear} (CPwL) functions
defined on conforming simplicial refinements of $\Lambda_h$. These
refinements may contain anisotropic simplices in thin neighborhoods
of shock surfaces, but they remain globally conforming.

\paragraph{Conventions.}
For a subset of cells $\Lambda\subset\Lambda_h$ we write $\#\Lambda$
for its cardinality and, when no confusion can arise, identify
$\Lambda$ with the associated union of cells
$\bigcup_{K\in\Lambda} K$. Throughout the paper, generic constants
are denoted by $C$ and may depend on the final time $T$, the flux
$\mathbf{f}$, and the exact entropy solution $u$, but never on the
mesh size $h$ or on the network parameters.

\subsection{Neural networks and the minimization problem}

\paragraph{Neural networks with bounded output.}
We use the same truncation level \(c\) fixed in Section~\ref{sec:notation}
to define a clipped neural network. For \(\mathbf z=(\mathbf x,t)\in\Omega_T\), set
\begin{align}\label{def:clipped-NN}
&  u_\theta(z):= \Pi_c\bigl(U_\theta^{\rm raw}(z)\bigr), \\
& \Pi_c(r):= \rho\!\left(c-\rho\!\left(\frac c2-r\right)\right)-\frac c2  = \min\left\{\max\left\{r,-\frac c2\right\},\frac c2\right\} , \quad \rho = \relu, \nonumber\\ 
\label{def:NN}
& U_\theta^{\rm raw}(z)
:= 
\bigl(A_L^\theta\circ \sigma\circ A_{L-1}^\theta
\circ\cdots\circ \sigma\circ A_1^\theta\bigr)(z),
\quad A_\ell^\theta(y)=W_\ell y+b_\ell, \quad
\sigma=\tanh ,  
\end{align}
where \(\theta=\{(W_\ell,b_\ell)\}_{\ell=1}^L\) denotes the collection of trainable parameters. 
Since \(\Pi_c\) is Lipschitz, the derivatives of \(u_\theta=\Pi_c(U_\theta^{\rm raw})\) are defined almost everywhere, and
\[
         \nabla u_\theta
         =
         \mathbf 1_{\{-c/2<U_\theta^{\rm raw}<c/2\}}
         \nabla U_\theta^{\rm raw}
         \qquad\text{a.e. in }\Omega_T .
 \]
Accordingly, differential expressions such as \(\nabla\!\cdot\mathbf F(u_\theta)\) are understood in the a.e. sense.

The trainable part of the
architecture is the raw $\tanh$ neural network \(U_\theta^{\rm raw}\). The clipping layer is fixed and introduces
no additional trainable parameters. We denote by
\[
\mathcal{N}
:=\bigl\{\,u_\theta =\Pi_c\bigl(U_\theta^{\rm raw}\bigr)  : \theta\in\Theta\,\bigr\}
\]
the resulting network class, where $\Theta$ encodes the chosen architecture (depth and width).

\paragraph{Loss functional.}
For \(v\in W^{1,1}(\Omega_T)\) with well-defined traces on
\(\Omega\times\{0\}\) and \(\partial\Omega\times(0,T)\), we introduce
the total loss
\begin{equation}\label{eq:loss_func}
\begin{aligned}
\L(v)\ &:= \L_{\mathrm{ent}}(v)+\L_{\mathrm{reg}}(v)+\L_{\mathrm{ibc}}(v),\\[2pt]
\L_{\mathrm{ent}}(v)\ &:=\ \sup_{k_h\in V_h^c}\J_{\mathrm{ent}}(v;k_h),
&&\J_{\mathrm{ent}}(v;k_h):=\int_{\Omega_T}\nabla\!\cdot\mathbf{F}(v)\,
\sgn\!\bigl(v-k_h\bigr)\,\d\z,\\[2pt]
\L_{\mathrm{reg}}(v)\ &:=\ h\!\int_{\Omega_T}\bigl|\nabla\!\cdot\mathbf{F}(v)\bigr|\,\d\z,
&& \L_{\mathrm{ibc}}(v)\ :=\ \|v(\cdot,0)-u_0(\cdot)\|_{L^1(\Omega)} +  \|v\|_{L^1(\partial\Omega\times(0,T) )}.
\end{aligned}
\end{equation}
In the numerical experiments on a finite interval, the
exact entropy solution generally has nonzero traces on the lateral boundary. We can replace $\|v\|_{L^1(\partial\Omega\times(0,T) )}$ with $\|v-g\|_{L^1(\partial\Omega\times(0,T) )}$ in $\L_{\mathrm{ibc}}$, where \(g\) is the trace of the exact solution. For convenience, we retain the homogeneous case \(g=0\) in our analysis, which is ensured by choosing \(\Omega\) large enough to contain the support of the solution.

The three contributions play complementary roles:
$\L_{\mathrm{ent}}$ enforces the entropy admissibility condition through a Kru\v{z}kov-type residual,
$\L_{\mathrm{ibc}}$ matches the prescribed initial condition  and the homogeneous lateral boundary condition, and \(\mathcal L_{\mathrm{reg}}\) is a mild \(L^1\)-residual regularization term, suppressing spurious oscillatory behavior.
Composition with the network gives the parameter-space loss
\(
\mathscr{L}(\theta):=\L(u_\theta).
\)
In addition, we emphasize that $\L_{\mathrm{ent}} \geq 0$ when $c$ is taken large enough.
\begin{lemma}[Nonnegativity of the entropy loss for clipped neural network]\label{lem:nonnegativity}
Let \(v\in W^{1,1}(\Omega_T)\) satisfy $v(z) \in [-\frac c2 , \frac c2 ]$, for a.e. $z\in\Omega_T$. Then
\[
        \L_{\rm ent}(v)\ge 0 .
\]
\end{lemma}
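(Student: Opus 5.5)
Since \(\L_{\rm ent}(v)\) is a supremum over \(V_h^c\), it suffices to exhibit one admissible \(k_h\in V_h^c\) with \(\J_{\rm ent}(v;k_h)\ge 0\). The natural choice is a constant. The bound \(|v|\le c/2\) together with \(\|k_h\|_{W_h^{1,\infty}}\le c\) leaves room for constants strictly outside the range of \(v\). Concretely, I take \(k_h\equiv -c\). It lies in \(V_h\) (constants are in \(\mathbb Q^1(K)\) or \(\mathbb P^1(K)\) on every cell) and has \(\|k_h\|_{W_h^{1,\infty}}=c\), so \(k_h\in V_h^c\).

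For this choice, \(v-k_h\ge c/2>0\) a.e., so \(\sgn(v-k_h)=1\) a.e. Hence
\[
\J_{\rm ent}(v;-c)=\int_{\Omega_T}\nabla\!\cdot\mathbf F(v)\,\d\z
=\int_{\Omega_T}\bigl(\partial_t v+\nabla_{\x}\cdot\mathbf f(v)\bigr)\,\d\z .
\]
Symmetrically, \(k_h\equiv c\) is admissible and gives \(\sgn(v-c)=-1\) a.e., so \(\J_{\rm ent}(v;c)=-\J_{\rm ent}(v;-c)\). Therefore
\[
\L_{\rm ent}(v)\ \ge\ \max\bigl\{\J_{\rm ent}(v;-c),\,-\J_{\rm ent}(v;-c)\bigr\}=\Bigl|\int_{\Omega_T}\nabla\!\cdot\mathbf F(v)\,\d\z\Bigr|\ \ge 0 .
\]

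The only step that needs care is that \(\nabla\!\cdot\mathbf F(v)\) is integrable, so that the integral is a finite real number and the two values are genuine negatives of each other. Since \(v\in W^{1,1}\) is bounded and \(\mathbf f\in C^2\), the chain rule for Lipschitz functions of Sobolev functions gives \(\nabla\!\cdot\mathbf F(v)=\partial_t v+\mathbf f'(v)\cdot\nabla_{\x}v\in L^1(\Omega_T)\), with \(|\mathbf f'|\) bounded on \([-c/2,c/2]\). The argument never uses the divergence theorem or any boundary traces.
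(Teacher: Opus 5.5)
Your proposal is correct and is essentially the paper's proof: test with the admissible constants \(k_h\equiv -c\) and \(k_h\equiv c\), which give \(\sgn(v-k_h)=\pm1\) a.e., so \(\L_{\rm ent}(v)\ge \bigl|\int_{\Omega_T}\nabla\!\cdot\mathbf F(v)\,\d\z\bigr|\ge 0\). Your remark that \(\nabla\!\cdot\mathbf F(v)\in L^1(\Omega_T)\) by the chain rule is a small point of rigor the paper leaves implicit.
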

\begin{proof}
The constant functions \(k_h^+=c\) and \(k_h^-=-c\) belong to \(V_h^c\). Since
\[
        \operatorname{sgn}(v-c)=-1,
        \qquad
        \operatorname{sgn}(v+c)=1
        \qquad\text{a.e. in }\Omega_T .
\]
Therefore,
\[
        \J_{\rm ent}(v;k_h^-)
        =
        \int_{\Omega_T}\nabla\cdot \mathbf F(v)\,dz, \quad  \J_{\rm ent}(v;k_h^+)
        =
        -\int_{\Omega_T}\nabla\cdot \mathbf F(v)\,dz .
\]
It follows that
\[
        \L_{\rm ent}(v)
        =
        \sup_{k_h\in V_h^c} \J_{\rm ent}(v;k_h)
        \ge
        \max\left\{
        \int_{\Omega_T}\nabla\cdot \mathbf F(v)\,dz,
        -\int_{\Omega_T}\nabla\cdot \mathbf F(v)\,dz
        \right\}
        \ge 0 .
\]
\end{proof}
With Lemma~\ref{lem:nonnegativity}, for every clipped network output \(u_\theta\), the three terms
\(\mathcal L_{\mathrm{ent}}(u_\theta)\),
\(\mathcal L_{\mathrm{ibc}}(u_\theta)\), and
\(\mathcal L_{\mathrm{reg}}(u_\theta)\) are nonnegative.

\begin{remark}[Effects of the clipping layer]
The purpose of the clipping layer is purely analytical: it enforces a uniform \(L^\infty\) bound on the
network output. This bound is used in two ways. First, it makes all Lipschitz constants involving
\(f\), \(F\), and the Kružkov flux \(Q(a,k)=(\mathbf F(a)-\mathbf F(k))\operatorname{sgn}(a-k)\) uniform on the compact
interval \([-c,c]\). Second, since the constant functions \(k_h\equiv \pm c\) belong to \(V_h^c\), the entropy loss \(\L_{\rm ent}(u_\theta)\) is nonnegative.
\end{remark}

\paragraph{The neural network scheme.}
The numerical scheme proposed in this paper consists of solving the minimization problem
\begin{equation}\label{eq:NN_min_problem}
\theta^{*}\ \in\ \underset{\theta\in\Theta}{\rm argmin}\,\mathscr{L}(\theta),
\qquad
u_{\theta^{*}} \in\mathcal N . 
\end{equation}
The analysis in Sections~\ref{sec:1d}--\ref{sec:multiD} establishes quantitative
$L^{1}$ error estimates between $u_{\theta^*}$ and the entropy solution
$u$ of \eqref{eq:scalar-HCL}.

\paragraph{Idealized setting for the analysis.}
For a neural network method, the total error is classically split
into approximation, optimization, and sampling/quadrature errors;
see, e.g., \cite{mishra2023estimates}. Since our focus is the
PDE-specific difficulty of approximating entropy solutions, we work
throughout in an idealized setting:
\begin{itemize}[label=(A\arabic*),leftmargin=2.5em,itemsep=2pt,topsep=4pt]
\item[(A1)] the supremum in $\L_{\mathrm{ent}}$ is taken exactly over
$V_h^c$, and all integrals in \eqref{eq:loss_func} are evaluated
exactly;
\item[(A2)] the minimization problem \eqref{eq:NN_min_problem} attains
a (global) minimizer $\theta^{*}\in\Theta$.
\end{itemize}
Under (A1)--(A2) we do not track optimization or quadrature errors.
The sampled, fully discrete version of \eqref{eq:loss_func} used in
the experiments is described in
Section~\ref{sec:numerics}.

\begin{remark}[Why this loss? — stability mechanism]\label{rem:motivation}
The sign function in $\J_{\mathrm{ent}}$ originates from the
Kru\v{z}kov entropy $\eta_k(u)=|u-k|$, and the choice of
$\L_{\mathrm{ent}}$ is dictated by the following stability picture.

\smallskip
\noindent\textbf{An ideal stability identity.}
Suppose first that $u$ is a piecewise $C^1$ entropy solution with a
single $C^2$ shock hypersurface $\Gamma$ separating $\Omega_T$ into
$\Omega_T^{\pm}$, and let $v\in W^{1,\infty}(\Omega_T)$ be any
candidate approximation. Since
$\nabla\!\cdot\mathbf F(u)=0$ pointwise on $\Omega_T^{\pm}$ and $u$ is
compactly supported in $\Omega$, integrating by parts on each side of
$\Gamma$ yields
\begin{equation}\label{eq:motivation_J_identity}
\begin{aligned}
\J_{\mathrm{ent}}(v;u)
&=\Bigl(\int_{\Omega_T^{-}}+\int_{\Omega_T^{+}}\Bigr)
\nabla\!\cdot\!\bigl((\mathbf F(v)-\mathbf F(u))\,\sgn(v-u)\bigr)\,\d\z\\
&=\|v(\cdot,T)-u(\cdot,T)\|_{L^1(\Omega)}
 -\|v(\cdot,0)-u(\cdot,0)\|_{L^1(\Omega)}\\
&  + 
 \int_{\partial\Omega\times(0,T)}
\bigl(\mathbf f(v)-\mathbf f(u)\bigr)
\sgn(v-u)\cdot\mathbf n_{\partial \Omega}\,\d s-\int_{\Gamma}
   \bigl\llbracket(\mathbf F(v)-\mathbf F(u))\,\sgn(v-u)\bigr\rrbracket
   \cdot\mathbf n_\Gamma\,\d s,
\end{aligned}
\end{equation}
where the first equality uses the algebraic identity
\begin{equation}\label{div_identity}
\nabla\!\cdot\mathbf F(v)\,\sgn(v-u)
=\nabla\!\cdot\!\bigl((\mathbf F(v)-\mathbf F(u))\,\sgn(v-u)\bigr),
\end{equation}
because $\nabla\!\cdot\mathbf F(u)=0$ and the distributional
contribution from $\nabla\sgn(v-u)$ vanishes on $\{v=u\}$.
The reformulated entropy condition~\eqref{eq:reformulated_entropy_condition}, applied
\emph{pointwise with $k=v(\z)$}, makes the shock integral in
\eqref{eq:motivation_J_identity} non-positive, hence
\[
\|v(\cdot,T)-u(\cdot,T)\|_{L^1(\Omega)}
\;\le\;
\|v(\cdot,0)-u(\cdot,0)\|_{L^1(\Omega)}
+ C_{\mathbf f}\|v-u\|_{L^1(\partial\Omega\times(0,T))} 
+\J_{\mathrm{ent}}(v;u),
\]
where we have used the Lipschitz continuity of flux $\mathbf f$. 
In words: \emph{controlling the entropy production
$\J_{\mathrm{ent}}(v;u)$ controls the growth of the $L^1$ error.}

\smallskip
\noindent\textbf{From $u$ to a computable surrogate.}
Of course $u$ is unknown, so $\J_{\mathrm{ent}}(v;u)$ cannot be used
directly. We instead replace $u$ by a rich, bounded family of
\emph{computable} test functions $k_h\in V_h^c$ and consider the
worst-case quantity
\(
\sup_{k_h\in V_h^{c}}\J_{\mathrm{ent}}(v;k_h),
\)
which is precisely $\L_{\mathrm{ent}}(v)$. If $V_h^{c}$ is rich
enough to approximate $u$, this supremum becomes a computable
surrogate for $\J_{\mathrm{ent}}(v;u)$ and may be viewed as a
discrete analogue of Kru\v{z}kov's doubling-of-variables technique.
The norm constraint $\|k_h\|_{W_h^{1,\infty}}\le c$ in the definition
of $V_h^c$ rules out highly oscillatory test functions and provides
the uniform bounds required in the convergence analysis.
\end{remark}

\paragraph{A cellwise identity.}
Throughout the article we will repeatedly use the following
elementary consequence of \eqref{div_identity} and the divergence
theorem: for every $K\in\Lambda_h$ and every continuous $v$,
\begin{equation}\label{eq:cellwise_entropy_ibp}
\begin{aligned}
\int_{K}\nabla\!\cdot\mathbf{F}(v)\,\sgn(v-k_h)\,\d\z
=&\ \int_{\partial K}
   \bigl(\mathbf{F}(v)-\mathbf{F}(k_h)\bigr)\sgn(v-k_h)\cdot\mathbf{n}\,\d s\\
&+\int_{K}\nabla\!\cdot\mathbf{F}(k_h)\,\sgn(v-k_h)\,\d\z.
\end{aligned}
\end{equation}
For a CPwL function $v$ the identity is understood cellwise after a
conforming simplicial refinement; interior face contributions cancel
because $v-k_h$ is continuous within $K$.

\paragraph{Strategy of the analysis.}
With \eqref{eq:NN_min_problem} fixed, the entire convergence
analysis reduces to showing that the minimizer $u_{\theta^*}$
attains a sufficiently small loss. Since
\(
\L(u_{\theta^*})\le \L(u_\theta)
\)
for any $\theta\in\Theta$, it suffices to \emph{exhibit one good neural
network}. We do this in two steps:
\begin{itemize}[leftmargin=4.2em,itemsep=2pt,topsep=4pt]
\item[\textbf{Step 1.}] Construct a continuous piecewise linear
(CPwL) function $\hat u$ with provably small loss $\L(\hat u)$
(carried out in Sections~\ref{sec:1d}--\ref{sec:multiD} according to
the wave structure of $u$).
\item[\textbf{Step 2.}] Use the following lemma to convert $\hat u$
into a $\tanh$ neural network $u_{\hat\theta}\in\mathcal N$ whose loss
matches $\L(\hat u)$ up to a controlled error.
\end{itemize}
Combining the two steps produces an admissible competitor in
\eqref{eq:NN_min_problem}, which in turn yields the desired bound on
$\L(u_{\theta^*})$ and hence on $\|u_{\theta^*}-u\|_{L^1}$.


\begin{lemma}[CPwL-to-network loss approximation]\label{lem:loss_cpwl_approx}
Let $\Omega_T\subset\mathbb{R}^{d+1}$ be bounded and let
$\hat{u}:\Omega_T\to\mathbb{R}$ be a continuous and piecewise linear function on a conforming simplicial partition $\mathcal{T}$ with
$N:=\#\mathcal{T}$ elements and uniformly bounded patch complexity
$N_{\mathrm{patch}}$ (i.e., the number of neighboring elements per
node depending on the spatial dimension $d$). Assume
$\mathbf{f}\in C^2(\mathbb{R};\mathbb{R}^d)$ and let $\Lambda_h$ be a
quasi-uniform background mesh of $\Omega_T$ with mesh size $h$ and
test space $V_h^c$ as in Section~\ref{sec:notation}. 
Assume also that there exists \(\delta_c>0\), independent of \(h\), such that
\begin{equation}\label{eq:cpwl-boundness-requirement}
        \|\hat u\|_{L^\infty(\Omega_T)}
        \le \frac c2-\delta_c .
\end{equation}
Assume that there exist measurable Lipschitz subsets  $\Omega_r,\Omega_s\subset\Omega_T$, such that
\begin{equation}\label{eq:assum_Omega_s}
\Omega_T=\Omega_r\cup\Omega_s,
\qquad \Omega_r\cap\Omega_s=\emptyset,
\qquad |\partial\Omega_s|\le C_{\Omega},
\qquad |\Omega_s|\le C_\Omega\,h^{2},
\end{equation}
 and
\(
\|\nabla \hat u\|_{L^1(\Omega_T)}\le C,
\)
and such that \(\hat u\) is piecewise linear on \(\Omega_r\) with a
shape-regular mesh of size \(O(h)\). Then there exists a clipped $\tanh$ neural network of the form \eqref{def:clipped-NN}, with depth
$O(\log(N_{\mathrm{patch}}))$ and at most $O(N\,N_{\mathrm{patch}})$
neurons per hidden layer, whose parameters $\hat\theta$ satisfy
\[
\L(u_{\hat{\theta}})
\;\le\;
\L(\hat{u})
\;+\;C\Bigl(h+\int_{\Omega_r}\bigl|\nabla\!\cdot\mathbf{F}(\hat u)\bigr|\,\d\z\Bigr).
\]
\end{lemma}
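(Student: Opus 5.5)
We first build a raw $\tanh$ network $U^{\rm raw}$ that approximates $\hat u$ in $W^{1,1}$. Then we estimate, term by term, how much each piece of the loss \eqref{eq:loss_func} can change when $\hat u$ is replaced by $u_{\hat\theta}=\Pi_c(U^{\rm raw})$.

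\textbf{Step 1: the network.} We use the standard representation of a CPwL function on a conforming simplicial mesh as a sum of local hat functions. Each hat is a min/max of at most $N_{\rm patch}$ affine functions, so depth $O(\log N_{\rm patch})$ suffices via pairwise max trees. Replacing each ReLU by a scaled $\tanh$ approximation, with scale parameter $\epsilon$ chosen as small as we like, gives $U^{\rm raw}$ with
\[
\|U^{\rm raw}-\hat u\|_{L^\infty(\Omega_T)}\le \epsilon,
\qquad
\|\nabla(U^{\rm raw}-\hat u)\|_{L^1(\Omega_T\setminus S_\epsilon)}\le \epsilon,
\qquad
\|\nabla U^{\rm raw}\|_{L^\infty(S_\epsilon)}\le C\|\nabla\hat u\|_{L^\infty(\text{patch})}.
\]
Here $S_\epsilon$ is an $\epsilon$-neighbourhood of the mesh skeleton, so $|S_\epsilon|\to0$ as $\epsilon\to0$ for fixed $\mathcal T$. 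The architecture has $O(N N_{\rm patch})$ neurons per layer. Since $\|\hat u\|_\infty\le c/2-\delta_c$, taking $\epsilon<\delta_c$ makes the clipping inactive, so $u_{\hat\theta}=U^{\rm raw}$. A key point is that the gradient bounds near kinks are controlled by convex combinations of neighbouring gradients, which gives
\[
\int_{S_\epsilon\cap\Omega_r}|\nabla U^{\rm raw}|\le C|S_\epsilon|/h
\quad\text{and}\quad
\int_{S_\epsilon\cap\Omega_s}|\nabla U^{\rm raw}|\le C\|\nabla \hat u\|_{L^1}+o(1).
\]

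\textbf{Step 2: the easy terms.} The term $\L_{\rm ibc}$ changes by at most $C\epsilon$ by the $L^\infty$ bound. For $\L_{\rm reg}$ we use that $\mathbf f'$ is bounded on $[-c,c]$, so
\[
h\int|\nabla\cdot\mathbf F(w)|\le Ch\|\nabla w\|_{L^1}.
\]
Together with $\|\nabla U^{\rm raw}\|_{L^1}\le C(\|\nabla\hat u\|_{L^1}+1)\le C$, this gives $\L_{\rm reg}(u_{\hat\theta})\le Ch$. That explains the $Ch$ in the bound.

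\textbf{Step 3: the entropy term (main obstacle).} The difficulty is that $\sgn(v-k_h)$ is discontinuous in $v$, so $\J_{\rm ent}$ is not Lipschitz in $v$ pointwise. For fixed $k_h$ we split $\Omega_T=\Omega_r\cup\Omega_s$.

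On $\Omega_s$ we do not compare sign factors. Instead we use the cellwise identity \eqref{eq:cellwise_entropy_ibp}, which rewrites $\int_K\nabla\cdot\mathbf F(v)\sgn(v-k_h)$ as a boundary flux term plus $\int_K\nabla\cdot\mathbf F(k_h)\sgn(v-k_h)$. Here $Q$ is Lipschitz in $v$, so differences between $v=\hat u$ and $v=U^{\rm raw}$ cost $C\epsilon\,|\partial\Omega_s\text{-related faces}|\le C\epsilon$. The volume term is bounded by $C|\Omega_s|\,\|\nabla k_h\|_\infty\le C h^2\cdot c/h= Ch$, uniformly in $k_h$; this is where $|\Omega_s|\le C_\Omega h^2$ enters. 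To make this rigorous we apply the divergence theorem on $\Omega_s\cap K$, a Lipschitz set, using $|\partial\Omega_s|\le C_\Omega$.

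On $\Omega_r$ we bound crudely:
\[
\int_{\Omega_r}\nabla\cdot\mathbf F(U^{\rm raw})\sgn(\cdot)\le\int_{\Omega_r}|\nabla\cdot\mathbf F(\hat u)|+C\|\nabla(U^{\rm raw}-\hat u)\|_{L^1(\Omega_r)}.
\]
The second term is at most $C(\epsilon+|S_\epsilon|/h)$.

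Adding the two regions and comparing with $\J_{\rm ent}(\hat u;k_h)\ge -\int_{\Omega_r}|\nabla\cdot\mathbf F(\hat u)|-Ch$, where the same identity is used on $\Omega_s$ for $\hat u$, we get
\[
\J_{\rm ent}(u_{\hat\theta};k_h)\le\J_{\rm ent}(\hat u;k_h)+C\Bigl(h+\int_{\Omega_r}|\nabla\cdot\mathbf F(\hat u)|\Bigr)+o_\epsilon(1)
\]
uniformly in $k_h$. Taking the supremum and choosing $\epsilon\le h$ small enough concludes the proof. The delicate part will be verifying that the boundary-flux comparison on $\Omega_s$ is uniform in the discontinuous $k_h$, via Lipschitz continuity of $Q(\cdot,k)$ uniformly in $k\in[-c,c]$.
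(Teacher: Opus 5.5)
Your proposal is correct and follows essentially the same route as the paper. The steps match: an $L^\infty$/$W^{1,1}$ tanh approximation with the clipping inactive; $\L_{\mathrm{ibc}}$ and $\L_{\mathrm{reg}}$ controlled by these bounds; and the entropy term split into a crude estimate on $\Omega_r$ plus the cellwise identity on $K\cap\Omega_s$, where the boundary flux is Lipschitz in $v$ uniformly in $k_h$ and the volume term is $\lesssim h^{-1}|\Omega_s|\lesssim h$.

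Two small repairs are needed.

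\begin{itemize}
\item The inequality $\J_{\mathrm{ent}}(\hat u;k_h)\ge-\int_{\Omega_r}|\nabla\cdot\mathbf F(\hat u)|\,\d\z-Ch$ is false as written, because a shock layer produces $O(1)$ negative entropy production. What you actually need, and what the paper uses, is $\int_{\Omega_s}\nabla\cdot\mathbf F(\hat u)\sgn(\hat u-k_h)\,\d\z\le\J_{\mathrm{ent}}(\hat u;k_h)+\int_{\Omega_r}|\nabla\cdot\mathbf F(\hat u)|\,\d\z$.
\item The boundaries $\partial(K\cap\Omega_s)$ include mesh faces of total measure up to $O(h^{-1})$, so the boundary comparison costs $C\epsilon h^{-1}$ rather than $C\epsilon$. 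You therefore need $\epsilon\lesssim h^2$, not merely $\epsilon\le h$.
\end{itemize}
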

The proof of Lemma~\ref{lem:loss_cpwl_approx} is given in
Appendix~\ref{appendix:proof_lem_tanh_nn}.

\section{Error estimate for one-dimensional problems with piecewise smooth solutions}\label{sec:1d}
This section focuses on the one-dimensional scalar HCL~\eqref{eq:scalar-HCL}
with either convex flux or nonconvex flux with finitely many inflection points. In this section the flux is scalar, so we write $f$ instead of $\mathbf{f}$. The setting may be viewed as a special $d$-dimensional homogeneous problem with invariance in $d-1$ spatial directions. Accordingly, we keep the dimension parameter $d$ in the general statements, although $ d=1$ is assumed here. In this work, we concentrate on a highly representative class of piecewise regular solutions under the following assumption:

\begin{assumption}[Finite space--time decomposition into regular-wave and event pieces]
\label{assump:piecewise_smooth}
There exists a finite decomposition of $\Omega_T$ into open, connected pieces $\{\mathcal P_q\}_{q=1}^Q$ with pairwise disjoint interiors and piecewise $C^2$ boundaries, in the sense that \(\Omega_T=\bigcup_{q=1}^Q \overline{\mathcal P_q}.\)
Each piece is one of the following types: a smooth piece, a regular shock piece, a rarefaction/compound-wave piece, a shock-shock interaction piece, or a nondegenerate shock-birth piece. By finiteness of the decomposition, all constants in the assumptions below may be taken uniformly in $q$.
\begin{itemize}
\item[(i)] \emph{Smooth pieces and regular shock pieces.}
If $\mathcal P_q$ contains no rarefaction region and no local event, then either
$u\in W^{2,\infty}(\mathcal P_q)\cap C^2(\mathcal P_q)$, 
or $\mathcal P_q$ contains a single $C^2$ entropy shock curve $\Gamma_q^s=\{(x,t):x=\gamma_q^s(t)\}\subset \mathcal P_q$ 
which divides $\mathcal P_q$ into two open subregions $\mathcal P_{q,-}$ and $\mathcal P_{q,+}$. In the latter case there exist $u_{q,\pm}\in W^{2,\infty}(\mathcal P_q)\cap C^2(\mathcal P_{q,\pm})$
such that $u=u_{q,\pm}$ on $\mathcal P_{q,\pm}$, with
\[
\|u_{q,-}\|_{W^{2,\infty}(\mathcal P_q)}
+\|u_{q,+}\|_{W^{2,\infty}(\mathcal P_q)}
\le C_*,
\]
and the left/right traces satisfy the Rankine-Hugoniot condition and entropy condition \eqref{eq:reformulated_entropy_condition} along $\Gamma_q^s$.

\item[(ii)] \emph{Rarefaction and compound-wave pieces.}
If $\mathcal P_q$ is a rarefaction piece or a compound-wave piece, then the rarefaction part inside $\mathcal P_q$ is the union of finitely many pairwise disjoint connected open sets $\mathcal R_q=\cup_{m=1}^{M_q}\mathcal R_{q,m}$, $M_q\ge 1$. More precisely, there exist numbers $0\le \tau_q<t_q^+\le T$, a point $x_q \in \Omega$, and for each $m=1,\dots,M_q$, two $C^2$ curves $\Gamma_{q,m,i}
=\{(x,t):x=\gamma_{q,m,i}(t),\ t\in[\tau_q,t_{q}^+)\}$, $i=1,2$, such that $\gamma_{q,m,1}(\tau_q)=\gamma_{q,m,2}(\tau_q)=x_q$, $\gamma_{q,m,1}(t)<\gamma_{q,m,2}(t)$, $t\in(\tau_q,t_q^+)$, and such that the open set
\begin{equation*}
\begin{aligned}
&\mathcal R_{q,m}
:= 
\{(x,t)\in\mathcal P_q:\ \tau_q<t<t_{q}^+,\ \gamma_{q,m,1}(t)<x<\gamma_{q,m,2}(t)\}, \\
&c(t-\tau_q)\le  \gamma_{q,m,2}(t)-\gamma_{q,m,1}(t)\le C(t-\tau_q),
\quad t\in[\tau_q,t_{q}^+).
\end{aligned}
\end{equation*}
Inside the rarefaction region one has $u\in C^2(\mathcal R_q)$, and
\[
(t-\tau_q)\,|\nabla u(x,t)|+(t-\tau_q)^2\,|\nabla^2u(x,t)|\le C,
\quad (x,t)\in\mathcal R_q.
\]
On each connected component $\mathcal O$ of $\mathcal P_q\setminus \overline{\mathcal R_q}$, the solution satisfies the regularity assumptions from item~(i). If a shock curve bounds $\mathcal R_q$, then on its non-rarefaction side the corresponding one-sided $W^{2,\infty}$ bound from item~(i) holds.

\item[(iii)] \emph{Regular interaction pieces.}
Each shock-shock merging point is contained in a single event piece $\mathcal P_q$ centered at $(x_\beta,\tau_\beta)$, in which exactly two incoming $C^2$ entropy shock curves and one outgoing $C^2$ entropy shock curve meet, and no other local event occurs. On every connected component of the complement of these three curves, the solution belongs to $W^{2,\infty}\cap C^2$, 
with the same uniform bound $C_*$. Any other regular local interaction (for instance, a shock-rarefaction interaction) is isolated in a single event piece and is assumed to satisfy the same piecewise regularity as in items (i)–(ii).

\item[(iv)] \emph{Nondegenerate shock-birth pieces and uniform regularity away from birth.}
Each shock-formation point is contained in a single event piece $\mathcal P_q$
which contains no local event other than the shock birth at $(x_q,\tau_q)$.
In the neighborhood of $(x_q,\tau_q)$ near the shock curve
$x=\gamma_q^b(t)$, for $x\neq \gamma_q^b(t)$, one has
\[
|\nabla u(x,t)|
= O\!\left(\bigl((t-\tau_q)^3+(x-x_q)^2\bigr)^{-1/3}\right),
\qquad
|\nabla^2 u(x,t)|
= O\!\left(\bigl((t-\tau_q)^3+(x-x_q)^2\bigr)^{-5/6}\right).
\]
Finally, there exists a uniform waiting time $\delta_*>0$ such that the
regular shock piece adjacent to $\gamma_q^b$ starts no earlier than
$t=\tau_q+\delta_*$. Consequently, on that adjacent regular shock piece
the two one-sided states satisfy the uniform $W^{2,\infty}$ bound from
item~\textup{(i)}, with the same constant $C_*$.
\end{itemize}
\end{assumption}
\begin{remark}
Assumption~\ref{assump:piecewise_smooth}(iv) models the generic nondegenerate shock-birth structure for one-dimensional scalar conservation laws; see, for instance, \cite[Thm.~1.1]{huicheng2022shock}.
\end{remark}

As the entropy solution consists of a finite number of elementary waves and shock interaction, the corresponding piecewise smoothness patterns are illustrated in Figure~\ref{fig:smooth-pattern-illustration}. In particular, Figure~\ref{fig:rarefaction-compound-illustration} includes both single rarefaction waves and compound waves (the latter illustrated in the case of flux $f(u)$ with three inflection points). 
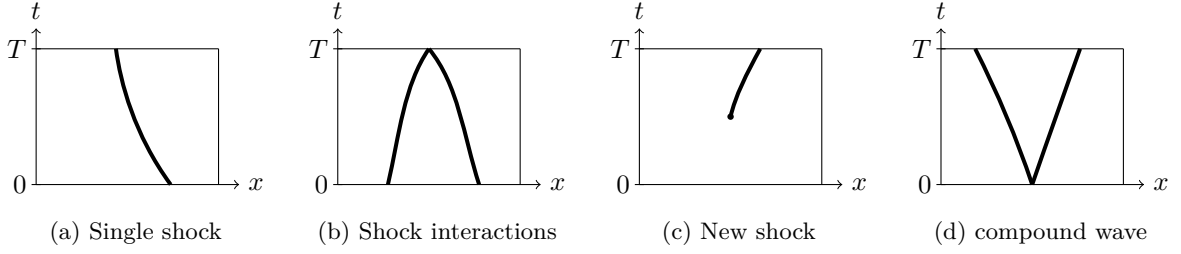
\begin{figure}[ht]
\centering
\begin{subfigure}{0.24\linewidth}
\centering
\begin{tikzpicture}[x=0.7cm,y=0.7cm]
  \xtframe{4}{3}
  \coordinate (S0) at (1.75,3);
  \coordinate (S1) at (1.85,2.25);
  \coordinate (S2) at (2.15,1.10);
  \coordinate (S3) at (2.95,0);
  \draw[shock] (S0) .. controls (S1) and (S2) .. (S3);
\end{tikzpicture}
\caption{Single shock}
\end{subfigure}
\hfill
\begin{subfigure}{0.24\linewidth}
\centering
\begin{tikzpicture}[x=0.7cm,y=0.7cm]
  \xtframe{4}{3}
  \coordinate (A)  at (2.00,3);
  \coordinate (L1) at (1.45,2.20);
  \coordinate (L2) at (1.35,1.10);
  \coordinate (L3) at (1.10,0);
  \draw[shock] (A) .. controls (L1) and (L2) .. (L3);
  \coordinate (R1) at (2.65,2.20);
  \coordinate (R2) at (2.75,1.10);
  \coordinate (R3) at (3.10,0);
  \draw[shock] (A) .. controls (R1) and (R2) .. (R3);
\end{tikzpicture}
\caption{Shock interactions}
\end{subfigure}
\hfill
\begin{subfigure}{0.24\linewidth}
\centering
\begin{tikzpicture}[x=0.7cm,y=0.7cm]
 \xtframe{4}{3}
 \coordinate (C)  at (2.00,1.50);
 \coordinate (S1) at (2.10,1.95);
 \coordinate (S2) at (2.35,2.45);
 \coordinate (S3) at (2.65,3.00);
 \draw[shock] (C) .. controls (S1) and (S2) .. (S3);
 \fill (C) circle (1.2pt); 
\end{tikzpicture}
\caption{Newly formed shock}
\end{subfigure}
\hfill
\begin{subfigure}{0.24\linewidth}
\centering
\begin{tikzpicture}[x=0.7cm,y=0.7cm]
  \xtframe{4}{3}
  \coordinate (B)  at (2.00,0);
  \coordinate (L1) at (1.65,1.05);
  \coordinate (L2) at (1.25,2.0);
  \coordinate (L3) at (0.75,3);
  \draw[shock] (B) .. controls (L1) and (L2) .. (L3);
  \coordinate (R1) at (2.36,1.05);
  \coordinate (R2) at (2.73,2.10);
  \coordinate (R3) at (3.05,3);
  \draw[shock] (B) .. controls (R1) and (R2) .. (R3);
\end{tikzpicture}
\caption{compound wave}
\end{subfigure}
\caption{Illustration of typical piecewise smooth patterns}
\label{fig:smooth-pattern-illustration}
\end{figure}
\begin{figure}[!ht]
\tikzset{rarEdge/.style={char, line width=0.9pt}}

\begin{subfigure}{0.45\linewidth}
\centering
\begin{tikzpicture}[x=0.9cm, y=0.9cm]
  \xtframe{4}{3}
  \coordinate (B) at (2.00,0);

  \coordinate (Ltop) at (1.10,3);
  \coordinate (Rtop) at (3.00,3);

  \draw[rarEdge] (B) -- (Ltop);
  \draw[rarEdge] (B) -- (Rtop);

  \foreach \xTop in {1.3, 1.6, 1.9, 2.2, 2.5,2.80}{
    \draw[char] (B) -- (\xTop,3);
  }

  \foreach \xB/\xT in {0.90/0.00, 1.30/0.40, 1.70/0.80}{
    \draw[char] (\xB,0) -- (\xT,3);
  }

  \foreach \xB/\xT in {2.30/3.30, 2.60/3.60, 2.90/3.90}{
    \draw[char] (\xB,0) -- (\xT,3);
  }
\end{tikzpicture}
\caption{Single rarefaction wave}
\label{fig:rarefaction-illustration-single}
\end{subfigure}
\hfill
\begin{subfigure}{0.45\linewidth}
\centering
\begin{tikzpicture}[x=0.9cm, y=0.9cm]
  \xtframe{4}{3}
  \coordinate (B)  at (2.00,0);
  \coordinate (L1) at (1.65,1.05);
  \coordinate (L2) at (1.25,2.0);
  \coordinate (L3) at (0.75,3);
  \draw[shock] (B) .. controls (L1) and (L2) .. (L3);

  \coordinate (R1) at (2.36,1.05);
  \coordinate (R2) at (2.73,2.10);
  \coordinate (R3) at (3.05,3);
  \draw[shock] (B) .. controls (R1) and (R2) .. (R3);

  \foreach \xTop in {1.05,1.35,1.75, 2.15,2.5,2.85}{
    \draw[char] (B) -- (\xTop,3);
  }
  \draw[char] (0.0, 0) -- (L3);
  \draw[char] (0.7, 0) -- (L2);
  \draw[char] (1.4, 0) -- (L1);

  \draw[char] (4.0, 0) -- (R3);
  \draw[char] (3.4, 0) -- (R2);
  \draw[char] (2.7, 0) -- (R1);
\end{tikzpicture}
\caption{Compound wave}
\label{fig:shock-illustration-compound-wave}
\end{subfigure}
\caption{Thin lines indicate characteristics; thick lines indicate shocks.}
\label{fig:rarefaction-compound-illustration}
\end{figure}
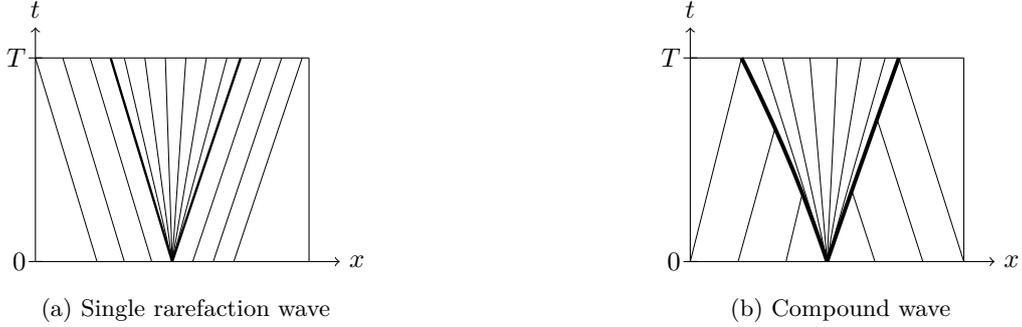

\subsection{Single shock waves and shock interactions}\label{sec:shock_wave}
This section establishes the a priori error estimate for the single-shock case. The proof has two main ingredients. First, we construct a neural network competitor with \(O(h^{-(d+1)})\) neurons whose loss is sufficiently small, which is highly nontrivial.	Indeed, we will convert derivatives in the loss into jumps across the shock, which has favorable signs for estimation due to the entropy condition \eqref{eq:reformulated_entropy_condition}. 
Second, using a DPwP approximation \(k_h\) of the entropy solution, we convert this loss bound into an error estimate for the NN minimizer \(u_{\theta^*}\).
\begin{lemma}\label{lem:Lu_shock}
Let $u$ be the entropy solution of the scalar conservation law \eqref{eq:scalar-HCL} that contains a single shock and satisfies Assumption~\ref{assump:piecewise_smooth}. There exists a clipped $\tanh$ neural network $u_{\hat\theta} = \Pi_c\bigl(U_{\hat\theta}^{\rm raw}\bigr)$ of form \eqref{def:clipped-NN} with depth $O(d+1)$ and at most $O(h^{-(d+1)})$ neurons in each hidden layer, such that
\begin{equation}\label{eq:Lu_shock}
\mathscr{L}(\hat{\theta}) \lesssim h.
\end{equation}
\end{lemma}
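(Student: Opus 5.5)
Following the two-step strategy, I will construct a CPwL competitor $\hat u$ with $\L(\hat u)\lesssim h$ and then invoke Lemma~\ref{lem:loss_cpwl_approx}.

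\emph{Construction of $\hat u$.} Take the shock curve $\Gamma=\{x=\gamma(t)\}$ and a strip $\Omega_s:=\{|x-\gamma(t)|<h^2\}$, so that $|\Omega_s|\lesssim h^2$ and $|\partial\Omega_s|\lesssim 1$. Outside the strip, on $\Omega_r$, I would let $\hat u$ be the nodal interpolant of the smooth extensions $u_\pm$ on a shape-regular mesh of size $h$, fitted so that the strip boundaries $x=\gamma(t)\pm h^2$ are unions of mesh faces. A conforming anisotropic refinement of the cells crossing $\Gamma$ achieves this. Inside the strip, $\hat u$ is the linear-in-$x$ interpolation between the traces at the two strip boundaries. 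Since $\|u\|_\infty<c/2$, we get \eqref{eq:cpwl-boundness-requirement}. On $\Omega_r$, $|\nabla\cdot\mathbf F(\hat u)|=|\nabla\cdot\mathbf F(\hat u)-\nabla\cdot\mathbf F(u)|\lesssim h$ by interpolation estimates and $u_\pm\in W^{2,\infty}$. Moreover $\|\nabla\hat u\|_{L^1}\lesssim 1$, since the strip contributes $O(h^2)\cdot O(h^{-2})$. Hence $\L_{\rm ibc}(\hat u)\lesssim h$ (initial interpolation error, plus zero lateral trace) and $\L_{\rm reg}(\hat u)\lesssim h\,(1+\int_{\Omega_s}|\nabla\cdot\mathbf F(\hat u)|)\lesssim h$, because the flux divergence inside the strip is $O(h^{-2})$ on a set of measure $O(h^2)$. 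The mesh has $O(h^{-2})=O(h^{-(d+1)})$ elements with bounded patch complexity, which gives the stated network size.

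\emph{Entropy loss, the main obstacle.} Fix any $k_h\in V_h^c$. On $\Omega_r$ the contribution to $\J_{\rm ent}(\hat u;k_h)$ is bounded by $\int_{\Omega_r}|\nabla\cdot\mathbf F(\hat u)|\lesssim h$. The difficulty is the strip, where $\nabla\cdot\mathbf F(\hat u)=O(h^{-2})$ and crude bounds give only $O(1)$. There I would work with the cellwise identity \eqref{eq:cellwise_entropy_ibp} on $\Omega_s$, or directly with the divergence theorem using \eqref{div_identity}-type manipulations, writing
\[
\int_{\Omega_s}\nabla\cdot\mathbf F(\hat u)\sgn(\hat u-k_h)
=\int_{\partial\Omega_s}(\mathbf F(\hat u)-\mathbf F(k_h))\sgn(\hat u-k_h)\cdot\n
+\int_{\Omega_s}\nabla\cdot\mathbf F(k_h)\sgn(\hat u-k_h)+\text{(jumps of }k_h\text{ on mesh faces inside }\Omega_s).
\]
The second term is bounded by $|\Omega_s|\,\|\nabla k_h\|_\infty\lesssim h^2\cdot h^{-1}=h$. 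The face-jump terms live on faces of total measure $O(h)$ within the strip, since only $O(h^{-1})$ background cells meet $\Gamma$, each face piece has length $O(h^2)$, and the integrand is bounded; so they are $O(h)$. For the boundary term, the two long sides of $\partial\Omega_s$ carry the traces $u_\pm+O(h^2)$ evaluated at the same $t$, at points $O(h^2)$ apart. Using the Lipschitz bound on $k_h$ within a cell (slope $\le c/h$, so its variation across the strip is $O(h)$; if a background face splits the strip, the jump contributions are accounted for as above), this term equals
\[
\int_\Gamma\bigl\llbracket(\mathbf F(u)-\mathbf F(k))\sgn(u-k)\bigr\rrbracket\cdot\n_\Gamma\,\d s
\]
evaluated with $k=k_h|_\Gamma$, up to $O(h)$, which uses the Lipschitz continuity of the Kru\v{z}kov flux in $k$. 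By the entropy condition \eqref{eq:reformulated_entropy_condition} applied pointwise with $k=k_h(\z)$, this integral is $\le 0$. The short strip ends at $t=0,T$ contribute $O(h^2)$. Thus $\sup_{k_h}\J_{\rm ent}(\hat u;k_h)\lesssim h$, so $\L(\hat u)\lesssim h$.

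\emph{Conclusion.} Lemma~\ref{lem:loss_cpwl_approx} then yields $u_{\hat\theta}$ with $\mathscr L(\hat\theta)\le \L(\hat u)+C(h+\int_{\Omega_r}|\nabla\cdot\mathbf F(\hat u)|)\lesssim h$. The delicate points I expect are the conforming anisotropic mesh near $\Gamma$ and the careful accounting of the discontinuities of $k_h$ crossing the strip.
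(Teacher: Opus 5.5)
Your overall route is the paper's: an $O(h^2)$-thin transition layer around $\Gamma$, nodal interpolation elsewhere, integration by parts on the layer, the entropy condition \eqref{eq:reformulated_entropy_condition} applied pointwise with $k=k_h(\z)$, and then Lemma~\ref{lem:loss_cpwl_approx}. However, the step where you bound the jump terms of $k_h$ inside the strip is false in general. The failure lies in the competitor itself, not only in the bookkeeping.

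You assert that every piece of a face of $\Lambda_h$ lying in the strip has length $O(h^2)$. This holds only for faces crossing $\Gamma$ transversally. Nothing prevents $\Gamma$ from running along, or within $O(h^2)$ of, a face over a long stretch, e.g.\ a stationary or slowly moving shock near a grid line $\{x=x_i\}$. Then the two long sides of the strip lie in different cells of $\Lambda_h$, and $k_h$ may take unrelated values on them. The entropy condition is only available for a \emph{single} $k$ on both sides, so it gives no sign.

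Concretely, take Burgers' equation near a standing admissible shock with states $1$ and $-1$ located on $\{x=x_i\}$. Your $\hat u=-(x-x_i)/h^2$ in the strip gives $\nabla\cdot\mathbf F(\hat u)=(x-x_i)/h^4$ there. Choose $k_h\equiv 1$ on the cells left of $x_i$ and $k_h\equiv -1$ on those to the right, which is admissible since $c>2$. Then $\sgn(\hat u-k_h)=\sgn(x-x_i)$ in the strip, so $\J_{\mathrm{ent}}(\hat u;k_h)\ge T-Ch$.

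The missing ingredient is the paper's genericity condition \eqref{eq:condition_Gamma}, namely $\mathcal H^1(\{\z\in\Gamma:\dist(\z,\partial\Lambda_h)<Ch^2\})\lesssim h$, together with Step~3 of its proof. If the condition fails, build the competitor for the translate $u_\delta(x,t):=u(x+\delta,t)$ with $|\delta|\lesssim h$. This is again an entropy solution, with data $u_0(\cdot+\delta)$, so $\L_{\mathrm{ent}}$ and $\L_{\mathrm{reg}}$ of its competitor are $O(h)$. Measuring the initial error against the original $u_0$ costs only $\|u_0-u_0(\cdot+\delta)\|_{L^1(\Omega)}\lesssim h$.

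A good $\delta$ exists by averaging over $\delta\in[0,h]$ (for a Cartesian $\Lambda_h$). Horizontal faces are always crossed transversally. For each $t$, the set of shifts bringing $\gamma(t)+\delta$ within $Ch^2$ of a grid line has measure $O(h^2)$, so the average exceptional length is $O(h)$.

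Under \eqref{eq:condition_Gamma}, two things hold:
\begin{itemize}
\item the faces of $\Lambda_h$ inside the strip have total length $O(h)$;
\item the set of times where a face separates the two corresponding points on the long sides has measure $O(h)$. This is where your comparison with $\llbracket\mathcal Q(u,k_h(\z))\rrbracket$ breaks down.
\end{itemize}
With bounded integrands both contribute $O(h)$, and the rest of your argument goes through. The paper organizes the same computation cell by cell; the short sides of $K_\Gamma$ and the exceptional sets $B_K$ play the role of your face terms.

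A minor point: simplicial faces cannot follow $x=\gamma(t)\pm h^2$ exactly. Use piecewise linear strip boundaries with segments of length $O(h)$, and a layer width $Ch^2$ with $C$ large enough that the chords stay on the correct sides of $\Gamma$.
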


\begin{proof}

Set $\varepsilon:=h^2$. We first construct a continuous piecewise linear function $\hat u$ on a simplicial partition $\mathcal T$ of $\Omega_T$ with $N:=\#\mathcal T = O(h^{-(d+1)})$ such that
\begin{equation}\label{eq:hat_u_basic}
\|\hat u\|_{L^1(\partial\Omega\times(0,T))}=0, \quad\| \hat u(\cdot,0)-u_{0}(\cdot)\|_{L^1(\Omega)}\lesssim h,
\quad
\int_{\Omega_{T}}|\nabla\!\cdot \mathbf{F}(\hat u)|\,\d \z\le C,
\quad
\sup_{k_{h}\in V_{h}^{c}}\J_{\mathrm{ent}}(\hat u;k_{h})\lesssim h,
\end{equation}
with $C$ independent of $h$. Then
\[
\L(\hat u)=\sup_{k_h\in V_h^c}\J_{\mathrm{ent}}(\hat u;k_h)+\L_{\mathrm{ibc}}(\hat u)+\L_{reg}(\hat u)\lesssim h.
\]

\noindent\textbf{Step 1: Construction of $\hat u$.}
Let $\Gamma:=\{(x,t)\in\Omega_T:\ x=\gamma_q^s(t),\ 0\le t \le T\}$ be the shock curve. Assume first that $\Gamma$ does not significantly intersect the mesh
skeleton $\partial\Lambda_{h}$ in the sense of
\begin{equation}\label{eq:condition_Gamma}
\mathcal{H}^1\Big(\big\{(x,t)\in\Gamma:\dist\big((x,t),\partial\Lambda_h\big)< C h^2\big\}\Big) \lesssim h,
\end{equation}
where $\mathcal{H}^1$ denotes the one-dimensional Hausdorff (arc-length) measure on $\Gamma$. 
Define a tubular neighborhood
$\widetilde\Lambda_{\Gamma}$ of $\Gamma$ of width $O(\varepsilon)$ whose left and right boundaries
$\widetilde\Gamma^{-},\widetilde\Gamma^{+}$, which are piecewise linear and each segment lies in a single mesh cell
(cf.\ Figure~\ref{fig:mesh_for_NN}). We triangulate each strip $\widetilde\Lambda_{\Gamma}$ by anisotropic triangles whose tangential
length is $O(h)$ and normal thickness is $O(\varepsilon)$; hence each such triangle has area
$O(h\varepsilon)=O(h^3)$ and there are $O(h^{-1})$ triangles. Then we triangulate $\Omega_{T}\setminus \widetilde\Lambda_{\Gamma}$ in a shape-regular way conforming with triangulation of $\widetilde\Lambda_{\Gamma}$ with mesh size $O(h)$.

Define $\hat u$ by:
(i) $\hat u$ equals the nodal $P^1$ interpolant of $u$ on $\Omega_T\setminus \widetilde\Lambda_\Gamma$;
(ii) inside $\widetilde\Lambda_\Gamma$, $\hat u$ connects the two traces $u^\pm$ linearly across the normal direction, so that $\hat u=u^\pm$ on $\widetilde\Gamma_\pm$ and $\hat u$ is linear on each simplex. Then $\hat{u} \in L^{\infty}(\Omega_T)$ is continuous. 
The initial trace $\hat{u}(\cdot,0)$ can be regarded as a piecewise linear approximation 
of the piecewise smooth function $u_0$ with a jump discontinuity. 
In particular, $\hat{u}(\cdot,0)$ varies sharply only within an $O(\varepsilon)$-neighborhood 
of each initial discontinuity. Therefore,
\[
\|\hat u(\cdot,0)-u_0(\cdot)\|_{L^1(\Omega)} \lesssim h+\varepsilon \lesssim h.
\]
By the assumption that the exact solution $u(\cdot, t)$, $t\in [0,T]$ is compactly supported in $\Omega$, for all sufficiently small $h$, we have $\hat u=0$ on $\partial\Omega\times(0,T)$. 
Consequently, we have the first bound in \eqref{eq:hat_u_basic}. 
Moreover, on $\Omega_T\setminus \widetilde\Lambda_\Gamma$ we have $u\in C^2$ and $\nabla\cdot \mathbf F(u)=0$, so standard interpolation estimates yield
$\int_{\Omega_T\setminus \widetilde\Lambda_\Gamma}|\nabla\cdot \mathbf F(\hat u)|\lesssim h$.
Inside $\widetilde\Lambda_\Gamma$, $\hat u$ varies by $O(1)$ across width $O(\varepsilon)$, hence $|\nabla \hat u|\lesssim \varepsilon^{-1}$ and $|\widetilde\Lambda_\Gamma|\lesssim \varepsilon$, which gives
$\int_{\widetilde\Lambda_\Gamma}|\nabla\cdot \mathbf F(\hat u)|\lesssim 1$.
Thus $\int_{\Omega_T}|\nabla\cdot \mathbf F(\hat u)|\le C$, proving the second bound in \eqref{eq:hat_u_basic}. 
\begin{figure}[htbp!]
\centering
\begin{subfigure}[b]{0.3\textwidth}
\includegraphics[width=\linewidth]{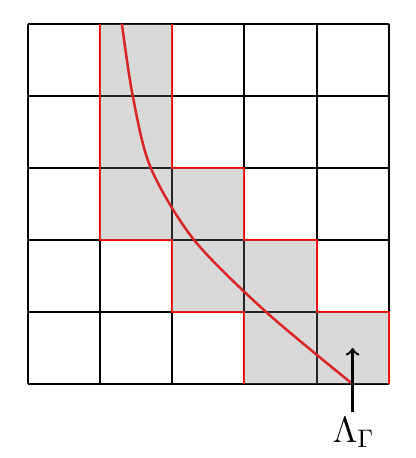}
\caption{Shock and the mesh for DPwP \( k_h \)}
\label{fig:mesh_for_DG}
\end{subfigure}
\hfill
\begin{subfigure}[b]{0.29\textwidth}
\includegraphics[width=\linewidth]{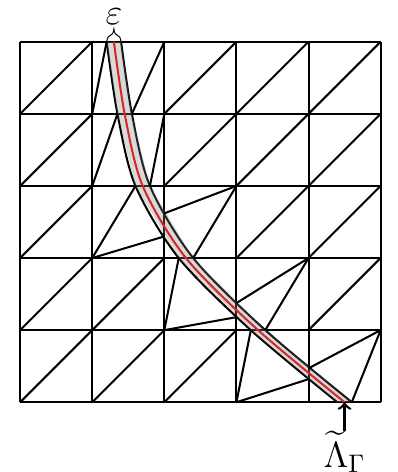}
\caption{Shock and the mesh for CPwL \( \hat{u} \)}
\label{fig:mesh_for_NN}
\end{subfigure}
\hfill
\begin{subfigure}[b]{0.29\textwidth}
\includegraphics[width=\linewidth]{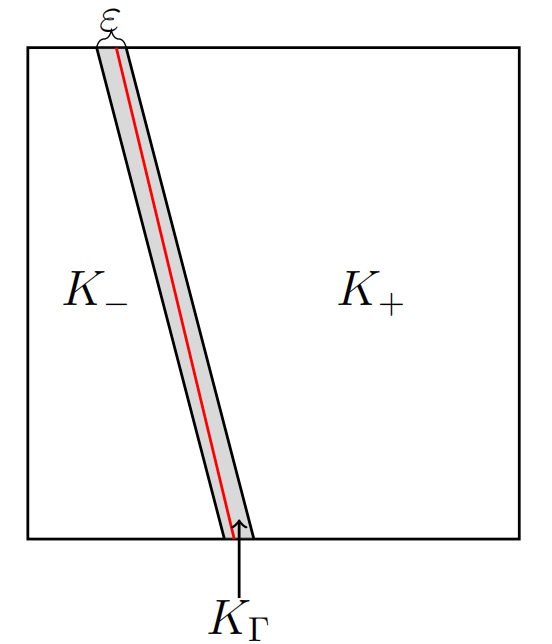}
\caption{A representative cell $K$ in $\Lambda_{\Gamma}$ \\ }
\label{fig:K_gamma}
\end{subfigure}
\caption{Comparison of the partition $\Lambda_h$ and the mesh partition for the CPwL $\hat{u}$.}
\end{figure}

\noindent\textbf{Step 2: Estimate of $\J_{\mathrm{ent}}(\hat u;k_{h})$.}
Fix $k_h\in V_h^c$ and define $\Lambda_\Gamma:=\{K\in\Lambda_h:K\cap \widetilde{\Lambda}_\Gamma\neq\emptyset\}$, as illustrated in Figure~\ref{fig:mesh_for_DG}. Decompose
\[
\J_{\mathrm{ent}}(\hat u;k_h)=\sum_{K\in\Lambda_h\setminus \Lambda_\Gamma}\int_K \nabla\cdot \mathbf F(\hat u)\,\sgn\!\left(\hat u-k_h\right)\,\d \z
+\sum_{K\in\Lambda_\Gamma}\int_K \nabla\cdot \mathbf F(\hat u)\,\sgn\!\left(\hat u-k_h\right)\,\d \z.
\]

If $K \notin \Lambda_{\Gamma}$, then $u$ is smooth on $K$ and $\hat{u}$ is 
its linear interpolant. Combining the fact that 
$\nabla \!\cdot \mathbf{F}(u) = 0$ and the Lipschitz continuity of the flux $\mathbf{f}$, we obtain
\begin{equation}\label{eq:away_shock}
\sum_{K\in\Lambda_{h}\setminus\Lambda_{\Gamma}}\int_{K}\nabla\!\cdot \mathbf{F}(\hat u)\,\sgn\!\left(\hat u-k_{h}\right) \,\d \z
\le
\sum_{K\in\Lambda_{h}\setminus\Lambda_{\Gamma}}\int_{K}\big|\nabla\!\cdot \mathbf{F}(\hat u)-\nabla\!\cdot \mathbf{F}(u)\big|\,\d \z
\lesssim h.
\end{equation}

Now let $K\in\Lambda_\Gamma$. By condition \eqref{eq:condition_Gamma}, most of the shock curve $\Gamma$ lies in the interior of mesh cells rather than near the mesh skeleton. In particular, the contribution from those cells that intersect the strip $\widetilde{\Lambda}_\Gamma$ but not $\Gamma$ itself is controlled by \eqref{eq:condition_Gamma} and is therefore small. Hence, we mainly focus on a representative element $K\in\Lambda_\Gamma$, as illustrated in Figure~\ref{fig:K_gamma}. Decompose $K=K_-\cup K_\Gamma\cup K_+$ with $K_\Gamma:=K\cap \widetilde\Lambda_\Gamma$ and $K_\pm:=K\setminus \widetilde\Lambda_\Gamma$ on the two sides. On $K_-\cup K_+$ we use the same interpolation argument. On $K_\Gamma$ we integrate by parts with $k_h|_K\in \mathbb{Q}^1(K)$:
\begin{equation}\label{eq:decompose_loss_K}
\begin{aligned}
&\int_K \nabla \cdot \mathbf{F}(\hat{u}) \sgn\!\left(\hat{u} - k_h\right)\, \d \z  
= \int_{K_- \cup K_+} \nabla \cdot \mathbf{F}(\hat{u}) \sgn\!\left(\hat{u} - k_h\right)\, \d \z  \\
&\quad\quad\quad\quad\quad+ \int_{\partial K_\Gamma} \left(\mathbf{F}(\hat{u}) - \mathbf{F}(k_h)\right)\sgn\!\left(\hat{u}-k_h\right)\cdot\mathbf{n}\,\d s
+ \int_{K_\Gamma} \nabla \cdot \mathbf{F}(k_h)\sgn\!\left(\hat{u}-k_h\right)\,\d \z,
\end{aligned}
\end{equation}
where $\mathbf{n}$ denotes the outward unit normal vector on $\partial K_\Gamma$ in the second term on the right-hand side.

For the first term on the right-hand side of \eqref{eq:decompose_loss_K}, 
since $u$ is smooth on $K_- \cup K_+$ and $\hat{u}$ is its linear interpolant, 
by the same argument as in \eqref{eq:away_shock} we deduce that
\begin{align}\label{eq:1st_K}
    \left|\int_{K_- \cup K_+} \nabla \cdot \mathbf{F}(\hat{u}) \sgn\!\left(\hat{u} - k_h\right)\, \d \z\right|\lesssim |K_- \cup K_+|\cdot h\lesssim h^3.
\end{align}
For the third term on the right-hand side of \eqref{eq:decompose_loss_K}, 
using the bound $\|k_h\|_{W_h^{1,\infty}} \le c$ together with 
$|K_\Gamma| \lesssim \varepsilon h$, we obtain
\begin{align}\label{eq:3rd_K}
    \left|\int_{K_\Gamma}\nabla\cdot \mathbf F(k_h)\,\sgn\!\left(\hat u-k_h\right)\,\d \z\right|\lesssim \varepsilon.
\end{align}

The next and most important step is to estimate the second term on the right-hand side 
of \eqref{eq:decompose_loss_K}, which involves the boundary integral of
\(\left(\mathbf{F}(\hat{u}) - \mathbf{F}(k_h)\right)
 \operatorname{sgn}\!\left(\hat{u} - k_h\right)\!\cdot\!\mathbf{n}\).
As shown in Figure~\ref{fig:K_gamma}, the region $K_{\Gamma}$ is a very thin strip 
with upper and lower boundaries whose measures are of size $O(\varepsilon)$. 
Hence, the dominant contribution of the boundary integral comes from 
the two lateral sides of $K_{\Gamma}$, denoted by 
\((\partial K_\Gamma)_-\) and \((\partial K_\Gamma)_+\).

Now we replace the lateral boundary integral with the precise shock jump integral. Set
\begin{align}\label{eq:def_Q}
\mathcal Q(a,k):=\bigl(\mathbf F(a)-\mathbf F(k)\bigr)\sgn(a-k).
\end{align}
Then on bounded interval, the map $\mathcal Q:(a,k)\to \mathcal Q(a,k)$ is Lipschitz. Indeed, for fixed $a$ and any $k,l$, if $a\notin [\min\{k,l\},\max\{k,l\}]$, then $\sgn(a-k)=\sgn(a-l)$ and hence
\[
|\mathcal Q(a,k)-\mathcal Q(a,l)|
=
|\mathbf F(k)-\mathbf F(l)|
\le C |k-l|.
\]
If $a\in [\min\{k,l\},\max\{k,l\}]$, then
\[
|\mathcal Q(a,k)-\mathcal Q(a,l)|
\le
|\mathbf F(a)-\mathbf F(k)|+|\mathbf F(a)-\mathbf F(l)|
\le C\bigl(|a-k|+|a-l|\bigr)
\le C |k-l|.
\]
Thus $\mathcal Q(a,\cdot)$ is Lipschitz uniformly in $a$. Since $\mathcal Q(a,k)=\mathcal Q(k,a)$, the same is true for $\mathcal Q(\cdot,k)$ uniformly in $k$. Therefore,
\begin{align}\label{eq:Lipschitz_Q}
|\mathcal Q(a,k)-\mathcal Q(b,l)|
\le
|\mathcal Q(a,k)-\mathcal Q(a,l)|
+
|\mathcal Q(a,l)-\mathcal Q(b,l)|
\le C\bigl(|a-b|+|k-l|\bigr).
\end{align}

Let
\[
B_K:=
\bigl\{
z\in\Gamma:\ \dist(z,\partial K)<C h^2
\bigr\}.
\]
For $z\in(\Gamma\cap K)\setminus B_K$, the two lateral sides $(\partial K_\Gamma)_\pm$
can be parameterized by $\Gamma\cap K$ in a one-to-one manner. Let
$z_\pm\in(\partial K_\Gamma)_\pm$ denote the points corresponding to $z$.
Then
\[
|z_\pm-z|\lesssim \varepsilon,
\qquad
\mathbf n_\pm(z_\pm)=\pm \mathbf n_\Gamma(z)+O(h).
\]
where $\mathbf n_\pm$ are the outward normals of $K_\Gamma$ on
$(\partial K_\Gamma)_\pm$. Moreover, since $\Gamma$ is $C^2$, the discrepancy between the line element on
$(\partial K_\Gamma)_\pm$ and the arc-length element on $\Gamma$ contributes only
an $O(h^2)$ error.
Furthermore, using $\|\nabla k_h\|_{L^\infty(K)}\lesssim h^{-1}$,
\[
|k_h(z_\pm)-k_h(z)|\lesssim \varepsilon h^{-1}.
\]
Together with the interpolation and normal-offset estimate for $\hat u$, this gives
\[
\bigl|
\mathcal Q(\hat u(z_\pm),k_h(z_\pm))
-
\mathcal Q(u^\pm(z),k_h(z))
\bigr|\lesssim |\hat u(z_\pm)-u^\pm(z)|+|k_h(z_\pm)-k_h(z)|
\lesssim \varepsilon h^{-1}.
\]
Since the length of $\Gamma\cap K$ is bounded by $O(h)$, the accumulated error from this
part is $O(\varepsilon)$. The arc-length and normal-vector perturbations contribute
$O(h^2)$. The above correspondence may break down only for points $z\in \Gamma\cap K$ lying in $B_K$,
i.e., where $\Gamma$ comes within distance $Ch^2$ of $\partial K$. Since the integrand is
uniformly bounded, the contribution of the corresponding exceptional portions is bounded by
$C\mathcal H^1(B_K)$. Consequently,
\begin{align}\label{eq:2nd_K}
&\int_{(\partial K_\Gamma)_-\cup(\partial K_\Gamma)_+}
\bigl(\mathbf F(\hat u)-\mathbf F(k_h)\bigr)
\sgn(\hat u-k_h)\cdot\mathbf n\,\d s =
\int_{\Gamma\cap K}
\llbracket
\bigl(\mathbf F(u)-\mathbf F(k_h)\bigr)\sgn(u-k_h)
\rrbracket\cdot\mathbf n_\Gamma\,\d s
+\mathcal E_K,
\end{align}
with
\begin{align*}
|\mathcal E_K|
\le
C\left(
h^2+\varepsilon+
\mathcal H^1\Bigl(
\bigl\{z\in\Gamma:\dist(z,\partial K)<C h^2\bigr\}
\Bigr)
\right).
\notag
\end{align*}
The integral in \eqref{eq:2nd_K} is non-positive by the reformulated
entropy condition \eqref{eq:reformulated_entropy_condition}, applied pointwise
with $k=k_h(z)$.

Therefore, by combining \eqref{eq:decompose_loss_K}-\eqref{eq:2nd_K} we obtain for all $K\in\Lambda_\Gamma$:
\begin{align}\label{eq:Lu_shock_K}
\int_K \nabla\cdot \mathbf F(\hat u)\,\sgn\!\left(\hat u-k_h\right)\,\d \z\lesssim h^2+\varepsilon+\mathcal{H}^1\Big(\big\{\z\in\Gamma:\dist\big(\z,\partial K\big)< C h^2\big\}\Big).
\end{align}
The remaining geometrically exceptional configurations, beyond the representative case shown in Figure~\ref{fig:K_gamma}, can be treated analogously and satisfy the same estimate.
Since $\#\Lambda_\Gamma\lesssim h^{-1}$ (for a single $C^2$ shock curve in space--time domain), summing \eqref{eq:Lu_shock_K} yields
\begin{align}\label{eq:Lu_shock_Lambda_Gamma}
\sum_{K\in \Lambda_{\Gamma}}\int_{K}\nabla \cdot \mathbf{F}(\hat{u})\sgn\!\left(\hat{u}-k_h\right)\,\d \z
\lesssim h+\frac{\varepsilon}{h}+\mathcal{H}^1\Big(\big\{\z\in\Gamma:\dist\big(\z,\partial \Lambda_{h}\big)< C h^2\big\}\Big).
\end{align}
With $\varepsilon=h^2$, \eqref{eq:condition_Gamma} and \eqref{eq:away_shock}, we conclude $\J_{\mathrm{ent}}(\hat u;k_h)\lesssim h$ uniformly in $k_h\in V_h^c$, proving the third bound in \eqref{eq:hat_u_basic}.

By identifying the transition region \(\widetilde{\Lambda}_\Gamma\) as the singular region, it remains to verify the \(L^1\)-bound on the gradient required in Lemma~\ref{lem:loss_cpwl_approx}. On \(\Omega_T\setminus\widetilde{\Lambda}_\Gamma\), \(\hat u\) is the nodal interpolant of a piecewise \(C^2\) function on a mesh of size \(O(h)\), and hence \(|\nabla \hat u|\le C\) there. Inside \(\widetilde{\Lambda}_\Gamma\), \(\hat u\) varies by \(O(1)\) across a layer of thickness \(O(\varepsilon)=O(h^2)\), so \(|\nabla \hat u|\lesssim \varepsilon^{-1}\), while \(|\widetilde{\Lambda}_\Gamma|\lesssim \varepsilon\). Moreover, by the standing choice of \(c\), the CPwL function \(\hat u\) constructed above satisfies
\[
        \|\hat u\|_{L^\infty(\Omega_T)}
        \le \frac c2-\delta_c
\]
for some \(\delta_c>0\) independent of \(h\). In addition, the gradient bound
\[
        \|\nabla\hat u\|_{L^1(\Omega_T)}\le C
\]
verifies the corresponding assumption in Lemma~\ref{lem:loss_cpwl_approx}.
From the estimate of $\J_{\mathrm{ent}}(\hat{u}; k_{h})$, we have
\[
\int_{\Omega_T \setminus \widetilde{\Lambda}_\Gamma}
\big|\nabla\!\cdot\mathbf{F}(\hat{u})\big|\, \mathrm{d}\boldsymbol{z}
\le h,
\]
which leads to
\(\mathscr{L}(\hat{\theta}) = \mathcal{L}(u_{\hat{\theta}}) \le \mathcal{L}(\hat{u}) + C h \lesssim h,\)
thus proving \eqref{eq:Lu_shock}.

From the construction of the piecewise linear function $\hat{u}$, 
its mesh is obtained by a direct simplicial refinement of a structured canonical grid. The corresponding patch complexity satisfies 
\(N_{\text{patch}} = O(2^{d+1})\), 
and the total number of mesh elements is \(O(h^{-(d+1)})\). 
Hence, by Lemma~\ref{lem:loss_cpwl_approx}, 
there exists a neural network of depth \(O(d+1)\) 
with at most \(O(h^{-(d+1)})\) neurons in each hidden layer.

\noindent\textbf{Step 3: Treatment of the general geometric case.}
If condition~\eqref{eq:condition_Gamma} fails, 
we apply a small translation argument by choosing a shift $\delta = O(h)$ 
and defining 
\(u_\delta(x,t) := u(x+\delta,t)\), 
which is the exact solution of~\eqref{eq:scalar-HCL} 
corresponding to the translated initial data \(u_{\delta,0}(x) = u_0(x+\delta)\) 
and the shifted shock curve \(\Gamma_\delta\). 
For a suitable choice of $\delta$, the translated shock $\Gamma_\delta$ 
satisfies~\eqref{eq:condition_Gamma}. 
Constructing the network approximation for $u_\delta$ as described above 
yields a function $u_{\theta_\delta}$ whose loss with respect to the translated initial data $u_{\delta,0}$ is of order $O(h)$. 
Since $u_0$ is piecewise smooth and $|\delta|\lesssim h$, it follows that 
\(\|u_0 - u_{\delta, 0}\|_{L^1}\lesssim h.\) 
Therefore, evaluating $u_{\theta_\delta}$ against the original data 
changes the loss by at most $O(h)$, 
and the final estimate \(\mathcal{L}(u_{\theta_\delta})\lesssim h\) remains valid. 
This completes the proof. 
\end{proof}
\begin{remark}[Parameter count]\label{rem:parameter_count}
The complexity statement in Lemma~\ref{lem:Lu_shock} is written in terms of neurons in the hidden layers. For the sparse constructive realization used here and in \cite{longo2023rham}, this yields the same scaling for the number of nonzero parameters. More precisely, for the CPwL competitors considered in this paper, one has $N:=\#\mathcal T=O(h^{-(d+1)})$ and uniformly bounded patch complexity, so the construction of Lemma~\ref{lem:loss_cpwl_approx} gives a network with depth $O(\log N_{\mathrm{patch}})=O(d+1)$, width $O(NN_{\mathrm{patch}})=O(h^{-(d+1)})$, and sparse connectivity. Hence the total number of nonzero weights and biases also scales like $O(h^{-(d+1)})$. The same scaling applies to the rarefaction, compound-wave, and interaction constructions below.
\end{remark}

Then by taking $k_h$ as a DPwP approximation of the entropy solution, we derive an  error estimate for the NN minimizer $u_{\theta^*}$.
\begin{theorem}[Single shock]\label{thm:shock}
Let $u$ be the entropy solution of the scalar conservation law \eqref{eq:scalar-HCL} that has discontinuous initial data $u_0$, contains a single shock, and satisfies Assumption~\ref{assump:piecewise_smooth}. Let $h$ be the mesh size of the background mesh $\Lambda_h$.  Let \(u_{\theta^*}\) be a clipped \(\tanh\) neural network of the form \eqref{def:clipped-NN} with depth $O(d+1)$ and at most $O(h^{-(d+1)})$ neurons in each hidden layer, where $\theta^*$ is a minimizer of the minimization problem \eqref{eq:NN_min_problem} over this class of networks. Then there exists a constant $C > 0$, independent of $h$, such that
\begin{equation}\label{eq:shock_error}
\|u_{\theta^*}(\cdot, T) - u(\cdot, T)\|_{L^1(\Omega)}\leq C h^{1/2}.
\end{equation}
\end{theorem}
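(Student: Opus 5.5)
The plan is to combine the loss bound of Lemma~\ref{lem:Lu_shock} with a stability estimate obtained by testing $\L_{\mathrm{ent}}(u_{\theta^*})$ against a well-chosen DPwP approximation $k_h$ of $u$. This is the discrete analogue of the identity \eqref{eq:motivation_J_identity} in Remark~\ref{rem:motivation}.

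\textbf{Step 1: loss of the minimizer.} By minimality and Lemma~\ref{lem:Lu_shock}, $\L(u_{\theta^*})\le\mathscr L(\hat\theta)\lesssim h$. Since all three parts of the loss are nonnegative (Lemma~\ref{lem:nonnegativity}), we get
\[
\J_{\mathrm{ent}}(u_{\theta^*};k_h)\lesssim h \ \ \forall k_h\in V_h^c,\qquad
\L_{\mathrm{ibc}}(u_{\theta^*})\lesssim h,\qquad
\int_{\Omega_T}|\nabla\cdot\mathbf F(u_{\theta^*})|\,\d\z\lesssim 1 .
\]
The last bound comes from $\L_{\mathrm{reg}}\lesssim h$, which gives $L^1$ control of the residual with an $h^{-1}$ loss.

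\textbf{Step 2: choice of $k_h$.} Take $k_h$ to be the cellwise $\mathbb Q^1$ interpolant of $u$ on cells away from $\Gamma$, and a cellwise approximation on the $O(h^{-1})$ cut cells $\Lambda_\Gamma$ with bounded values and no gradient blow-up. A natural option there is to freeze $k_h$ to a constant or a one-sided extension, so that $k_h\in V_h^c$ (this uses $\|u\|_\infty<c/2$). Then $\|k_h-u\|_{L^1(K)}\lesssim h^2|K|$ on smooth cells and $\lesssim|K|$ on cut cells. Apply the cellwise identity \eqref{eq:cellwise_entropy_ibp} with $v=u_{\theta^*}$:
\[
\J_{\mathrm{ent}}(u_{\theta^*};k_h)=\sum_K\int_{\partial K}\mathcal Q(u_{\theta^*},k_h)\cdot\mathbf n\,\d s+\sum_K\int_K\nabla\cdot\mathbf F(k_h)\sgn(u_{\theta^*}-k_h)\,\d\z .
\]
The boundary sums telescope. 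They produce the $L^1$ difference at $t=T$ minus the one at $t=0$, the lateral term (controlled by $\L_{\mathrm{ibc}}$), and interior face terms involving jumps of $k_h$. Because $\mathcal Q$ is Lipschitz \eqref{eq:Lipschitz_Q}, these face terms are bounded by $\int_{\mathcal F}|\llbracket k_h\rrbracket|$. On smooth faces this gives $O(h^2)\cdot h^{-1}\cdot O(1)=O(h)$ over $O(h^{-2})$ faces. Faces crossed by the shock contribute $O(1)$ each over $O(h^{-1})$ faces of length $h$, so $O(1)$ in total, which is too large. The volume term is $O(h)$ on smooth cells, since $\nabla\cdot\mathbf F(k_h)\approx\nabla\cdot\mathbf F(u)=0$. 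On cut cells it is $O(h)$ in total, using the gradient bound in $W_h^{1,\infty}$ and a total measure $O(h)$.

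\textbf{Step 3: the main obstacle, the shock cells, handled by a Kuznetsov-type smoothing that yields $h^{1/2}$.} Exact alignment is impossible, so the shock layer must be handled by a regularization trade-off. Instead of resolving the jump of $k_h$ at scale $h$, I would let $k_h$ be a cellwise-$\mathbb Q^1$ approximation of a mollified version of $u$. The mollification acts only across the shock, at scale $\delta\ge h$, so that $k_h$ is nearly continuous with face jumps $O(h\cdot h/\delta)$, while $\|\nabla k_h\|\lesssim\delta^{-1}\le h^{-1}$ keeps it in $V_h^c$.

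The cost then has two parts. The first is the volume term $\int\nabla\cdot\mathbf F(k_h)\sgn(\cdot)$ in the $\delta$-layer. There the entropy condition must be used: as in \eqref{eq:2nd_K}, convert the layer integral back to shock jumps, which are nonpositive, plus an error of order the $\mathcal Q$-variation across the layer. The second is the consistency error $\|u-k_h\|_{L^1}\lesssim\delta$ entering through the comparison. On the face terms, the total jump across the smoothed layer is $O(h/\delta)$ per unit length, which gives $O(h/\delta)$. Balancing $\delta$ against $h/\delta$ gives $\delta=h^{1/2}$, so
\[
\|u_{\theta^*}(\cdot,T)-u(\cdot,T)\|_{L^1}\le\|u_{\theta^*}(\cdot,0)-u_0\|_{L^1}+C\|u_{\theta^*}\|_{L^1(\partial\Omega\times(0,T))}+\J_{\mathrm{ent}}(u_{\theta^*};k_h)+C\bigl(\delta+h/\delta\bigr)\lesssim h^{1/2}.
\]
I expect the delicate point to be checking that the smoothed layer's entropy contribution has the right sign, up to $O(\delta+h/\delta)$. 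If this fails, I would fall back on the bounded residual $\int|\nabla\cdot\mathbf F(u_{\theta^*})|\lesssim1$ from $\L_{\mathrm{reg}}$. That bound, combined with a time-slab (Kuznetsov) argument, controls how much $u_{\theta^*}$ can deviate inside the $\delta$-layer, and it also gives $O(\delta+h/\delta)$.
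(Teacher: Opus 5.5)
Your Steps 1--2 are fine, and you correctly identify the cut-cell face terms as the $O(1)$ obstacle. The mechanism you propose in Step 3 fails, however. Once $k_h$ is smoothed across the shock over a width $\delta\ge h$, the quantity you must bound from above, $-\int_{\Omega_T}\nabla\cdot\mathbf F(k_h)\,\sgn(u_{\theta^*}-k_h)\,\d\z$, is neither small nor of favorable sign. The sign argument behind \eqref{eq:2nd_K} works only in the opposite configuration. There the \emph{first} argument $\hat u$ is smeared over width $\varepsilon=h^2\ll h$, so $k_h$ is essentially single-valued across the layer and \eqref{eq:reformulated_entropy_condition} applies with $k=k_h(\z)$. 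In your setting the role of the Kru\v{z}kov constant is played by $u_{\theta^*}$, which should jump by $O(1)$ inside the $\delta$-layer. So the ``$\mathcal Q$-variation across the layer'' is $O(1)$, not $O(\delta+h/\delta)$.

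Concretely, take Burgers' standing shock $1\to-1$. Let $k=u_\delta$ be a monotone smoothed profile, and let $v$ be a continuous but much sharper approximation of $u$ whose transition sits at the center of the layer. Then $\sgn(v-u_\delta)=+1$ on the left and $-1$ on the right, while $\partial_x(u_\delta^2/2)$ is negative on the left and positive on the right. Hence $\int\nabla\cdot\mathbf F(u_\delta)\,\sgn(v-u_\delta)\,\d x=-1$ at every time. Both functions are stationary, so (ignoring lateral terms) the cellwise identity forces $\J_{\mathrm{ent}}(v;u_\delta)\approx-T$. The bound $\J_{\mathrm{ent}}\lesssim h$ then yields only $\|v-u_\delta\|_{L^1}(T)\le\|v-u_\delta\|_{L^1}(0)+Ch+T$.

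Your fallback does not help either. The bound $\int|\nabla\cdot\mathbf F(u_{\theta^*})|\,\d\z\lesssim1$ is fully compatible with an $O(1)$ jump of $u_{\theta^*}$ inside any prescribed layer, so by itself it gives no control of $u_{\theta^*}$ there.

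The missing idea is to keep the test function \emph{sharp} and pay for the misalignment with the mesh by \emph{displacing} the interface rather than smoothing it. With a sharp $\tilde u_h$, the interface term has the form $\int\llbracket\mathcal Q(\tilde u_h,u_{\theta^*})\rrbracket\cdot\mathbf n\,\d s$ with $u_{\theta^*}$ single-valued, since it is continuous. This is exactly the form in which the entropy condition applies pointwise with $k=u_{\theta^*}(\z)$. The paper proceeds as follows.
\begin{enumerate}
\item It takes $M$ disjoint translates $\Lambda_j$ of the strip of cut cells, with shifts $jL$ and $L=O(h)$. Pigeonholing $\int|\nabla\cdot\mathbf F(u_{\theta^*})|\lesssim1$ gives one strip with residual $\lesssim M^{-1}$.
\item It places the single jump of $\tilde u_h$ on the mesh-aligned boundary $\Gamma_j$ of that strip. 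On the left it interpolates $u$; on the right it interpolates a smooth extension of the right branch.
\item It transfers the jump integral by the divergence theorem through the region between $\Gamma_j$ and the translated shock $\widetilde\Gamma$. This costs $O(M^{-1}+h)$.
\item On $\widetilde\Gamma$, the traces of $\tilde u_h$ match $u^\pm$ at the corresponding point of $\Gamma$ up to $O(Mh)$. The entropy condition therefore bounds that integral by $O(Mh)$.
\item The volume term and the endpoint errors at $t=0,T$ are also $O(h+Mh)$. Choosing $M\sim h^{-1/2}$ balances $Mh$ against $M^{-1}$.
\end{enumerate}
So the $h^{1/2}$ rate comes from trading interface displacement against a pigeonholed residual, not from a $\delta$ versus $h/\delta$ smoothing trade-off.
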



\begin{proof}
By Lemma~\ref{lem:Lu_shock} there exists $\hat\theta$ (in the same network class) such that $\mathscr{L}(\hat\theta)\lesssim h$.
Since $\theta^{*}$ minimizes $\mathscr{L}$, we have
\begin{equation}\label{eq:min_loss}
\J_{\mathrm{ent}}(u_{\theta^{*}};k_{h})\le \mathscr{L}(\theta^{*})\le \mathscr{L}(\hat\theta)\lesssim h,
\quad \forall k_{h}\in V_{h}^{c}.
\end{equation}
We now choose a particular $k_h\in V_h^c$ (denoted $\tilde u_h$) adapted to the shock.
\begin{figure}[htbp!]
\centering
\includegraphics[width=0.8\linewidth]{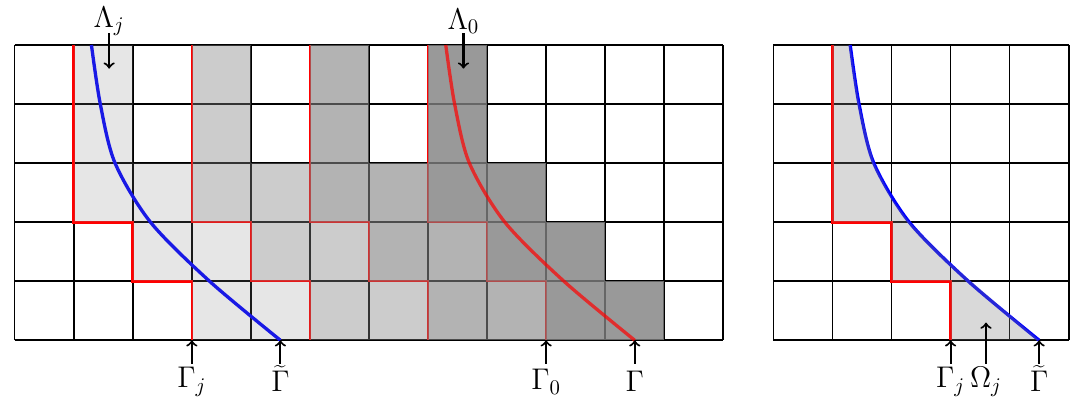}
\caption{Construction of the DPwP \( \tilde{u}_h \) based on the extended exact solution. Left: The strategy for extending the exact solution. Right: The local cells near $\widetilde{\Gamma}_j$.}
\label{fig:DPwP_construction}
\end{figure}

\noindent\textbf{Step 1: A shifted strip with small $\int |\nabla\cdot\mathbf F(u_{\theta^*})|$.} 
Let $\Lambda_0:=\cup_{K\in\Lambda_\Gamma}\overline K$ be the $O(h)$-width strip of elements intersecting the shock curve $\Gamma$, and let $\Gamma_0$ denote its left boundary (see Figure \ref{fig:DPwP_construction}).
To control $\int_{\Lambda_0}|\nabla\cdot\mathbf F(u_{\theta^*})|$ we use a strip-selection argument.

Let $L = O(h)$ denote the strip width, and assume that $L$ is an integer multiple of $h$. 
Define the shifted strips $\Lambda_j$ ($j = 1, \dots, M$) by shifting $\Lambda_0$ to the left by $jL$. Let $M=O(h^{-1/2})$ and denote by $\Gamma_j$ the left boundary of $\Lambda_j$. Let $\widetilde\Gamma(t):=\left\{(x,t)\in\Omega_T:\ x=\gamma_q^s(t)-jL,\ 0\le t \le T\right\}$ be the shifted copy of the shock curve lying inside $\Lambda_j$ (see Figure~\ref{fig:DPwP_construction}).
Since the strips are disjoint, with Lemma~\ref{lem:nonnegativity}, 
\[
\sum_{j=1}^M \int_{\Lambda_j}\big|\nabla\cdot\mathbf F(u_{\theta^*})\big|\,\d \z
\le \int_{\Omega_T}\big|\nabla\cdot\mathbf F(u_{\theta^*})\big|\,\d \z=h^{-1}\L_{reg}(u_{\theta^*})
\le h^{-1}\L(u_{\theta^*}) \le C.
\]
Hence there exists an index $j\in\{1,\dots,M\}$ such that
\begin{equation}\label{eq:condition_u_theta}
\int_{\Lambda_j} \left| \nabla \cdot \mathbf{F}(u_{\theta^*}) \right|\, \d \z \leq \frac{C}{M}.
\end{equation}
In particular, the region $\Omega_j\subset \Lambda_j$ between $\Gamma_j$ and $\widetilde\Gamma$ satisfies
$\int_{\Omega_j}|\nabla\cdot\mathbf F(u_{\theta^*})|\lesssim M^{-1}$.

\noindent\textbf{Step 2: Construction of the DPwP test function $\tilde u_h$.}
Since $\Lambda_h$ does not align with $\Gamma$, we define $\tilde u_h$ to have its single discontinuity along $\Gamma_j$.
Let $u_{\rm ext}^+$ be a smooth extension of the right trace of $u$ to the right of $\Gamma_j$ (up to $\Gamma_j$), agreeing with $u$ on the true right side of $\Gamma$.
Define $\tilde u_h$ as the (elementwise) $\mathbb{Q}^1$ Lagrange interpolant of $u$ on the left of $\Gamma_j$ and of $u_{\rm ext}^+$ on the right of $\Gamma_j$. By construction $\tilde u_h\in V_h^c$ and is continuous across all mesh faces except those lying on $\Gamma_j$.

Taking $k_h=\tilde u_h$ in \eqref{eq:min_loss} and using the elementwise integration-by-parts identity \eqref{eq:cellwise_entropy_ibp} yields
\begin{align}\label{eq:error_estimate_shock}
\sum_{K\in \Lambda_{h}} \left(\int_{\partial K} \left(\mathbf{F}(u_{\theta^{*}}) - \mathbf{F}(\tilde{u}_h)\right)\sgn\!\left(u_{\theta^{*}}-\tilde{u}_h\right)\cdot\mathbf{n}\,\d s
+\int_{K}\nabla \cdot \mathbf{F}(\tilde{u}_h)\sgn\!\left(u_{\theta^{*}}-\tilde{u}_h\right)\,\d \z\right) \lesssim h.
\end{align}

\noindent\textbf{Step 3: estimate of the volume term.}
Using $|\sgn|\le 1$,
\[
\sum_{K\in \Lambda_{h}} \left|\int_{K}\nabla\cdot\mathbf F(\tilde u_h)\,\sgn\!\left(u_{\theta^*}-\tilde u_h\right)\,\d\z\right|
\le \sum_{K\in \Lambda_{h}} \int_{K}\big|\nabla\cdot\mathbf F(\tilde u_h)\big|\,\d\z.
\]
Let $D_j:=\bigcup_{i=0}^j \Lambda_i$, which satisfies $|D_j|\lesssim jh \lesssim Mh$. 
On $\Omega_T\setminus D_j$, $\tilde u_h$ is the $\mathbb{Q}^1$ interpolant of the exact smooth solution $u$, hence
$\int_{\Omega_T\setminus D_j}|\nabla\cdot\mathbf F(\tilde u_h)|\,\d\z\lesssim h$ as in Lemma~\ref{lem:Lu_shock}.
On $D_j$, $\tilde u_h$ is the $\mathbb{Q}^1$ interpolant of a smooth bounded function ($u_{\rm ext}^+$), so
$|\nabla\cdot\mathbf F(\tilde u_h)|\lesssim 1$ there and thus $\int_{D_j}|\nabla\cdot\mathbf F(\tilde u_h)|\,\d\z\lesssim |D_j|\lesssim Mh$.
Therefore,
\begin{equation}\label{eq:estimate_2nd_term}
\left|\int_{\Omega_T}\nabla\cdot\mathbf F(\tilde u_h)\,\sgn\!\left(u_{\theta^*}-\tilde u_h\right)\,\d\z\right|
\;\lesssim\; h + Mh.
\end{equation}

\noindent\textbf{Step 4: Reformulation of the boundary term.}
Since $\tilde u_h$ is continuous across all faces of $\partial\Lambda_h$ except those on $\Gamma_j$, the mesh-skeleton contributions cancel pairwise. The uncancelled contributions are the final-time face, the
initial-time face, the lateral boundary $\partial\Omega\times(0,T)$, and the jump interface
$\Gamma_j$. Then
\begin{equation}\label{eq:1st_term_reformulated}
\begin{aligned}
&\sum_{K\in \Lambda_{h}}\int_{\partial K}\left(\mathbf{F}(u_{\theta^{*}}) - \mathbf{F}(\tilde{u}_h)\right)\sgn(u_{\theta^{*}}-\tilde{u}_h) \cdot\mathbf{n}\,\d s\\
=&\int_{\Omega}|u_{\theta^{*}}-\tilde{u}_h|(x, T)\,\d x - \int_{\Omega}|u_{\theta^{*}}-\tilde{u}_h|(x, 0)\,\d x \\
+ & \int_{\partial\Omega\times(0,T)}
\left(\mathbf f(u_{\theta^*})-\mathbf f(\tilde u_h)\right)
\sgn(u_{\theta^*}-\tilde u_h)\cdot\mathbf n_{\partial\Omega}\,\d s
-\int_{\Gamma_{j}}\llbracket \left(\mathbf{F}(\tilde{u}_h) - \mathbf{F}(u_{\theta^{*}}) \right)\sgn(\tilde{u}_h - u_{\theta^{*}}) \rrbracket\cdot\mathbf{n}_{\Gamma_j}\,\d s.
\end{aligned}    
\end{equation}
By the Lipschitz continuity of $\mathbf f$ and the construction of $\tilde u_h$ on the boundary,
\begin{equation}\label{eq:lateral_boundary_bound}
| \int_{\partial\Omega\times(0,T)}
\left(\mathbf f(u_{\theta^*})-\mathbf f(\tilde u_h)\right)
\sgn(u_{\theta^*}-\tilde u_h)\cdot\mathbf n_{\partial\Omega}\,\d s|
\le
C\|u_{\theta^*}\|_{L^1(\partial\Omega\times(0,T))}
\le
C\mathcal L(u_{\theta^*})
\lesssim h .
\end{equation}

\noindent\textbf{Step 5: Control of the jump term on $\Gamma_j$ via entropy condition.}
Extend $\tilde u_h$ from the left of $\Gamma_j$ across $\Lambda_j$ by interpolating the exact solution $u$; denote this extension by $\tilde u_h^-$. Denote the original $\tilde u_h$ by $\tilde u_h^+$.
Let $\Omega_j\subset\Lambda_j$ be the region between $\Gamma_j$ and $\widetilde\Gamma$. Integration by parts gives
\begin{equation}\label{eq:reformulate_int_Gamma_j}
\begin{aligned}
& \int_{\Gamma_j}\llbracket \left(\mathbf{F}(\tilde{u}_h) - \mathbf{F}(u_{\theta^{*}}) \right)\sgn(\tilde{u}_h - u_{\theta^{*}}) \rrbracket\cdot\mathbf{n}_{\Gamma_j}\,\d s \\
& = \int_{\widetilde{\Gamma}} \llbracket \left(\mathbf{F}(\tilde{u}_h) - \mathbf{F}(u_{\theta^{*}}) \right)\sgn(\tilde{u}_h - u_{\theta^{*}}) \rrbracket \cdot\mathbf{n}_{\widetilde{\Gamma}}\,\d s + \mathcal{R}, \quad 
|\mathcal{R}| \leq \mathcal{R}_1 + \mathcal{R}_2, \\
& \mathcal{R}_1 = \int_{\Omega_j} \left( |\nabla \cdot \mathbf{F}(u_{\theta^*})| + |\nabla \cdot \mathbf{F}(\tilde{u}_h^-)| + |\nabla \cdot \mathbf{F}(\tilde{u}_h^+)| \right) \d \z, \\
& \mathcal{R}_2 = \int_{\partial \Omega_j \setminus (\Gamma_j \cup \widetilde{\Gamma})} 
\big| \llbracket \left(\mathbf{F}(u_{\theta^{*}}) - \mathbf{F}(\tilde{u}_h)\right)\sgn(u_{\theta^{*}}-\tilde{u}_h) \rrbracket\big|\,\d s.
\end{aligned}
\end{equation}

For $(x,t)\in\widetilde{\Gamma}$, we have $(x+jL,t)\in\Gamma$, 
where $L=O(h)$ and $j\le M$. 
Using the smoothness of $u^\pm$ together with the interpolation error estimate, we obtain
\begin{align}\label{eq:oneside_smoothness}
| \tilde{u}_h^-(x,t) - u^-(x + jL, t) | \lesssim Mh, \quad 
| \tilde{u}_h^+(x,t) - u^+(x + jL, t) | \lesssim Mh. 
\end{align}
Since $\widetilde{\Gamma}$ is a translated copy of $\Gamma$, for each $(x,t)\in \widetilde{\Gamma}$ we associate the point $(x+jL,t)\in\Gamma$. Moreover, the corresponding unit normals agree:
\[
\mathbf{n}_{\widetilde{\Gamma}}(x,t)=\mathbf{n}_{\Gamma}(x+jL,t).
\]
By \eqref{eq:oneside_smoothness}, we have
\[
|\tilde{u}_h^\pm(x,t)-u^\pm(x+jL,t)|\lesssim Mh.
\]
Recalling the definition of $\mathcal Q$ in \eqref{eq:def_Q},
and using the Lipschitz continuity of $\mathcal Q$ for the first variable on bounded sets in \eqref{eq:Lipschitz_Q}, we obtain
\[
\bigl|
\mathcal Q(\tilde{u}_h^\pm(x,t),u_{\theta^*}(x,t))
-
\mathcal Q(u^\pm(x+jL,t),u_{\theta^*}(x,t))
\bigr|
\lesssim Mh .
\]
Therefore,
\[
\llbracket
\mathcal Q(\tilde{u}_h,u_{\theta^*})
\rrbracket(x,t)\cdot \mathbf n_{\widetilde\Gamma}(x,t)
=
\llbracket
\mathcal Q(u,u_{\theta^*})
\rrbracket(x+jL,t)\cdot \mathbf n_\Gamma(x+jL,t)
+O(Mh),
\]
uniformly on $\widetilde{\Gamma}$. Integrating over $\widetilde{\Gamma}$ and using the entropy condition \eqref{eq:reformulated_entropy_condition}, which holds pointwise on $\Gamma$, with $k=u_{\theta^*}(x,t)$, we conclude
\begin{equation}\label{eq:estimate_int_Gamma}
\int_{\widetilde{\Gamma}} \llbracket ( \mathbf{F}(\tilde{u}_h) - \mathbf{F}(u_{\theta^*}) ) \sgn\!\left(\tilde{u}_h - u_{\theta^*}\right) \rrbracket \cdot \mathbf{n}_{\widetilde{\Gamma}}\,\d s \lesssim M h.
\end{equation}
Moreover, using the boundedness condition~\eqref{eq:condition_u_theta} 
together with the boundedness of 
$|\nabla \!\cdot\! \mathbf{F}(\tilde{u}_h^-)|$ and 
$|\nabla \!\cdot\! \mathbf{F}(\tilde{u}_h^+)|$, 
which follows from the smoothness of $\tilde{u}_h^-$ and $\tilde{u}_h^+$, 
we obtain
\begin{equation}\label{eq:estimate_R_1}
\mathcal{R}_1 \lesssim \int_{\Omega_j}|\nabla \cdot \mathbf{F}(u_{\theta^*})|\d \z+|\Omega_j|\lesssim h + M^{-1}.
\end{equation}
For $\mathcal R_2$ in \eqref{eq:reformulate_int_Gamma_j}, since $u_{\theta^*}$ is continuous across the interface, we have
\[
\llbracket \left(\mathbf{F}(u_{\theta^{*}}) - \mathbf{F}(\tilde{u}_h)\right)\sgn(u_{\theta^{*}}-\tilde{u}_h) \rrbracket=
\mathcal Q(u_{\theta^*},\tilde u_h^+)-\mathcal Q(u_{\theta^*},\tilde u_h^-).
\]
Moreover, by Lipschitz continuity of $\mathcal Q$ in \eqref{eq:Lipschitz_Q}, we have:
\begin{align}\label{eq:estimate_jump}
\bigl|\llbracket (\mathbf F(u_{\theta^*})-\mathbf F(\tilde u_h))
\sgn(u_{\theta^*}-\tilde u_h)\rrbracket\bigr|
&=
\bigl|\mathcal Q(u_{\theta^*},\tilde u_h^+)-\mathcal Q(u_{\theta^*},\tilde u_h^-)\bigr| \lesssim |\tilde u_h^+-\tilde u_h^-|
=|\llbracket \tilde u_h\rrbracket|.
\end{align}

Consequently, using the $L^\infty$ boundedness of $\tilde{u}_h^\pm$, and the length $|\partial \Omega_j \setminus (\Gamma_j \cup \widetilde{\Gamma})|\lesssim h$, we obtain
\begin{equation}\label{eq:estimate_R_2}
\mathcal{R}_2 \lesssim \int_{\partial \Omega_j \setminus (\Gamma_j \cup \widetilde{\Gamma})} 
|\llbracket \tilde{u}_h \rrbracket|\,\d s\lesssim h.
\end{equation}
Combining \eqref{eq:reformulate_int_Gamma_j}-\eqref{eq:estimate_R_2} yields
\begin{equation}\label{eq:estimate_int_Gamma_new}
\int_{\Gamma_j} \llbracket ( \mathbf{F}(\tilde{u}_h) - \mathbf{F}(u_{\theta^*}) ) \sgn\!\left(\tilde{u}_h - u_{\theta^*}\right) \rrbracket \cdot \mathbf{n}_{\Gamma_j}\,\d s
\lesssim h + M^{-1} + M h.
\end{equation}

\noindent\textbf{Step 6: Conclusion and choice of $M$.}
Combining  \eqref{eq:error_estimate_shock}, \eqref{eq:estimate_2nd_term}, \eqref{eq:1st_term_reformulated}, \eqref{eq:lateral_boundary_bound}, and  \eqref{eq:estimate_int_Gamma_new} gives
\[
\|u_{\theta^*}(\cdot, T) - \tilde{u}_h(\cdot, T)\|_{L^1(\Omega)}
\le \|u_{\theta^*}(\cdot, 0) - \tilde{u}_h(\cdot, 0)\|_{L^1(\Omega)}
+ C\big(h + M^{-1} + Mh\big).
\]
Choose $M\sim h^{-1/2}$ to balance $Mh$ and $M^{-1}$, so that
\[
\|u_{\theta^*}(\cdot, T) - \tilde{u}_h(\cdot, T)\|_{L^1(\Omega)}
\le \|u_{\theta^*}(\cdot, 0) - \tilde{u}_h(\cdot, 0)\|_{L^1(\Omega)} + C h^{1/2}.
\]
Finally, the initial term is controlled by the boundary penalty and the fact that $\tilde u_h$ differs from $u_0$ only through a shift of the discontinuity by $O(Mh)$:
\[
\|u_{\theta^*}(\cdot, 0)-\tilde u_h(\cdot, 0)\|_{L^1(\Omega)}
\le \|u_{\theta^*}(\cdot, 0)-u_0(\cdot)\|_{L^1(\Omega)} + \|u_0-\tilde u_h\|_{L^1(\Omega)}
\lesssim h + Mh \lesssim h^{1/2},
\]
where we used $\|u_{\theta^*}(\cdot,0)-u_0(\cdot)\|_{L^1(\Omega)}\leq\L_{\mathrm{ibc}}(u_\theta^*)\le \mathscr{L}(\theta^*)\lesssim h$.
Moreover, $\tilde u_h$ approximates $u$ well away from an $O(Mh)$-neighborhood of the shock, and the interpolation error there is of order $O(h)$, so $\|u(\cdot, T)-\tilde u_h(\cdot, T)\|_{L^1(\Omega)}\lesssim Mh + h\lesssim h^{1/2}$.
By the triangle inequality,
\[
\|u(\cdot, T)-u_{\theta^*}(\cdot, T)\|_{L^1(\Omega)}
\le \|u(\cdot, T)-\tilde u_h(\cdot, T)\|_{L^1(\Omega)} + \|\tilde u_h(\cdot, T)-u_{\theta^*}(\cdot, T)\|_{L^1(\Omega)}
\lesssim h^{1/2}.
\]
This completes the proof of Theorem~\ref{thm:shock}.
\end{proof}

For the case of shock interactions, we have the following convergence result. 
\begin{theorem}[Shock interaction]\label{thm:shock_interaction}
Let $u$ be the entropy solution of the scalar conservation law \eqref{eq:scalar-HCL} that has discontinuous initial data $u_0$, contains a single shock interaction occurring at the terminal time $T$, and satisfies Assumption~\ref{assump:piecewise_smooth}. Let $h$ be the mesh size of the background mesh $\Lambda_h$. Let \(u_{\theta^*}\) be a clipped \(\tanh\) neural network of the form \eqref{def:clipped-NN} with depth $O(d+1)$ and at most $O(h^{-(d+1)})$ neurons in each hidden layer, where $\theta^*$ is a minimizer of the minimization problem \eqref{eq:NN_min_problem} over this class of networks. Then there exists a constant $C > 0$, independent of $h$, such that
\begin{equation}
\|u_{\theta^*}(\cdot, T) - u(\cdot, T)\|_{L^1(\Omega)} \leq C h^{1/2}.
\end{equation}
\end{theorem}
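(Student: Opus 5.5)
The plan is to follow the two-part structure of Theorem~\ref{thm:shock}: first exhibit a competitor with loss $O(h)$, then test the minimality inequality with a shock-adapted DPwP function $\tilde u_h$ to obtain a Kuznetsov-type $L^1$ bound. Since the interaction occurs exactly at $t=T$, on $\Omega_T$ the solution has two disjoint $C^2$ shock curves $\Gamma_1,\Gamma_2$. They are separated by a distance $\gtrsim (T-t)$, which degenerates only as $t\to T$, where both curves meet at $(x_\beta,T)$.

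\textbf{Step 1 (small loss competitor).} We repeat the construction of Lemma~\ref{lem:Lu_shock} for each shock separately. The CPwL function $\hat u$ is the nodal interpolant of $u$ away from two anisotropic strips of width $\varepsilon=h^2$ around $\Gamma_1,\Gamma_2$, with linear connection of the traces inside the strips. Cells $K$ of $\Lambda_h$ meeting only one strip are estimated exactly as in \eqref{eq:Lu_shock_K}. The new configuration consists of the $O(1)$ cells in an $O(h)$ neighbourhood of the merging point, where both strips enter the same cell. There we do not try to exploit the entropy sign. We bound the integrand crudely: $|\nabla\cdot\mathbf F(\hat u)|\lesssim \varepsilon^{-1}$ on a region of measure $O(\varepsilon h)$ per strip, plus $O(1)$ elsewhere. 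This gives an $O(h)$ contribution in total. Applying the translation argument of Step~3 of Lemma~\ref{lem:Lu_shock} to each curve (one common shift suffices generically) and then Lemma~\ref{lem:loss_cpwl_approx} yields $\mathscr L(\hat\theta)\lesssim h$ with the stated architecture.

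\textbf{Step 2 (test function and strip selection).} Using $\mathcal L_{\rm reg}(u_{\theta^*})\lesssim h$, we select shifted strips $\Lambda_{j_1}$ and $\Lambda_{j_2}$, one for each shock. Each is shifted by $j_iL$ with $j_i\le M\sim h^{-1/2}$ and satisfies \eqref{eq:condition_u_theta}; pigeonholing over pairs of disjoint families costs only a constant. We then build $\tilde u_h$ with jumps only along $\Gamma_{j_1}$ and $\Gamma_{j_2}$, using smooth extensions of the intermediate state $u_m$ (the state between the two shocks) to cover the thin region between the shifted interfaces and the true shocks. The cellwise identity \eqref{eq:cellwise_entropy_ibp} then reduces the minimality inequality to the analogue of \eqref{eq:1st_term_reformulated}. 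It contains the terminal and initial $L^1$ terms, the lateral term \eqref{eq:lateral_boundary_bound}, a volume term bounded by $h+Mh$, and two jump integrals. Each jump integral is transported to the translated shock and bounded by $h+M^{-1}+Mh$ via the entropy sign, exactly as in Step~5 of Theorem~\ref{thm:shock}.

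\textbf{Main obstacle.} The difficulty is the region near $t=T$, where the intermediate state $u_m$ lives on a wedge of width $\sim (T-t)$ that is thinner than the shift $Mh$. I would cut off the time interval at $T-\delta$ with $\delta\sim Mh= h^{1/2}$. On the top slab $\Omega\times(T-\delta,T)$, I would let $\tilde u_h$ have a single jump: the two shifted interfaces coalesce there, following the outer state extensions. Every term on this slab (volume terms, the mismatch of jump integrals, and the boundary pieces $\mathcal R_2$) is then bounded by its measure or length, giving $O(\delta)=O(h^{1/2})$. The entropy sign is still used where possible, since on the slab the discrepancy is controlled by Lipschitz continuity of $\mathcal Q$ in \eqref{eq:Lipschitz_Q}. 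Also $u(\cdot,t)$ restricted to the wedge has $L^1$ mass $O(\delta)$, so comparing $\tilde u_h(\cdot,T)$ with $u(\cdot,T)$ costs $O(Mh+\delta)$. Choosing $M\sim h^{-1/2}$ and $\delta\sim h^{1/2}$, and combining with $\|u_{\theta^*}(\cdot,0)-u_0\|_{L^1}\le\mathscr L(\theta^*)\lesssim h$ and the triangle inequality, yields $\|u_{\theta^*}(\cdot,T)-u(\cdot,T)\|_{L^1(\Omega)}\lesssim h^{1/2}$.
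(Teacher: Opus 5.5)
Your proof is correct, but at the interaction point it takes a different route from the paper.

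\textbf{The paper's route.} The paper gives no separate argument. It treats the statement as the case $d=1$, $\Sigma=\{(x_\beta,T)\}$ of Theorem~\ref{thm:more_general_case}. There, the competitor of Lemma~\ref{lem:Lu_shock_3d} is bounded crudely on the cells near $\Sigma$, exactly as in your Step~1. The test function is built by excising the cells meeting the $h^{1/2}$-neighbourhood of $\Sigma$ and setting $\tilde u_h\equiv 0$ there. The resulting jump is paid for via $\mathcal H^d(\partial\Omega_\Sigma)\lesssim h^{1/2}$. The shifts are capped by $\kappa_{\rm sh}h^{1/2}$ with $2\kappa_{\rm sh}<c_{\rm sep}$, so the shifted interfaces stay disjoint outside that neighbourhood.

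\textbf{Your route and the trade-off.} You instead use that the interaction sits at the terminal time. You excise the whole slab $\Omega\times(T-\delta,T)$ and keep a single-jump approximation of $u$ inside it. The bookkeeping is the same: the slab volume, the interface of length $\delta$, and the face $t=T-\delta$ of length $O(h^{1/2})$ each cost $O(h^{1/2})$. The paper's local ball excision works for an interaction at any time and for codimension-two sets in any dimension, which is why one theorem covers both settings. Your slab relies on $\Sigma\subset\{t=T\}$, but it makes the terminal comparison transparent. In the paper's reduction, $\Omega_\Sigma\cap\{t=T\}$ actually has measure $O(h^{1/2})$ here, not $O(h)$ as the dimension count in that proof suggests, because $\Sigma$ lies in the terminal slice. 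That is still enough. Your $\tilde u_h(\cdot,T)$, by contrast, simply has one jump within $O(h^{1/2})$ of $x_\beta$.

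\textbf{Two small remarks.} First, if you shift the first shock to the left and the second to the right, the candidate strips never enter the thinning intermediate wedge. Then the slab is not actually needed; only a $W^{2,\infty}$ extension of $u_m$ across the corner is. Second, in Step~1 the $O(h)$-neighbourhood of $(x_\beta,T)$, where the two $\varepsilon$-strips come closer than $h$, cannot be meshed shape-regularly at scale $h$. It should be placed in $\Omega_s$ of Lemma~\ref{lem:loss_cpwl_approx}, where it has measure $O(h^2)$ and perimeter $O(h)$, as the paper does with $\Omega_{\Sigma,h}$.
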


\begin{proof}
	The proof follows the same overall construction as in Theorem~\ref{thm:shock}, with an additional local analysis near the interaction point \((x_\beta,T)\). A complete argument is contained in the proof of the more general higher-dimensional interaction result, Theorem~\ref{thm:more_general_case}; the present statement is a special case of that theorem.
\end{proof}


\subsection{Single rarefaction waves and compound waves}\label{sec:rarefaction_wave}
In this section, we extend our analysis to solutions featuring a single rarefaction wave. 
\begin{lemma}\label{lem:Lu_rarefaction}
Let $u$ be the entropy solution of the scalar conservation law \eqref{eq:scalar-HCL} that contains a single rarefaction wave  and satisfies Assumption~\ref{assump:piecewise_smooth}. Then there exists  a clipped $\tanh$ neural network $u_{\hat\theta} = \Pi_c\bigl(U_{\hat\theta}^{\rm raw}\bigr)$ of form \eqref{def:clipped-NN} with depth $O(d+1)$ and at most $O(h^{-(d+1)})$ neurons in each hidden layer, such that
\begin{align}\label{eq:Lu_rarefaction}
\mathscr{L}(\hat{\theta})\lesssim h|\ln h|.
\end{align}
\end{lemma}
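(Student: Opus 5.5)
Following the strategy of Lemma~\ref{lem:Lu_shock}, we first build a CPwL function $\hat u$ with $\L(\hat u)\lesssim h|\ln h|$. We then invoke Lemma~\ref{lem:loss_cpwl_approx}; its extra term $\int_{\Omega_r}|\nabla\cdot\mathbf F(\hat u)|$ must also be $O(h|\ln h|)$.

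\textbf{Construction of $\hat u$.} Let $(x_q,\tau_q)$ be the center of the fan $\mathcal R_q$. Away from the fan, $u$ satisfies Assumption~\ref{assump:piecewise_smooth}(i). If $\tau_q=0$ (a Riemann-type jump in $u_0$), we replace the discontinuity of $u_0$ near $x_q$ by a linear ramp of width $\varepsilon=h^2$. On the shape-regular background triangulation of size $O(h)$ we take $\hat u$ to be the nodal $P^1$ interpolant of $u$. The only exception is the first layer of cells, $t-\tau_q\lesssim h$, near the apex, where we use the same thin-strip connection of traces as in the shock lemma. Since $u$ is continuous across the fan edges $\Gamma_{q,m,i}$ (rarefaction boundaries are characteristics), no anisotropic strip is needed there. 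The singular set $\Omega_s$ is the $O(h^2)$-sized region near the apex, which satisfies \eqref{eq:assum_Omega_s}. The boundedness \eqref{eq:cpwl-boundness-requirement} follows from the maximum principle and the choice of $c$, and $\hat u(\cdot,0)$ differs from $u_0$ only on an $O(h)$ set, so $\L_{\mathrm{ibc}}(\hat u)\lesssim h$. Moreover $\|\nabla\hat u\|_{L^1}\le C$, because $|\nabla u|\lesssim (t-\tau_q)^{-1}$ on a fan whose width is $\sim t-\tau_q$.

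\textbf{Residual estimate (the main obstacle).} The key quantity is $\int|\nabla\cdot\mathbf F(\hat u)|$ over the fan. On a cell $K$ at height $s=t-\tau_q\gtrsim h$, standard interpolation gives
\[
|\nabla\cdot\mathbf F(\hat u)-\nabla\cdot\mathbf F(u)|\lesssim h\|\nabla^2u\|_{L^\infty(K)}+|\nabla u|\,\|\hat u-u\|_{L^\infty(K)}\lesssim h s^{-2}+s^{-1}h^2s^{-2}.
\]
For $s\gtrsim h$ the first term dominates. Integrating over the fan cross-section of width $\sim s$ gives
\[
\int_h^{T} h s^{-2}\, s\,\d s\lesssim h|\ln h|,
\]
which is the source of the logarithm. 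The cells within $O(h)$ of the apex have total area $O(h^2)$ and gradient $O(h^{-1})$, so they contribute $O(h)$. Cells straddling a fan edge $\Gamma_{q,m,i}$ see only a gradient jump of $u$. Interpolation there loses one order, $|\nabla(\hat u-u)|\lesssim h\cdot s^{-2}$ on a strip of width $h$, and after integrating in $t$ this is again $\lesssim h|\ln h|$. Outside the fan the bound is $O(h)$ as in \eqref{eq:away_shock}.

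\textbf{Bounding $\J_{\mathrm{ent}}$.} Since $|\sgn|\le1$, we get $\J_{\mathrm{ent}}(\hat u;k_h)\le\int_{\Omega_T}|\nabla\cdot\mathbf F(\hat u)|$ uniformly in $k_h\in V_h^c$. Here $\nabla\cdot\mathbf F(u)=0$ pointwise off a null set, and the only singular behavior sits in $\Omega_s$, where $\int_{\Omega_s}|\nabla\cdot\mathbf F(\hat u)|\lesssim h$ as in the shock lemma. Hence $\L_{\mathrm{ent}}(\hat u)\lesssim h|\ln h|$ and $\L_{\mathrm{reg}}(\hat u)\lesssim h$. If the rarefaction is bounded by shocks, as in a compound wave, the shock portions are treated exactly as in Step~2 of Lemma~\ref{lem:Lu_shock}, with the entropy sign, at cost $O(h)$.

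\textbf{Passing to the network.} Applying Lemma~\ref{lem:loss_cpwl_approx} with the regular-region residual bound $O(h|\ln h|)$ and $N=O(h^{-(d+1)})$ produces the required network with $\mathscr L(\hat\theta)\lesssim h|\ln h|$. The hard step is the careful cell-by-cell accounting near the apex and the fan edges, where the degenerate regularity $|\nabla^2u|\lesssim (t-\tau_q)^{-2}$ must be integrated so as to give exactly the logarithm and not a worse power.
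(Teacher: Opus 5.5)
Your proposal is correct and follows the paper's argument: nodal $P^1$ interpolation on a shape-regular $O(h)$ mesh, the bound $\J_{\mathrm{ent}}(\hat u;k_h)\le\int_{\Omega_T}|\nabla\cdot\mathbf F(\hat u)|\,\d\z$ from $|\sgn|\le 1$, the logarithm from $\int_h^T h\,t^{-2}\cdot t\,\d t$ over the fan, an $O(h)$ tip contribution, and then Lemma~\ref{lem:loss_cpwl_approx}. Two small remarks: the paper keeps the plain interpolant at the apex (gradient $O(h^{-1})$ on area $O(h^2)$) and takes $\Omega_s=\emptyset$, so your $h^2$-ramp is unnecessary; and on cells cut by a fan edge the correct pointwise bound is the gradient jump $\sim s^{-1}$ rather than $h s^{-2}$, which over a strip of width $h$ still integrates to $O(h|\ln h|)$.
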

\begin{proof}
Let $A_0,\dots,A_3$ be the partition of $\Omega_T$ shown in Figure \ref{fig:grids_rarefaction} (left part); in particular,
\[
A_0=[\Gamma_1(0)-O(h),\,\Gamma_1(0)+O(h)]\times[0,t_1],
\quad
A_2=\{(x,t)\in\Omega_T:\ t\in[t_1,T],\ \Gamma_1(t)\le x\le \Gamma_2(t)\},
\]
with $t_1\simeq h$.
Let $\{\Lambda_i\}_{i=0}^3$ denote the induced decomposition of the background mesh $\Lambda_h$ shown in Figure \ref{fig:grids_rarefaction} (right part), i.e.\ $\Lambda_2:=\{K\in\Lambda_h:\ K\cap A_2 \neq \emptyset\}$, and $\Lambda_i:=\{K\in\Lambda_h:\ K\subset A_i \}$, $i=1,3$.

\smallskip
\noindent\textbf{Step 1: Construction of $\hat u$.}
We construct a shape-regular simplicial partition of size \(O(h)\) over $\Omega_T$ that aligns with boundaries of $\{\Lambda_i\}_{i=0}^3$ and define a CPwL \(\hat{u}\) as the linear interpolation of \(u\) on this partition. With $\hat u\in \mathcal P^1(\mathcal T_h)\cap C^0(\Omega_T)$, $\|\nabla \hat u\|_{L^\infty(A_0)}\lesssim h^{-1}$, $\hat u=0$ on $\partial\Omega\times(0,T)$, we have
\begin{equation}\label{eq:int_div_u_hat_rare}
    \|\hat u(\cdot,0)-u_0\|_{L^1(\Omega)}\lesssim h,\quad \|\hat u\|_{L^1(\partial\Omega\times(0,T))} =0,
    \quad
    \int_{\Omega_T} \big|\nabla \cdot \mathbf F(\hat u)\big|\,\d \z \lesssim 1.
\end{equation}

For any $k_h\in V_h^c$, since $|\sgn|\le 1$ we have
\begin{equation}\label{eq:estimate_Lu_rarefaction}
    \J_{\mathrm{ent}}(\hat u;k_h)
    =\int_{\Omega_T} \nabla \cdot \mathbf F(\hat u)\,\sgn\!\left(\hat u-k_h\right)\,\d \z
    \le \sum_{i=0}^3\sum_{K\in\Lambda_i}\int_K \big|\nabla \cdot \mathbf F(\hat u)\big|\,\d \z.
\end{equation}
We estimate the right-hand side region by region.

\smallskip
\noindent\emph{Smooth regions $A_1$ and $A_3$.}
On $A_1$ and $A_3$, the solution \(u\) is \(C^2\) with uniformly bounded derivatives. Since \(\nabla\cdot\mathbf F(u)=0\) and \(\hat u\) is the \(P^1\) interpolant of \(u\), standard interpolation estimates imply that \(\nabla\cdot\mathbf F(\hat u)=O(h)\) on \(K\in\Lambda_i\). Therefore,
\begin{equation}\label{eq:estimate_Lambda123}
    \sum_{K\in\Lambda_i}\int_K \big|\nabla \cdot \mathbf F(\hat u)\big|\,\d \z
    \lesssim h,
    \quad i\in\{1,3\}.
\end{equation}

\smallskip
\noindent\emph{Rarefaction fan region $A_2$.}
Assumption~\ref{assump:piecewise_smooth}(ii) implies $|\nabla^2 u(x,t)|\lesssim t^{-2}$ on $A_2$.
Consequently, on each cell $K\subset A_2$ at time level $t\sim t_K$,
\(
\|\nabla \cdot \mathbf F(\hat u)\|_{L^\infty(K)}\lesssim h\,t_K^{-2}.
\)
Using that $|\Gamma_2(t)-\Gamma_1(t)|\lesssim t$ for a rarefaction fan, we obtain
\begin{equation}\label{eq:estimate_Lambda4}
    \sum_{K\in\Lambda_2}\int_K \big|\nabla \cdot \mathbf F(\hat u)\big|\,\d \z
    \lesssim \int_{t_1}^{T}\frac{h}{t^2}\,|\Gamma_2(t)-\Gamma_1(t)|\,\d t
    \lesssim h\int_{t_1}^{T}\frac{\d t}{t}
    \lesssim h|\ln h|.
\end{equation}

\smallskip
\noindent\emph{Tip region $A_0$.}
In $A_0$ we have $\|\nabla \hat u\|_{L^\infty(A_0)}\lesssim h^{-1}$ and $|A_0|\lesssim h^{2}$, hence
\begin{equation}\label{eq:estimate_Lambda0}
    \sum_{K\in\Lambda_0}\int_K \big|\nabla \cdot \mathbf F(\hat u)\big|\,\d \z
    \lesssim h.
\end{equation}

Combining \eqref{eq:estimate_Lambda123}-\eqref{eq:estimate_Lambda0} in \eqref{eq:estimate_Lu_rarefaction} yields
\[
\sup_{k_h\in V_h^c}\J_{\mathrm{ent}}(\hat u;k_h)\lesssim h|\ln h|.
\]
Together with \eqref{eq:int_div_u_hat_rare} and the definition \eqref{eq:loss_func}, we conclude that
\begin{equation}\label{eq:Lu_rarefaction_cpwl}
    \L(\hat u)\lesssim h|\ln h|.
\end{equation}

\smallskip
\noindent\textbf{Step 2: Neural network approximation.}
In addition, the constructed CPwL function \(\hat u\) satisfies
\[
\|\nabla \hat u\|_{L^1(\Omega_T)} \le C.
\]
Indeed, this follows from the uniform \(W^{1,\infty}\)-bound on the smooth regions,
the estimate \(\|\nabla \hat u\|_{L^\infty(A_0)}\lesssim h^{-1}\) together with \(|A_0|\lesssim h^2\),
and the bound \(|\nabla \hat u(x,t)|\lesssim t^{-1}\) in the rarefaction region \(A_2\), whose integral is finite. By construction of the CPwL function $\hat u$, we have $\|\hat u\|_{L^\infty(\Omega_T)} \le c/2 - \delta_c$, 
so the assumptions of Lemma~\ref{lem:loss_cpwl_approx} are satisfied. 
Therefore, by setting the singular region to be empty in Lemma~\ref{lem:loss_cpwl_approx}, and using
\[
\int_{\Omega_T} \big|\nabla \cdot \mathbf F(\hat u)\big|\, d\mathbf z \lesssim h|\ln h|,
\]
we obtain that there exists a \(\tanh\) neural network \(u_{\hat\theta}\) of the stated size such that \eqref{eq:Lu_rarefaction} holds.
\end{proof}
\begin{figure}[htbp!]
\centering
\includegraphics[width=0.75\linewidth]{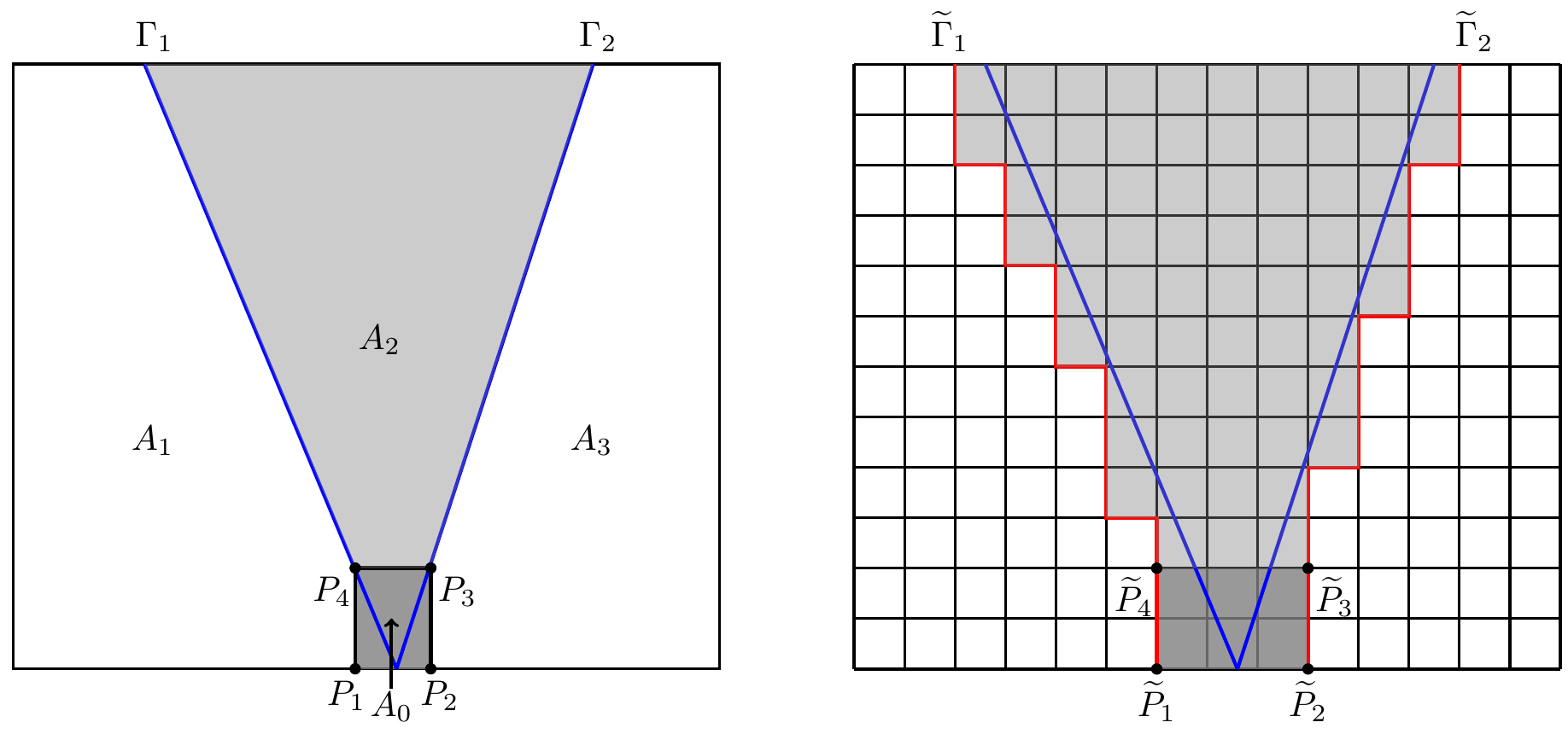}
\caption{Domain decomposition for rarefaction $u$ and DPwP $\tilde u_h$. Left: region partition of $\Omega_T$. Right: mesh partition for the DPwP $\tilde u_h$}
\label{fig:grids_rarefaction}
\end{figure}
Then we derive an error estimate for the neural network minimizer $u_{\theta^*}$.
\begin{theorem}[Single rarefaction]\label{thm:rarefaction}
Let $u$ be the entropy solution of the scalar conservation law \eqref{eq:scalar-HCL} that has discontinuous initial data $u_0$, contains a single rarefaction wave, and satisfies Assumption~\ref{assump:piecewise_smooth}. Let $h$ be the mesh size of the background mesh $\Lambda_h$. Let \(u_{\theta^*}\) be a clipped \(\tanh\) neural network of the form \eqref{def:clipped-NN} with depth $O(d+1)$ and at most $O(h^{-(d+1)})$ neurons in each hidden layer, where $\theta^*$ is a minimizer of the minimization problem \eqref{eq:NN_min_problem} over this class of networks. Then there exists a constant $C > 0$, independent of $h$, such that
\begin{align}\label{eq:single-rarefaction-rate}
\|u_{\theta^{*}}(\cdot, T)-u(\cdot, T)\|_{L^1(\Omega)} \leq C h|\ln h|.
\end{align}
\end{theorem}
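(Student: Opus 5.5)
We follow the proof of Theorem~\ref{thm:shock}, but no shifted-strip selection is needed: the solution has no shock, so the comparison function can be essentially continuous. By Lemma~\ref{lem:Lu_rarefaction} there is $\hat\theta$ in the same network class with $\mathscr{L}(\hat\theta)\lesssim h|\ln h|$. By minimality,
\[
\J_{\mathrm{ent}}(u_{\theta^*};k_h)\le\mathscr{L}(\theta^*)\lesssim h|\ln h|,\qquad \forall\,k_h\in V_h^c,
\]
and both $\L_{\mathrm{ibc}}(u_{\theta^*})$ and $h\int_{\Omega_T}|\nabla\cdot\mathbf F(u_{\theta^*})|$ are $\lesssim h|\ln h|$.

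As test function we take $k_h=\tilde u_h$, the continuous $\mathbb{Q}^1$ Lagrange interpolant of $u$ on $\Lambda_h$. In the tip region $A_0$, where $u$ is discontinuous at $t=0$, we interpolate nodal values of $u$ at positive times, or of a regularized $u$. We must check that $\tilde u_h\in V_h^c$. The $L^\infty$ bound follows from $\|u\|_{L^\infty}<c/2$. For the gradient, $h|\nabla\tilde u_h|\lesssim h\cdot\min\{h^{-1},t^{-1}\}$, since on $A_2$ we have $|\nabla u|\lesssim t^{-1}$ and on $A_0$ the jump is resolved over $O(h)$. If the constant in this bound exceeds $c$, we enlarge $c$ from the outset or slightly smooth the fan tip over a few cells. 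Because $\tilde u_h$ is continuous, applying the cellwise identity \eqref{eq:cellwise_entropy_ibp} and summing over $K$ makes all interior face terms cancel, with no jump interface left. The result is
\[
\|u_{\theta^*}-\tilde u_h\|_{L^1}(T)\le \|u_{\theta^*}-\tilde u_h\|_{L^1}(0)+C\|u_{\theta^*}\|_{L^1(\partial\Omega\times(0,T))}+\int_{\Omega_T}|\nabla\cdot\mathbf F(\tilde u_h)|\,\d\z+Ch|\ln h|.
\]
The lateral term is bounded exactly as in \eqref{eq:lateral_boundary_bound}.

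The key step is bounding the volume residual $\int_{\Omega_T}|\nabla\cdot\mathbf F(\tilde u_h)|$. This repeats Step 1 of Lemma~\ref{lem:Lu_rarefaction} region by region, with $\mathbb{Q}^1$ in place of $P^1$ interpolation.
\begin{itemize}
\item On $A_1\cup A_3$ the interpolation error gives $O(h)$.
\item On the fan $A_2$ we have $|\nabla\cdot\mathbf F(\tilde u_h)|\lesssim h\,t^{-2}$ and the fan width is $\lesssim t$, so the integral is $\lesssim h\int_h^T t^{-1}\,\d t\lesssim h|\ln h|$.
\item On $A_0$, $|\nabla\tilde u_h|\lesssim h^{-1}$ on a set of measure $O(h^2)$, giving $O(h)$.
\item The cells straddling the fan edges $\Gamma_{1},\Gamma_2$ (where $u$ is only Lipschitz) contribute $O(h)\cdot|\nabla u|\cdot$area. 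This is at most $\sum_t h\cdot O(1)$ along curves, hence $O(h)$ overall after using the $t^{-1}$ bound near the tip.
\end{itemize}

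It remains to handle the initial and final comparisons. At $t=0$, $\|u_{\theta^*}(\cdot,0)-\tilde u_h(\cdot,0)\|_{L^1}\le\L_{\mathrm{ibc}}+\|u_0-\tilde u_h(\cdot,0)\|_{L^1}\lesssim h|\ln h|+h$, because $\tilde u_h(\cdot,0)$ differs from the step $u_0$ only on an $O(h)$ interval. At $t=T$, $\|u(\cdot,T)-\tilde u_h(\cdot,T)\|_{L^1}\lesssim h$ since $u(\cdot,T)$ is Lipschitz. The triangle inequality then yields \eqref{eq:single-rarefaction-rate}.

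The main obstacle is the admissibility of $\tilde u_h$ in $V_h^c$ near the fan tip together with a sharp accounting of the residual on cells cut by $\Gamma_{1,2}$ and in $A_0$. To handle it, we would first define $\tilde u_h$ on $A_0\cup\{t\lesssim h\}$ by interpolating the self-similar profile evaluated at $t=\max\{t,h\}$. This keeps the slopes $O(h^{-1})$ and the residual $O(h)$ there.
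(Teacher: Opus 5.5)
Your argument is correct and gives the same rate, but it uses a different comparison function from the paper.

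\textbf{What the paper does.} The paper takes $\tilde u_h$ to be a genuinely discontinuous DPwP. On the cells $\Lambda_2$ meeting the fan it interpolates a smooth extension of the fan solution, and on $\Lambda_1\cup\Lambda_3$ it interpolates $u$ itself. Away from the tip, every cell therefore sees a $C^2$ function. The kinks of $u$ along $\Gamma_1,\Gamma_2$ become small jumps $|\llbracket\tilde u_h\rrbracket|\lesssim h/t$ across the mesh-aligned curves $\widetilde\Gamma_1,\widetilde\Gamma_2$. These interface terms are bounded through the Lipschitz continuity of $\mathcal Q$ and integrate to $O(h|\ln h|)$.

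\textbf{What you do, and the slip in it.} You use a globally continuous $\mathbb{Q}^1$ interpolant, so all face terms cancel. The estimate then reduces to $\int_{\Omega_T}|\nabla\cdot\mathbf F(\tilde u_h)|$ plus endpoint and lateral terms. The kink reappears as volume residual on the cells cut by $\Gamma_{1,2}$, and there your accounting is off. On such a cell the interpolant's gradient differs from both one-sided gradients of $u$ by $O(|\llbracket\nabla u\rrbracket|)\sim t^{-1}$, not by $O(h)|\nabla u|$. Summing $h^2 t_K^{-1}$ over the $O(h^{-1})$ cut cells gives $h\int_h^T t^{-1}\,\d t\sim h|\ln h|$, not $O(h)$. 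This is exactly the quantity the paper obtains from its jump integral, and it is still within the claimed bound, so the conclusion stands.

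\textbf{What each route buys.} Yours is leaner for a pure rarefaction, since no interface machinery is needed. The paper's construction is the one that transfers to compound waves (Theorem~\ref{thm:compound_wave}). There the fan is bounded by a shock, so a jump interface in $\tilde u_h$ cannot be avoided.

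\textbf{Admissibility in $V_h^c$ near the tip.} Your treatment is more explicit than the paper's appeal to the standing choice of $c$. However, freezing the profile at $t=\max\{t,h\}$ is not enough in general. If the fan opens slowly, its width at $t=h$ is smaller than a cell. The interpolant then still has slope $\sim|u_R-u_L|/h$, and $\|\tilde u_h\|_{L^\infty(K)}+h\|\nabla\tilde u_h\|_{L^\infty(K)}$ can exceed $c$.

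The fix is to interpolate $u(\cdot,\max\{t,mh\})$ with a fixed $m$. Since the fan width at time $mh$ is $\gtrsim mh$ and $t|\nabla u|\le C$, this gives $h|\nabla\tilde u_h|\lesssim C/m$. Choose $m$ so that this is below $c-\|u\|_{L^\infty}$. This only adds $O(mh)$ to the residual in the initial layer $\{t<mh\}$ and to $\|\tilde u_h(\cdot,0)-u_0\|_{L^1(\Omega)}$.
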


\begin{proof}
By Lemma~\ref{lem:Lu_rarefaction} and $\theta^*$ being a minimizer of the minimization problem \eqref{eq:NN_min_problem}, we have 
\begin{align}\label{eq:recall_Lu}
\J_{\mathrm{ent}}(u_{\theta^*}; k_h) \leq \mathscr{L}(\theta^{*}) \leq \mathscr{L}(\hat{\theta}) \lesssim h|\ln h|, \quad \forall k_h \in V_h^c.
\end{align}
As before, by taking $\tilde{u}_h \in V_h^c$ as an approximation to the exact solution $u$, we derive an error estimate.

Based on the space–time decomposition and the partition $\Lambda_h$ illustrated in 
Figure~\ref{fig:grids_rarefaction}, with $\Lambda_i$ corresponding to $A_i$ for $i=0,1,2, 3$, 
we proceed as follows. 

First, the exact solution $u$ in $A_2 $ is smoothly extended 
to  $\Lambda_2 $. 
The function $\tilde{u}_h$ is then defined as the continuous piecewise 
$\mathbb{Q}_1$ interpolation of this extended function over 
$\Lambda_2 $. 
Over the regions $\Lambda_1 \cup \Lambda_3$, 
$\tilde{u}_h$ is taken as the continuous piecewise 
$\mathbb{Q}_1$ interpolation of the exact solution $u$. 
Finally, on $\Lambda_0$, $\tilde{u}_h$ is defined as a continuous 
piecewise $\mathbb{Q}_1$ function that remains continuous 
across the common boundary between $\Lambda_0$ and $\Lambda_2$. 
In addition, $\tilde{u}_h$ satisfies the bound
$|\tilde{u}_h| + h |\nabla \tilde{u}_h| \le C$ in $\Lambda_0$,
where the constant $C$ is independent of $h$.

By substituting $k_h = \tilde{u}_h$ into \eqref{eq:recall_Lu}, and noting the continuity of $u_{\theta^{*}}$ over $\Lambda_h$ and the constructed function $\tilde{u}_h$ within $\Lambda_{0} \cup \Lambda_2$, $\Lambda_1$, and $\Lambda_3$, we conclude that
\begin{equation}\label{eq:part1_Lu_rarefaction}
\begin{aligned}
&\sum_{K\in \Lambda_{h}}\int_{\partial K} \left(\mathbf{F}(u_{\theta^{*}})-\mathbf{F}(\tilde{u}_h)\right)\sgn\!\left(u_{\theta^{*}}-\tilde{u}_h\right)\cdot\mathbf{n}\,\d s \\
=&\int_{\Omega}\big|u_{\theta^{*}}-\tilde{u}_h\big|(x, T)\,\d x 
-\int_{\Omega}\big|u_{\theta^{*}}-\tilde{u}_h\big|(x, 0)\,\d x \\
+ & \int_{\partial\Omega\times(0,T)}
\left(\mathbf f(u_{\theta^*})-\mathbf f(\tilde u_h)\right)
\sgn(u_{\theta^*}-\tilde u_h)\cdot\mathbf n_{\partial\Omega}\,\d s +\int_{\widetilde{\Gamma}_1\cup\widetilde{\Gamma}_2} \llbracket\left(\mathbf{F}(u_{\theta^{*}})-\mathbf{F}(\tilde{u}_h)\right)\sgn\!\left(u_{\theta^{*}}-\tilde{u}_h\right)\rrbracket\cdot\mathbf{n}\,\d s.
\end{aligned}
\end{equation}
By the Lipschitz continuity of $\mathbf f$ and the construction of $\tilde u_h$ on the boundary,
\begin{equation}\label{eq:lateral_boundary_bound-1}
| \int_{\partial\Omega\times(0,T)}
\left(\mathbf f(u_{\theta^*})-\mathbf f(\tilde u_h)\right)
\sgn(u_{\theta^*}-\tilde u_h)\cdot\mathbf n_{\partial\Omega}\,\d s|
\le
C\|u_{\theta^*}\|_{L^1(\partial\Omega\times(0,T))}
\le
C\mathcal L(u_{\theta^*})
\lesssim h|\ln h| .
\end{equation}

Using the same argument as in \eqref{eq:estimate_jump} we have
\begin{align}
    \big| \llbracket (\mathbf{F}(u_{\theta^*}) - \mathbf{F}(\tilde{u}_h)) \sgn\!\left(u_{\theta^*} - \tilde{u}_h\right) \rrbracket \cdot \mathbf{n} \big| \leq \big| \llbracket \mathbf{F}(u_{\theta^*}) - \mathbf{F}(\tilde{u}_h) \rrbracket \big| \lesssim |\llbracket \tilde{u}_h \rrbracket|.
\end{align}

Since $\tilde{u}_h$ on each side of 
$\widetilde{\Gamma}_1(t) \cup \widetilde{\Gamma}_2(t)$ 
is obtained by piecewise $\mathbb{Q}_1$ interpolation of either the exact solution $u$ or its local smooth extension, and because for $\boldsymbol{z} = (x,t) \in \widetilde{\Gamma}_1(t) \cup \widetilde{\Gamma}_2(t)$ 
we have $\operatorname{dist}(\boldsymbol{z}, \Gamma_1 \cup \Gamma_2) \lesssim h$, 
it follows from the continuity of $u$ along $\Gamma_1$ and $\Gamma_2$ 
that the jump of $\tilde{u}_h$ over 
$\widetilde{\Gamma}_1(t) \cup \widetilde{\Gamma}_2(t)$ 
for all $t \ge t_1$ can be bounded by
\[
|\llbracket \tilde{u}_h \rrbracket| \lesssim h \sup_{x' \in B^h_x} |\nabla u(x', t)|\lesssim \left\{\begin{array}{cc}
    h & \text{for}\quad t>t_2, \\
    {\displaystyle\frac{h}{t}} & \quad\;\;\text{for} \quad t\in[t_1,t_2].
\end{array}\right.
\]
Denoting by \(\widetilde{\Gamma}_{\widetilde{P}_i \widetilde{P}_j}\) the line segment connecting \(\widetilde{P}_i\) and \(\widetilde{P}_j\), with \(|\widetilde{P}_1 \widetilde{P}_4| + |\widetilde{P}_2 \widetilde{P}_3| \lesssim h\), we have 
\begin{equation}\label{eq:Gamma_estimate_rarefaction}
\begin{aligned}
&\int_{\widetilde{\Gamma}_1 \cup \widetilde{\Gamma}_2} \llbracket \left( \mathbf{F}(u_{\theta^*}) - \mathbf{F}(\tilde{u}_h) \right) \sgn\!\left(u_{\theta^*} - \tilde{u}_h\right) \rrbracket \cdot \mathbf{n}\,\d s \\
\lesssim &\left( \int_{\widetilde{\Gamma}_{\widetilde{P}_1 \widetilde{P}_4} \cup \widetilde{\Gamma}_{\widetilde{P}_2 \widetilde{P}_3}}+\int_{(\widetilde{\Gamma}_1 \cup \widetilde{\Gamma}_2) \setminus (\widetilde{\Gamma}_{\widetilde{P}_1 \widetilde{P}_4} \cup \widetilde{\Gamma}_{\widetilde{P}_2 \widetilde{P}_3})} \right)|\llbracket \tilde{u}_h \rrbracket| \d s \\
\lesssim &\int_{\widetilde{\Gamma}_{\widetilde{P}_1 \widetilde{P}_4} \cup \widetilde{\Gamma}_{\widetilde{P}_2 \widetilde{P}_3}} \big( |\tilde{u}_h^+| + |\tilde{u}_h^-| \big)\,\d s + \int_{(\widetilde{\Gamma}_1 \cup \widetilde{\Gamma}_2) \setminus (\widetilde{\Gamma}_{\widetilde{P}_1 \widetilde{P}_4} \cup \widetilde{\Gamma}_{\widetilde{P}_2 \widetilde{P}_3})} \frac{h}{t}\,\d s
\lesssim  h + h |\ln h|.
\end{aligned}
\end{equation}

Meanwhile, since \(\tilde{u}_h\) is a DPwP of the exact solution \(u\), we follow the same reasoning as in the proof of Lemma \ref{lem:Lu_rarefaction} to obtain
\begin{equation}\label{eq:part2_Lu_rarefaction}
\left| \sum_{K \in \Lambda_h} \int_K \nabla \cdot \mathbf{F}(\tilde{u}_h) \sgn\!\left(u_{\theta^*} - \tilde{u}_h\right) \, \d \z \right| \leq \sum_{K \in \Lambda_h} \int_K \big| \nabla \cdot \mathbf{F}(\tilde{u}_h) \big| \, \d \z \lesssim h |\ln h|.
\end{equation}
Combining this estimate with \eqref{eq:recall_Lu}, \eqref{eq:part1_Lu_rarefaction}, \eqref{eq:lateral_boundary_bound-1}, \eqref{eq:Gamma_estimate_rarefaction}, and \eqref{eq:part2_Lu_rarefaction}, we conclude that
\[
\|u_{\theta^*}(\cdot, T) - \tilde{u}_h(\cdot, T)\|_{L^1(\Omega)} \leq \|u_{\theta^*}(\cdot, 0) - \tilde{u}_h(\cdot, 0)\|_{L^1(\Omega)} + C h |\ln h|.
\]
Since \(\tilde{u}_h\) is a piecewise linear approximation of the exact solution \(u\), we obtain \eqref{eq:single-rarefaction-rate}.

\end{proof}

For the case of compound waves,  we have the following convergence result.
\begin{theorem}[Single compound wave]\label{thm:compound_wave}
Let $u$ be the entropy solution of the scalar conservation law \eqref{eq:scalar-HCL} that has discontinuous initial data $u_0$, contains a single compound wave, and satisfies Assumption~\ref{assump:piecewise_smooth}. Let $h$ be the mesh size of the background mesh $\Lambda_h$. Let \(u_{\theta^*}\) be a clipped \(\tanh\) neural network of the form \eqref{def:clipped-NN} with depth $O(d+1)$ and at most $O(h^{-(d+1)})$ neurons in each hidden layer, where $\theta^*$ is a minimizer of the minimization problem \eqref{eq:NN_min_problem} over this class of networks. Then there exists a constant $C>0$, independent of $h$, such that
\begin{equation}\label{eq:compound_wave_result}
\|u_{\theta^*}(\cdot,T)-u(\cdot,T)\|_{L^1(\Omega)}
\le C(h|\ln h|)^{1/2}.
\end{equation}
\end{theorem}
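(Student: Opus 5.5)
We combine the two mechanisms already developed: the shock analysis of Lemma~\ref{lem:Lu_shock} and Theorem~\ref{thm:shock} for the shock curves bounding the compound wave, and the rarefaction analysis of Lemma~\ref{lem:Lu_rarefaction} and Theorem~\ref{thm:rarefaction} for the fan regions $\mathcal R_{q,m}$ of Assumption~\ref{assump:piecewise_smooth}(ii). The rate $(h|\ln h|)^{1/2}$ should come out of the same strip-selection balance as in Theorem~\ref{thm:shock}. The difference is that the loss bound, and hence the budget for $\int|\nabla\cdot\mathbf F(u_{\theta^*})|$, is now $h|\ln h|$ instead of $h$.

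\textbf{Step 1 (competitor with small loss).} We build a CPwL $\hat u$ on a mesh with $O(h^{-(d+1)})$ elements, and handle each kind of region the same way as before.
\begin{itemize}
\item In smooth regions, take the nodal interpolant, which contributes $O(h)$.
\item In each fan $\mathcal R_{q,m}$ for $t\ge t_1\simeq h$, take the interpolant. The bound $|\nabla^2u|\lesssim (t-\tau_q)^{-2}$ together with fan width $\lesssim t-\tau_q$ gives $O(h|\ln h|)$, exactly as in \eqref{eq:estimate_Lambda4}.
\item On the tip region $A_0$ of size $O(h)\times O(h)$, which now also contains the emanation point of the attached shocks, the contribution is $O(h)$.
\item Along each shock curve bounding a fan, use an anisotropic transition layer of width $\varepsilon=h^2$. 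The entropy sign \eqref{eq:reformulated_entropy_condition}, applied pointwise with $k=k_h(\z)$, then gives \eqref{eq:Lu_shock_K}, after the small translation argument of Step~3 of Lemma~\ref{lem:Lu_shock} has enforced \eqref{eq:condition_Gamma}.
\end{itemize}
The one genuinely new point is that a shock attached to a rarefaction is a contact-type (sonic) shock, with one trace equal to the fan value. Near $t=\tau_q$ the one-sided gradient on the fan side blows up like $t^{-1}$. So the interpolation errors in \eqref{eq:1st_K} and in the normal-offset estimate must be summed with weight $h/t$ along the curve, which again produces $h|\ln h|$. Lemma~\ref{lem:loss_cpwl_approx} then applies: the singular set is the union of the $O(h^2)$-wide layers, the $L^1$ norm of $\nabla\hat u$ is bounded, and $\int_{\Omega_r}|\nabla\cdot\mathbf F(\hat u)|\lesssim h|\ln h|$. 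This gives $\mathscr L(\hat\theta)\lesssim h|\ln h|$, and hence $\mathscr L(\theta^*)\lesssim h|\ln h|$.

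\textbf{Step 2 (stability via a DPwP test function).} We choose $k_h=\tilde u_h$, built as follows.
\begin{itemize}
\item Across the fan boundaries that are not shocks, $\tilde u_h$ is continuous or nearly continuous as in Theorem~\ref{thm:rarefaction}, with jump $\lesssim h/t$, which contributes $h|\ln h|$.
\item Along each shock, we repeat the shifted-strip construction of Theorem~\ref{thm:shock}. The one-sided states are extended smoothly beyond the shock, with the fan-side extension taken from the fan's smooth continuation, and the discontinuity of $\tilde u_h$ is placed on a shifted strip boundary $\Gamma_j$ with $j\le M$.
\end{itemize}
Now $\int_{\Omega_T}|\nabla\cdot\mathbf F(u_{\theta^*})|\le h^{-1}\mathscr L(\theta^*)\lesssim|\ln h|$. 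The pigeonhole argument over $M$ disjoint strips therefore yields a strip with $\int_{\Lambda_j}|\nabla\cdot\mathbf F(u_{\theta^*})|\lesssim |\ln h|/M$. The transfer of the jump from $\Gamma_j$ to the translated shock $\widetilde\Gamma$, done via \eqref{eq:reformulate_int_Gamma_j}, costs $Mh+|\ln h|/M$ plus the $\mathcal R_2$ term. The entropy sign on $\widetilde\Gamma$ follows from \eqref{eq:reformulated_entropy_condition} with $k=u_{\theta^*}$. Collecting terms gives
\[
\|u_{\theta^*}(\cdot,T)-\tilde u_h(\cdot,T)\|_{L^1}\le \|u_{\theta^*}(\cdot,0)-\tilde u_h(\cdot,0)\|_{L^1}+C\bigl(h|\ln h|+Mh+M^{-1}|\ln h|\bigr).
\]
Choosing $M\sim|\ln h|^{1/2}h^{-1/2}$ balances the last two terms at $(h|\ln h|)^{1/2}$. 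The initial-time error and $\|u(\cdot,T)-\tilde u_h(\cdot,T)\|_{L^1}$ are both $O(Mh)$, and the triangle inequality concludes.

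\textbf{Main obstacle.} The delicate step is the shifted-strip estimate near the emanation point, where the shock, the fan edge and the $O(h)$ tip all meet. There the shift by $jL\lesssim Mh\sim (h|\ln h|)^{1/2}$ moves $\widetilde\Gamma$ into the fan, so the one-sided consistency bound \eqref{eq:oneside_smoothness} fails for $t\lesssim Mh$. The fan-side extension is then only Lipschitz with constant $\sim 1/t$. My plan is to cut off the initial time layer $t\le Mh$ and treat it by brute force: it contributes $O(Mh)$ because all integrands are bounded and its length is $O(Mh)$. For $t\ge Mh$, use the bound $|\tilde u_h^\pm-u^\pm(\cdot+jL)|\lesssim \min(1,Mh/t)$, whose integral is $O(Mh|\ln h|)$. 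If that extra logarithm survives, it would instead have to be absorbed by a slightly different choice of $M$.
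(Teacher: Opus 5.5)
Your main line coincides with the paper's own sketch. The competitor has loss $O(h|\ln h|)$, the residual budget is $\int_{\Omega_T}|\nabla\cdot\mathbf F(u_{\theta^*})|\,\d\z\lesssim|\ln h|$, strip selection gives $|\ln h|/M$, and this is balanced against $Mh$. The paper simply asserts that the shifted-interface cost is $O(Mh)$, as in Theorem~\ref{thm:shock}. It does not treat the step you flag at the end, and your resolution of that step does not close the argument.

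Measure $t$ from the emanation time. The fan-side state has $|\nabla u^+|\sim t^{-1}$, so Lipschitz bounds on $\mathcal Q$ make the trace mismatch on $\widetilde\Gamma$ cost $\int\min\{1,Mh/t\}\,\d t\sim Mh|\ln h|$. The volume term on $D_j$ has the same defect. In Theorem~\ref{thm:shock} it is bounded by $|D_j|$ because the extension has bounded derivatives, but a fan-side extension has residual up to $t^{-1}$. Brute force on $D_j\cap\{t\le Mh\}$ with integrand $\lesssim h^{-1}$ would cost $M^2h\sim|\ln h|$.

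Re-choosing $M$ cannot rescue this. Minimizing $Mh|\ln h|+M^{-1}|\ln h|$ gives $M\sim h^{-1/2}$ and only $h^{1/2}|\ln h|$, which is a factor $|\ln h|^{1/2}$ worse than the claimed $(h|\ln h|)^{1/2}$. So your displayed inequality, with $Mh$ rather than $Mh|\ln h|$, is not yet justified.

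The missing idea is to use the sonic contact, which you mention, quantitatively. Set $s=\gamma'(t)$ and $g(a):=f(a)-sa$. Then $\mathcal Q(a,k)\cdot\mathbf n_\Gamma=\pm(1+s^2)^{-1/2}\sgn(a-k)\bigl(g(a)-g(k)\bigr)$, and sonic means $g'(b)=0$ at the fan-side trace $b$. Treating separately the cases $k$ outside and $k$ between $a$ and $b$ gives
\[
\bigl|\mathcal Q(a,k)\cdot\mathbf n_\Gamma-\mathcal Q(b,k)\cdot\mathbf n_\Gamma\bigr|\le C|a-b|^2\qquad\text{uniformly in }k.
\]
Since $\widetilde\Gamma$ is a translate of $\Gamma$ with identical normals, the fan-side mismatch then costs only $\int\min\{1,(Mh/t)^2\}\,\d t\lesssim Mh$. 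The smooth side still costs $O(Mh)$ linearly, as in Theorem~\ref{thm:shock}.

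For the volume term, split in time:
\begin{itemize}
\item For $t\gtrsim Mh$, let the fan-side extension be an exact solution near $\Gamma$. For Riemann data this is the self-similar fan formula continued slightly past the sonic line, which is legitimate because $|\nabla u|\lesssim t^{-1}$ forces $f''\neq0$ at the edge state. Only the interpolation residual $\lesssim h\,t^{-2}$ then remains.
\item For $t\lesssim Mh$, use a bounded extension whose gradient is $\lesssim\min\{h^{-1},t^{-1}\}$ and is supported in a band of width $\lesssim\max\{h,t\}$ around $\Gamma$. This costs $O(1)$ per unit time.
\end{itemize}
Both pieces are then $O(Mh)$, your displayed inequality holds as written, and $M\sim(|\ln h|/h)^{1/2}$ gives the stated rate.
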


The proof combines the shifted-interface argument from Theorem~\ref{thm:shock} with the rarefaction estimate from Theorem~\ref{thm:rarefaction}. The logarithmic factor comes from the approximation of the rarefaction part: as shown in Lemma~\ref{lem:Lu_rarefaction}, the contribution of the rarefaction region is \(O(h|\ln h|)\). Therefore, one can construct a $\tanh$ neural network \(u_{\hat\theta}\) such that
\(
\mathcal L(u_{\hat\theta}) \lesssim h|\ln h|.
\)
Hence \(\mathcal L(u_{\theta^*}) \lesssim h|\ln h|\), and therefore \(\int_{\Omega_T} |\nabla\!\cdot\mathbf F(u_{\theta^*})|\,d\mathbf z \lesssim |\ln h|.\)
The strip-selection argument gives
\(
\int_{\Lambda_j} |\nabla\!\cdot\mathbf F(u_{\theta^*})|\,d\mathbf z \lesssim |\ln h|/M,
\)
so balancing \(Mh\) and \(|\ln h|/M\) yields
\(
M\sim (|\ln h|/h)^{1/2}
\)
and thus the rate
\(
(h|\ln h|)^{1/2}.
\)
We omit further details.


\subsection{Shock formation with smooth initial data}\label{sec:shock-formation}

We next consider smooth initial data that generate a nondegenerate shock in finite time.

\begin{theorem}[Shock formation from smooth initial data]\label{thm:shock_smooth_initial}
Let \(u\) be the entropy solution of \eqref{eq:scalar-HCL} in one space dimension. Assume that \(u_0\) is smooth, that \(u\) develops one nondegenerate shock at time \(\tau_q<T-\delta_*\), and that Assumption~\ref{assump:piecewise_smooth} holds. Let \(h\) be the mesh size of the background mesh \(\Lambda_h\). Let \(u_{\theta^*}\) be a clipped \(\tanh\) neural network of the form \eqref{def:clipped-NN} with depth \(O(d+1)\) and at most \(O(h^{-(d+1)})\) neurons in each hidden layer, where \(\theta^*\) is a minimizer of \eqref{eq:NN_min_problem} over this class of networks. Then there exists a constant \(C>0\), independent of \(h\), such that
\begin{equation}\label{eq:shock_smooth_initial}
\|u_{\theta^*}(\cdot,T)-u(\cdot,T)\|_{L^1(\Omega)}
\le C h^{1/2}|\ln h|.
\end{equation}
\end{theorem}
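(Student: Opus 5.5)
The plan follows the same two-step pattern as Theorems~\ref{thm:shock} and~\ref{thm:rarefaction}: first bound the loss of a competitor network, then convert that bound into an $L^1$ estimate by testing with a shock-adapted DPwP $\tilde u_h\in V_h^c$.

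\textbf{Step 1: a competitor with loss $O(h|\ln h|)$.} Split $\Omega_T$ into four regions:
\begin{itemize}
\item[(a)] a pre-birth smooth region $t<\tau_q-\delta_0$;
\item[(b)] the event piece around $(x_q,\tau_q)$ from Assumption~\ref{assump:piecewise_smooth}(iv);
\item[(c)] a tip ball $B_h:=\{|t-\tau_q|+|x-x_q|\lesssim h\}$;
\item[(d)] the regular shock piece for $t\ge \tau_q+\delta_*$.
\end{itemize}
On (a), and on the smooth parts away from the shock, $\hat u$ is the $P^1$ interpolant of $u$. On (d), and along the newborn shock curve $\gamma_q^b$ for $t>\tau_q+h$, I reuse the anisotropic layer construction of Lemma~\ref{lem:Lu_shock} with width $\varepsilon=h^2$. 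The shock strength near birth is only $O((t-\tau_q)^{1/2})$, and the jump integral still has the favourable sign by \eqref{eq:reformulated_entropy_condition}. Inside $B_h$ I just interpolate, which gives $|\nabla\hat u|\lesssim h^{-1}$ on a set of area $h^2$ and hence an $O(h)$ contribution.

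The key calculation is in region (b) minus the shock layer. Write $r:=((t-\tau_q)^3+(x-x_q)^2)^{1/6}$. Then $|\nabla^2u|\lesssim r^{-5}$, so the interpolation residual satisfies $|\nabla\cdot\mathbf F(\hat u)|\lesssim h\,r^{-5}$ wherever $r$ is large compared with the local scale. I integrate this in the anisotropic variables $t-\tau_q\sim r^2$, $x-x_q\sim r^3$, so the area element is $r^4\,\d r$. This gives $\int h r^{-5}\cdot r^4\,\d r\lesssim h|\ln h|$, cut off at $r\sim h^{1/3}$, or equivalently where the mesh resolves the cusp. The cells below that cutoff are absorbed into $B_h$, possibly after enlarging it to a region of area $O(h^{\,})$ where $\nabla\hat u\in L^1$ with $O(h)$ mass. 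I expect this cutoff bookkeeping to be the \textbf{main obstacle}: I must check that the near-cusp region, where interpolation is not accurate, has small enough $|\nabla\cdot\mathbf F(\hat u)|$ mass. I would first try bounding that mass crudely by $|\nabla\hat u|_{L^1}$ over the cells with $r\lesssim h^{1/3}$.

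We also need $\|\nabla \hat u\|_{L^1}\le C$, which holds because $|\nabla u|\lesssim r^{-2}$ is integrable against $r^4\,\d r$. Lemma~\ref{lem:loss_cpwl_approx} then yields $\mathscr L(\hat\theta)\lesssim h|\ln h|$. Consequently $\int_{\Omega_T}|\nabla\cdot\mathbf F(u_{\theta^*})|\lesssim|\ln h|$.

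\textbf{Step 2: stability via a shifted DPwP.} I build $\tilde u_h$ as follows.
\begin{itemize}
\item Away from the shock, $\tilde u_h$ is the continuous $\mathbb Q^1$ interpolant of $u$; near $B_h$ it is any bounded continuous $\mathbb Q^1$ function.
\item Along the shock, I place the discontinuity on the left boundary $\Gamma_j$ of a strip shifted by $jL$, as in Theorem~\ref{thm:shock}. The right trace is extended smoothly, and the jump starts at the birth time.
\end{itemize}
The strip selection over $M$ disjoint shifted strips gives $\int_{\Lambda_j}|\nabla\cdot\mathbf F(u_{\theta^*})|\lesssim |\ln h|/M$. Inserting $k_h=\tilde u_h$ in $\J_{\rm ent}(u_{\theta^*};\cdot)\lesssim h|\ln h|$ and using \eqref{eq:cellwise_entropy_ibp}, the remaining terms are bounded as follows.
\begin{itemize}
\item The volume term is $\int|\nabla\cdot\mathbf F(\tilde u_h)|\lesssim h|\ln h|+Mh$, by the same cusp integral as in Step~1.
\item The jump term on $\Gamma_j$ is transported to the translated shock $\widetilde\Gamma$, where the entropy condition with $k=u_{\theta^*}$ gives the sign. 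This costs $O(Mh)$ from trace perturbations, plus $\mathcal R_1\lesssim |\ln h|/M+Mh$.
\item The side pieces $\mathcal R_2$ are controlled by $|\llbracket\tilde u_h\rrbracket|$. Near birth the jump is small, since $u$ is continuous there up to the $r^{-2}$ gradient. I therefore expect $\mathcal R_2\lesssim h|\ln h|$, using an estimate of the kind $h\,r^{-2}$ integrated along the curve.
\item The lateral boundary term is $\lesssim\mathcal L(u_{\theta^*})$.
\end{itemize}

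Altogether,
\[
\|u_{\theta^*}(\cdot,T)-\tilde u_h(\cdot,T)\|_{L^1}\le \|u_{\theta^*}(\cdot,0)-\tilde u_h(\cdot,0)\|_{L^1}+C\bigl(h|\ln h|+Mh+|\ln h|/M\bigr).
\]
Choosing $M\sim h^{-1/2}$ gives $Mh+|\ln h|/M\lesssim h^{1/2}|\ln h|$. (Balancing exactly would give $(h|\ln h|)^{1/2}$, which is even better.) The initial term is $O(h)$, because $u_0$ is smooth and the shift only acts for $t>\tau_q$. Finally, $\|u-\tilde u_h\|_{L^1}(T)\lesssim Mh$, and the triangle inequality gives \eqref{eq:shock_smooth_initial}.
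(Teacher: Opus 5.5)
Your plan is essentially the paper's proof. You excise a small neighbourhood of the birth point: your anisotropic cutoff $r\sim h^{1/3}$ replaces the paper's $h\times h$ box $E_h$, but keep the cells with $r\lesssim h^{1/3}$ in $\Omega_r$ rather than $\Omega_s$, since their total area $\sim h^{5/3}$ exceeds the $O(h^2)$ allowed by Lemma~\ref{lem:loss_cpwl_approx}. You then combine the log-divergent interpolation residual with the $\varepsilon=h^2$ layer for $t>\tau_q+h$ to get loss $O(h|\ln h|)$, and finish with the shifted-interface argument using strip residual $|\ln h|/M$ and $M\sim h^{-1/2}$.

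One correction concerns the shift terms. Since $|\nabla u^\pm|\sim (t-\tau_q)^{-1}$ along the newborn shock, a shift of size $Mh$ costs $\int_{\tau_q+h}^{T}\min\{1,Mh/(t-\tau_q)\}\,\d t\sim Mh|\ln h|$ in the trace comparison on $\widetilde\Gamma$, and likewise in the extension part of the volume term. This matches the paper, so your $O(Mh)$ bounds should carry a $|\ln h|$ factor. It is harmless for $M\sim h^{-1/2}$ and the stated $h^{1/2}|\ln h|$ rate. However, it means your parenthetical claim that exact balancing yields $(h|\ln h|)^{1/2}$ is not supported by this argument.
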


The proof follows the shifted-interface argument used for shocks, with an additional local estimate near the shock-formation point. It is given in Appendix~\ref{appendix:proof_smooth_init}.

\subsection{Main result for one-dimensional HCL}
Combining the analysis strategies developed in Sections~\ref{sec:shock_wave}--\ref{sec:shock-formation}, we establish the following global convergence result under the general piecewise smooth setting.

\begin{theorem}[Main result in 1D]\label{thm:main_1d}
Let \(u\) be the entropy solution of the scalar conservation law \eqref{eq:scalar-HCL} that is piecewise smooth in the sense of Assumption~\ref{assump:piecewise_smooth}. Let \(h\) be the mesh size of the background mesh \(\Lambda_h\). Let \(u_{\theta^*}\) be a clipped \(\tanh\) neural network of the form \eqref{def:clipped-NN} with depth \(O(d+1)\) and at most \(O(h^{-(d+1)})\) neurons in each hidden layer, where \(\theta^*\) is a minimizer of the minimization problem \eqref{eq:NN_min_problem} over this class of neural networks. Then there exists a constant \(C>0\), independent of \(h\), such that
\begin{equation}\label{eq:main_1d}
\|u_{\theta^*}(\cdot,T)-u(\cdot,T)\|_{L^1(\Omega)}
\le C h^{1/2}|\ln h|.
\end{equation}
\end{theorem}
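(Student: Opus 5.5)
The proof has the same two-part structure as Theorems~\ref{thm:shock}--\ref{thm:shock_smooth_initial}, applied to the finite decomposition $\{\mathcal P_q\}_{q=1}^Q$ of Assumption~\ref{assump:piecewise_smooth}. Part one bounds the loss of a good competitor. Part two converts that bound into an $L^1$ error estimate by testing with a single DPwP comparison function.

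\textbf{Part one: the competitor.} We build one global CPwL function $\hat u$ on a conforming simplicial partition of $\Omega_T$ with $O(h^{-(d+1)})$ elements, defined piece by piece:
\begin{itemize}
\item on smooth pieces, the nodal interpolant, as in Lemma~\ref{lem:Lu_shock};
\item along every shock curve, an $O(h^2)$-thin anisotropic transition strip, as in Lemma~\ref{lem:Lu_shock};
\item in each rarefaction fan, the interpolant plus an $O(h)$ tip region, as in Lemma~\ref{lem:Lu_rarefaction};
\item near an interaction point or a shock-birth point, a local patch of radius $O(h)$ (or $O(h^{1/2})$ for shock birth, as in the appendix argument for Theorem~\ref{thm:shock_smooth_initial}).
\end{itemize}
Since $Q$ is finite, the strips and patches can be glued conformingly, choosing the translation argument of Step~3 of Lemma~\ref{lem:Lu_shock} so that \eqref{eq:condition_Gamma} holds simultaneously for all shocks; this is possible because the bad translations form a set of small measure for each shock. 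The local contributions to $\sup_{k_h}\J_{\mathrm{ent}}(\hat u;k_h)$ then add up:
\begin{itemize}
\item $O(h)$ from shocks, using the entropy sign of the jump terms \eqref{eq:2nd_K};
\item $O(h|\ln h|)$ from rarefactions;
\item $O(h)$ from interaction patches, whose area is $O(h^2)$ while $\nabla\hat u=O(h^{-1})$ there;
\item at most $O(h|\ln h|)$ from the shock-birth region, integrating the singular bounds of Assumption~\ref{assump:piecewise_smooth}(iv) as in the appendix.
\end{itemize}
Lemma~\ref{lem:loss_cpwl_approx}, with the shock strips as singular set, yields a network $u_{\hat\theta}$ with $\mathscr L(\hat\theta)\lesssim h|\ln h|$. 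Consequently $\int_{\Omega_T}|\nabla\cdot\mathbf F(u_{\theta^*})|\lesssim|\ln h|$.

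\textbf{Part two: the stability argument.} We build $\tilde u_h\in V_h^c$ by using the shifted-interface construction of Theorem~\ref{thm:shock} for each shock. Each shock $\Gamma_i$ gets its own family of $M$ disjoint shifted strips, and these families are pairwise disjoint for small $h$. A pigeonhole argument on each family gives a shift $j_i$ with $\int_{\Lambda_{j_i}}|\nabla\cdot\mathbf F(u_{\theta^*})|\lesssim|\ln h|/M$. Away from shocks, $\tilde u_h$ is the continuous $\mathbb Q^1$ interpolant as in Theorem~\ref{thm:rarefaction}. Inserting $k_h=\tilde u_h$ into the loss bound and applying \eqref{eq:cellwise_entropy_ibp}, the skeleton terms cancel except on the shifted interfaces $\Gamma_{j_i}$, the rarefaction-boundary interfaces, and the event patches.
\begin{itemize}
\item Each shifted interface is controlled by the entropy condition, which costs $O(Mh+h+|\ln h|/M)$, as in \eqref{eq:estimate_int_Gamma_new}.
\item Rarefaction interfaces cost $O(h|\ln h|)$, as in \eqref{eq:Gamma_estimate_rarefaction}.
\item The volume terms cost $O(Mh+h|\ln h|)$.
\end{itemize}
Choosing $M\sim(|\ln h|/h)^{1/2}$ gives $(h|\ln h|)^{1/2}\le h^{1/2}|\ln h|$. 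The initial error and $\|u(\cdot,T)-\tilde u_h(\cdot,T)\|_{L^1}$ are $O(Mh)$, which completes the estimate.

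\textbf{Main obstacle.} The hardest step is the event patches, especially where the shifted strips must end. At an interaction point, the incoming shifted interfaces must join the outgoing one. At a shock-birth point, the shock emanates with vanishing strength, so the shift cannot be applied uniformly near $(x_q,\tau_q)$. I would first try truncating the shifted interfaces at distance $r$ from the event point and bounding all uncancelled terms inside the truncated ball crudely, by $|\llbracket\tilde u_h\rrbracket|\times$ length plus area. For shock birth, I would use that the jump there is $O((t-\tau_q)^{1/2})$ to choose $r\sim h^{1/2}$, which reproduces the $h^{1/2}|\ln h|$ bound of Theorem~\ref{thm:shock_smooth_initial}. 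Near interactions, the jump is $O(1)$, so truncation at $r\sim Mh$ costs $O(Mh)$, which is acceptable.
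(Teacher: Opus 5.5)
Your architecture is the paper's. You glue a global CPwL competitor from the local constructions and use Lemma~\ref{lem:loss_cpwl_approx} to get $\mathscr L(\theta^*)\lesssim h|\ln h|$, hence $\int_{\Omega_T}|\nabla\cdot\mathbf F(u_{\theta^*})|\,\d\z\lesssim|\ln h|$. You then test with one global DPwP function carrying shifted interfaces and sum the local costs. Your truncation at interaction points plays the role of the paper's excision of an $h^{1/2}$-neighbourhood of $\Sigma$, and is fine.

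\textbf{The gap is the rate bookkeeping in Part two.} Along the newborn shock the one-sided states are not uniformly $W^{2,\infty}$: Assumption~\ref{assump:piecewise_smooth}(iv) only gives $|\nabla u^\pm|\lesssim (t-\tau_q)^{-1}$ up to $\tau_q+\delta_*$. The $O(Mh)$ shift is therefore paid with this weight twice:
\begin{itemize}
\item when comparing $\tilde u_h^\pm$ on $\widetilde\Gamma_j$ with the true traces on $\Gamma$;
\item in the volume term over the shifted strips, where $\tilde u_h$ interpolates the extended post-shock branch.
\end{itemize}
Each is of size $Mh\int_{\tau_q+r}^T (t-\tau_q)^{-1}\,\d t\simeq Mh\,|\ln r|$, cf.\ \eqref{eq:shock_birth_volume_tilde}--\eqref{eq:shock_birth_interface_term}. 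Truncating at $r\sim h^{1/2}$ only halves the logarithm. You cannot take $r$ of order one, because your crude interior bound (jump times length) is of order $r^{3/2}$.

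So the birth shock costs $|\ln h|/M+Mh|\ln h|$, not $|\ln h|/M+Mh$. With your global choice $M\sim(|\ln h|/h)^{1/2}$ this is $h^{1/2}|\ln h|^{3/2}$, which is weaker than \eqref{eq:main_1d}. A symptom is that your listed costs sum to $(h|\ln h|)^{1/2}$, strictly better than the theorem: the very term that produces $h^{1/2}|\ln h|$ in Theorem~\ref{thm:shock_smooth_initial} is missing from your list. The fan side of a compound-wave shock can behave the same way, since Assumption~\ref{assump:piecewise_smooth}(ii) gives $W^{2,\infty}$ only on the non-rarefaction side.

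\textbf{The repair} is the paper's choice $M=\lceil h^{-1/2}\rceil$ in every shifted-interface argument. Then every piece costs at most $|\ln h|/M+Mh|\ln h|\lesssim h^{1/2}|\ln h|$. This includes regular shocks, whose pigeonhole step also has only the global residual bound $|\ln h|$. The endpoint errors remain $O(Mh)$.

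\textbf{A smaller correction in Part one.} The birth patch of the CPwL competitor must have size $O(h)$, like the set $E_h$ in the appendix, with the newborn shock resolved by the $h^2$-strip outside it. An $h^{1/2}$-patch has area of order $h$, which violates $|\Omega_s|\lesssim h^2$ in Lemma~\ref{lem:loss_cpwl_approx}. Inside such a patch the shock is no longer resolved, so the $O(h|\ln h|)$ residual bound you quote is lost. The $h^{1/2}$ truncation belongs only to the DPwP step.
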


The proof is obtained by decomposing the entropy solution into finitely many elementary waves, shock interactions, compound waves, and nondegenerate shock-birth pieces, each of which is handled by the corresponding local analysis. The complete argument is given in Appendix~\ref{appendix:proof_1d_thm}.

\section{Error estimate for $d$-dimensional scalar hyperbolic conservation laws}\label{sec:multiD}
We now extend the analysis to several space dimensions. When the entropy solution contains only one shock hypersurface, or one rarefaction region with smooth non-self-intersecting boundary, see for example Figure~\ref{fig:solution_3d}, the arguments of Sections~\ref{sec:shock_wave} and \ref{sec:rarefaction_wave} carry over verbatim after replacing shock curves by shock hypersurfaces and arc-length by $\mathcal H^d$-measure. We therefore focus on the genuinely new situation in several space dimensions: the interaction of several shock hypersurfaces. The main additional difference is the codimension-two intersection set of the shock hypersurfaces; see Figure~\ref{fig:3d_shock_intersect}. The proof below follows the same strategy as the one-dimensional proof in Section~\ref{sec:1d}: first we show the loss can be made sufficiently small, then construct a suitable DPwP function $\tilde{u}_h$ to complete the proof. Below, we state the assumptions for a representative class of piecewise smooth solutions with multiple shock interactions.

\begin{figure}[htbp!]
\centering
\begin{subfigure}[b]{0.35\textwidth}
\includegraphics[width=\linewidth]{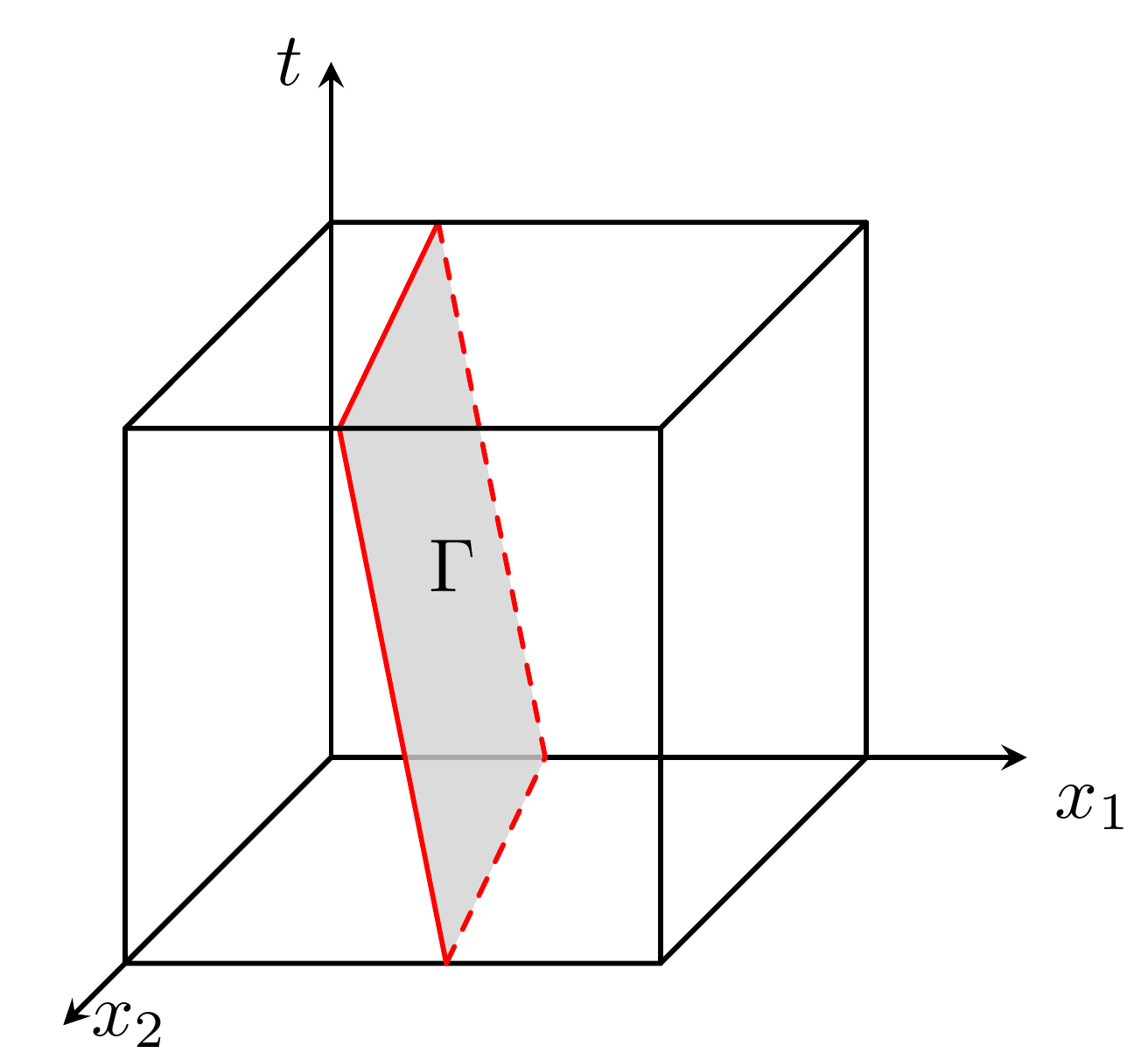}
\caption{Single shock wave solution with shock surface $\Gamma$}
\end{subfigure}
\qquad
\begin{subfigure}[b]{0.35\textwidth}
\includegraphics[width=\linewidth]{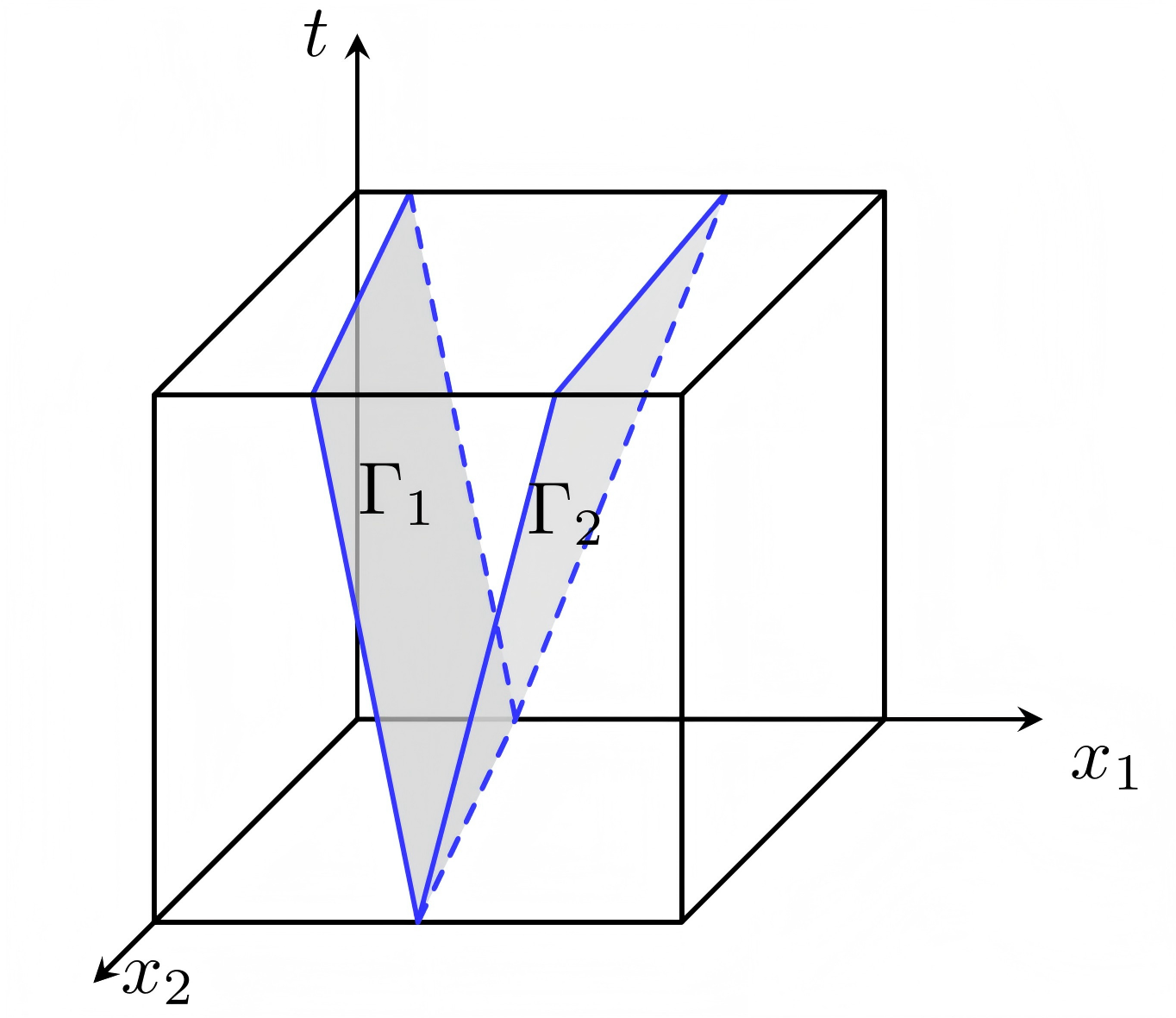}
\caption{Single rarefaction wave solution with rarefaction boundaries $\Gamma_1$ and $\Gamma_2$}
\end{subfigure}
\caption{Solutions in high-dimensional cases with a single shock and rarefaction waves.}
\label{fig:solution_3d}
\end{figure}

\begin{assumption}[Piecewise $C^2$ solution with finitely many shock hypersurfaces]\label{assump:shock_3d}
Let \(u\) be the entropy solution of \eqref{eq:scalar-HCL}.  
Assume that there exist finitely many compact \(C^2\) hypersurfaces 
\(\Gamma_i \subset \Omega_T\), \(i = 1, \dots, N_\Gamma\), and define 
\(
\Gamma := \bigcup_{i=1}^{N_\Gamma} \Gamma_i,
\)
such that \(\Gamma\) has finite \(d\)-dimensional Hausdorff measure.  
The following conditions are assumed to hold:
\begin{enumerate}[(i)]
\item $u\in C^2$ on each connected component of $\Omega_T\setminus\Gamma$ and satisfies $\nabla\cdot \mathbf F(u)=0$ there. Along each $\Gamma_i$, the traces of $u$ satisfy the Rankine-Hugoniot relation and the reformulated entropy condition \eqref{eq:reformulated_entropy_condition}.
\item For \(i\neq j\), if \(\Gamma_i\cap\Gamma_j\neq\emptyset\), then \(\Gamma_i\cap\Gamma_j\) is a compact \(C^2\) \((d-1)\)-dimensional manifold. Moreover, with
\(\Sigma:=\bigcup_{1\le i<j\le N_\Gamma}(\Gamma_i\cap\Gamma_j)\),  we assume that \(\Sigma\) has finite \((d-1)\)-dimensional Hausdorff measure.
\item The intersections are transversal. Equivalently, there exist constants $r_0>0$ and $c_{\rm sep}>0$ such that for every $0<\delta\le r_0$, if $S_1$ and $S_2$ are two distinct connected components of $\Gamma\setminus\Sigma_\delta$, where $\Sigma_\delta:=\big\{\z\in\Omega_T:\dist(\z,\Sigma)<\delta\big\}$, 
then $\dist(S_1,S_2)\ge c_{\rm sep}\delta$.
\end{enumerate}
\end{assumption}

\begin{figure}[htbp!]
\centering
\begin{subfigure}[b]{0.35\textwidth}
\includegraphics[width=\linewidth]{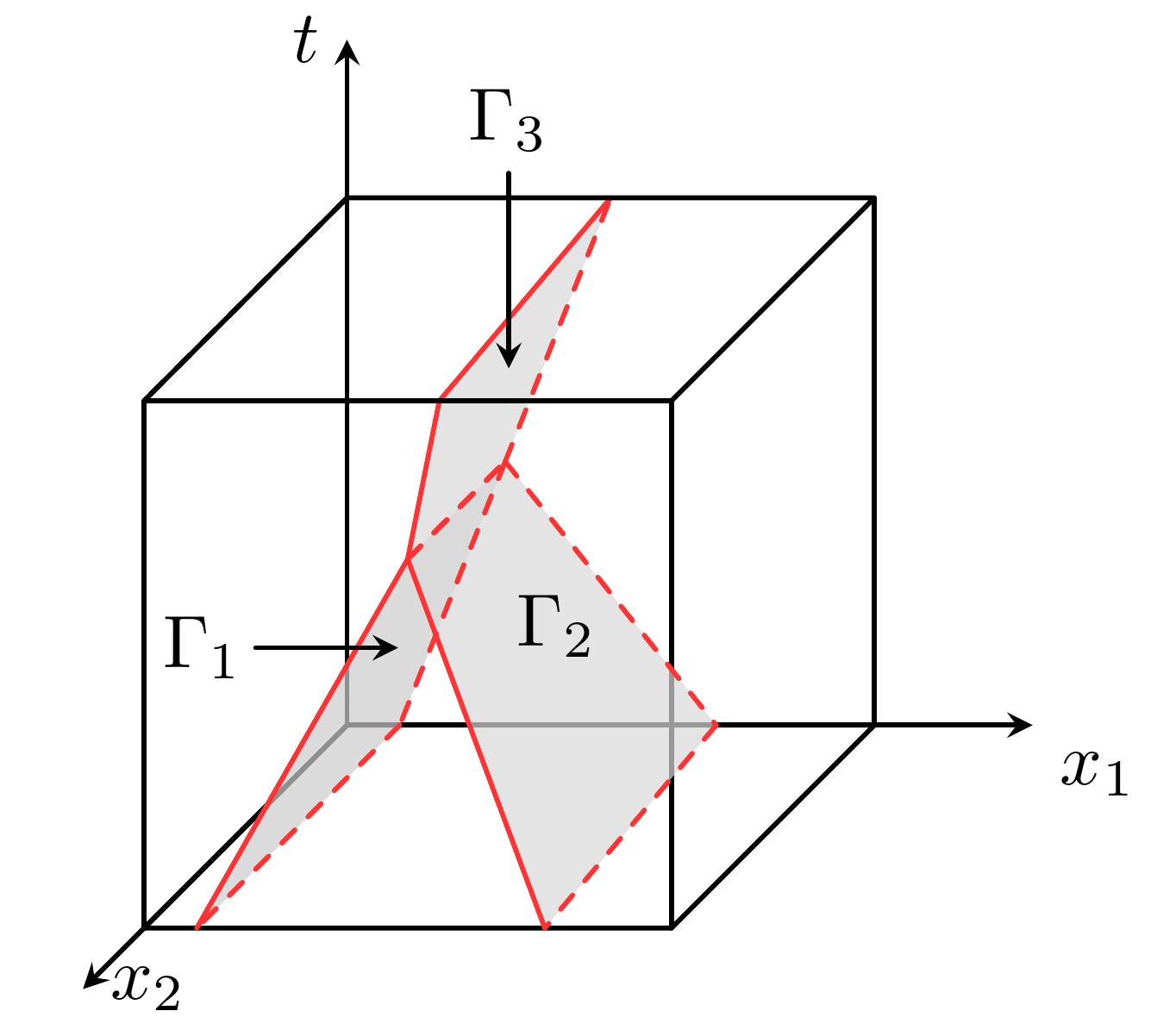}
\caption{Solution with multiple shock waves}
\label{fig:3d_shock_intersect}
\end{subfigure}
\quad
\begin{subfigure}[b]{0.4\textwidth}
\includegraphics[width=\linewidth]{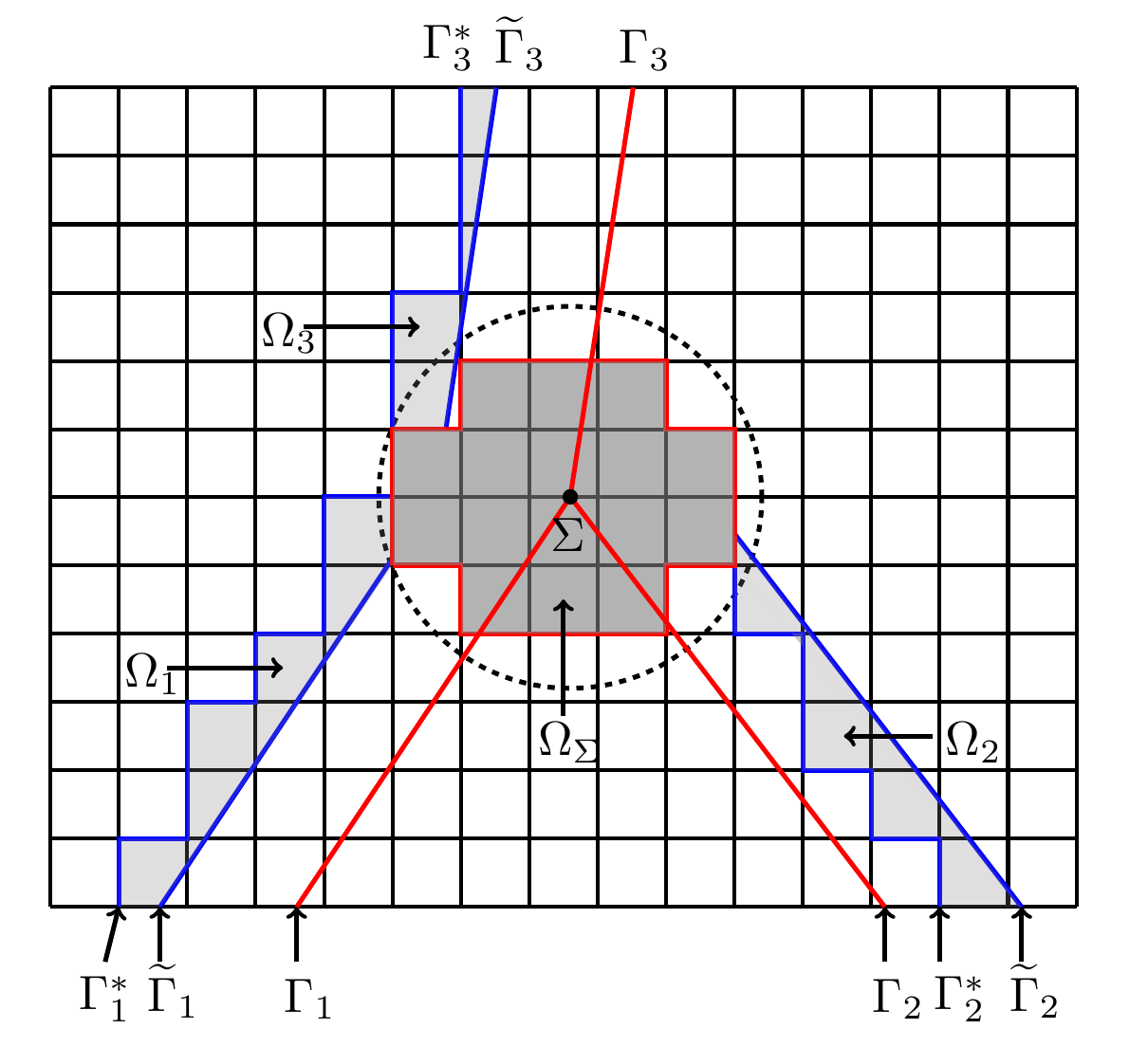}
\caption{Strategy for selecting the $k_h$}
\label{fig:construction_K_3d}
\end{subfigure}
\caption{Schematic diagram of a solution with multiple shock waves in $d$ dimensions.}
\end{figure}

 Following a construction similar to that in Lemma~\ref{lem:Lu_shock}, we can construct a specific CPwL $\hat{u}$, which can be approximated by a $\tanh$ neural network $u_{\hat{\theta}}$, such that the loss is sufficiently small.
\begin{lemma}\label{lem:Lu_shock_3d}
Let $u$ be the entropy solution of the scalar conservation law \eqref{eq:scalar-HCL} satisfying Assumption~\ref{assump:shock_3d}. Then there exists a clipped $\tanh$ neural network $u_{\hat\theta} = \Pi_c\bigl(U_{\hat\theta}^{\rm raw}\bigr)$ of form~\eqref{def:clipped-NN} with depth $O(d+1)$ and at most $O(h^{-(d+1)})$ neurons in each hidden layer, such that
	\begin{equation}\label{eq:Lu_shock_3d}
		\mathscr{L}(\hat{\theta}) \lesssim h.
	\end{equation}
\end{lemma}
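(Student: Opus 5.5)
We follow the two-step strategy of Lemma~\ref{lem:Lu_shock}. First we construct a CPwL function $\hat u$ on a conforming simplicial partition $\mathcal T$ with $\#\mathcal T=O(h^{-(d+1)})$ and bounded patch complexity, satisfying the analogue of \eqref{eq:hat_u_basic}. Then we pass to a clipped $\tanh$ network through Lemma~\ref{lem:loss_cpwl_approx}.

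\textbf{Construction of $\hat u$.} Set $\varepsilon=h^2$. Around each $\Gamma_i$, away from the intersection set, we take a tubular layer $\widetilde\Lambda_{\Gamma_i}$ of normal thickness $O(\varepsilon)$. Its two boundary faces are piecewise linear, and each facet lies in a single background cell. We triangulate the layer anisotropically, with tangential size $O(h)$ and normal size $O(\varepsilon)$, and inside it $\hat u$ interpolates linearly in the normal direction between the traces $u^\pm$. Near $\Sigma$ we use a neighbourhood $\Sigma_{Ch}$ of width $O(h)$. There $\hat u$ is any CPwL function on a shape-regular mesh of size $h$ that is bounded by $c/2-\delta_c$ and matches the neighbouring layers and interpolants. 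Elsewhere $\hat u$ is the nodal $P^1$ interpolant of $u$. As in Step~3 of Lemma~\ref{lem:Lu_shock}, we first apply a small translation of size $O(h)$ to arrange the analogue of \eqref{eq:condition_Gamma}: the $\mathcal H^d$-measure of the part of $\Gamma$ within $Ch^2$ of $\partial\Lambda_h$ should be $O(h)$. Averaging over shifts $\delta\in[0,h]$ works, because the expected measure of this bad set is $O(h^2/h)=O(h)$. The shift changes $\mathcal L_{\rm ibc}$ by only $O(h)$.

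\textbf{Estimates.} We check the basic bounds first. The boundary term vanishes, and $\|\hat u(\cdot,0)-u_0\|_{L^1}\lesssim h$. For the regularization term, $\int|\nabla\cdot\mathbf F(\hat u)|\lesssim 1$ because the layers have volume $O(\varepsilon)$ and gradients $O(\varepsilon^{-1})$. The region $\Sigma_{Ch}$ has volume $O(h^2\mathcal H^{d-1}(\Sigma))$ and gradients $O(h^{-1})$, so it contributes $O(h)$.

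For $\J_{\rm ent}(\hat u;k_h)$ we split the cells into three groups.
\begin{enumerate}
\item[(a)] Cells away from $\Gamma$ contribute $O(h)$ by interpolation, as in \eqref{eq:away_shock}.
\item[(b)] Cells meeting $\Sigma_{Ch}$ contribute at most $\int_{\Sigma_{Ch}}|\nabla\cdot\mathbf F(\hat u)|\lesssim h^{-1}|\Sigma_{Ch}|\lesssim h$. This uses $|\sgn|\le 1$ and the finite $(d-1)$-measure of $\Sigma$.
\item[(c)] Cells meeting $\Gamma\setminus\Sigma_{Ch}$ are handled by the cellwise argument \eqref{eq:decompose_loss_K}--\eqref{eq:2nd_K}. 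There the integral over the layer becomes $\int_{\Gamma\cap K}\llbracket\mathcal Q(u,k_h)\rrbracket\cdot\mathbf n_\Gamma\,\d s\le 0$ by \eqref{eq:reformulated_entropy_condition}, with error $O(h^{d+1}+\varepsilon h^{d-1}+\mathcal H^d(B_K))$. Summing over the $O(h^{-d})$ such cells gives $O(h)$.
\end{enumerate}
Transversality, Assumption~\ref{assump:shock_3d}(iii), is used in (c). With $\delta=Ch$, distinct sheets of $\Gamma\setminus\Sigma_{Ch}$ are $c_{\rm sep}Ch$ apart. Choosing $C$ large, each such background cell therefore meets at most one sheet. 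The layers of different sheets are then disjoint and the single-shock analysis applies cell by cell.

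\textbf{Network and main obstacle.} To finish, we take $\Omega_s$ to be the union of the layers. Then $|\Omega_s|\lesssim\varepsilon=h^2$, while $\Sigma_{Ch}$ belongs to $\Omega_r$ on an $O(h)$ shape-regular mesh. We also have $\|\nabla\hat u\|_{L^1}\le C$ and $\int_{\Omega_r}|\nabla\cdot\mathbf F(\hat u)|\lesssim h$. Lemma~\ref{lem:loss_cpwl_approx} then yields $\mathscr L(\hat\theta)\lesssim h$, with the stated depth and width.

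The main obstacle is the conforming meshing near $\Sigma$. The anisotropic layers must be glued to the shape-regular patch around $\Sigma_{Ch}$ without degrading the patch complexity. They must also be glued so that $\hat u$ stays continuous and bounded there. Our first attempt is to end each layer on the boundary of $\Sigma_{Ch}$. We would fill the transition cells with a bounded number of simplices per background cell, accepting $O(h^{-1})$ gradients there. These transition cells have total volume $O(h^2)$, so the $O(h)$ budget is kept.
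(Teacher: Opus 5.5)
Your proposal is correct and follows the paper's overall plan: an $O(h)$ shift to control the part of $\Gamma$ near the mesh skeleton, layers of thickness $\varepsilon\sim h^2$, excision of an $O(h)$-neighbourhood of $\Sigma$ filled by a bounded CPwL function, a three-way split of the cells, and finally Lemma~\ref{lem:loss_cpwl_approx}. Where you differ is in how transversality is used.

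The paper excises only the cells meeting the $O(h^2)$-neighbourhood of $\Sigma$, so a remaining cell may contain several sheets. It therefore applies Assumption~\ref{assump:shock_3d}(iii) at scale $\delta\sim h^2$ and takes $\varepsilon=c'h^2$ with $c'<c_{\rm sep}/2$, so that the layers inside one cell stay disjoint. You apply (iii) at scale $\delta=Ch$ with $Cc_{\rm sep}>1$. Then every non-excised cell meets exactly one sheet, and the single-shock cell analysis of Lemma~\ref{lem:Lu_shock} applies verbatim. This is cleaner: no layer constant has to be tuned against $c_{\rm sep}$. In the paper's version, layers that follow each curved sheet only to within $O(h^2)$ must fit between sheets that may be just $c_{\rm sep}h^2$ apart. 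The only price you pay is a larger constant in the $O(h^2)$ volume of the excised tube.

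On the excised cells, the paper bounds the entropy term with the cellwise identity \eqref{eq:cellwise_entropy_ibp}, which needs only $L^\infty$ bounds on $\hat u$ and $k_h$. Your bound through $\int|\nabla\cdot\mathbf F(\hat u)|$ instead needs gradient control there, and one of your claims about it is wrong. In the transition simplices where a layer ends, the gradient is $O(\varepsilon^{-1})$, not $O(h^{-1})$, because the shared face already carries an $O(1)$ variation across width $\varepsilon$. The estimate still closes: these slivers have volume $O(\varepsilon h^{d})$ in each of the $O(h^{-(d-1)})$ affected cells, so they contribute $O(h)$ to $\int|\nabla\hat u|$. Because they are anisotropic, they should be assigned to $\Omega_s$ rather than $\Omega_r$, which is harmless.

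Finally, in several dimensions the averaging shift should range over $[0,h]^d$, not $[0,h]$, as the paper does.
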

The proof of this lemma is deferred to Appendix~\ref{appendix:proof_lemma_nd}.

From Lemma~\ref{lem:Lu_shock_3d}, we know that the minimizer \(u_{\theta^*}\) of the minimization problem \eqref{eq:NN_min_problem} must achieve a loss no greater than that of the network constructed in Lemma~\ref{lem:Lu_shock_3d}. Hence,
\(
\mathscr{L}(\theta^*) \leq \mathscr{L}(\hat{\theta}) \lesssim h.
\)
Based on the stability provided by the Kru\v{z}kov-type loss, and following the arguments used in the proof of Theorem~\ref{thm:shock}, we obtain the following theorem.

\begin{theorem}[Multiple shock hypersurfaces]\label{thm:more_general_case}
Let $u$ be the entropy solution of \eqref{eq:scalar-HCL} satisfying Assumption~\ref{assump:shock_3d}. Let $h$ be the mesh size of the background mesh $\Lambda_h$. Let \(u_{\theta^*}\) be a clipped \(\tanh\) neural network of the form \eqref{def:clipped-NN} with  depth $O(d+1)$ and at most $O(h^{-(d+1)})$ neurons in each hidden layer, where $\theta^*$ is a minimizer of the minimization problem \eqref{eq:NN_min_problem}. Then there exists a constant $C>0$, independent of $h$, such that
\begin{equation}\label{eq:more_general_case}
\|u_{\theta^*}(\cdot, T)-u(\cdot, T)\|_{L^1(\Omega)}\le Ch^{1/2}.
\end{equation}
\end{theorem}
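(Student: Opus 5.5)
We follow the two-stage structure of Theorem~\ref{thm:shock}. By Lemma~\ref{lem:Lu_shock_3d} and minimality of $\theta^*$, we have $\mathscr L(\theta^*)\lesssim h$. Hence $\J_{\mathrm{ent}}(u_{\theta^*};k_h)\lesssim h$ for every $k_h\in V_h^c$, $\L_{\mathrm{ibc}}(u_{\theta^*})\lesssim h$, and $\int_{\Omega_T}|\nabla\cdot\mathbf F(u_{\theta^*})|\,\d\z\lesssim 1$ from $\L_{\mathrm{reg}}$. It then remains to build one DPwP comparison function $\tilde u_h\in V_h^c$ and run the cellwise identity \eqref{eq:cellwise_entropy_ibp}.

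\textbf{Step 1 (regularization near $\Sigma$ and strip selection).} Set $\delta:=Mh$ with $M\sim h^{-1/2}$ and remove the tube $\Sigma_{\delta}$ around the codimension-two intersection set. By Assumption~\ref{assump:shock_3d}(ii), $|\Sigma_\delta|\lesssim\delta^2$, and the area of $\partial\Sigma_\delta$ is $\lesssim\delta$. On $\Gamma\setminus\Sigma_\delta$ the components $S_i$ are separated by at least $c_{\rm sep}\delta$ (item (iii)). For each $S_i$ choose a fixed transversal direction, for instance a spatial unit vector $\mathbf e_i$ with $\mathbf e_i\cdot\mathbf n_{S_i}\ge c>0$ locally, and build $M$ disjoint shifted copies of the $O(h)$-wide cell strip around $S_i$, shifted by $jL$ with $L=O(h)$ and $j\le M'$. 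Here $M'$ is chosen with $M'h\le c_{\rm sep}\delta/4$ so that the shifted strips of different components do not collide. Summing $\int|\nabla\cdot\mathbf F(u_{\theta^*})|$ over these disjoint strips and using the $\L_{\mathrm{reg}}$ bound picks, for each $i$, an index $j_i$ whose strip carries at most $C/M$ of this mass.

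\textbf{Step 2 ($\tilde u_h$ and the identity).} Define $\tilde u_h$ as the elementwise $\mathbb Q^1$ interpolant of $u$ (or of smooth one-sided extensions $u^\pm_{\rm ext}$) on each region cut out by the shifted mesh-aligned interfaces $\Gamma_{j_i}$ outside $\Sigma_\delta$. Inside the cells meeting $\Sigma_\delta$, take any bounded $\mathbb Q^1$ values with $\|\tilde u_h\|_{W^{1,\infty}_h}\le c$. Inserting $k_h=\tilde u_h$ into \eqref{eq:cellwise_entropy_ibp} and summing produces the following terms.
\begin{itemize}
\item The final- and initial-time terms, together with the lateral boundary term, which is bounded by $C\L_{\mathrm{ibc}}$ as in \eqref{eq:lateral_boundary_bound}.
\item The volume terms, bounded by $h+Mh+|\Sigma_\delta|/h\cdot h$. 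Since $|\nabla\cdot\mathbf F(\tilde u_h)|\lesssim h^{-1}$ near $\Sigma$, this contribution is $\lesssim\delta^2/h=M^2h$.
\item The jump terms on the interfaces $\Gamma_{j_i}$, which are transported back to the true shocks $S_i$ exactly as in Step 5 of Theorem~\ref{thm:shock}. The entropy condition \eqref{eq:reformulated_entropy_condition} with $k=u_{\theta^*}$ gives a nonpositive main part. The remainders are $O(Mh)$ from the offset, $O(M^{-1})$ from the selected strip, and $O(h)$ from the lateral caps.
\item The remaining jumps of $\tilde u_h$ on $\partial\Sigma_\delta$-cells, bounded by $|\llbracket\tilde u_h\rrbracket|\lesssim1$ times an area $\lesssim\delta$, i.e.\ $O(Mh)$.
\end{itemize}

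\textbf{Step 3 (conclusion).} The previous step yields
\[
\|u_{\theta^*}(\cdot,T)-\tilde u_h(\cdot,T)\|_{L^1}\le\|u_{\theta^*}(\cdot,0)-\tilde u_h(\cdot,0)\|_{L^1}+C(h+M^{-1}+Mh+M^2h).
\]
The term $M^2h$ is dangerous: with $M\sim h^{-1/2}$ it is $O(1)$. This is the main obstacle. We would fix it by decoupling the tube radius from the shift, taking $\delta\sim h^{1/2}$ but bounding the contribution of $\Sigma_\delta$ without the $h^{-1}$ gradient. Concretely, inside $\Sigma_\delta$ let $\tilde u_h$ interpolate $u$ itself on the shape-regular background mesh, with unshifted discontinuities cutting cells. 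The only bad cells are then those crossed by $\Gamma$ within $\Sigma_\delta$, numbering $\lesssim\delta h^{-(d)}\cdot h^{d-1}$ in measure-weighted count. Their volume contribution is $\lesssim h^{-1}\cdot h\cdot\mathcal H^d(\Gamma\cap\Sigma_\delta)\lesssim\delta$, and their face jumps contribute an area $\lesssim\mathcal H^d(\Gamma\cap\Sigma_\delta)\lesssim\delta$ on which the entropy sign is used approximately. This gives a total contribution $O(\delta)=O(h^{1/2})$. With $M\sim h^{-1/2}$, all terms are then $O(h^{1/2})$. Finally, the initial term is bounded by $\L_{\mathrm{ibc}}+\|u_0-\tilde u_h(\cdot,0)\|_{L^1}\lesssim h+Mh$. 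Likewise $\|u(\cdot,T)-\tilde u_h(\cdot,T)\|_{L^1}\lesssim Mh+\delta$, since the shifts and the $\Sigma_\delta$ region occupy that measure. The triangle inequality then gives \eqref{eq:more_general_case}.
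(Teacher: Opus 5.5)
Your architecture is the paper's: a tube of radius $\delta\sim h^{1/2}$ around the codimension-two set $\Sigma$; the separation in Assumption~\ref{assump:shock_3d}(iii) to keep the shifted strips of different components disjoint; strip selection with $M\sim h^{-1/2}$; transport of the interface jump back to the true shock; and an $O(h^{1/2})$ price for jumps across the tube boundary, since $\mathcal H^d(\partial\Sigma_\delta)\lesssim\delta$.

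The $M^2h$ ``obstacle'' in your Step~3 is self-inflicted. It appears only because you let the filler inside the tube have gradients of size $h^{-1}$. The paper simply sets $\tilde u_h\equiv 0$ on $\Omega_\Sigma$. The volume term $\int\nabla\cdot\mathbf F(\tilde u_h)\,\sgn(u_{\theta^*}-\tilde u_h)$ then vanishes there. The only cost is the jump term on $\partial\Omega_\Sigma$, which you already bounded by $O(\delta)$. No decoupling argument is needed.

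Your repair, interpolating the discontinuous $u$ itself on the tube cells, does give the correct $O(\delta)$ count for the volume term. As stated, however, it need not be an admissible test function.
\begin{itemize}
\item On a cell $K$ cut by $\Gamma$, some edge joins nodes on opposite sides of the shock. Hence $h\|\nabla\tilde u_h\|_{L^\infty(K)}\ge|\llbracket u\rrbracket|-O(h)$, while $\|\tilde u_h\|_{L^\infty(K)}\ge|\llbracket u\rrbracket|/2-O(h)$.
\item The paper only requires $\|u_0\|_{L^\infty}<c/2$. The jump can therefore be close to $c$, and then $\|\tilde u_h\|_{W_h^{1,\infty}}$ can exceed $c$.
\item In that case $\tilde u_h\notin V_h^c$, and the bound $\J_{\mathrm{ent}}(u_{\theta^*};\tilde u_h)\lesssim h$ is not available for it.
\end{itemize}
Replacing the filler by a constant on those cells, or on the whole tube, costs nothing. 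The extra face jumps are bounded by $O(1)$ times an area $\lesssim\delta$, and this brings you exactly to the paper's proof. Your appeal to the entropy sign for face jumps inside the tube is also unnecessary: the crude area bound already suffices.
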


\begin{proof}
 The proof is a multidimensional analogue of Theorem~\ref{thm:shock}. The only new point is the treatment of the codimension-two intersection set $\Sigma$.
 
 Since $\theta^*$ minimizes the loss,
 \begin{equation}\label{eq:md_loss_min}
 	\L(u_{\theta^*})\le \L(u_{\hat\theta})\lesssim h.
 \end{equation}
 In particular,
 \begin{equation}\label{eq:md_J_min}
 	\J_{\mathrm{ent}}(u_{\theta^*};k_h)\lesssim h,
 	\quad \forall k_h\in V_h^c, \quad \int_{\Omega_T}|\nabla\cdot\mathbf F(u_{\theta^*})|\,\d\z
 	\le h^{-1}\L_{\rm reg}(u_{\theta^*})
 	\lesssim 1.
 \end{equation}

\noindent\textbf{Step 1: Construction of a DPwP test function.}
We now construct a DPwP test function \(k_h=\tilde u_h\) that plays the
same role as the shifted-interface test function in the one-dimensional
shock proof. The basic obstruction is that the physical discontinuity set
\(\Gamma=\cup_i\Gamma_i\) is not, in general, aligned with the background
mesh \(\Lambda_h\). Since a DPwP function may jump only across mesh faces,
a direct cellwise interpolant of the exact solution cannot have its jump
set exactly on \(\Gamma\). Instead, its jump set must be replaced by a
mesh-aligned hypersurface, which we denote by $\Gamma^*:=\bigcup_i \Gamma_i^*$. 
Consequently, the entropy condition cannot be applied directly on
\(\Gamma^*\), because the entropy admissibility condition is known only on
the true shock hypersurfaces \(\Gamma_i\).

To overcome this mismatch, we use the same shifted-interface mechanism as
in Theorem~\ref{thm:shock}. For each smooth component of the shock set, we
choose a nearby mesh-aligned hypersurface \(\Gamma_i^*\) and an auxiliary
surface \(\widetilde\Gamma_i\) parallel to the true shock surface
\(\Gamma_i\). The surface \(\Gamma_i^*\) is chosen by a strip-selection
argument so that the residual of \(u_{\theta^*}\) is small in the thin
region enclosed between \(\Gamma_i^*\) and \(\widetilde\Gamma_i\). We then
define \(\tilde u_h\) by interpolating the appropriate smooth one-sided
extensions of \(u\) on the two sides of \(\Gamma_i^*\). This makes
\(\tilde u_h\in V_h^c\) and ensures that all its jumps occur on mesh faces.
Later, the jump integral over \(\Gamma_i^*\) will be transferred through
the thin region to \(\widetilde\Gamma_i\), and then compared with the
entropy-admissible jump on the true surface \(\Gamma_i\).

 The main additional difficulty is the presence of the intersection set \(\Sigma\), where several
discontinuity surfaces may meet. To separate this singular region from the shifted-interface construction, we fix a constant
\(\kappa_{\rm sh}>0\) that bounds the maximal normal shift used below. We choose these constants so that
\[
2\kappa_{\rm sh}<c_{\rm sep},
\]
where \(c_{\rm sep}\) is the separation constant in Assumption~\ref{assump:shock_3d}. For \(h\)
sufficiently small so that \(\delta_h:=h^{1/2}\le r_0\), define
\begin{align}\label{eq:def_tilde_Sigma_revised2}
	\widetilde\Sigma^\varepsilon
	=
	\big\{\z\in\Omega_T:\operatorname{dist}(\z,\Sigma)<\delta_h\big\}.
\end{align}
Then \(\Gamma\setminus\widetilde\Sigma^\varepsilon\) is a finite union of pairwise disjoint smooth
hypersurface components. Moreover, for any two distinct components \(S_i\) and \(S_j\) of
\(\Gamma\setminus\widetilde\Sigma^\varepsilon\), Assumption~\ref{assump:shock_3d} gives
\[
\operatorname{dist}(S_i,S_j)\ge c_{\rm sep}\delta_h
= c_{\rm sep}h^{1/2}.
\]
In the shifted-interface construction below, all normal shifts are chosen to have size at most
\(\kappa_{\rm sh}h^{1/2}\). Hence, after shifting both components, their distance remains bounded below by
\[
(c_{\rm sep}-2\kappa_{\rm sh})h^{1/2}>0.
\]
Therefore, the shifted hypersurfaces remain disjoint.

For each component \(\Gamma_i\), we choose a nearby hypersurface
\(\widetilde\Gamma_i\) parallel to \(\Gamma_i\) by the same mesh-strip
pigeonhole argument as in Theorem~\ref{thm:shock}. More precisely, within the
admissible normal-offset band of width \(O(\kappa_{\rm sh} h^{1/2})\) around
\(\Gamma_i\), one can select
\[
M \sim h^{-1/2}
\]
pairwise disjoint candidate strips of thickness comparable to \(h\), each
bounded by the mesh interface \(\Gamma_i^*\) and a hypersurface parallel to
\(\Gamma_i\). Since
\[
\int_{\Omega_T} |\nabla\cdot \mathbf F(u_{\theta^*})|\,d\z \lesssim 1,
\]
one of these strips, denoted by \(\Omega_i\), satisfies
\[
\int_{\Omega_i} |\nabla\cdot \mathbf F(u_{\theta^*})|\,d\z
\lesssim \frac{1}{M}
\lesssim h^{1/2}.
\]
We denote by \(\widetilde\Gamma_i\) the corresponding parallel hypersurface (see the schematic diagram in Figure~\ref{fig:construction_K_3d}).
        
Let
\[
\Omega_\Sigma
:=
\bigcup\big\{K\in\Lambda_h:\ K\cap\widetilde\Sigma^\varepsilon\neq\emptyset\big\}.
\]
On \(\Omega_T\setminus\Omega_\Sigma\), we define \(\tilde u_h\) exactly as in Section~\ref{sec:shock_wave}: we modify \(u\) near each \(\Gamma_i\setminus\Omega_\Sigma\) so that the jump set aligns with \(\Gamma_i^*\setminus\Omega_\Sigma\), and then take the piecewise linear interpolant over \(\Lambda_h\). 
Inside \(\Omega_\Sigma\), we define \(\tilde u_h\equiv0\). 
Since \(\Sigma\) is piecewise \(C^2\) with bounded \(\mathcal H^{d-1}\)-measure, the measure of \(\Omega_\Sigma\) is small, so its contribution is negligible (see Figure~\ref{fig:3d_shock_intersect} for a schematic illustration of the construction of \(\tilde u_h\)).

We now record the endpoint estimate for \(\tilde u_h\). Let \(\tau\in\{0,T\}\). Away from \(\Omega_\Sigma\), the function
\(\tilde u_h(\cdot,\tau)\) is obtained exactly as in the proof of Theorem~\ref{thm:shock}: the smooth-region
interpolation error is \(O(h)\), while the shifted-interface strips have thickness \(O(h^{1/2})\),
hence contribute \(O(h^{1/2})\) in \(L^1(\Omega)\). On the other hand, in \(\Omega_\Sigma\) we have
\(\tilde u_h\equiv 0\). Since \(\Sigma\cap\{t=\tau\}\) has dimension at most \(d-2\), its
\(h^{1/2}\)-tubular neighborhood in \(\Omega\) has \(d\)-dimensional measure \(O(h)\), so the
contribution from \(\Omega_\Sigma\) is \(O(h)\). Therefore
\begin{equation}\label{eq:u_h_3d}
\|\tilde u_h(\cdot,\tau)-u(\cdot,\tau)\|_{L^1(\Omega)}\lesssim h^{1/2},
\qquad \tau=0,T.
\end{equation}


\noindent\textbf{Step 2: Error identity and conclusion.}
Taking $k_h=\tilde u_h$ in \eqref{eq:md_J_min} and applying the cellwise identity \eqref{eq:cellwise_entropy_ibp}, we obtain
\begin{equation}\label{eq:md_error_identity_raw}
\sum_{K\in\Lambda_h}\int_{\partial K}(\mathbf F(u_{\theta^*})-\mathbf F(\tilde u_h))\sgn(u_{\theta^*}-\tilde u_h)\cdot\mathbf n\,\d s
+\int_{\Omega_T}\nabla\cdot\mathbf F(\tilde u_h)\,\sgn(u_{\theta^*}-\tilde u_h)\,\d\z
\lesssim h.
\end{equation}
Hence, the contributions over those interior faces across which \(\tilde u_h\) is continuous cancel pairwise. The only remaining face terms are the jumps across \(\Gamma^*:=\bigcup_i\Gamma_{i}^*\) and \(\partial\Omega_\Sigma\), which give rise to \(I_1\) and \(I_2\), respectively. Therefore \eqref{eq:md_error_identity_raw} becomes
\begin{equation}\label{eq:md_error_identity}
\begin{aligned}
\|u_{\theta^*}(\cdot, T)-\tilde u_h(\cdot, T)\|_{L^1(\Omega)}
\le & \|u_{\theta^*}(\cdot, 0)-\tilde u_h(\cdot, 0)\|_{L^1(\Omega)}
+\left|\int_{\Omega_T}\nabla\cdot\mathbf F(\tilde u_h)\,\sgn(u_{\theta^*}-\tilde u_h)\,\d\z\right|\\
&+I_1+I_2 + I_3 +Ch,
\end{aligned}
\end{equation}
where
\begin{equation*}
\begin{aligned}
I_1:= & \sum_i\int_{\Gamma_{i}^*}\llbracket(\mathbf F(\tilde u_h)-\mathbf F(u_{\theta^*}))\sgn(\tilde u_h-u_{\theta^*})\rrbracket\cdot\mathbf n_{\Gamma^*_i}\,\d s, \\
I_2:= & \sum_{e\subset\partial\Omega_{\Sigma}}\left|\int_e\llbracket(\mathbf F(\tilde u_h)-\mathbf F(u_{\theta^*}))\sgn(\tilde u_h-u_{\theta^*})\rrbracket\cdot\mathbf n_e\,\d s\right|, \\
I_3:= & \left|\int_{\partial\Omega\times(0,T)}
\left(\mathbf f(u_{\theta^*})-\mathbf f(\tilde u_h)\right)
\sgn(u_{\theta^*}-\tilde u_h)\cdot\mathbf n_{\partial\Omega}\,\d s \right| \le
C\|u_{\theta^*}\|_{L^1(\partial\Omega\times(0,T))}
\le
C\mathcal L(u_{\theta^*})
\lesssim h. 
\end{aligned}
\end{equation*}

We first estimate the volume term. Outside $\Omega_{\Sigma}$, the same argument as in \eqref{eq:estimate_2nd_term} yields
\[
\int_{\Omega_T\setminus\Omega_{\Sigma}}|\nabla\cdot\mathbf F(\tilde u_h)|\,\d\z\lesssim h+h^{\frac{1}{2}}.
\]
Inside $\Omega_{\Sigma}$ we have $|\nabla\!\cdot\!\mathbf{F}(\tilde{u}_h)| \equiv 0$. 
Therefore,
\begin{equation}\label{eq:md_volume_term}
\left|\int_{\Omega_T}\nabla\cdot\mathbf F(\tilde u_h)\,\sgn(u_{\theta^*}-\tilde u_h)\,\d\z\right|
\lesssim h^{\frac{1}{2}}.
\end{equation}

 Next, we estimate \(I_1\).
For each component \(i\), we transfer the integral from \(\Gamma_i^*\)
to the shifted shock surface $\widetilde{\Gamma}_i$ by integrating over the strip enclosed between them (see the schematic diagram in Figure~\ref{fig:construction_K_3d}).
Proceeding as in Step~5 of the proof of Theorem~\ref{thm:shock}
(corresponding to~\eqref{eq:estimate_int_Gamma_new}),
we obtain the same estimate, with the only additional point that in several
dimensions, the lateral boundary of the strip also contains side pieces near
\(\Sigma\), in addition to the parts at \(t=0,T\). Both types of boundary
pieces have \(d\)-dimensional measure \(O(h^{1/2})\): the former because the
strip thickness is \(O(h^{1/2})\), and the latter because they are obtained by
thickening \((d-1)\)-dimensional boundary pieces by a distance \(O(h^{1/2})\).
Since the jump integrand is uniformly bounded, the corresponding boundary terms
are still \(O(h^{1/2})\). Hence
\begin{equation}\label{eq:md_I1}
I_1\lesssim h^{\frac{1}{2}}.
\end{equation}


For \(I_2\), the jump integrand is uniformly bounded as in \eqref{eq:estimate_jump}, so it remains
to estimate \(\mathcal H^d(\partial\Omega_\Sigma)\). Here \(\Omega_\Sigma\) is simply a Cartesian-grid
discrete approximation of the tubular neighborhood \(\widetilde\Sigma^\varepsilon\) of radius
\(\delta_h=O(h^{1/2})\) around the \(C^2\) manifold \(\Sigma\). Since
\(\mathcal H^{d-1}(\Sigma)\lesssim 1\), the boundary of this tubular neighborhood satisfies
\[
\mathcal H^d(\partial\widetilde\Sigma^\varepsilon)\lesssim \delta_h \lesssim h^{1/2}.
\]
Because the mesh is Cartesian, \(\partial\Omega_\Sigma\) is a grid-aligned discrete approximation of
\(\partial\widetilde\Sigma^\varepsilon\), and its \(d\)-dimensional measure differs from that of
\(\partial\widetilde\Sigma^\varepsilon\) by at most a constant factor. Hence
\[
\mathcal H^d(\partial\Omega_\Sigma)\lesssim h^{1/2},
\]
which yields
\begin{equation}\label{eq:md_I2}
    I_2 \lesssim h^{1/2}.
\end{equation}
This is precisely where the codimension-two nature of $\Sigma$ is used: the $d$-dimensional measure of the boundary of a tubular neighborhood of radius $h^{\frac{1}{2}}$ is $O(h^{\frac{1}{2}})$, independently of $d$.

Substituting \eqref{eq:md_volume_term}-\eqref{eq:md_I2} into \eqref{eq:md_error_identity} yields
\[
\|u_{\theta^*}(\cdot, T)-\tilde u_h(\cdot, T)\|_{L^1(\Omega)}
\lesssim
\|u_{\theta^*}(\cdot, 0)-\tilde u_h(\cdot, 0)\|_{L^1(\Omega)}
+h^{\frac{1}{2}}.
\]
Since \(\tilde u_h\) approximates the exact solution \(u\) with estimate in \eqref{eq:u_h_3d}, and
\[
\|u_{\theta^*}(\cdot, 0)-u(\cdot, 0)\|_{L^1(\Omega)}\le \L(u_{\theta^*})\lesssim h,
\]
it follows from the triangle inequality that
\[
\int_\Omega |u_{\theta^*}-u|(\x,T)\,\d\x\lesssim h^{1/2}.
\]
This completes the proof of Theorem~\ref{thm:more_general_case}.
\end{proof}

\begin{remark}
The proof shows that, in several space dimensions, the only additional ingredient beyond the one-dimensional analysis is the excision of a tubular neighborhood of the codimension-two interaction set. Therefore, whenever the discontinuity set consists of finitely many pairwise disjoint smooth hypersurfaces, the arguments of Sections~\ref{sec:shock_wave} and \ref{sec:rarefaction_wave} apply without change. By combining those local constructions with the present surgery near $\Sigma$, one can treat finitely many multidimensional rarefaction regions and mixed shock-rarefaction configurations as well. We omit the routine details.
\end{remark}

\section{Numerical experiments}\label{sec:numerics}
In this section, we present numerical experiments to demonstrate the implementation and performance of the proposed algorithm (Algorithm \ref{alg-1}). The algorithm is implemented using the JAX library in Python. Experiments are conducted on various test cases, including standing shocks, moving shocks, rarefaction waves, sine waves for Burgers' equation, and Riemann problems with non-convex fluxes.

Let \(u_{\theta^*}\) denote the numerical approximation obtained from Algorithm \ref{alg-1}. To evaluate performance, we compute the relative \(L^1\) error at the final time \(T\) and the space--time relative \(L^1\) error, defined as
\[
\mathcal{E}_r^T(\theta^*) = \frac{\int_{\Omega} |u_{\theta^*}(\x, T) - u(\x, T)| \, \d \x}{\int_{\Omega} |u(\x, T)| \, \d \x}, \quad \mathcal{E}_r(\theta^*) = \frac{\int_{\Omega_T} |u_{\theta^*}(\x, t) - u(\x, t)| \, \d\z}{\int_{\Omega_T} |u(\x, t)| \, \d\z},
\]
respectively. These metrics assess accuracy at the final time and over the entire space--time domain. The integrals are approximated using standard quadrature rules.

For the loss function \eqref{eq:loss_func}, all integrals are approximated by the composite trapezoidal rule on uniform grids. All numerical experiments are carried out with $\tanh$ neural networks. The reason is that the loss in \eqref{eq:loss_func} involves the differential quantity $\nabla \cdot \mathbf{F}(u_{\theta})$ and is minimized by gradient-based updates in Algorithm~\ref{alg-1}; the smooth activation makes this derivative-based objective depend more smoothly on the parameters and is therefore better suited to the optimization used here. Hence, the use of $\tanh$ here is mainly an implementation choice for the training procedure, rather than a limitation of the approximation theory.

To approximate the maximization over  \(k_h \in V_h^c\), we sample test functions by perturbing the piecewise average of \(u_{\theta}\),
\begin{equation}\label{eq:avg_u}
(\operatorname{Avg_h} u_{\theta})|_K := \frac{1}{|K|} \int_K u_{\theta} \, \d\z = \sum_{\alpha \in \mathcal{I}} c_\alpha^K \ell_\alpha,
\end{equation}
given by
\begin{equation}\label{eq:gen_pert_for_k_h}
k_{h,j}|_K := (\operatorname{Avg_h} u_{\theta})|_K + \sum_{\alpha \in \mathcal{I}} \epsilon_j^\alpha \ell_\alpha, \quad \epsilon_j^\alpha \sim \mathcal{U}(-b, b), \quad K \in \Lambda_h, \quad j = 1, \dots, N_{\text{pert}},
\end{equation}
where \(\mathcal{I}\) is the index set for the Lagrange basis of \(\mathbb{Q}^1\) on \(K\), \(\ell_\alpha\) denotes the corresponding Lagrange basis function, \(\epsilon_j^\alpha\) is uniformly distributed, \(b\) is a hyperparameter bounding the perturbation, and \(N_{\text{pert}}\) is the number of perturbations generated.
\begin{algorithm}
\caption{Our algorithm}\label{alg-1}
\begin{algorithmic}[1]
\Require Initial $u_0$, hyperparameter $N_{train}$, $N_{pert}$, perturbation bound $b>0$, learning rate $\eta$, neural network $u_\theta$
\Ensure Best neural network $u_{\theta^*}$
\State Randomly initialize the network parameters $\theta$ and generate partition $\Lambda_h$ for $\Omega_T$.
\State Initialize best loss $\L_{\min} \leftarrow \infty$
\For{$i = 1$ \textbf{to} $N_{train}$}
\State Obtain coefficients $c^K_{\alpha}$ as in \eqref{eq:avg_u} and generate $\{k_{h,j}\}_{j=1}^{N_{pert}}$ as in \eqref{eq:gen_pert_for_k_h} 
\State Compute $$\hat{\mathscr{L}}(\theta) := \J_{\mathrm{ent}} (u_\theta; k_{h}^*) + \L_{\mathrm{ibc}}(u_\theta) + \L_{reg}(u_\theta), \quad
 k_{h}^* :=  \underset{j=1,\dots,N_{pert}}{\arg\max}\J_{\mathrm{ent}} (u_\theta; k_{h,j}) $$
\State Update $\theta \leftarrow \theta - \eta \nabla_\theta  \hat{\mathscr{L}}(\theta)$ \Comment{In practice, we employ Adam.}
\If{$i>0.9N_{train}$ and $\hat{\mathscr{L}}(\theta) < \L_{\min}$}
\State $\L_{\min} \leftarrow \hat{\mathscr{L}}(\theta)$, $\theta^* \leftarrow \theta$
\EndIf
\EndFor
\end{algorithmic}
\end{algorithm}
In all numerical experiments presented in this work, the common hyperparameters for Algorithm \ref{alg-1} are set as follows: perturbation bound \(b = 5\) and learning rate \(\eta = 0.001\). Other hyperparameters, which vary by experiment, are specified in Table~\ref{tab:hyperparameter}, where $l_{\theta}$ denotes the number of neurons in each hidden layer, $n_{\theta}$ denotes the number of hidden layers, $N_T$ denotes the number of time strips used for strip-by-strip training, $N_t$ denotes the number of elements in the temporal direction, and $N_x$ denotes the number of elements in the spatial direction.
\begin{table}[h!]
\centering
\caption{Hyperparameter configurations in the training}
\renewcommand{\arraystretch}{1.0} 
\setlength{\tabcolsep}{4pt} 
\small
\begin{tabular}{c | c c c c c c c c c}
\hline
Experiment & $l_{\theta}$ & $n_{\theta}$ & $N_T$ & $N_t$ & $N_x$ & $N_{train}$ & $N_{pert}$ & $\mathcal{E}_r^T\left(\theta^*\right)$ & $\mathcal{E}_r\left(\theta^*\right)$\\ 
\hline
Standing shock & 64  & 4 & 1 & 64 & 128 & 1E4 & 5E4 &  0.513\% & 0.434\%\\
\hline
Moving shock  & 64  & 4 & 2 & 64 & 128 & 1E4 & 5E4 & 0.272\% &  0.431\% \\
\hline
Rarefaction wave &  64  & 4 & 1 & 32 & 64 & 1E4 & 2E4 & 0.294\% & 0.272\% \\
\hline
Two shocks interaction & 64  & 4 & 2 & 64 & 128 & 2E4 & 5E4 & 0.252\% & 0.217\%\\
\hline
Sine wave initial &  64  & 4 & 2 & 64 & 128 & 5E4 & 5E4 & 0.248\% & 0.205\%  \\
\hline
Cubic flux & 64  & 4 & 2 & 32 & 128 & 2E4 & 2E4 & 0.270\% & 0.199\% \\
\hline
Buckley-Leverett equation & 64  & 4 & 2 & 32 & 128 & 5E4 & 5E4 & 0.372\% & 0.358\% \\
\hline
Sine flux & 64  & 4 & 2 & 32 & 128 & 5E4 & 5E4 & 0.541\% & 0.318\% \\
\hline
2D Burgers & 64  & 4 & 3 & 20 & $40^2$ & 2E4 & 5E4 & 0.519\% & 0.493\%\\
\hline
\end{tabular}
\label{tab:hyperparameter}
\end{table}

\subsection{Inviscid Burgers equation}
\subsubsection{Standing and moving shock}
We consider one-dimensional HCL \eqref{eq:scalar-HCL} with convex flux $f(u)=\frac{u^2}{2}$, i.e. Burgers' equation in $[-1,1] \times[0,0.5]$ with initial conditions:
$$
u_0(x)=\left\{\begin{array}{ll}
1 & x \leq 0 \\
-1 & x>0
\end{array}, \quad u_0(x)= \begin{cases}2 & x \leq 0 \\
0 & x>0\end{cases}\right.
$$
which result in a standing shock located at $x=0$ and a shock moving with speed $1$, respectively:
$$
u(x, t)=\left\{\begin{array}{ll}
1 & x \leq 0 \\
-1 & x>0
\end{array}, \quad u(x, t)= \begin{cases}2 & x \leq t \\
0 & x>t\end{cases}\right.
$$
\begin{figure}[h]
\centering
\begin{subfigure}[b]{0.495\textwidth}
\includegraphics[width=\linewidth]{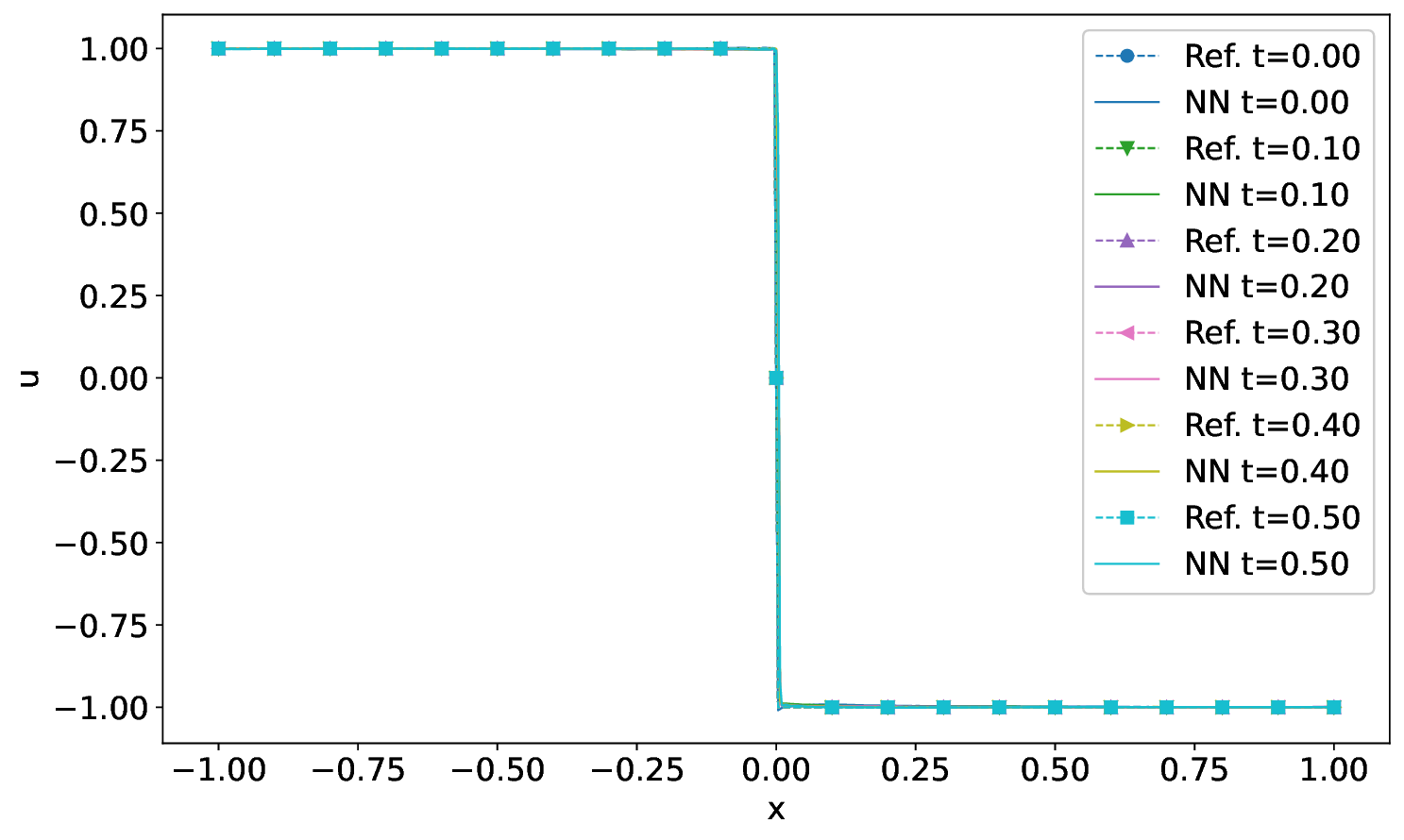}
\caption{Standing shock solution up to $T=0.5$.}
\end{subfigure}
\hfill
\begin{subfigure}[b]{0.495\textwidth}
\includegraphics[width=\linewidth]{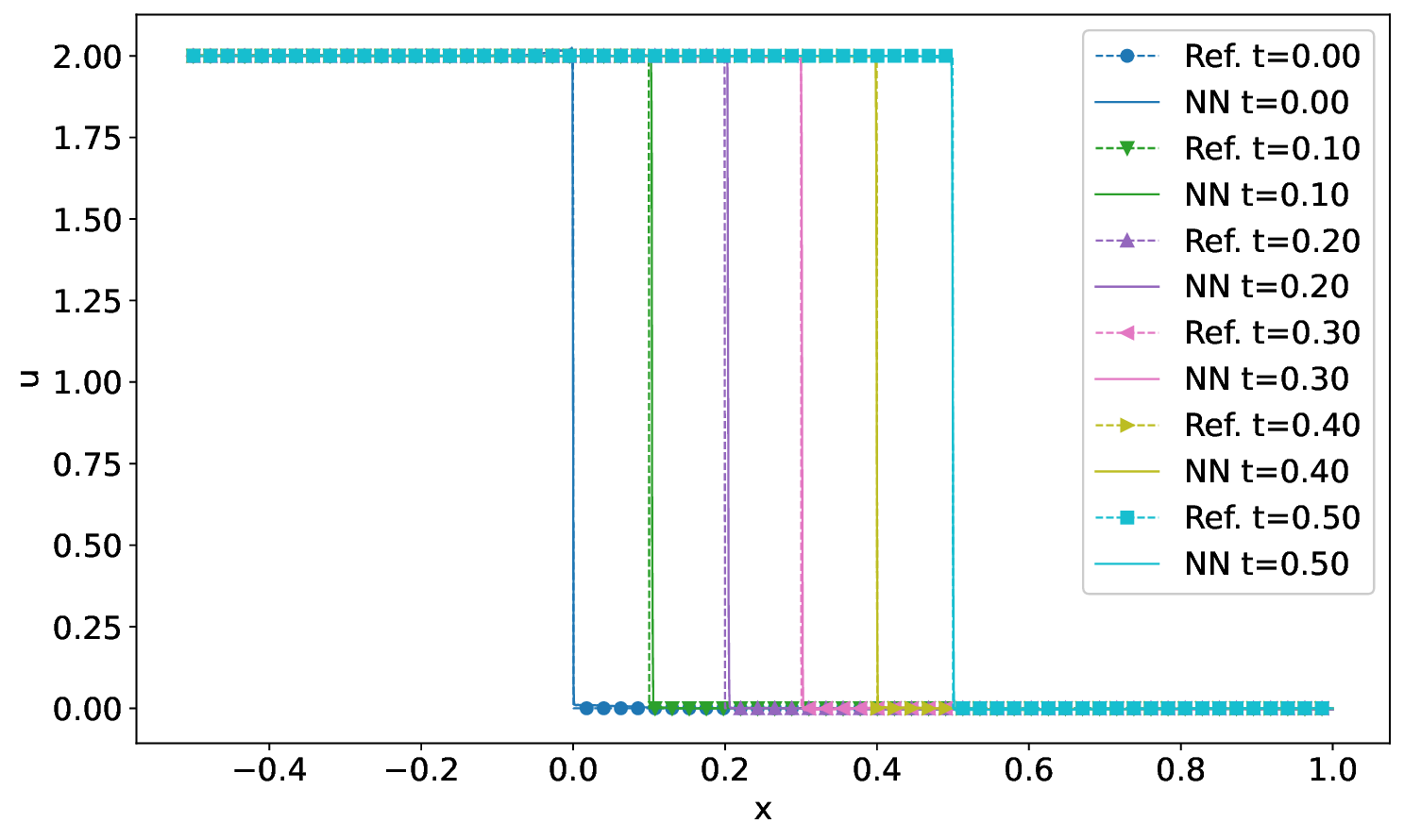}
\caption{Moving shock solution up to $T=0.5$.}
\end{subfigure}

\begin{subfigure}[b]{0.495\textwidth}
\hspace{0.6cm}
\includegraphics[width=\linewidth]{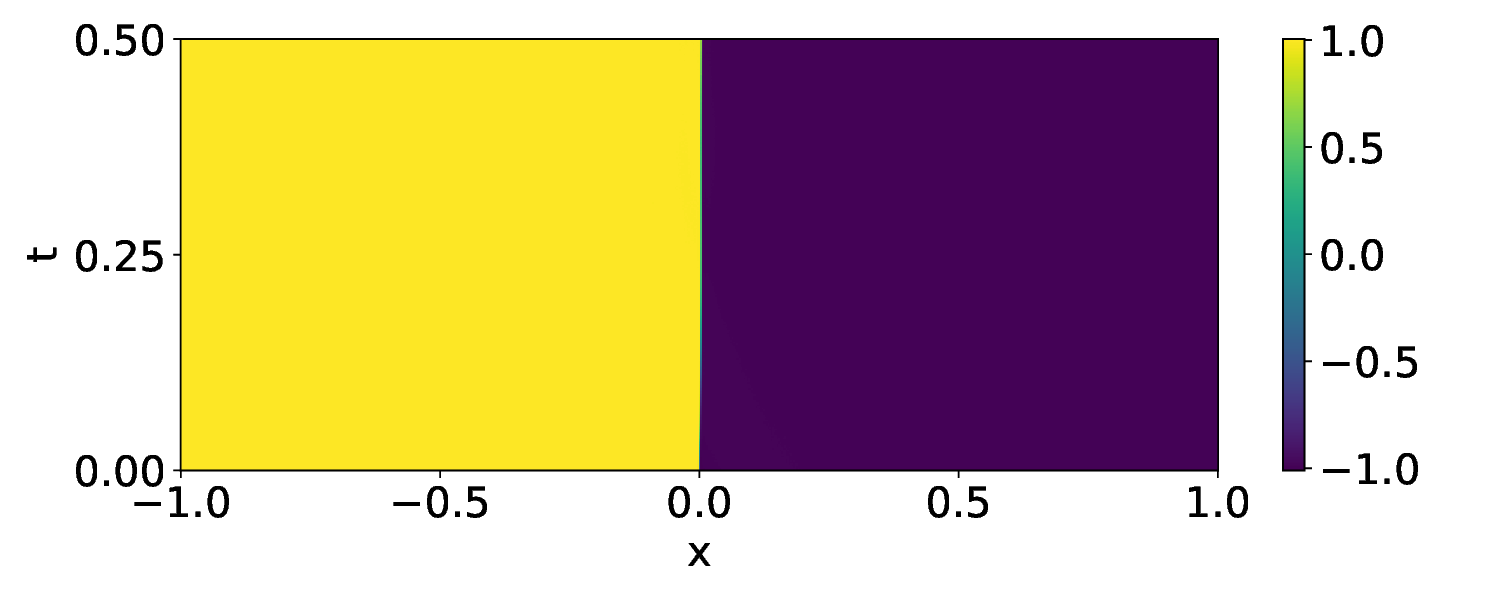}
\caption{NN solution to the standing shock}
\end{subfigure}
\hfill
\begin{subfigure}[b]{0.495\textwidth}
\hspace{0.6cm}
\includegraphics[width=\linewidth]{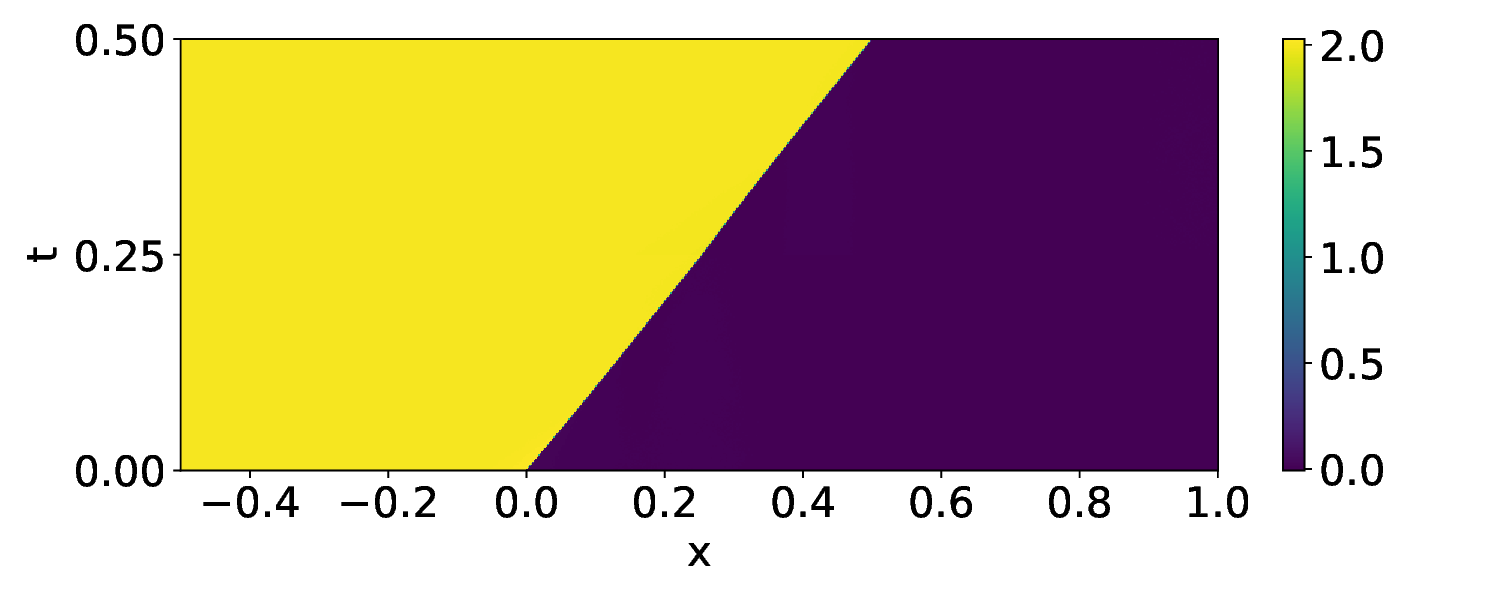}
\caption{NN solution to a moving shock}
\end{subfigure}
\caption{Exact solutions and NN solutions for the Burgers’ equation with shocks.}
\label{fig:standing_shock_strong_moving_shock}
\end{figure}
With the hyperparameter settings reported in Table~\ref{tab:hyperparameter} and the training procedure described in Algorithm~\ref{alg-1}, Figure~\ref{fig:standing_shock_strong_moving_shock} shows that the neural network accurately captures both the standing and moving shocks, despite the highly nonconvex min--max structure of the training objective. 

\subsubsection{Rarefaction wave}
We consider the Burgers' equation with initial data and exact solution given,
$$
u_0(x)= \begin{cases}-1 & x \leq 0 \\ 1 & x>0\end{cases}, \quad u(x, t)= \begin{cases}-1 & x \leq-t \\ \frac{x}{t} & -t<x \leq t \\ 1 & x>t\end{cases}.
$$
This corresponds to a rarefaction wave. Figure~\ref{fig:rarefaction} shows the NN solution, and Table~\ref{tab:hyperparameter} reports the final time error.
\begin{figure}[h!]
\centering
\begin{subfigure}[b]{0.6\textwidth}
\includegraphics[width=\linewidth]{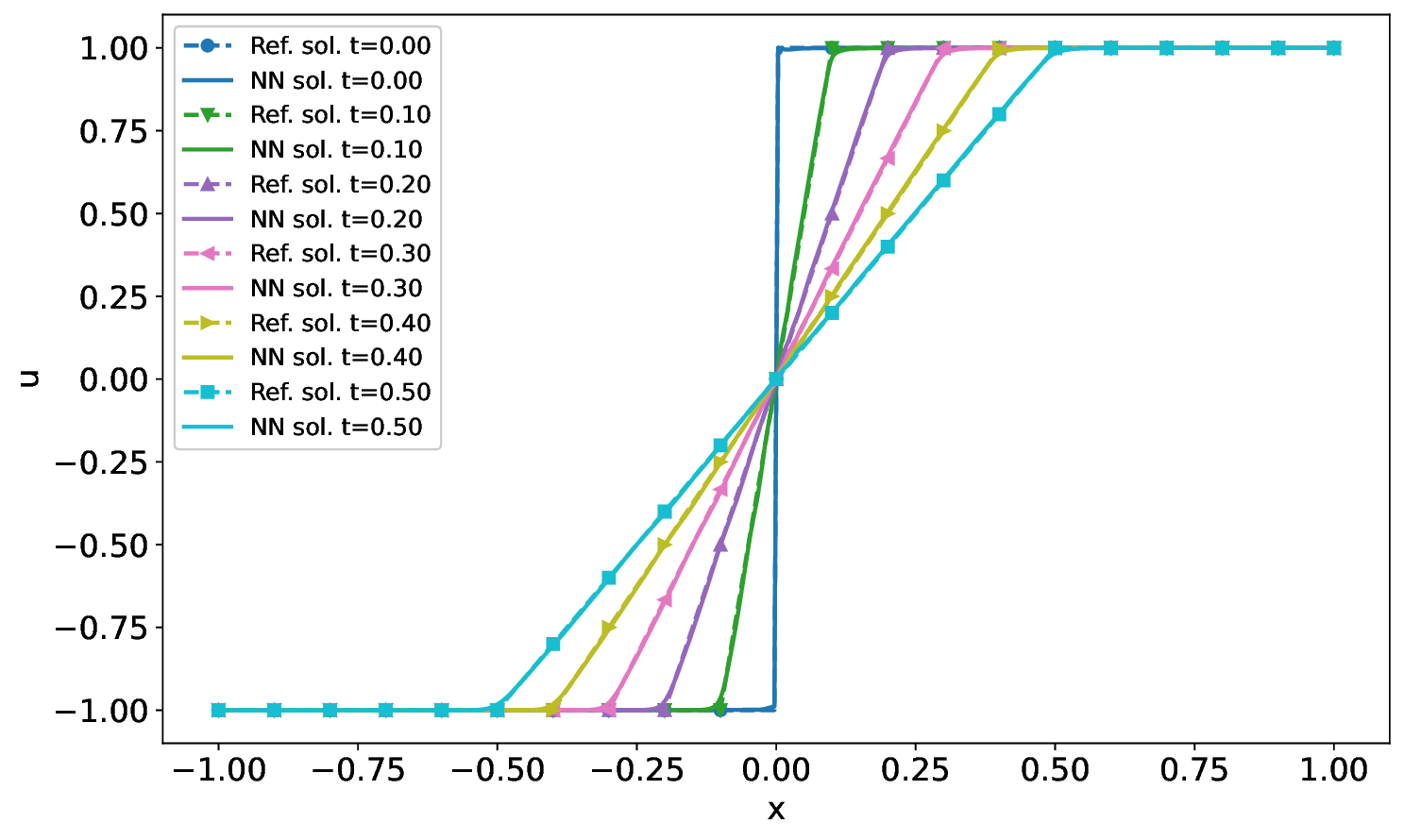}
\caption{Rarefaction wave up to $T=0.5$}
\end{subfigure}\\
\hspace{1.6cm}
\begin{subfigure}[b]{0.59\textwidth}
\includegraphics[width=\linewidth]{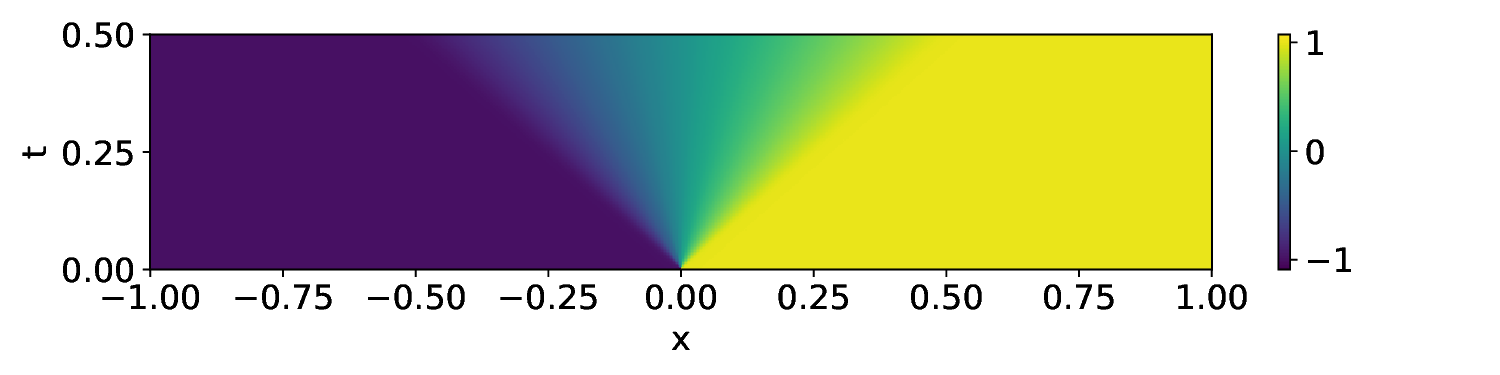}
\caption{NN solution to a rarefaction}
\end{subfigure}
\caption{Exact solutions and NN solutions for the Burgers’ equation with rarefaction.}
\label{fig:rarefaction}
\end{figure}

\subsubsection{Interaction of two shocks}
We consider the Burgers' equation with initial data and exact solution given by,
$$
u_0(x)= \begin{cases}0.8 & x \leq 0.3 \\ -0.2 & 0.3 < x \leq 0.7 \\ -1.6 & x > 0.7\end{cases}, \quad 
u(x, t)=  \begin{cases}
  \begin{cases}
    0.8 & x \leq 0.3 + 0.3t \\
    -0.2 & 0.3 + 0.3t < x \leq 0.7 - 0.9t \\
    -1.6 & x > 0.7 - 0.9t
  \end{cases} & t \leq \frac{1}{3} \\[2ex]
  \begin{cases}
    0.8 & x \leq 0.4 - 0.4\left(t - \frac{1}{3}\right) \\
    -1.6 & x > 0.4 - 0.4\left(t - \frac{1}{3}\right)
  \end{cases} & t > \frac{1}{3}
\end{cases} ,
$$
which corresponds to the interaction of two shocks. The NN solutions are plotted in Figure~\ref{fig:two_shocks}. The final time error is reported in Table~\ref{tab:hyperparameter}.
\begin{figure}[h!]
\centering
\begin{subfigure}[b]{0.6\textwidth}
\includegraphics[width=\linewidth]{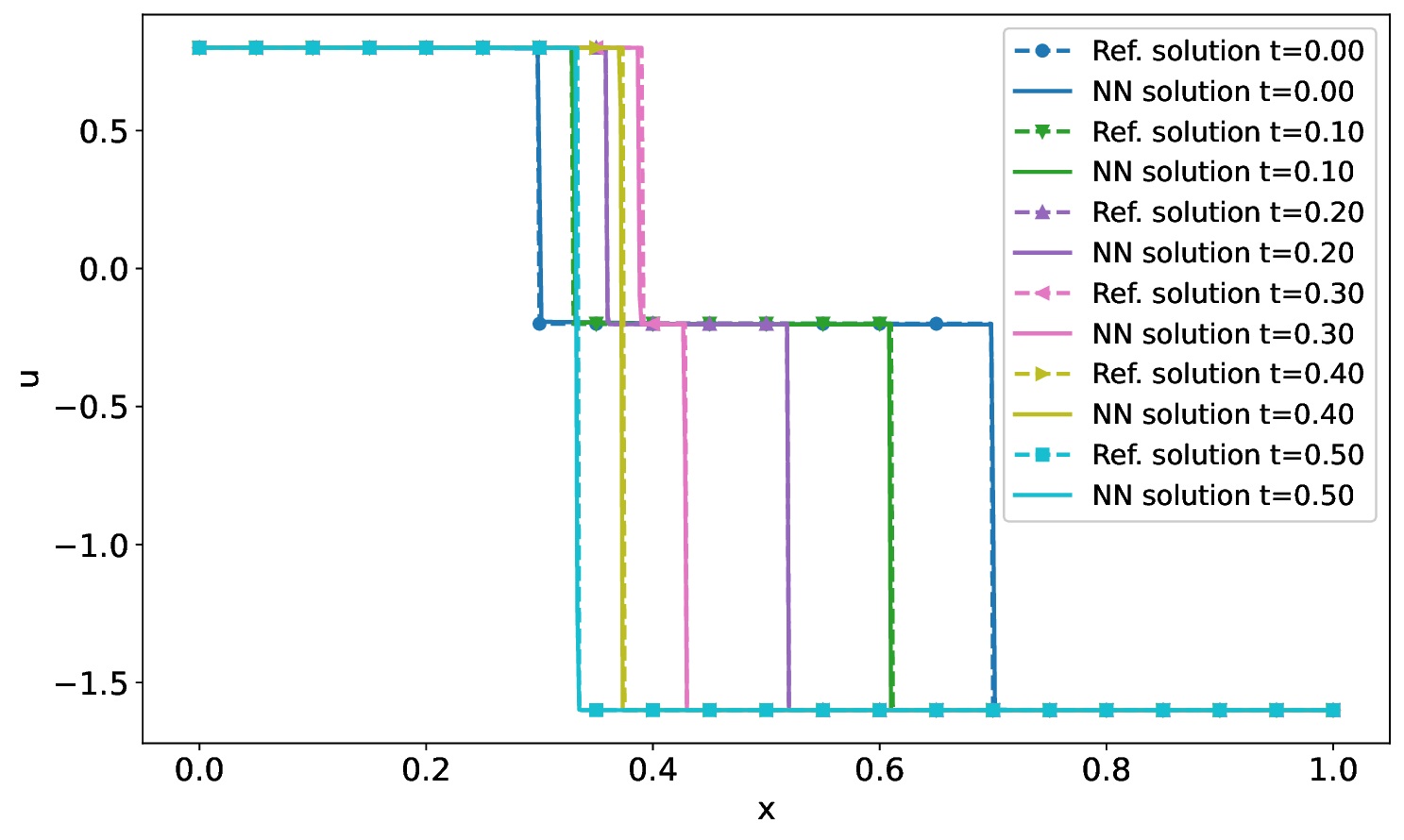}
\caption{Two shocks interaction up to $T=0.5$}
\end{subfigure} \\
\hspace{1.6cm}
\begin{subfigure}[b]{0.59\textwidth}
\includegraphics[width=\linewidth]{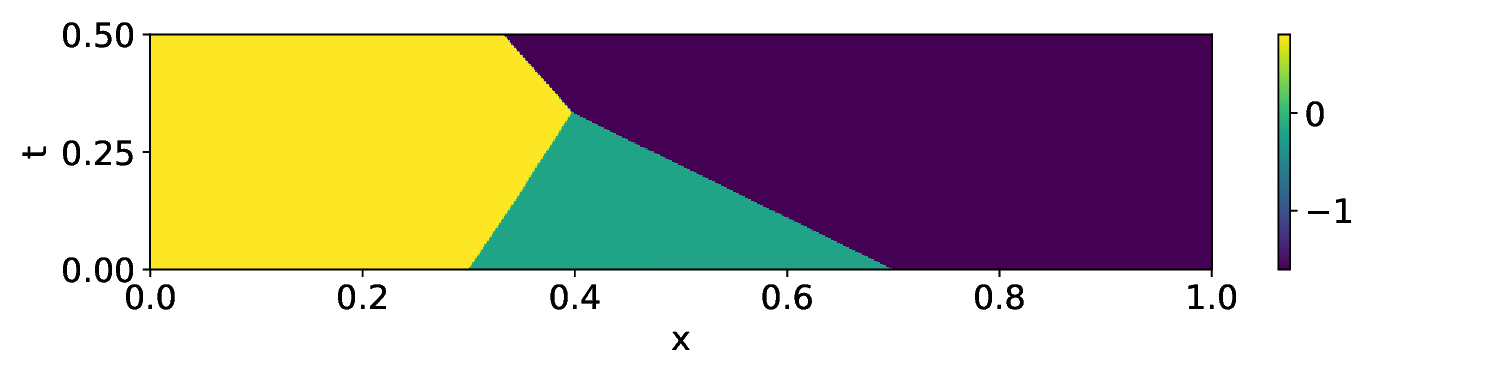}
\caption{NN solution for the interaction of two shocks.}
\end{subfigure}
\caption{Exact solutions and NN solutions for the Burgers’ equation with two shocks.}
\label{fig:two_shocks}
\end{figure}

\subsubsection{Sine wave initial datum}\label{sec:sine-wave}
We consider the Burgers' equation with the initial data $u_0(x)=-\sin (\pi x)$ 
and zero Dirichlet boundary conditions in the space--time domain $[-1,1] \times[0,1]$. The reference solution shows a complex evolution with both steepening and expansion of the sine wave that eventually develops into a shock separating two rarefaction waves. This example is included as a representative smooth-initial-data shock-formation test, corresponding to the regime covered by Theorem~\ref{thm:shock_smooth_initial}. The NN solutions are plotted in Figure~\ref{fig:sine_wave}. The final time error is reported in Table~\ref{tab:hyperparameter}.
\begin{figure}[h!]
\centering
\begin{subfigure}[b]{0.7\textwidth}
\includegraphics[width=\linewidth]{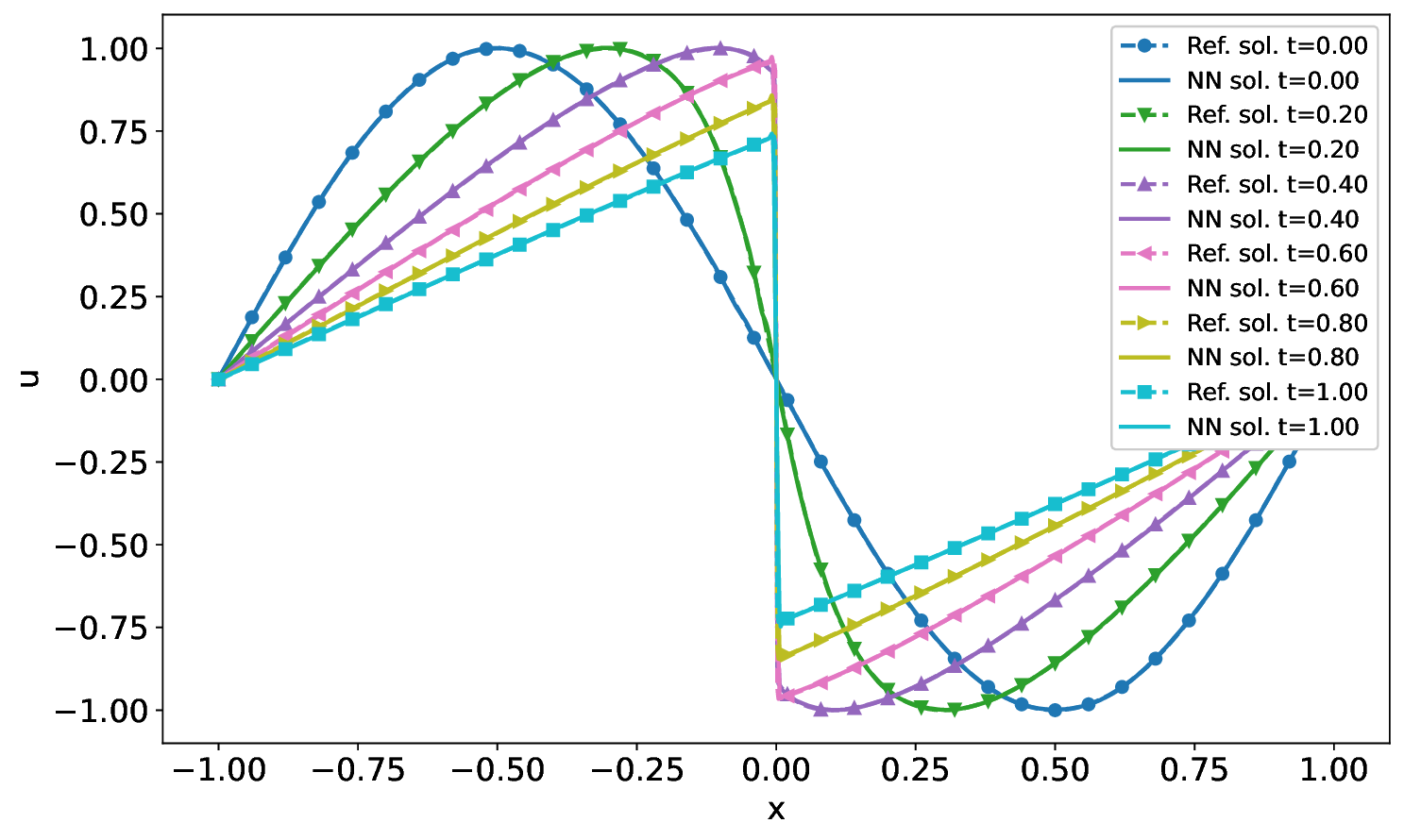}
\caption{Solution for the sine-wave initial datum up to \(T=1\).}
\end{subfigure} \\
\hspace{1.6cm}
\begin{subfigure}[b]{0.5\textwidth}
\includegraphics[width=\linewidth]{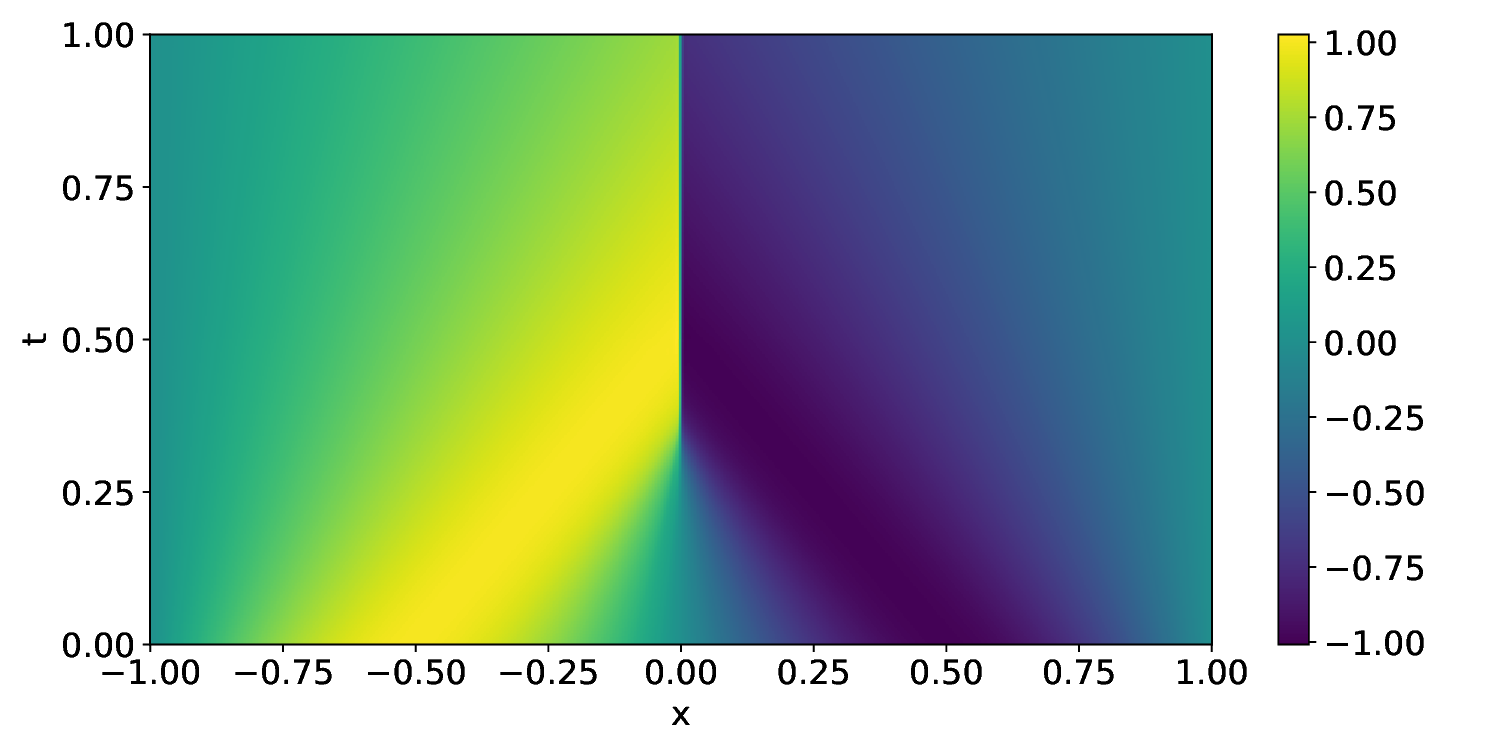}
\caption{NN solution}
\end{subfigure}
\caption{Reference solutions and NN solutions for the Burgers’ equation with sine wave initial.}
\label{fig:sine_wave}
\end{figure}

To evaluate the convergence of Algorithm~\ref{alg-1}, we conduct mesh refinement studies to analyze its convergence behavior, training the neural network until the loss stabilizes and no longer decreases. The final time errors and space--time errors are plotted in Figure~\ref{fig:convergence}.
\begin{figure}[h!]
\centering
\includegraphics[scale=0.4]{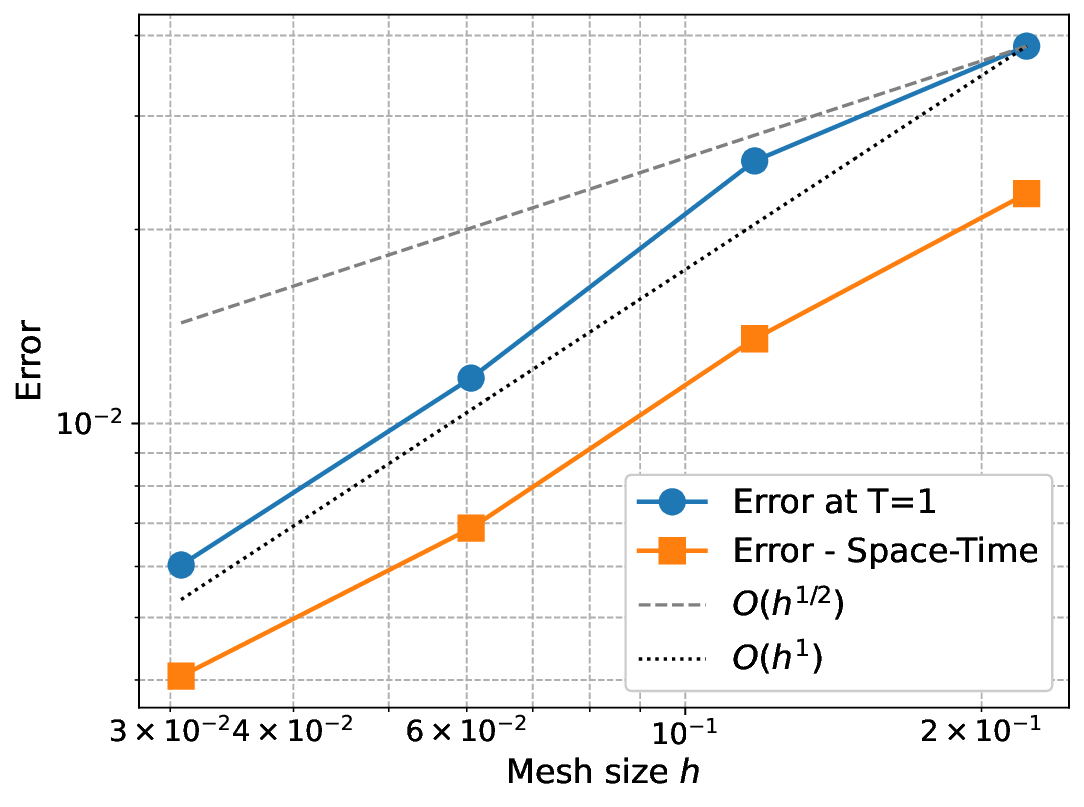}
\caption{Space--time relative \(L^1\)-error and final-time relative \(L^1\)-error at \(T=1\) versus mesh size.}
\label{fig:convergence}
\end{figure}

\begin{remark}\label{rem:convergence}
The \(O(h^{1/2})\) estimate in Theorem~\ref{thm:shock} is a consequence of the present stability analysis. 
Once the exact shock interface is replaced by a mesh-aligned one, no direct estimate is available for the induced interface discrepancy term, which is why the shifted-interface argument is introduced. 
For the explicit competitor \(u_{\hat\theta}\) constructed in Lemma~\ref{lem:Lu_shock}, however, the corresponding term is of order \(O(h)\), and therefore
\[
\|u_{\hat\theta}(\cdot,T)-u(\cdot,T)\|_{L^1(\Omega)} \le Ch.
\]
This indicates that the rigorous \(O(h^{1/2})\) bound may not be sharp. 
The convergence experiments in Figure~\ref{fig:convergence} are consistent with this observation. 
For smooth initial data that develop a shock, Theorem~\ref{thm:shock_smooth_initial} gives the bound \(O(h^{1/2}|\ln h|)\); the additional logarithmic factor comes from the singular derivative behavior near the shock-formation point.
\end{remark}

\subsection{Scalar conservation laws with non-convex flux}

We now turn from Burgers' equation, whose flux is convex, to scalar 
conservation laws with non-convex fluxes. In this regime, the entropy 
solution generally exhibits compound waves combining rarefactions and 
shocks, the shock speeds are determined by the convex-hull (Oleinik) 
construction rather than by Rankine--Hugoniot conditions alone, and the 
entropy admissibility condition plays an essential role in selecting the 
physically relevant solution. These features make non-convex problems a 
particularly demanding test for the proposed entropy-based neural network 
method.

\subsubsection{Cubic flux}

As a first non-convex example, we consider the one-dimensional 
HCL with cubic flux $f(u) \;=\; \tfrac{1}{3}\,u^{3}$, posed on the spatial domain $[-1,1]$ with Riemann initial data
\[
u_{0}(x) \;=\;
\begin{cases}
\phantom{-}1, & x < 0, \\
-1, & x \geq 0.
\end{cases}
\]
Since the jump $u_{L}\to u_{R}$ straddles the inflection point $u = 0$ 
of the flux, neither a single shock nor a single rarefaction is admissible.
The entropy solution is therefore a compound wave consisting of a 
rarefaction attached to a trailing shock; for $t > 0$, the shock 
propagates at speed $s = 0.25$ and connects the states $u = 1$ and 
$u = -0.5$.

A reference solution is computed using a third-order strong stability 
preserving Runge--Kutta scheme combined with a fifth-order WENO 
reconstruction on a very fine mesh. The neural network approximation is 
displayed in Figure~\ref{fig:cubic_flux}, and the corresponding error at 
the final time $T = 0.5$ is reported in Table~\ref{tab:hyperparameter}. 
Both the rarefaction part and the trailing shock are accurately resolved,
without spurious oscillations near the discontinuity.

\begin{figure}[h!]
\centering
\begin{subfigure}[b]{0.6\textwidth}
\includegraphics[width=\linewidth]{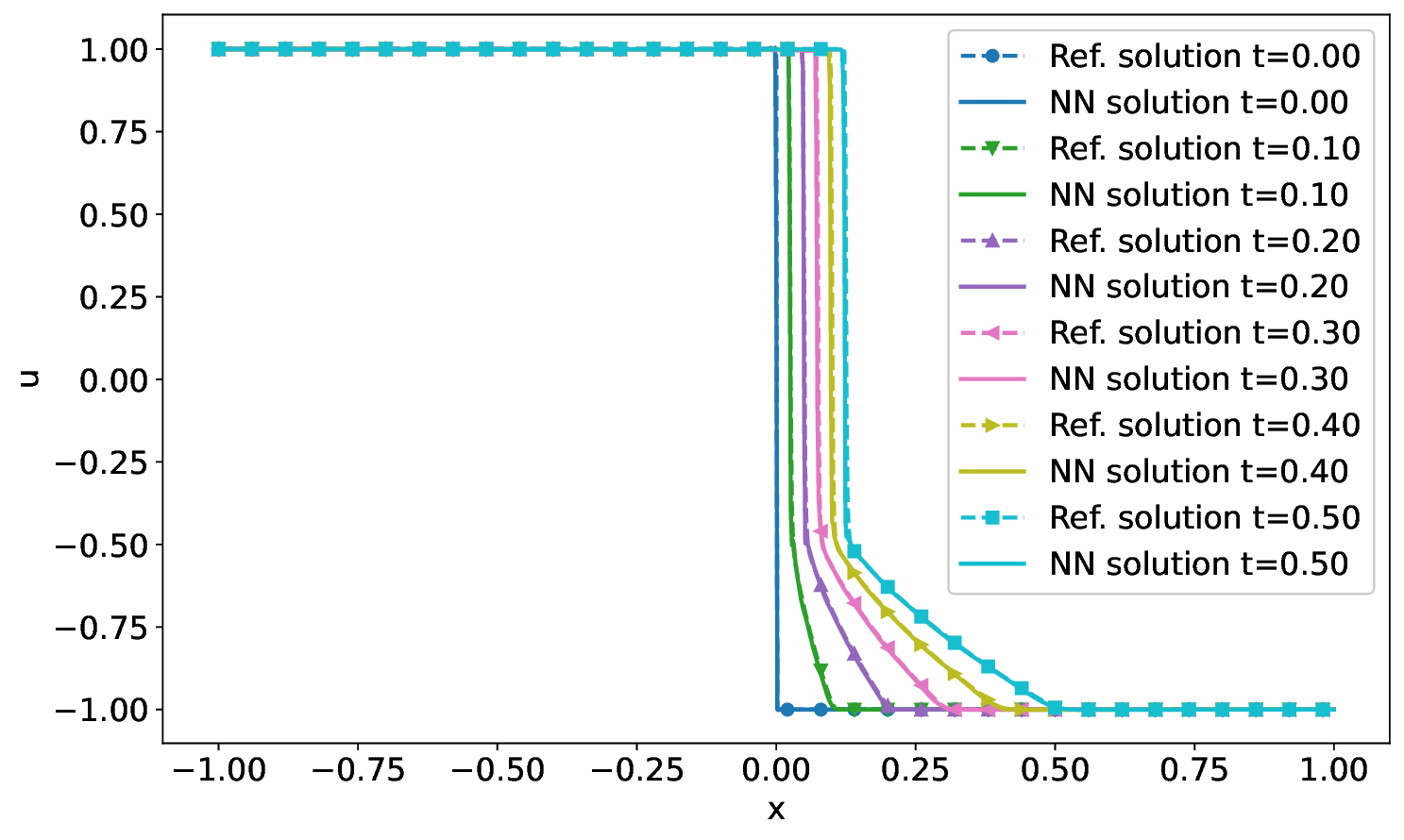}
\caption{Reference and NN solutions up to $T=0.5$.}
\end{subfigure}\\
\hspace{1.6cm}
\begin{subfigure}[b]{0.59\textwidth}
\includegraphics[width=\linewidth]{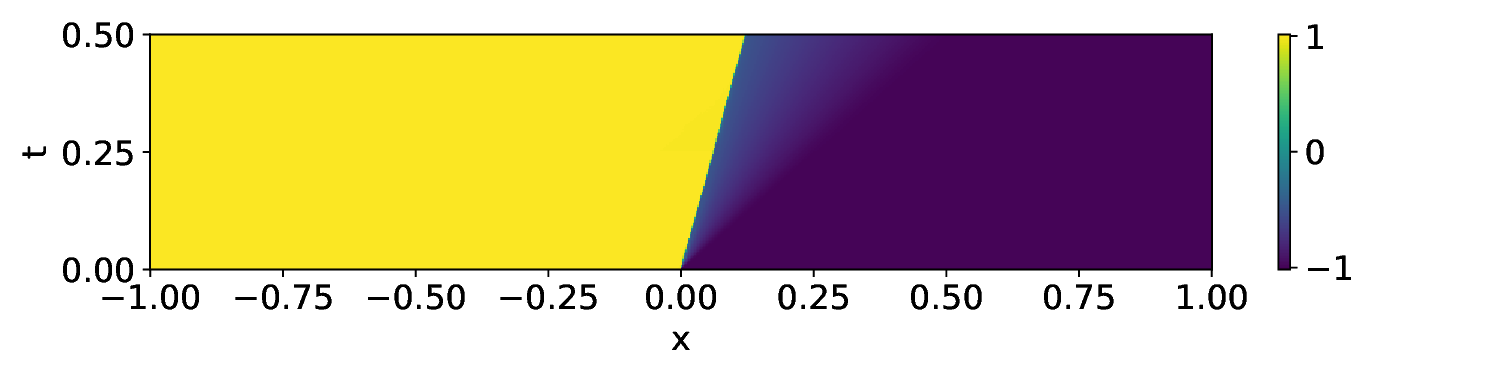}
\caption{Space--time NN solution.}
\end{subfigure}
\caption{Reference solutions and NN solutions for the conservation law with cubic flux.}
\label{fig:cubic_flux}
\end{figure}

\subsubsection{The Buckley--Leverett equation}

We next consider the Buckley--Leverett equation, a one-dimensional scalar HCL with the non-convex S-shaped flux $f(u) =\frac{u^{2}}{u^{2} + \tfrac{1}{2}(1-u)^{2}}$ 
posed on the spatial domain $[-1,1]$ with Riemann initial data
\[
u_{0}(x) \;=\;
\begin{cases}
1, & x < 0, \\
0, & x \geq 0.
\end{cases}
\]
Because $f$ has an inflection point in $(0,1)$, the entropy solution is a 
compound wave: a rarefaction fan attached to a trailing shock, with the 
shock speed determined by the convex-hull (Oleinik) construction. As such,
this test is a stringent benchmark for non-convex scalar conservation laws.
A reference solution is computed using a third-order strong stability 
preserving Runge--Kutta scheme combined with a fifth-order WENO 
reconstruction on a very fine mesh. The neural network approximation is 
shown in Figure~\ref{fig:BLE}, and the corresponding error at the final 
time $T=0.5$ is reported in Table~\ref{tab:hyperparameter}. The network 
resolves both the rarefaction part and the trailing shock without 
producing spurious oscillations.

\begin{figure}[h!]
\centering
\begin{subfigure}[b]{0.6\textwidth}
\includegraphics[width=\linewidth]{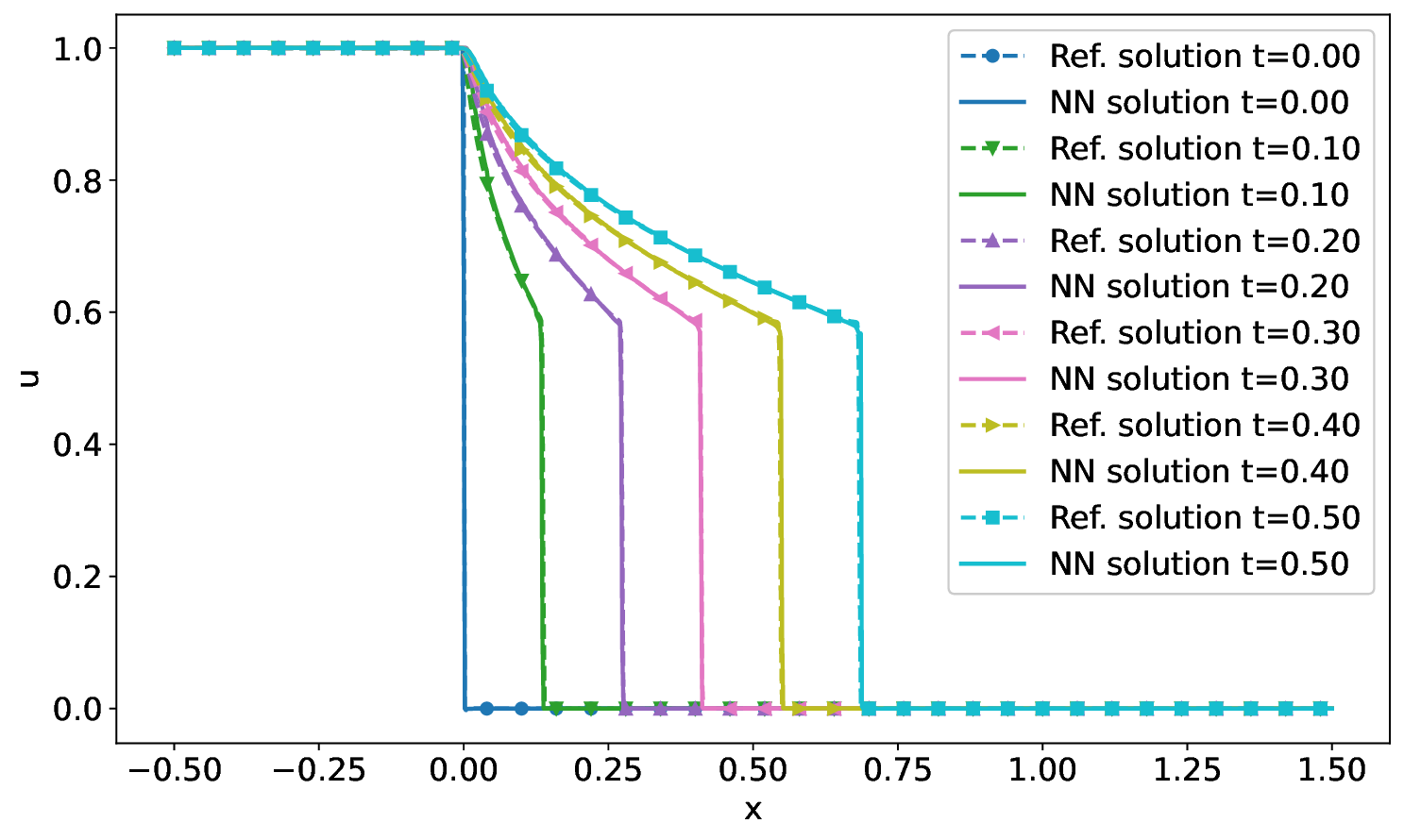}
\caption{Reference and NN solutions up to $T=0.5$.}
\end{subfigure} \\
\hspace{1.6cm}
\begin{subfigure}[b]{0.59\textwidth}
\includegraphics[width=\linewidth]{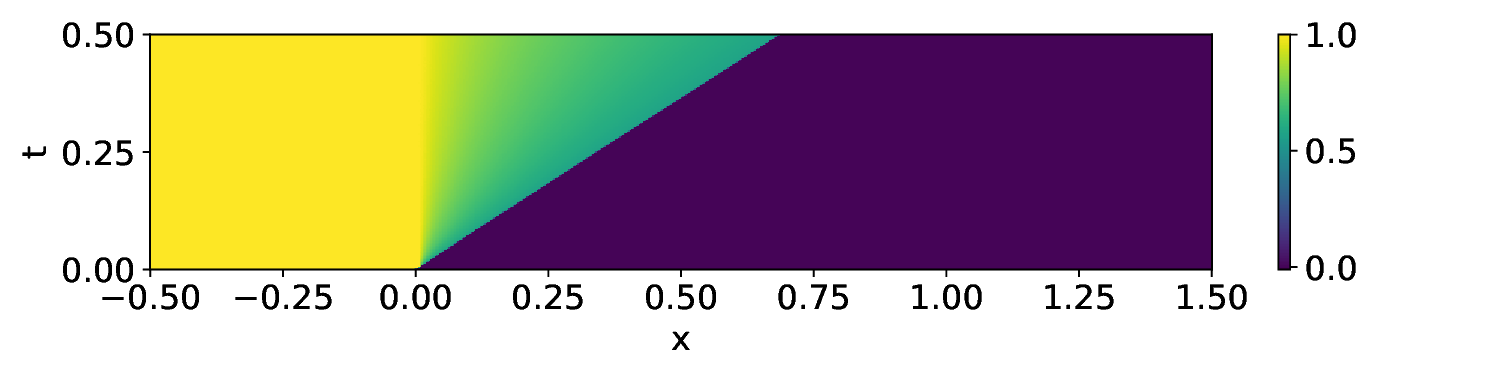}
\caption{Space--time NN solution.}
\end{subfigure}
\caption{Reference solutions and NN solutions for the Buckley--Leverett equation.}
\label{fig:BLE}
\end{figure}

\subsubsection{Sine flux}

As a final non-convex example, we consider the scalar HCL
with the highly non-convex sinusoidal flux $f(u) \;=\; \sin(\pi u)$ posed on the spatial domain $[-1.5,1.5]$, with Riemann initial data
\[
u_{0}(x) \;=\;
\begin{cases}
\tfrac{1}{2}, & x < 0, \\
\tfrac{5}{2}, & x \geq 0.
\end{cases}
\]
Since the flux possesses several inflection points between the two 
constant states, the entropy solution is a compound wave consisting of a 
central rarefaction fan flanked by two shocks of opposite orientation. 
This configuration cannot be captured by Rankine--Hugoniot conditions 
alone and crucially relies on the entropy admissibility condition, making 
it a particularly challenging test for the proposed method.

The reference solution is again computed by a third-order strong stability
preserving Runge--Kutta scheme combined with a fifth-order WENO 
reconstruction on a very fine mesh. The neural network approximation is 
displayed in Figure~\ref{fig:sin_flux}, and the corresponding error at 
the final time $T=0.5$ is reported in Table~\ref{tab:hyperparameter}. The 
two shocks and the connecting rarefaction fan are all accurately resolved.

\begin{figure}[h!]
\centering
\begin{subfigure}[b]{0.6\textwidth}
\includegraphics[width=\linewidth]{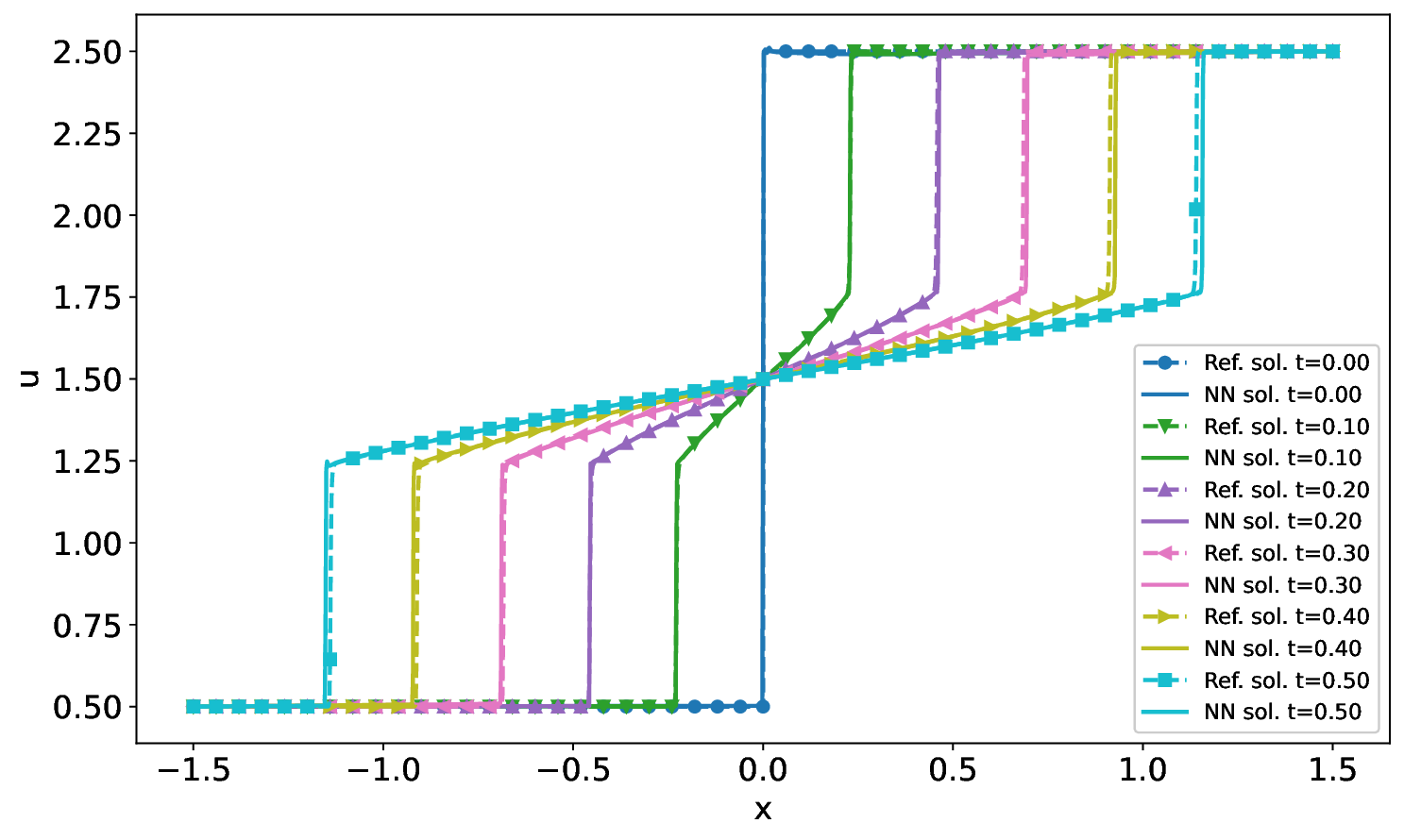}
\caption{Reference and NN solutions up to $T=0.5$.}
\end{subfigure} \\
\hspace{1.6cm}
\begin{subfigure}[b]{0.59\textwidth}
\includegraphics[width=\linewidth]{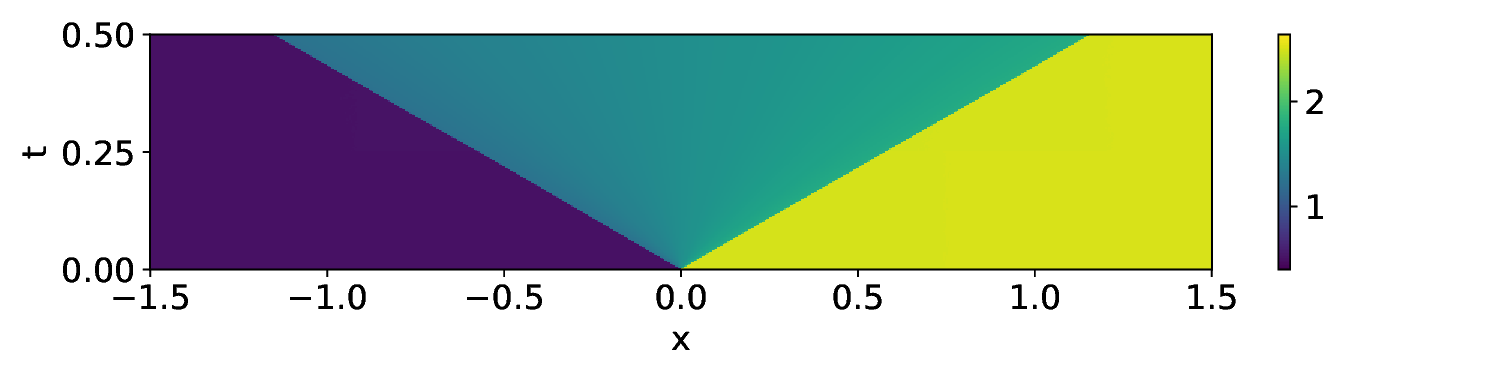}
\caption{Space--time NN solution.}
\end{subfigure}
\caption{Reference solutions and NN solutions for the conservation law with sine flux.}
\label{fig:sin_flux}
\end{figure}

\subsection{Two-dimensional Burgers equation}

To illustrate the performance of the proposed method in several space 
dimensions, we consider the two-dimensional Burgers' equation
\[
\partial_t u + \partial_x\!\left(\frac{u^2}{2}\right) + \partial_y\!\left(\frac{u^2}{2}\right) = 0,
\qquad (x,y)\in[0,1]^2,\ t\in[0,T],
\]
with exact solution
\[
u(x,y,t) =
\begin{cases}
\begin{cases}
0, & y \geq \tfrac{1}{2} + t, \\
2, & \text{otherwise},
\end{cases}
& x \leq \tfrac{1}{2} - t, \\[1.2em]
\begin{cases}
-2, & y \geq 1-x, \\
\phantom{-}2, & \text{otherwise},
\end{cases}
& x > \tfrac{1}{2} - t.
\end{cases}
\]
This solution exhibits a genuinely two-dimensional wave structure: a 
horizontal shock that propagates upward with unit speed in the region 
$x \leq \tfrac{1}{2}-t$, and a stationary oblique shock along the line 
$y = 1-x$ in the region $x > \tfrac{1}{2}-t$. The two shocks meet along a moving codimension-two interaction curve, which provides a nontrivial test for the proposed entropy-residual loss in the multidimensional setting.

\begin{figure}[h!]
\centering
\includegraphics[scale=0.32]{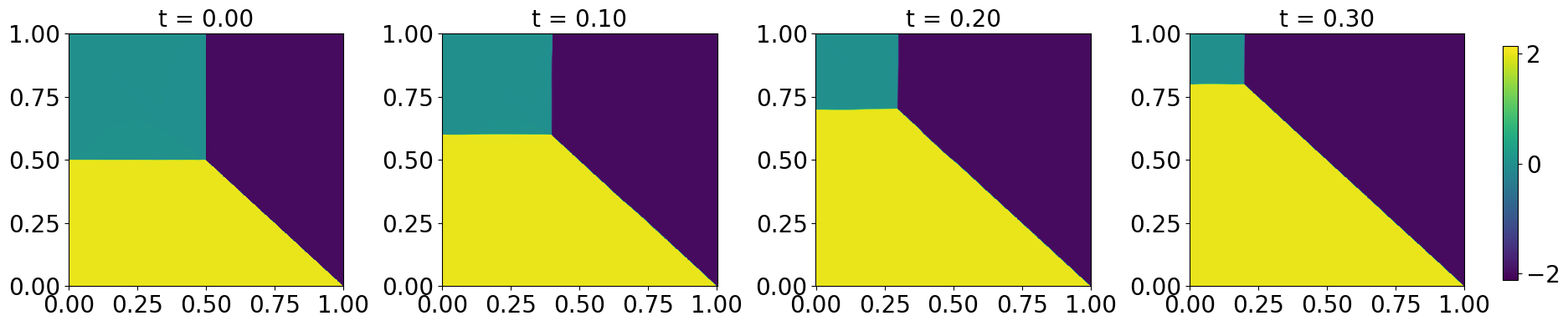}
\caption{Snapshots of the neural network solution to the 2D Burgers 
equation up to $T = 0.3$.}
\label{fig:2d_burgers}
\end{figure}
Snapshots of the neural network approximation are shown in 
Figure~\ref{fig:2d_burgers}, and the corresponding errors at the final 
time $T = 0.3$ are reported in Table~\ref{tab:hyperparameter}. The 
network accurately captures both shock fronts as well as their 
interaction, with sharp transitions and no visible spurious oscillations.

\section{Conclusion}

In this paper, we proposed and analyzed an entropy-compatible neural network 
method for scalar hyperbolic conservation laws. The method is built around 
a computable surrogate of the Kru\v{z}kov entropy residual, obtained by replacing
the constant Kru\v{z}kov parameter $k\in\mathbb{R}$ by a space--time test function
$k_h$ in a finite-dimensional space $V_h^c$. This formulation sits strictly 
between the strong and weak enforcements of the entropy inequality: it preserves
the locality and computational cost typical of strong residual losses, while 
retaining enough flexibility to be compatible with discontinuous entropy solutions.

On the analytical side, we developed a constructive bridge between finite element approximations and \(\tanh\) neural networks. For piecewise smooth entropy solutions containing shocks, rarefactions, compound waves, regular shock interactions, and one-dimensional nondegenerate shock formation from smooth initial data, we built explicit shock-adapted continuous piecewise linear competitors with provably small loss, and transferred them to the neural network setting via representability and approximation results for \(\tanh\) neural networks. Combined with an entropy-based stability estimate that is itself a discrete analogue of Kru\v{z}kov's doubling-of-variables technique, this yielded rigorous \(L^1\) error bounds for any minimizer of the proposed loss functional. When the network size scales like \(O(h^{-(d+1)})\), our analysis gives the rate \(O(h^{1/2})\) in regular shock-dominated regimes, \(O(h|\ln h|)\) in rarefaction-dominated regimes, and \(O(h^{1/2}|\ln h|)\) in the shock-birth regime. To the best of our knowledge, these are the first explicit \(L^1\) convergence rates for a neural network method on scalar hyperbolic conservation laws that cover genuinely discontinuous entropy solutions, several space dimensions, and the one-dimensional nondegenerate shock-formation case.

The numerical experiments in one and two space dimensions, including Burgers' 
equation, the Buckley--Leverett equation, and conservation laws with non-convex 
fluxes, confirm the theory across all wave configurations covered by the 
analysis. Interestingly, the observed convergence is in fact close to first 
order, which suggests, in line with Remark~\ref{rem:convergence}, that the rigorous $O(h^{1/2})$ 
bound is a consequence of the present proof technique, in particular of the 
shifted-interface argument used to align the numerical and physical 
discontinuities, rather than an intrinsic limitation of the method.

Several directions remain open. First, closing the gap between the proven \(O(h^{1/2})\) rate and the nearly first-order convergence observed in practice would require new analytical tools, possibly based on a sharper treatment of the interface discrepancy or on \(\mathrm{Lip}'\)-type estimates in the spirit of Nessyahu and Tadmor. Second, our analysis assumes idealized exact optimization and exact quadrature; a complete error analysis incorporating optimization and sampling errors, in the spirit of generalization estimates for PINNs, is a natural next step. Third, although Theorem~\ref{thm:shock_smooth_initial} covers the one-dimensional nondegenerate shock-birth case, extending this part of the analysis to multidimensional shock formation remains open. Finally, extending the framework and its convergence analysis to systems of conservation laws, where the Kru\v{z}kov entropy family is no longer available, is a challenging but important direction.

\section{Acknowledgements}
The work of the authors is partially supported by the National Natural Science Foundation of China (project no. 12525111) and the CAS AMSS-PolyU Joint Laboratory of Applied Mathematics.  
\bibliography{ref}


\begin{thebibliography}{36}
\ifx \bisbn   \undefined \def \bisbn  #1{ISBN #1}\fi
\ifx \binits  \undefined \def \binits#1{#1}\fi
\ifx \bauthor  \undefined \def \bauthor#1{#1}\fi
\ifx \batitle  \undefined \def \batitle#1{#1}\fi
\ifx \bjtitle  \undefined \def \bjtitle#1{#1}\fi
\ifx \bvolume  \undefined \def \bvolume#1{\textbf{#1}}\fi
\ifx \byear  \undefined \def \byear#1{#1}\fi
\ifx \bissue  \undefined \def \bissue#1{#1}\fi
\ifx \bfpage  \undefined \def \bfpage#1{#1}\fi
\ifx \blpage  \undefined \def \blpage #1{#1}\fi
\ifx \burl  \undefined \def \burl#1{\textsf{#1}}\fi
\ifx \doiurl  \undefined \def \doiurl#1{\url{https://doi.org/#1}}\fi
\ifx \betal  \undefined \def \betal{\textit{et al.}}\fi
\ifx \binstitute  \undefined \def \binstitute#1{#1}\fi
\ifx \binstitutionaled  \undefined \def \binstitutionaled#1{#1}\fi
\ifx \bctitle  \undefined \def \bctitle#1{#1}\fi
\ifx \beditor  \undefined \def \beditor#1{#1}\fi
\ifx \bpublisher  \undefined \def \bpublisher#1{#1}\fi
\ifx \bbtitle  \undefined \def \bbtitle#1{#1}\fi
\ifx \bedition  \undefined \def \bedition#1{#1}\fi
\ifx \bseriesno  \undefined \def \bseriesno#1{#1}\fi
\ifx \blocation  \undefined \def \blocation#1{#1}\fi
\ifx \bsertitle  \undefined \def \bsertitle#1{#1}\fi
\ifx \bsnm \undefined \def \bsnm#1{#1}\fi
\ifx \bsuffix \undefined \def \bsuffix#1{#1}\fi
\ifx \bparticle \undefined \def \bparticle#1{#1}\fi
\ifx \barticle \undefined \def \barticle#1{#1}\fi
\bibcommenthead
\ifx \bconfdate \undefined \def \bconfdate #1{#1}\fi
\ifx \botherref \undefined \def \botherref #1{#1}\fi
\ifx \url \undefined \def \url#1{\textsf{#1}}\fi
\ifx \bchapter \undefined \def \bchapter#1{#1}\fi
\ifx \bbook \undefined \def \bbook#1{#1}\fi
\ifx \bcomment \undefined \def \bcomment#1{#1}\fi
\ifx \oauthor \undefined \def \oauthor#1{#1}\fi
\ifx \citeauthoryear \undefined \def \citeauthoryear#1{#1}\fi
\ifx \endbibitem  \undefined \def \endbibitem {}\fi
\ifx \bconflocation  \undefined \def \bconflocation#1{#1}\fi
\ifx \arxivurl  \undefined \def \arxivurl#1{\textsf{#1}}\fi
\csname PreBibitemsHook\endcsname

\bibitem[\protect\citeauthoryear{Harten et~al.}{1987}]{Harten1987}
\begin{barticle}
\bauthor{\binits{A.} \bsnm{Harten}},
\bauthor{\binits{B.} \bsnm{Engquist}},
\bauthor{\binits{S.} \bsnm{Osher}} and
\bauthor{\binits{S.R.} \bsnm{Chakravarthy}}:
\batitle{Uniformly high order accurate essentially non-oscillatory schemes. {III}}.
\bjtitle{Journal of Computational Physics}
\bvolume{71}(\bissue{2}),
\bfpage{231}--\blpage{303}
(\byear{1987})
\end{barticle}
\endbibitem

\bibitem[\protect\citeauthoryear{Liu et~al.}{1994}]{liu1994weighted}
\begin{barticle}
\bauthor{\binits{X.-D.} \bsnm{Liu}},
\bauthor{\binits{S.} \bsnm{Osher}} and
\bauthor{\binits{T.} \bsnm{Chan}}:
\batitle{Weighted essentially non-oscillatory schemes}.
\bjtitle{Journal of Computational Physics}
\bvolume{115}(\bissue{1}),
\bfpage{200}--\blpage{212}
(\byear{1994})
\end{barticle}
\endbibitem

\bibitem[\protect\citeauthoryear{Jiang and Shu}{1996}]{JiangShu1996}
\begin{barticle}
\bauthor{\binits{G.-S.} \bsnm{Jiang}} and
\bauthor{\binits{C.-W.} \bsnm{Shu}}:
\batitle{Efficient implementation of weighted {ENO} schemes}.
\bjtitle{Journal of Computational Physics}
\bvolume{126},
\bfpage{202}--\blpage{228}
(\byear{1996})
\end{barticle}
\endbibitem

\bibitem[\protect\citeauthoryear{Cockburn and Shu}{2001}]{CockburnShu2001}
\begin{barticle}
\bauthor{\binits{B.} \bsnm{Cockburn}} and
\bauthor{\binits{C.-W.} \bsnm{Shu}}:
\batitle{{Runge--Kutta discontinuous Galerkin methods for convection-dominated problems}}.
\bjtitle{Journal of Scientific Computing}
\bvolume{16}(\bissue{3}),
\bfpage{173}--\blpage{261}
(\byear{2001})
\end{barticle}
\endbibitem

\bibitem[\protect\citeauthoryear{Tadmor}{2016}]{tadmor2016entropy}
\begin{bchapter}
\bauthor{\binits{E.} \bsnm{Tadmor}}:
\bctitle{Entropy stable schemes}.
In: \bbtitle{Handbook of Numerical Analysis}
vol. \bseriesno{17},
pp. \bfpage{467}--\blpage{493}.
\bpublisher{Elsevier},
\blocation{Amsterdam}
(\byear{2016})
\end{bchapter}
\endbibitem

\bibitem[\protect\citeauthoryear{Kru{\v{z}}kov}{1970}]{kruvzkov1970first}
\begin{barticle}
\bauthor{\binits{S.N.} \bsnm{Kru{\v{z}}kov}}:
\batitle{First order quasilinear equations in several independent variables}.
\bjtitle{Mathematics of the USSR-Sbornik}
\bvolume{10}(\bissue{2}),
\bfpage{217}
(\byear{1970})
\end{barticle}
\endbibitem

\bibitem[\protect\citeauthoryear{Kuznetsov}{1976}]{kuznetsov1976accuracy}
\begin{barticle}
\bauthor{\binits{N.} \bsnm{Kuznetsov}}:
\batitle{Accuracy of some approximate methods for computing the weak solutions of a first-order quasi-linear equation}.
\bjtitle{USSR Computational Mathematics and Mathematical Physics}
\bvolume{16}(\bissue{6}),
\bfpage{105}--\blpage{119}
(\byear{1976})
\end{barticle}
\endbibitem

\bibitem[\protect\citeauthoryear{Sanders}{1983}]{sanders1983convergence}
\begin{barticle}
\bauthor{\binits{R.} \bsnm{Sanders}}:
\batitle{On convergence of monotone finite difference schemes with variable spatial differencing}.
\bjtitle{Mathematics of Computation}
\bvolume{40}(\bissue{161}),
\bfpage{91}--\blpage{106}
(\byear{1983})
\end{barticle}
\endbibitem

\bibitem[\protect\citeauthoryear{Cockburn and Gremaud}{1996}]{cockburn1996priori}
\begin{barticle}
\bauthor{\binits{B.} \bsnm{Cockburn}} and
\bauthor{\binits{P.-A.} \bsnm{Gremaud}}:
\batitle{{A priori error estimates for numerical methods for scalar conservation laws. Part I: The general approach}}.
\bjtitle{Mathematics of Computation}
\bvolume{65}(\bissue{214}),
\bfpage{533}--\blpage{573}
(\byear{1996})
\end{barticle}
\endbibitem

\bibitem[\protect\citeauthoryear{Makridakis and Perthame}{2003}]{makridakis2003optimal}
\begin{barticle}
\bauthor{\binits{C.} \bsnm{Makridakis}} and
\bauthor{\binits{B.} \bsnm{Perthame}}:
\batitle{Optimal rate of convergence for anisotropic vanishing viscosity limit of a scalar balance law}.
\bjtitle{SIAM Journal on Mathematical Analysis}
\bvolume{34}(\bissue{6}),
\bfpage{1300}--\blpage{1307}
(\byear{2003})
\end{barticle}
\endbibitem

\bibitem[\protect\citeauthoryear{Nessyahu and Tadmor}{1992}]{nessyahu1992convergence}
\begin{barticle}
\bauthor{\binits{H.} \bsnm{Nessyahu}} and
\bauthor{\binits{E.} \bsnm{Tadmor}}:
\batitle{The convergence rate of approximate solutions for nonlinear scalar conservation laws}.
\bjtitle{SIAM Journal on Numerical Analysis}
\bvolume{29}(\bissue{6}),
\bfpage{1505}--\blpage{1519}
(\byear{1992})
\end{barticle}
\endbibitem

\bibitem[\protect\citeauthoryear{DiPerna}{1985}]{diperna1985measure}
\begin{barticle}
\bauthor{\binits{R.J.} \bsnm{DiPerna}}:
\batitle{Measure-valued solutions to conservation laws}.
\bjtitle{Archive for Rational Mechanics and Analysis}
\bvolume{88}(\bissue{3}),
\bfpage{223}--\blpage{270}
(\byear{1985})
\end{barticle}
\endbibitem

\bibitem[\protect\citeauthoryear{Szepessy}{1989}]{szepessy1989convergence}
\begin{barticle}
\bauthor{\binits{A.} \bsnm{Szepessy}}:
\batitle{Convergence of a shock-capturing streamline diffusion finite element method for a scalar conservation law in two space dimensions}.
\bjtitle{Mathematics of Computation}
\bvolume{53}(\bissue{188}),
\bfpage{527}--\blpage{545}
(\byear{1989})
\end{barticle}
\endbibitem

\bibitem[\protect\citeauthoryear{Raissi et~al.}{2019}]{Raissi2019}
\begin{barticle}
\bauthor{\binits{M.} \bsnm{Raissi}},
\bauthor{\binits{P.} \bsnm{Perdikaris}} and
\bauthor{\binits{G.E.} \bsnm{Karniadakis}}:
\batitle{Physics-informed neural networks ({PINNs}): A deep learning framework for solving forward and inverse problems involving nonlinear partial differential equations}.
\bjtitle{Journal of Computational Physics}
\bvolume{378},
\bfpage{686}--\blpage{707}
(\byear{2019})
\end{barticle}
\endbibitem

\bibitem[\protect\citeauthoryear{Akrivis et~al.}{2025}]{akrivis2025runge}
\begin{barticle}
\bauthor{\binits{G.} \bsnm{Akrivis}},
\bauthor{\binits{C.G.} \bsnm{Makridakis}} and
\bauthor{\binits{C.} \bsnm{Smaragdakis}}:
\batitle{{Runge--Kutta physics informed neural networks: formulation and analysis}}.
\bjtitle{Numerische Mathematik}
\bvolume{157}(\bissue{6}),
\bfpage{1975}--\blpage{2016}
(\byear{2025})
\end{barticle}
\endbibitem

\bibitem[\protect\citeauthoryear{E and Yu}{2018}]{E2018}
\begin{barticle}
\bauthor{\binits{W.} \bsnm{E}} and
\bauthor{\binits{B.} \bsnm{Yu}}:
\batitle{{The Deep Ritz method: a deep learning-based numerical algorithm for solving variational problems}}.
\bjtitle{Communications in Mathematics and Statistics}
\bvolume{6}(\bissue{1}),
\bfpage{1}--\blpage{12}
(\byear{2018})
\end{barticle}
\endbibitem

\bibitem[\protect\citeauthoryear{Cai et~al.}{2025}]{cai2025efficient}
\begin{botherref}
\oauthor{\binits{Z.} \bsnm{Cai}},
\oauthor{\binits{A.} \bsnm{Doktorova}},
\oauthor{\binits{R.D.} \bsnm{Falgout}} and
\oauthor{\binits{C.} \bsnm{Herrera}}:
{Efficient Shallow Ritz Method for One-Dimensional Diffusion-Reaction Problems}.
SIAM Journal on Scientific Computing,
414--435
(2025)
\end{botherref}
\endbibitem

\bibitem[\protect\citeauthoryear{Li et~al.}{2020}]{Li2020}
\begin{bchapter}
\bauthor{\binits{Z.} \bsnm{Li}},
\bauthor{\binits{N.} \bsnm{Kovachki}},
\bauthor{\binits{K.} \bsnm{Azizzadenesheli}},
\bauthor{\binits{B.} \bsnm{Liu}},
\bauthor{\binits{K.} \bsnm{Bhattacharya}},
\bauthor{\binits{A.M.} \bsnm{Stuart}} and
\bauthor{\binits{A.} \bsnm{Anandkumar}}:
\bctitle{Fourier neural operator for parametric partial differential equations}.
In: \bbtitle{Advances in Neural Information Processing Systems},
vol. \bseriesno{33},
pp. \bfpage{6755}--\blpage{6766}
(\byear{2020})
\end{bchapter}
\endbibitem

\bibitem[\protect\citeauthoryear{Lu et~al.}{2021}]{Lu2021}
\begin{barticle}
\bauthor{\binits{L.} \bsnm{Lu}},
\bauthor{\binits{P.} \bsnm{Jin}},
\bauthor{\binits{G.} \bsnm{Pang}},
\bauthor{\binits{Z.} \bsnm{Zhang}} and
\bauthor{\binits{G.E.} \bsnm{Karniadakis}}:
\batitle{Learning nonlinear operators via {DeepONet} based on the universal approximation theorem of operators}.
\bjtitle{Nature Machine Intelligence}
\bvolume{3},
\bfpage{218}--\blpage{229}
(\byear{2021})
\end{barticle}
\endbibitem

\bibitem[\protect\citeauthoryear{Du and Zaki}{2021}]{du2021evolutional}
\begin{barticle}
\bauthor{\binits{Y.} \bsnm{Du}} and
\bauthor{\binits{T.A.} \bsnm{Zaki}}:
\batitle{Evolutional deep neural network}.
\bjtitle{Physical Review E}
\bvolume{104}(\bissue{4}),
\bfpage{045303}
(\byear{2021})
\end{barticle}
\endbibitem

\bibitem[\protect\citeauthoryear{Feischl et~al.}{2024}]{feischl2024regularized}
\begin{botherref}
\oauthor{\binits{M.} \bsnm{Feischl}},
\oauthor{\binits{C.} \bsnm{Lasser}},
\oauthor{\binits{C.} \bsnm{Lubich}} and
\oauthor{\binits{J.} \bsnm{Nick}}:
Regularized dynamical parametric approximation.
arXiv preprint arXiv:2403.19234
(2024)
\end{botherref}
\endbibitem

\bibitem[\protect\citeauthoryear{Su et~al.}{2025}]{su2025spike}
\begin{botherref}
\oauthor{\binits{H.} \bsnm{Su}},
\oauthor{\binits{L.} \bsnm{Zhang}} and
\oauthor{\binits{J.} \bsnm{Zhao}}:
{SPIKE: Stable Physics-Informed Kernel Evolution Method for Solving Hyperbolic Conservation Laws}.
arXiv preprint arXiv:2510.18266
(2025)
\end{botherref}
\endbibitem

\bibitem[\protect\citeauthoryear{De~Ryck and Mishra}{2024}]{de2024numerical}
\begin{barticle}
\bauthor{\binits{T.} \bsnm{De~Ryck}} and
\bauthor{\binits{S.} \bsnm{Mishra}}:
\batitle{Numerical analysis of physics-informed neural networks and related models in physics-informed machine learning}.
\bjtitle{Acta Numerica}
\bvolume{33},
\bfpage{633}--\blpage{713}
(\byear{2024})
\end{barticle}
\endbibitem

\bibitem[\protect\citeauthoryear{Gazoulis et~al.}{2025}]{gazoulis2025stability}
\begin{botherref}
\oauthor{\binits{D.} \bsnm{Gazoulis}},
\oauthor{\binits{I.} \bsnm{Gkanis}} and
\oauthor{\binits{C.G.} \bsnm{Makridakis}}:
On the stability and convergence of physics informed neural networks.
IMA Journal of Numerical Analysis,
090
(2025)
\end{botherref}
\endbibitem

\bibitem[\protect\citeauthoryear{Chaumet and Giesselmann}{2024}]{chaumet2024improving}
\begin{barticle}
\bauthor{\binits{A.} \bsnm{Chaumet}} and
\bauthor{\binits{J.} \bsnm{Giesselmann}}:
\batitle{Improving weak pinns for hyperbolic conservation laws: Dual norm computation, boundary conditions and systems}.
\bjtitle{The SMAI Journal of Computational Mathematics}
\bvolume{10},
\bfpage{373}--\blpage{401}
(\byear{2024})
\end{barticle}
\endbibitem

\bibitem[\protect\citeauthoryear{Cai et~al.}{2024}]{cai2024least}
\begin{barticle}
\bauthor{\binits{Z.} \bsnm{Cai}},
\bauthor{\binits{J.} \bsnm{Choi}} and
\bauthor{\binits{M.} \bsnm{Liu}}:
\batitle{{Least-squares neural network (LSNN) method for linear advection-reaction equation: Discontinuity interface}}.
\bjtitle{SIAM Journal on Scientific Computing}
\bvolume{46}(\bissue{4}),
\bfpage{448}--\blpage{478}
(\byear{2024})
\end{barticle}
\endbibitem

\bibitem[\protect\citeauthoryear{Liu and Cai}{2026}]{liu2026least}
\begin{botherref}
\oauthor{\binits{M.} \bsnm{Liu}} and
\oauthor{\binits{Z.} \bsnm{Cai}}:
{Least-Squares Neural Network (LSNN) Method for Scalar Hyperbolic Partial Differential Equations}.
arXiv preprint arXiv:2601.20013
(2026)
\end{botherref}
\endbibitem

\bibitem[\protect\citeauthoryear{Patel et~al.}{2022}]{patel2022thermodynamically}
\begin{barticle}
\bauthor{\binits{R.G.} \bsnm{Patel}},
\bauthor{\binits{I.} \bsnm{Manickam}},
\bauthor{\binits{N.A.} \bsnm{Trask}},
\bauthor{\binits{M.A.} \bsnm{Wood}},
\bauthor{\binits{M.} \bsnm{Lee}},
\bauthor{\binits{I.} \bsnm{Tomas}} and
\bauthor{\binits{E.C.} \bsnm{Cyr}}:
\batitle{Thermodynamically consistent physics-informed neural networks for hyperbolic systems}.
\bjtitle{Journal of Computational Physics}
\bvolume{449},
\bfpage{110754}
(\byear{2022})
\end{barticle}
\endbibitem

\bibitem[\protect\citeauthoryear{De~Ryck and Mishra}{2024}]{de2024error}
\begin{barticle}
\bauthor{\binits{T.} \bsnm{De~Ryck}} and
\bauthor{\binits{S.} \bsnm{Mishra}}:
\batitle{Error analysis for deep neural network approximations of parametric hyperbolic conservation laws}.
\bjtitle{Mathematics of Computation}
\bvolume{93}(\bissue{350}),
\bfpage{2643}--\blpage{2677}
(\byear{2024})
\end{barticle}
\endbibitem

\bibitem[\protect\citeauthoryear{Oubarka et~al.}{2026}]{oubarka2026weak}
\begin{botherref}
\oauthor{\binits{I.} \bsnm{Oubarka}},
\oauthor{\binits{I.} \bsnm{Kissami}},
\oauthor{\binits{M.} \bsnm{Boubekeur}},
\oauthor{\binits{F.} \bsnm{Benkhaldoun}},
\oauthor{\binits{A.} \bsnm{Madrane}} and
\oauthor{\binits{Z.} \bsnm{Saadi}}:
Weak and entropy physics-informed neural networks for conservation laws.
arXiv preprint arXiv:2603.24819
(2026)
\end{botherref}
\endbibitem

\bibitem[\protect\citeauthoryear{Bouchut and Perthame}{1998}]{bouchut1998kruzkov}
\begin{barticle}
\bauthor{\binits{F.} \bsnm{Bouchut}} and
\bauthor{\binits{B.} \bsnm{Perthame}}:
\batitle{Kruzkov’s estimates for scalar conservation laws revisited}.
\bjtitle{Transactions of the American Mathematical Society}
\bvolume{350}(\bissue{7}),
\bfpage{2847}--\blpage{2870}
(\byear{1998})
\end{barticle}
\endbibitem

\bibitem[\protect\citeauthoryear{De~Ryck et~al.}{2024}]{de2024wpinns}
\begin{barticle}
\bauthor{\binits{T.} \bsnm{De~Ryck}},
\bauthor{\binits{S.} \bsnm{Mishra}} and
\bauthor{\binits{R.} \bsnm{Molinaro}}:
\batitle{{wPINNs: Weak physics informed neural networks for approximating entropy solutions of hyperbolic conservation laws}}.
\bjtitle{SIAM Journal on Numerical Analysis}
\bvolume{62}(\bissue{2}),
\bfpage{811}--\blpage{841}
(\byear{2024})
\end{barticle}
\endbibitem

\bibitem[\protect\citeauthoryear{Mishra and Molinaro}{2023}]{mishra2023estimates}
\begin{barticle}
\bauthor{\binits{S.} \bsnm{Mishra}} and
\bauthor{\binits{R.} \bsnm{Molinaro}}:
\batitle{Estimates on the generalization error of physics-informed neural networks for approximating pdes}.
\bjtitle{IMA Journal of Numerical Analysis}
\bvolume{43}(\bissue{1}),
\bfpage{1}--\blpage{43}
(\byear{2023})
\end{barticle}
\endbibitem

\bibitem[\protect\citeauthoryear{Yin and Zhu}{2022}]{huicheng2022shock}
\begin{barticle}
\bauthor{\binits{H.} \bsnm{Yin}} and
\bauthor{\binits{L.} \bsnm{Zhu}}:
\batitle{The shock formation and optimal regularities of the resulting shock curves for 1d scalar conservation laws}.
\bjtitle{Nonlinearity}
\bvolume{35}(\bissue{2}),
\bfpage{954}--\blpage{997}
(\byear{2022})
\end{barticle}
\endbibitem

\bibitem[\protect\citeauthoryear{Longo et~al.}{2023}]{longo2023rham}
\begin{barticle}
\bauthor{\binits{M.} \bsnm{Longo}},
\bauthor{\binits{J.A.} \bsnm{Opschoor}},
\bauthor{\binits{N.} \bsnm{Disch}},
\bauthor{\binits{C.} \bsnm{Schwab}} and
\bauthor{\binits{J.} \bsnm{Zech}}:
\batitle{{De Rham compatible deep neural network FEM}}.
\bjtitle{Neural Networks}
\bvolume{165},
\bfpage{721}--\blpage{739}
(\byear{2023})
\end{barticle}
\endbibitem

\bibitem[\protect\citeauthoryear{He et~al.}{2020}]{he2020relu}
\begin{barticle}
\bauthor{\binits{J.} \bsnm{He}},
\bauthor{\binits{L.} \bsnm{Li}},
\bauthor{\binits{J.} \bsnm{Xu}} and
\bauthor{\binits{C.} \bsnm{Zheng}}:
\batitle{Relu deep neural networks and linear finite elements}.
\bjtitle{Journal of Computational Mathematics}
\bvolume{38}(\bissue{3}),
\bfpage{502}--\blpage{527}
(\byear{2020})
\end{barticle}
\endbibitem

\end{thebibliography}

\appendix
\section{Appendix}\label{appendix:proof_lem_tanh_nn}
\subsection{Neural network approximation}
In this appendix, we adapt the min--max construction underlying the ReLU representability of CPwL functions on simplicial meshes; see, e.g., \cite{he2020relu,longo2023rham}. The key point is that nodal hat basis functions admit representations by finitely many binary $\min$ and $\max$ operations applied to affine functions; see Figure~\ref{fig:base_function}. We replace these nonsmooth operations by smooth $\tanh$-based surrogates. For $\tau>0$, define
\begin{equation}\label{eq:smooth-minmax}
	S_{\min,\tau}(r,s)
	:=
	\frac{r+s}{2}
	-\frac{r-s}{2}\tanh\bigl(\tau(r-s)\bigr),
	\qquad
	S_{\max,\tau}(r,s)
	:=
	\frac{r+s}{2}
	+\frac{r-s}{2}\tanh\bigl(\tau(r-s)\bigr).
\end{equation}
These are smooth approximations of $\min(r,s)$ and $\max(r,s)$.

\begin{figure}[htbp!]
	\centering
	\begin{subfigure}[b]{0.4\textwidth}
		\includegraphics[width=\linewidth]{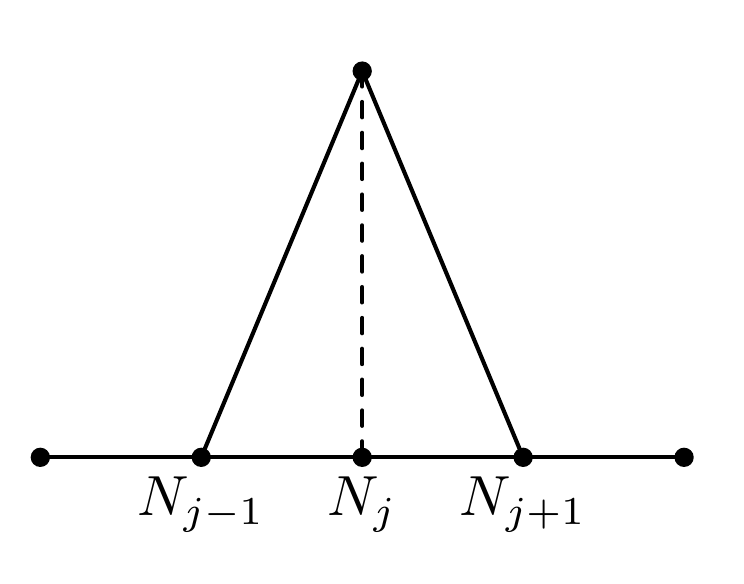}
	\end{subfigure}
	\qquad
	\begin{subfigure}[b]{0.4\textwidth}
		\includegraphics[width=\linewidth]{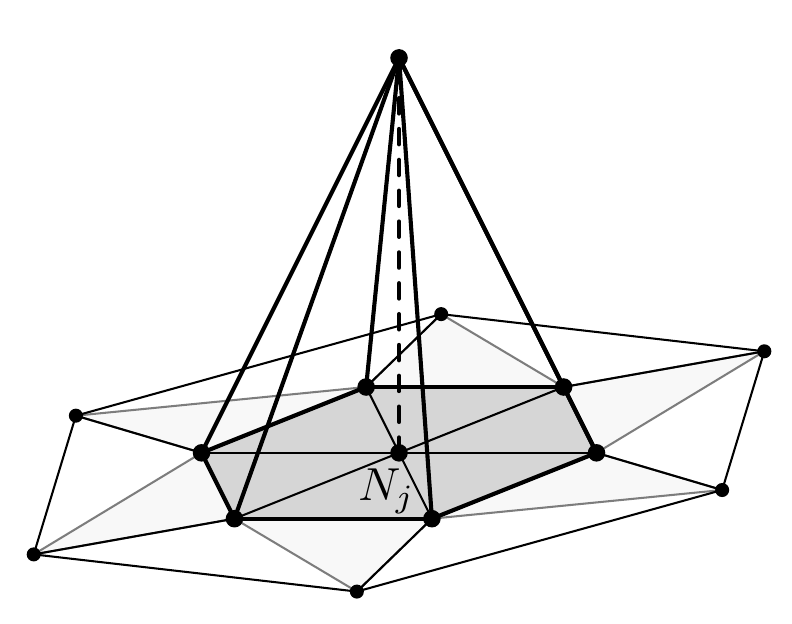}
	\end{subfigure}
	\caption{Finite-element hat basis functions in one and two dimensions.}
	\label{fig:base_function}
\end{figure}

We will repeatedly use the function
\begin{equation}\label{eq:def_psi}
 	\Psi_\tau(r):=\tanh(\tau r)+\tau r\,\text{sech}^2(\tau r).
\end{equation}
Throughout the appendix, we use the convention \(\sgn(0)=0\).
\begin{lemma}\label{lem:psi}
	There exist constants \(C,c>0\), independent of \(\tau\), such that
	\[
	\sup_{r\in\mathbb R} |\Psi_\tau(r)|\le C,
	\qquad
	|\Psi_\tau(r)-\sgn(r)|\le Ce^{-c\tau |r|}
	\quad\text{for all }r\in\mathbb R.
	\]
\end{lemma}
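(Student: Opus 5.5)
Both bounds reduce to elementary one-variable estimates. Substituting $s=\tau r$ gives
\[
\Psi_\tau(r)=\phi(s),\qquad \phi(s):=\tanh(s)+s\,\mathrm{sech}^2(s),
\]
so it suffices to prove $\sup_{s\in\mathbb R}|\phi(s)|\le C$ and $|\phi(s)-\sgn(s)|\le Ce^{-c|s|}$, with $C,c$ absolute constants. Since $\tau|r|=|s|$, the $\tau$-independence is then automatic. Moreover, $\phi$ is odd and $\sgn$ is odd (with $\sgn(0)=0=\phi(0)$), so it is enough to treat $s\ge 0$.

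For the uniform bound, we use $|\tanh s|\le 1$ and
\[
\mathrm{sech}^2 s=\frac{4}{(e^s+e^{-s})^2}\le 4e^{-2|s|}.
\]
This gives $|s\,\mathrm{sech}^2 s|\le 4|s|e^{-2|s|}\le 2/e$, since $\sup_{s\ge0}se^{-2s}=1/(2e)$. Hence $|\phi|\le 1+2/e$.

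For the exponential estimate with $s>0$, we write
\[
\phi(s)-1=(\tanh s-1)+s\,\mathrm{sech}^2 s .
\]
The first term satisfies
\[
1-\tanh s=\frac{2e^{-2s}}{1+e^{-2s}}\le 2e^{-2s}.
\]
The second term satisfies $s\,\mathrm{sech}^2 s\le 4se^{-2s}\le 4e^{-1}e^{-s}$, using $se^{-s}\le e^{-1}$. Altogether $|\phi(s)-1|\le Ce^{-s}$, so we may take $c=1$. At $s=0$ the inequality holds trivially, because both $\phi(0)$ and $\sgn(0)$ vanish.

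No step here is a genuine obstacle. The only point requiring care is absorbing the polynomial factor $s$ into the exponential, which costs part of the decay rate: we accept $c=1$ in place of $2$. It is also worth noting the origin of $\Psi_\tau$: one has $\Psi_\tau(r)=\frac{d}{dr}\bigl(r\tanh(\tau r)\bigr)$, and $r\tanh(\tau r)$ is the smoothed $|r|$ appearing in $S_{\min,\tau}$ and $S_{\max,\tau}$. This explains why $\Psi_\tau$ approximates $\sgn$.
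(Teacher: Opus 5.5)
Your proposal is correct and follows the paper's own approach: the paper likewise notes that boundedness is immediate and that, for $r>0$, both $1-\tanh(\tau r)$ and $\tau r\,\mathrm{sech}^2(\tau r)$ decay exponentially in $\tau r$, with $r<0$ handled by symmetry. You simply supply the explicit constants the paper's sketch omits: $|\Psi_\tau|\le 1+2/e$, and $c=1$ with $C=2+4/e$.
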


\begin{proof}
	Uniform boundedness is immediate. For \(r>0\),
	\[
	1-\Psi_\tau(r)=1-\tanh(\tau r)-\tau r\,\text{sech}^2(\tau r),
	\]
	and both terms decay exponentially in \(\tau r\); the case \(r<0\) is analogous.
\end{proof}

\begin{lemma}\label{lem:binary-smoothing}
	Let $\Omega\subset\mathbb{R}^d$ be a bounded Lipschitz domain, and let
	\[
	a(x)=p\cdot x+\alpha,\qquad b(x)=q\cdot x+\beta
	\]
	be affine on \(\Omega\). Define
	\[
	m_\tau:=S_{\min,\tau}(a,b),\qquad M_\tau:=S_{\max,\tau}(a,b).
	\]
	Then, as \(\tau\to\infty\),
	\[
	\|m_\tau-\min(a,b)\|_{L^\infty(\Omega)}+\|M_\tau-\max(a,b)\|_{L^\infty(\Omega)}\to0,
	\]
	and
	\[
	\|\nabla m_\tau-\nabla\min(a,b)\|_{L^1(\Omega)}
	+\|\nabla M_\tau-\nabla\max(a,b)\|_{L^1(\Omega)}\to0.
	\]
\end{lemma}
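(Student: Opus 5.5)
We prove both claims for the smoothed minimum; the smoothed maximum follows from the identity \(S_{\max,\tau}(r,s)=r+s-S_{\min,\tau}(r,s)\), which also gives \(\max(a,b)=a+b-\min(a,b)\). The whole argument rests on two elementary identities: \(\min(r,s)=\frac{r+s}{2}-\frac{|r-s|}{2}\) and \(|r|=r\,\sgn(r)\). Write \(w:=a-b\), which is affine with \(\nabla w=p-q\). Then
\[
m_\tau-\min(a,b)=\frac{w}{2}\bigl(\sgn(w)-\tanh(\tau w)\bigr).
\]
Since \(\bigl|r\,(\sgn(r)-\tanh(\tau r))\bigr|=|r|\,(1-\tanh(\tau|r|))\le \sup_{s\ge0} s\,(1-\tanh(\tau s))\le C/\tau\), the \(L^\infty\) error is at most \(C/\tau\). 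This bound holds uniformly on \(\Omega\) and needs no assumption on \(w\).

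For the gradient, differentiate directly:
\[
\nabla m_\tau=\frac{\nabla(a+b)}{2}-\frac{\nabla w}{2}\bigl(\tanh(\tau w)+\tau w\,\mathrm{sech}^2(\tau w)\bigr)=\frac{\nabla(a+b)}{2}-\frac{\nabla w}{2}\Psi_\tau(w).
\]
Almost everywhere on \(\{w\neq0\}\) we have \(\nabla\min(a,b)=\frac{\nabla(a+b)}{2}-\frac{\nabla w}{2}\sgn(w)\). Hence
\[
|\nabla m_\tau-\nabla\min(a,b)|\le \tfrac12|p-q|\,|\Psi_\tau(w)-\sgn(w)|\quad\text{a.e.}
\]

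We now split into two cases.
\begin{itemize}
\item \emph{Case \(p=q\).} The right-hand side vanishes identically, so the gradient error is zero.
\item \emph{Case \(p\neq q\).} The zero set \(\{w=0\}\) is a hyperplane and so has measure zero. By Lemma~\ref{lem:psi}, the integrand is bounded uniformly in \(\tau\) and tends to \(0\) pointwise off this hyperplane as \(\tau\to\infty\). Since \(\Omega\) is bounded, dominated convergence gives \(L^1\) convergence.
\end{itemize}

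A quantitative rate is also available. Using the coarea formula along the direction \(p-q\), together with the exponential bound in Lemma~\ref{lem:psi}, we get
\[
\int_\Omega e^{-c\tau|w|}\le C\,\mathrm{diam}(\Omega)^{d-1}\,\frac{1}{|p-q|\,\tau}.
\]
This yields an \(O(1/\tau)\) rate. The rate will be convenient later, when these operations are composed across the layers of the min–max representation of the hat functions.

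The only delicate point is the set \(\{w=0\}\). There \(\min(a,b)\) fails to be differentiable and \(\sgn(0)=0\) by convention, so one must observe that this set is either null (case \(p\neq q\)) or irrelevant because the gradient coefficient \(p-q\) vanishes (case \(p=q\)). Everything else is routine.
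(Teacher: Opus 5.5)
Your proof is correct and follows essentially the same route as the paper. Both arguments reduce to $w=a-b$, compare $\nabla m_\tau=\frac{p+q}{2}-\frac12\Psi_\tau(w)(p-q)$ with $\frac{p+q}{2}-\frac12\sgn(w)(p-q)$ via Lemma~\ref{lem:psi}, and split into the cases $p=q$ and $p\neq q$. The remaining differences are minor. You make the $L^\infty$ bound explicitly $O(1/\tau)$, and you obtain the $L^1$ gradient convergence first by dominated convergence, whereas the paper goes straight to Fubini slicing along $(p-q)/|p-q|$, which is exactly the $O(1/\tau)$ coarea estimate you also record.
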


\begin{proof}
	We consider only \(m_\tau\), the case of \(M_\tau\) being identical. Set
	\[
	d:=a-b=(p-q)\cdot x+(\alpha-\beta).
	\]
	Then
	\[
	m_\tau=\frac{a+b}{2}-\frac d2\tanh(\tau d),
	\qquad
	\min(a,b)=\frac{a+b-|d|}{2},
	\]
	hence
	\[
	m_\tau-\min(a,b)=\frac12\bigl(|d|-d\tanh(\tau d)\bigr).
	\]
	Since \(d\) is bounded on \(\Omega\) and \(r\mapsto |r|-r\tanh(\tau r)\) converges uniformly to \(0\) on bounded intervals, we obtain
	\[
	\|m_\tau-\min(a,b)\|_{L^\infty(\Omega)}\to0.
	\]
	
	If \(p=q\), then \(d\) is constant and \(\min(a,b)\) is affine, so the gradient claim is immediate. Assume therefore \(p\neq q\). Since
	\[
	\nabla m_\tau
	=
	\frac{p+q}{2}-\frac12\Psi_\tau(d)(p-q),
	\]
	while
	\[
	\nabla\min(a,b)
	=
	\frac{p+q}{2}-\frac12\sgn(d)(p-q)
	\qquad\text{a.e. on }\Omega,
	\]
	Lemma~\ref{lem:psi} gives
	\[
	|\nabla m_\tau-\nabla\min(a,b)|
	\le C e^{-c\tau|d|}\,|p-q|
	\qquad\text{a.e. in }\Omega.
	\]
	Let \(\nu:=(p-q)/|p-q|\). Writing \(x=y+s\nu\), \(y\in\nu^\perp\), we have
	\[
	d(y+s\nu)=|p-q|\,s+\gamma(y)
	\]
	for some affine \(\gamma\) on \(\nu^\perp\). By Fubini,
	\[
	\int_\Omega |\nabla m_\tau-\nabla\min(a,b)|\,\d x
	\le
	C|p-q|\int_{\nu^\perp}\int_{I_y}e^{-c\tau||p-q|s+\gamma(y)|}\,\d s\,\d y,
	\]
	where \(I_y\) is the slice of \(\Omega\) in direction \(\nu\). With
	\[
	t=\tau\bigl(|p-q|s+\gamma(y)\bigr),
	\]
	we obtain
	\[
	\int_\Omega |\nabla m_\tau-\nabla\min(a,b)|\,\d x
	\le
	\frac{C}{\tau}\int_{\nu^\perp}\int_{\mathbb R}e^{-c|t|}\,\d t\,\d y
	\le \frac{C_\Omega}{\tau}\to0.
	\]
\end{proof}

\begin{lemma}\label{lem:cpwl-binary-smoothing}
	Let $\Omega\subset\mathbb{R}^d$ be a bounded Lipschitz domain, and let
	\(G,H:\Omega\to\mathbb{R}\) be continuous piecewise affine functions. Define
	\[
	F_\tau:=S_{\min,\tau}(G,H),\qquad F:=\min(G,H).
	\]
	Then
	\[
	\|F_\tau-F\|_{L^\infty(\Omega)}\to0,
	\qquad
	\|\nabla F_\tau-\nabla F\|_{L^1(\Omega)}\to0
	\qquad\text{as }\tau\to\infty.
	\]
	The same conclusion holds with \(\min\) and \(S_{\min,\tau}\) replaced by \(\max\) and \(S_{\max,\tau}\).
\end{lemma}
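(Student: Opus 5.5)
We mimic the proof of Lemma~\ref{lem:binary-smoothing}, now applied to continuous piecewise affine arguments. Write \(D:=G-H\), which is again continuous and piecewise affine on a finite polyhedral partition \(\{P_j\}_{j=1}^J\) of \(\Omega\) (a common refinement of the partitions for \(G\) and \(H\)). As before,
\[
F_\tau-F=\tfrac12\bigl(|D|-D\tanh(\tau D)\bigr).
\]
Since \(D\) is bounded on \(\Omega\) and \(r\mapsto |r|-r\tanh(\tau r)\) tends to \(0\) uniformly on bounded sets (indeed \(\le C/\tau\)), the \(L^\infty\) convergence is immediate.

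For the gradient we work piecewise on each \(P_j\), where \(G,H,D\) are affine. Since \(F_\tau\) is Lipschitz,
\[
\nabla F_\tau=\tfrac12\nabla(G+H)-\tfrac12\Psi_\tau(D)\nabla D
\]
a.e., with \(\Psi_\tau\) as in \eqref{eq:def_psi}. Similarly, a.e. on \(\Omega\),
\[
\nabla F=\tfrac12\nabla(G+H)-\tfrac12\sgn(D)\nabla D.
\]
On the set \(\{D=0\}\cap P_j\) one needs care: if \(\nabla D|_{P_j}\neq 0\) this set is a hyperplane piece of measure zero. If \(\nabla D|_{P_j}=0\), then \(D\) is constant on \(P_j\). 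If that constant is nonzero, the formula holds; if it is zero, then \(G=H\) there, so \(\nabla F=\nabla G\), and \(\nabla F_\tau=\nabla G\) as well because \(\nabla D=0\). So in every case
\[
|\nabla F_\tau-\nabla F|\le \tfrac12|\Psi_\tau(D)-\sgn(D)|\,|\nabla D|\le C e^{-c\tau|D|}|\nabla D|
\]
a.e., by Lemma~\ref{lem:psi}. This vanishes identically on pieces where \(\nabla D=0\).

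On each piece \(P_j\) with \(\nabla D|_{P_j}=p_j\neq0\), we repeat the Fubini argument of Lemma~\ref{lem:binary-smoothing}. Take coordinates along \(\nu_j=p_j/|p_j|\), so that \(D=|p_j|s+\gamma_j(y)\). Then
\[
\int_{P_j}e^{-c\tau|D|}|p_j|\,\d x\le \frac{C\,\mathrm{diam}(\Omega)^{d-1}}{\tau}.
\]
Summing over the finitely many pieces gives \(\|\nabla F_\tau-\nabla F\|_{L^1(\Omega)}\le CJ/\tau\to0\). The \(\max\) case is identical with the sign flipped.

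The only delicate point is the handling of pieces where \(D\equiv0\), i.e. the a.e. identification of \(\nabla F\) there. This is resolved by noting that both gradients reduce to \(\nabla G\). Everything else is the one-piece computation already done in Lemma~\ref{lem:binary-smoothing}, applied finitely many times. One also uses that a Lipschitz function's gradient restricted to a piece agrees a.e. with the classical gradient, which is standard.
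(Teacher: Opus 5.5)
Your proposal is correct and follows the paper's route. Like the paper, you partition $\Omega$ into finitely many pieces on which $G$ and $H$ are affine, apply the affine-case argument of Lemma~\ref{lem:binary-smoothing} on each piece (including the separate treatment of $\nabla D = 0$), and sum. The only difference is that you inline the slicing computation instead of citing the lemma; this gives an explicit $O(J/\tau)$ bound and avoids needing each piece to be a Lipschitz domain.
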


\begin{proof}
	Take a finite Lipschitz partition \(\Omega=\bigcup_{j=1}^N\Omega_j\) such that \(G\) and \(H\) are affine on each \(\Omega_j\), say \(G|_{\Omega_j}=a_j\), \(H|_{\Omega_j}=b_j\). Then Lemma~\ref{lem:binary-smoothing} gives
	\[
	\|S_{\min,\tau}(a_j,b_j)-\min(a_j,b_j)\|_{L^\infty(\Omega_j)}\to0,
	\qquad
	\|\nabla S_{\min,\tau}(a_j,b_j)-\nabla\min(a_j,b_j)\|_{L^1(\Omega_j)}\to0
	\]
	for every \(j\). Summing over \(j\) yields the claim.
\end{proof}

\begin{lemma}\label{lem:minmax-stability}
	Let \(\Omega\subset\mathbb R^d\) be a bounded Lipschitz domain. Let \(F\) be a finite min--max affine expression on \(\Omega\), and let \(F_\tau\) denote the corresponding \(\tau\)-smoothed expression. Then
	\[
	\|F_\tau-F\|_{L^\infty(\Omega)}\to0,
	\qquad
	\|\nabla F_\tau-\nabla F\|_{L^1(\Omega)}\to0
	\qquad\text{as }\tau\to\infty.
	\]
	In particular,
	\[
	\|F_\tau-F\|_{W^{1,1}(\Omega)}\to0.
	\]
\end{lemma}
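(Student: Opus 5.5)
The natural route is induction on the structure of the finite min--max expression $F$, with Lemma~\ref{lem:cpwl-binary-smoothing} as the inductive step. Formally, a finite min--max affine expression is built from affine functions by finitely many applications of binary $\min$ and $\max$. The $\tau$-smoothed expression $F_\tau$ is obtained by replacing every $\min$ with $S_{\min,\tau}$ and every $\max$ with $S_{\max,\tau}$. I would induct on the number of binary operations.

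For the base case, $F$ is affine, so $F_\tau=F$. For the inductive step, write $F=\min(G,H)$ (the $\max$ case is identical), where $G,H$ are min--max expressions of lower complexity. Thus $G,H$ are continuous and piecewise affine on a finite Lipschitz partition, and by the induction hypothesis $G_\tau\to G$, $H_\tau\to H$ in $L^\infty$ and their gradients converge in $L^1$. Split the error as
\[
F_\tau-F=\bigl(S_{\min,\tau}(G_\tau,H_\tau)-S_{\min,\tau}(G,H)\bigr)+\bigl(S_{\min,\tau}(G,H)-\min(G,H)\bigr).
\]
The second bracket tends to zero in $L^\infty$ and in $W^{1,1}$ by Lemma~\ref{lem:cpwl-binary-smoothing}. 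For the $L^\infty$ part of the first bracket, note that $S_{\min,\tau}$ is Lipschitz uniformly in $\tau$: its partial derivatives are $\tfrac12\mp\tfrac12\Psi_\tau(r-s)$, which are bounded by Lemma~\ref{lem:psi}. So that difference is bounded by $C(\|G_\tau-G\|_\infty+\|H_\tau-H\|_\infty)\to0$.

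The main obstacle is the gradient of the first bracket. By the chain rule,
\[
\nabla S_{\min,\tau}(A,B)=\tfrac12(\nabla A+\nabla B)-\tfrac12\Psi_\tau(A-B)(\nabla A-\nabla B).
\]
Taking the difference between $(A,B)=(G_\tau,H_\tau)$ and $(G,H)$ gives two kinds of terms:
\begin{itemize}
\item terms $\Psi_\tau(G_\tau-H_\tau)\,\nabla(G_\tau-G)$ and the analogous ones with $H$, which are harmless because $\Psi_\tau$ is uniformly bounded and $\nabla(G_\tau-G)\to0$ in $L^1$;
\item the term $\bigl(\Psi_\tau(G_\tau-H_\tau)-\Psi_\tau(G-H)\bigr)\nabla(G-H)$, which is delicate because $\Psi_\tau$ has Lipschitz constant of order $\tau$, so a uniform perturbation of $G-H$ is not automatically small after applying $\Psi_\tau$.
\end{itemize}

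I would handle this last term as follows. Here $\nabla(G-H)$ is bounded, since $G,H$ are piecewise affine. Fix $\eta>0$ and split $\Omega$ into $\{|G-H|\ge\eta\}$ and $\{|G-H|<\eta\}$.
\begin{itemize}
\item On $\{|G-H|\ge\eta\}$, for $\tau$ large we have $\|G_\tau-G\|_\infty+\|H_\tau-H\|_\infty<\eta/2$, so both $G_\tau-H_\tau$ and $G-H$ have modulus at least $\eta/2$ and the same sign. By Lemma~\ref{lem:psi}, both $\Psi_\tau$ values are within $Ce^{-c\tau\eta/2}$ of the same $\sgn$, and the contribution vanishes as $\tau\to\infty$.
\item On $\{|G-H|<\eta\}$, the integrand is bounded by $C|\nabla(G-H)|$. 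On each affine piece $\Omega_j$ where $\nabla(G-H)\neq0$, the set $\{|G-H|<\eta\}$ is a slab of measure $O(\eta)$. On pieces where $\nabla(G-H)=0$ the integrand vanishes identically.
\end{itemize}
Hence the $L^1$ norm of this term is at most $C\eta+o(1)$. Letting $\tau\to\infty$ and then $\eta\to0$ closes the induction. Finally, $L^\infty$ convergence on the bounded domain implies $L^1$ convergence, which gives the $W^{1,1}$ statement.
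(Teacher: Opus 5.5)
Your proposal is correct and follows essentially the same route as the paper: induction on the number of binary operations, the same splitting through $S_{\min,\tau}(G,H)$, and the same isolation of the delicate term $\bigl(\Psi_\tau(G_\tau-H_\tau)-\Psi_\tau(G-H)\bigr)\nabla(G-H)$, which is controlled by separating a thin slab where $|G-H|$ is small from its complement. The only cosmetic difference is that the paper uses the $\tau$-dependent threshold $2\delta_\tau$ with $\delta_\tau=\|(G_\tau-G)-(H_\tau-H)\|_{L^\infty}$ (and a slicing integral on the far set), whereas you use a fixed $\eta$ with a uniform exponential bound and let $\tau\to\infty$ before $\eta\to0$; both work.
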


\begin{proof}
	We argue by induction on the number of binary operations. The case of zero operations is trivial.
	
	Assume the claim holds for expressions with at most \(N\) binary operations, and let
	\[
	F=\min(G,H),
	\]
	where \(G\) and \(H\) are min--max affine expressions with at most \(N\) operations; the case \(F=\max(G,H)\) is identical. Let \(G_\tau,H_\tau\) be the corresponding smoothed expressions and set
	\[
	F_\tau:=S_{\min,\tau}(G_\tau,H_\tau).
	\]
	By the induction hypothesis,
	\[
	\|G_\tau-G\|_{L^\infty}+\|H_\tau-H\|_{L^\infty}\to0,
	\qquad
	\|\nabla G_\tau-\nabla G\|_{L^1}+\|\nabla H_\tau-\nabla H\|_{L^1}\to0.
	\]
	
	Since \(S_{\min,\tau}(r,s)\) is uniformly Lipschitz in \((r,s)\),
	\[
	\|F_\tau-S_{\min,\tau}(G,H)\|_{L^\infty}
	\le
	C\bigl(\|G_\tau-G\|_{L^\infty}+\|H_\tau-H\|_{L^\infty}\bigr)\to0.
	\]
	By Lemma~\ref{lem:cpwl-binary-smoothing},
	\[
	\|S_{\min,\tau}(G,H)-\min(G,H)\|_{L^\infty}\to0,
	\]
	hence \(\|F_\tau-F\|_{L^\infty(\Omega)}\to0\).
	
	For the gradients,
	\[
	\nabla F_\tau-\nabla F
	=
	\bigl(\nabla F_\tau-\nabla S_{\min,\tau}(G,H)\bigr)
	+
	\bigl(\nabla S_{\min,\tau}(G,H)-\nabla\min(G,H)\bigr).
	\]
	The second term tends to \(0\) in \(L^1\) by Lemma~\ref{lem:cpwl-binary-smoothing}. For the first, using
	\[
	\nabla S_{\min,\tau}(u,v)
	=
	\frac{\nabla u+\nabla v}{2}
	-\frac12\Psi_\tau(u-v)(\nabla u-\nabla v),
	\]
	we obtain
	\begin{align*}
		\nabla F_\tau-\nabla S_{\min,\tau}(G,H)
		={}&
		\frac{\nabla G_\tau-\nabla G+\nabla H_\tau-\nabla H}{2}\\
		&-\frac12\Psi_\tau(G_\tau-H_\tau)\bigl[(\nabla G_\tau-\nabla H_\tau)-(\nabla G-\nabla H)\bigr]\\
		&-\frac12\bigl(\Psi_\tau(G_\tau-H_\tau)-\Psi_\tau(G-H)\bigr)(\nabla G-\nabla H).
	\end{align*}
	The first two terms converge to \(0\) in \(L^1\) by the induction hypothesis and Lemma~\ref{lem:psi}. It remains to control
	\[
	D_\tau:=\bigl(\Psi_\tau(G_\tau-H_\tau)-\Psi_\tau(G-H)\bigr)(\nabla G-\nabla H).
	\]
	
	Let \(\{K_j\}_{j=1}^J\) be a common finite polyhedral refinement on which both \(G\) and \(H\) are affine. On each \(K_j\),
	\[
	G(x)=p_j\cdot x+\alpha_j,\qquad H(x)=q_j\cdot x+\beta_j,
	\]
	so that \(d_j:=G-H\) is affine and
	\[
	\nabla G-\nabla H=p_j-q_j
	\qquad\text{a.e. on }K_j.
	\]
	Set
	\[
	\varepsilon_\tau:=(G_\tau-G)-(H_\tau-H),\qquad
	\delta_\tau:=\|\varepsilon_\tau\|_{L^\infty(\Omega)}\to0.
	\]
	If \(p_j=q_j\), then \(D_\tau=0\) a.e. on \(K_j\). Assume \(p_j\neq q_j\), and define
	\[
	E_{\tau,j}:=\{|d_j|>2\delta_\tau\}\cap K_j,\qquad
	N_{\tau,j}:=\{|d_j|\le2\delta_\tau\}\cap K_j.
	\]
	
	On \(E_{\tau,j}\), \(d_j\) and \(d_j+\varepsilon_\tau\) have the same sign and \(|d_j+\varepsilon_\tau|\ge |d_j|/2\), hence Lemma~\ref{lem:psi} gives
	\begin{align*}
		|\Psi_\tau(d_j + \varepsilon_\tau) - \Psi_\tau(d_j)|&\leq |\Psi_\tau(d_j + \varepsilon_\tau) - \sgn(d_j+\varepsilon_{\tau})|+|\Psi_\tau(d_j)-\sgn(d_j)|
		\\
		&\le C \left(e^{-c\tau|d_j+\varepsilon_{\tau}|}+e^{-c\tau|d_j|}\right)\leq 2C e^{-\frac{c}{2}\tau|d_j|}.
	\end{align*}
	Therefore,
	\[
	\int_{E_{\tau,j}} |D_\tau|\,\d x
	\le
	2C|p_j-q_j|\int_{K_j}e^{-\frac{c}{2}\tau|d_j(x)|}\,\d x \to0,
	\]
	by the same slicing argument as in Lemma~\ref{lem:binary-smoothing}.
	
	On \(N_{\tau,j}\), using the uniform boundedness of \(\Psi_\tau\),
	\[
	\int_{N_{\tau,j}}|D_\tau|\,\d x
	\le C|p_j-q_j|\,|N_{\tau,j}|.
	\]
	Let \(\nu_j:=(p_j-q_j)/|p_j-q_j|\). Since
	\[
	d_j(y+s\nu_j)=|p_j-q_j|\,s+\gamma_j(y),
	\qquad y\in\nu_j^\perp,
	\]
	the section of \(N_{\tau,j}\) in direction \(\nu_j\) has length at most \(4\delta_\tau/|p_j-q_j|\). By Fubini,
	\[
	|N_{\tau,j}|
	\le
	\frac{4\delta_\tau}{|p_j-q_j|}\,|\pi_{\nu_j^\perp}(K_j)|,
	\]
	and thus
	\[
	\int_{N_{\tau,j}}|D_\tau|\,\d x
	\le
	C|\pi_{\nu_j^\perp}(K_j)|\,\delta_\tau \to0.
	\]
	Hence \(\int_{K_j}|D_\tau|\,\d x\to0\) for each \(j\), and summing over the finite partition yields \(\|D_\tau\|_{L^1(\Omega)}\to0\). Therefore \(\|\nabla F_\tau-\nabla F\|_{L^1(\Omega)}\to0\).
\end{proof}

\begin{prop}[Uniform \(\tanh\)-approximation of CPwL functions]\label{prop:represent}
	Let \(\Omega_T\subset\mathbb R^n\) be bounded, and let \(\hat u:\Omega_T\to\mathbb R\) be continuous and piecewise affine over a conforming simplicial partition \(\mathcal T\) with \(N:=\#\mathcal T<\infty\). Assume that each vertex of \(\mathcal T\) belongs to at most \(N_{\rm patch}\) simplices, where \(N_{\rm patch}\) depends only on \(d\). Then \(\forall \varepsilon\in(0,1)\) there exists a \(\tanh\) neural network \(u_{\hat\theta}\) of form \eqref{def:NN} such that
	\begin{align}
		\|\hat u-u_{\hat\theta}\|_{L^\infty(\Omega_T)}&\le \varepsilon,\label{eq:tanh_approx_L_infty}\\
		\|\nabla\hat u-\nabla u_{\hat\theta}\|_{L^1(\Omega_T)}&\le \varepsilon.\label{eq:tanh_approx_W11}
	\end{align}
	Moreover,
	\begin{itemize}
		\item \emph{Depth:} \(L \le C_1\lceil \log_2(N_{\rm patch})\rceil + C_2\);
		\item \emph{Width:} each hidden layer has at most \(O(N_{\rm patch}N)\) neurons,
	\end{itemize}
	where \(C_1,C_2\) are independent of \(\varepsilon\) and \(N\).
\end{prop}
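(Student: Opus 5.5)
The plan is to first write $\hat u$ exactly as a finite min--max affine expression with the stated depth and width, as in the ReLU representability results of \cite{he2020relu,longo2023rham}. Then we replace every binary $\min$/$\max$ by its $\tau$-smoothed surrogate $S_{\min,\tau}$, $S_{\max,\tau}$ from \eqref{eq:smooth-minmax}, and invoke Lemma~\ref{lem:minmax-stability} with $\tau$ large. Concretely, $\hat u=\sum_{\mathbf v}\hat u(\mathbf v)\,\phi_{\mathbf v}$, summed over the vertices $\mathbf v$ of $\mathcal T$. Each nodal hat function can be written as
\[
\phi_{\mathbf v}=\max\Bigl\{0,\min_{K\ni\mathbf v}\ell_{K,\mathbf v}\Bigr\},
\]
where $\ell_{K,\mathbf v}$ is the affine extension of $\phi_{\mathbf v}|_K$. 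This formula holds on convex patches; for nonconvex patches we use the standard fix of \cite{he2020relu}, which writes $\phi_{\mathbf v}$ as a max of mins over at most $N_{\rm patch}$ affine pieces. Organizing the minimum over $\le N_{\rm patch}$ terms as a balanced binary tree gives depth $\lceil\log_2 N_{\rm patch}\rceil$ per hat function. Each hat function uses $O(N_{\rm patch})$ affine inputs, and there are $O(N)$ vertices, since $\#\text{vertices}\le (n+1)N$. Running all hat functions in parallel layers therefore gives width $O(N_{\rm patch}N)$. The final affine layer $A_L$ forms the linear combination with coefficients $\hat u(\mathbf v)$.

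Next we realize each smoothed binary operation with $\tanh$ neurons. This is the one genuinely architectural point, because $S_{\min,\tau}(r,s)=\tfrac{r+s}{2}-\tfrac{r-s}{2}\tanh(\tau(r-s))$ involves a product $(r-s)\tanh(\tau(r-s))$, which a single $\tanh$ layer does not produce. I would handle it in one of two ways. The first option is to approximate the smooth univariate function $g_\tau(y)=y\tanh(\tau y)$ on the bounded range of $y=r-s$ by a fixed small $\tanh$ subnetwork, using for instance finite-difference representations such as $y\tanh(\tau y)\approx \frac{1}{\delta}\bigl(\log\cosh(\tau y+\delta)-\dots\bigr)$, or by standard $\tanh$ approximation of smooth univariate functions in $W^{1,\infty}$ on compacts with $O(1)$ neurons per target accuracy. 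The second option is to note that $\tfrac1\tau\log\cosh(\tau y)$ equally approximates $|y|$ and that its derivative is exactly $\tanh(\tau y)$. This suggests defining the surrogate via an antiderivative, approximated in $W^{1,\infty}$ by a $\tanh$ network of constant size. In either case each binary node costs $O(1)$ neurons and $O(1)$ layers, preserving the depth and width bounds with constants $C_1,C_2$. The identity channels $\tfrac{r+s}{2}$ must be carried through the layers, which can be done by $\tanh$ neurons with small weights, since $\tanh(\epsilon y)/\epsilon\to y$ in $C^1$ on compacts. Because the range of all intermediate quantities is bounded, these auxiliary approximations converge in $W^{1,\infty}$.

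The final step is the error bound. Lemma~\ref{lem:minmax-stability} gives $\|F_\tau-\hat u\|_{L^\infty}+\|\nabla F_\tau-\nabla\hat u\|_{L^1}\to0$ as $\tau\to\infty$. The additional realization errors of the previous step enter in the same way as $G_\tau-G$ in the induction of that lemma, because $S$ is uniformly Lipschitz and the $D_\tau$ argument only needs $L^\infty$ closeness of the inner arguments plus $L^1$ closeness of their gradients. So we may choose $\tau$ large and the auxiliary accuracies small to reach \eqref{eq:tanh_approx_L_infty}--\eqref{eq:tanh_approx_W11} for any $\varepsilon$, with architecture independent of $\varepsilon$. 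I expect the main obstacle to be the exact $\tanh$-realizability of the product structure in $S_{\min,\tau}$ together with the identity pass-through, while keeping the stated depth bound uniform in $\varepsilon$. I would try the $\log\cosh$/derivative route first and, failing that, settle for approximate realization with uniform-in-$\varepsilon$ size.
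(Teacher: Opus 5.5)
Your overall route is the paper's: a min--max representation of the nodal hat functions from \cite{he2020relu,longo2023rham}, replacement of each binary $\min/\max$ by $S_{\min,\tau}/S_{\max,\tau}$, Lemma~\ref{lem:minmax-stability}, and the nodal combination in the last affine layer. You also correctly spotted the step the paper's proof passes over. The paper asserts that $S_{\min,\tau}$ and $S_{\max,\tau}$ are compositions of affine maps and $\tanh$. That is not literally true, because of the product $\tfrac{r-s}{2}\tanh(\tau(r-s))$. Moreover, since \eqref{def:NN} has no skip connections, linear channels such as $\tfrac{r+s}{2}$ must be emulated too.

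\emph{The gap.} Neither of your remedies closes this step. A generic $W^{1,\infty}$ approximation result for smooth univariate functions, applied to $g_\tau(y)=y\tanh(\tau y)$, needs a neuron count that grows as the accuracy tends to zero. It also grows with $\tau$, since the higher derivatives of $g_\tau$ grow like powers of $\tau$, and $\tau\to\infty$ is forced by Lemma~\ref{lem:minmax-stability}. This destroys the $\varepsilon$-independent depth and width bounds. Your $\log\cosh$ routes rely on a function that is not a $\tanh$ network, and you give no fixed-size approximation of it.

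\emph{The missing ingredient} is a fixed-size multiplication gadget. For any $a\neq0$ (so $\tanh''(a)\neq0$), Taylor expansion gives
\[
\frac{\tanh(a+\delta x)+\tanh(a-\delta x)-2\tanh a}{\delta^{2}\tanh''(a)}\longrightarrow x^{2},
\qquad
\frac{\tanh(\epsilon x)}{\epsilon}\longrightarrow x
\]
in $C^1$ on compact sets as $\delta,\epsilon\to0$, and $pq=\tfrac14\bigl((p+q)^2-(p-q)^2\bigr)$. Put $p=\tanh(\tau y)$ and $q=\epsilon^{-1}\tanh(\epsilon y)$ in one hidden layer and the four squaring neurons in the next. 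This realizes $g_\tau$, hence $S_{\min,\tau}(r,s)=\tfrac{r+s}{2}-\tfrac12 g_\tau(r-s)$, to any $W^{1,\infty}$ accuracy on bounded sets. It uses two hidden layers and a neuron count independent of both $\tau$ and the accuracy; only the weights blow up. Each tree node then costs $O(1)$ layers and neurons, which gives $L\le C_1\lceil\log_2 N_{\rm patch}\rceil+C_2$ and width $O(N_{\rm patch}N)$.

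\emph{Error bookkeeping.} This is simpler than re-running the $D_\tau$ argument. First fix $\tau$ by Lemma~\ref{lem:minmax-stability} so that $F_\tau:=\sum_{\mathbf v}\hat u(\mathbf v)\phi_{\mathbf v,\tau}$ is within $\varepsilon/2$ of $\hat u$. For this fixed $\tau$, $F_\tau$ is a finite composition of smooth maps, each a $C^1$-limit of the gadgets on compact sets. Since $C^1$ convergence on compacts is stable under composition, letting $\delta,\epsilon\to0$ gives a network within $\varepsilon/\bigl(2(1+|\Omega_T|)\bigr)$ of $F_\tau$ in $W^{1,\infty}(\Omega_T)$.
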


\begin{proof}
	By the representability results of \cite{he2020relu,longo2023rham} (Theorem 3.1 in \cite{he2020relu} and Theorem 4.3 in \cite{longo2023rham}), each nodal hat basis function \(\varphi_j\) admits a finite min--max affine representation. These constructions may be arranged in binary-tree form, yielding depth
	\[
	L \le C_1\lceil \log_2(N_{\rm patch})\rceil + C_2
	\]
	and total width \(O(N_{\rm patch}N)\).
	
	Let \(\{N_j\}_{j=1}^{\mathcal N}\) be the nodes of \(\mathcal T\), with associated hat basis \(\{\varphi_j\}_{j=1}^{\mathcal N}\). Then \(\mathcal N=O(N)\) and
	\[
	\hat u=\sum_{j=1}^{\mathcal N}\hat u(N_j)\varphi_j.
	\]
	For each \(j\), replace every \(\min\) and \(\max\) in the representation of \(\varphi_j\) by \(S_{\min,\tau}\) and \(S_{\max,\tau}\), respectively, and denote the resulting function by \(\varphi_{j,\tau}\). Since \(S_{\min,\tau}\) and \(S_{\max,\tau}\) are compositions of affine maps and \(\tanh\), each \(\varphi_{j,\tau}\) is realized by a $\tanh$ neural network with the same asymptotic depth and width bounds. Hence so is
	\[
	u_{\hat\theta,\tau}:=\sum_{j=1}^{\mathcal N}\hat u(N_j)\varphi_{j,\tau}.
	\]
	
	By Lemma~\ref{lem:minmax-stability},
	\[
	\|\varphi_{j,\tau}-\varphi_j\|_{L^\infty(\Omega_T)}\to0,
	\qquad
	\|\nabla\varphi_{j,\tau}-\nabla\varphi_j\|_{L^1(\Omega_T)}\to0
	\qquad\text{for each }j.
	\]
	Since \(\mathcal N<\infty\),
	\[
	\|u_{\hat\theta,\tau}-\hat u\|_{L^\infty(\Omega_T)}
	\le
	\sum_{j=1}^{\mathcal N}|\hat u(N_j)|\,\|\varphi_{j,\tau}-\varphi_j\|_{L^\infty(\Omega_T)}\to0,
	\]
	and similarly,
	\[
	\|\nabla u_{\hat\theta,\tau}-\nabla\hat u\|_{L^1(\Omega_T)}
	\le
	\sum_{j=1}^{\mathcal N}|\hat u(N_j)|\,\|\nabla\varphi_{j,\tau}-\nabla\varphi_j\|_{L^1(\Omega_T)}\to0.
	\]
	Choosing \(\tau\) sufficiently large gives \eqref{eq:tanh_approx_L_infty}--\eqref{eq:tanh_approx_W11}.
\end{proof}

\subsubsection*{\bf Proof of Lemma~\ref{lem:loss_cpwl_approx}}
Under the assumptions of Lemma~\ref{lem:loss_cpwl_approx}, the function \(\hat u\) satisfies
\[
\|\hat u\|_{L^\infty(\Omega_T)} \le \frac{c}{2}-\delta_c
\]
for some \(\delta_c>0\). Therefore, for any approximation tolerance
\(\varepsilon\le \delta_c/2\), Proposition~\ref{prop:represent} yields a raw tanh network
\(U_{\hat\theta}^{\rm raw}\) of depth \(O(\log_2 N_{\rm patch})\) and width
\(O(NN_{\rm patch})\) such that
\[
\|U_{\hat\theta}^{\rm raw}-\hat u\|_{L^\infty(\Omega_T)}\le \varepsilon.
\]
Consequently,
\[
|U_{\hat\theta}^{\rm raw}(z)|
\le \|\hat u\|_{L^\infty(\Omega_T)}+\varepsilon
\le \frac{c}{2}-\frac{\delta_c}{2}
< \frac{c}{2},
\qquad \forall z\in\Omega_T.
\]
Hence, the clipping layer in \eqref{def:clipped-NN} is inactive, and thus
\[
u_{\hat\theta}:=\Pi_c(U_{\hat\theta}^{\rm raw})=U_{\hat\theta}^{\rm raw}.
\]
Therefore, in what follows we do not distinguish between \(u_{\hat\theta}\) and
\(U_{\hat\theta}^{\rm raw}\).

Since $\mathbf f\in C^2(\mathbb R;\mathbb R^d)$, the associated space--time flux $\mathbf F$
is $C^2$ as well. Moreover, in the argument below,
the functions under consideration, in particular $u_{\hat\theta}$, $\hat u$, and
$k_h\in V_h^c$, all take values in the bounded interval $[-c,c]$. Therefore, there
exists a constant $C_F>0$ such that for all a.e. differentiable functions $v,w$ on
$\Omega_T$ satisfying
$\|v\|_{L^\infty(\Omega_T)},\|w\|_{L^\infty(\Omega_T)}\le c$,
\begin{equation}\label{eq:proof_flux_lip}
	|\mathbf F(v)-\mathbf F(w)|\le C_F |v-w|
	\quad\text{a.e. in }\Omega_T,
\end{equation}
and
\begin{equation}\label{eq:proof_divflux_lip}
	|\nabla\!\cdot \mathbf F(v)-\nabla\!\cdot \mathbf F(w)|
	\le C_F\bigl(|\nabla v-\nabla w|+|v-w|\,|\nabla w|\bigr)
	\quad\text{a.e. in }\Omega_T.
\end{equation}
We now fix
\begin{align}\label{eq:choose_epsilon}
\varepsilon:=\min\{C_F^{-1}h^2, {\delta_c}/{2}\}.
\end{align}

We first estimate the boundary contribution. Recalling the definition of $\mathcal L_{ibc}$ in~\eqref{eq:loss_func}, the pointwise bound in Proposition~\ref{prop:represent} yields
\begin{equation}\label{eq:proof_bc}
\begin{aligned}
\mathcal L_{ibc}(u_{\hat\theta})
	&\le \|\hat u(0)-u_0\|_{L^1(\Omega)}
	+ |\Omega|\,\|u_{\hat\theta}-\hat u\|_{L^\infty(\Omega_T)}  + \|\hat u\|_{L^1(\partial\Omega\times(0,T))}
+ |\partial\Omega\times(0,T)|\,
\|u_{\hat\theta}-\hat u\|_{L^\infty(\Omega_T)}
	\\
	&\le \mathcal L_{ibc}(\hat u)+C\varepsilon
	\le \mathcal L_{ibc}(\hat u)+Ch^2.
\end{aligned}
\end{equation}

Next we consider the term $\mathcal L_{\mathrm{reg}}$. Since $\hat u$ is continuous and piecewise affine on the finite partition $\mathcal T$, we have $\|\nabla \hat u\|_{L^1(\Omega_T)}<\infty$. Using \eqref{eq:proof_divflux_lip} with
$v=u_{\hat\theta}$ and $w=\hat u$, we obtain
\begin{align}
	\mathcal L_{\mathrm{reg}}(u_{\hat\theta})
	&\le h\int_{\Omega_T} |\nabla\!\cdot\mathbf F(\hat u)|\,\d\mathbf z
	+ h\int_{\Omega_T} |\nabla\!\cdot\mathbf F(u_{\hat\theta})-\nabla\!\cdot\mathbf F(\hat u)|\,\d\mathbf z
	\notag\\
	&\le \mathcal L_{\mathrm{reg}}(\hat u)
	+ hC_F\|\nabla u_{\hat\theta}-\nabla\hat u\|_{L^1(\Omega_T)}
	+ hC_F\|u_{\hat\theta}-\hat u\|_{L^\infty(\Omega_T)}\|\nabla \hat u\|_{L^1(\Omega_T)}
	\notag\\
	&\le \mathcal L_{\mathrm{reg}}(\hat u)+Ch\varepsilon
	\le \mathcal L_{\mathrm{reg}}(\hat u)+Ch^3.
	\label{eq:proof_reg}
\end{align}

It remains to control the entropy part. Fix any $k_h\in V_h^c$. Write
\begin{align*}
	\mathcal J_{\mathrm{ent}}(u_{\hat\theta};k_h)
	&=
	\int_{\Omega_r}\nabla\!\cdot\mathbf F(u_{\hat\theta})\,\sgn(u_{\hat\theta}-k_h)\,\d\mathbf z
	+\int_{\Omega_s}\nabla\!\cdot\mathbf F(u_{\hat\theta})\,\sgn(u_{\hat\theta}-k_h)\,\d\mathbf z
	=: I_1+I_2.
\end{align*}

For the regular region $\Omega_r$, using $|\sgn|\le 1$ and \eqref{eq:proof_divflux_lip},
\begin{align}
	I_1
	&\le \int_{\Omega_r} |\nabla\!\cdot\mathbf F(\hat u)|\,\d\mathbf z
	+ \int_{\Omega_r} |\nabla\!\cdot\mathbf F(u_{\hat\theta})-\nabla\!\cdot\mathbf F(\hat u)|\,\d\mathbf z
	\notag\\
	&\le \int_{\Omega_r} |\nabla\!\cdot\mathbf F(\hat u)|\,\d\mathbf z
	+ C_F\|\nabla u_{\hat\theta}-\nabla\hat u\|_{L^1(\Omega_T)}
	+ C_F\|u_{\hat\theta}-\hat u\|_{L^\infty(\Omega_T)}\|\nabla\hat u\|_{L^1(\Omega_T)}
	\notag\\
	&\le \int_{\Omega_r} |\nabla\!\cdot\mathbf F(\hat u)|\,\d\mathbf z + C\varepsilon
	\le \int_{\Omega_r} |\nabla\!\cdot\mathbf F(\hat u)|\,\d\mathbf z + Ch^2.
	\label{eq:proof_I1}
\end{align}

For the singular region $\Omega_s$, we write
\begin{align*}
	I_2
	&=
	\int_{\Omega_s}
	\Bigl(
	\nabla\!\cdot\mathbf F(u_{\hat\theta})\,\sgn(u_{\hat\theta}-k_h)
	-\nabla\!\cdot\mathbf F(\hat u)\,\sgn(\hat u-k_h)
	\Bigr)\,\d\mathbf z
	+\int_{\Omega_s}\nabla\!\cdot\mathbf F(\hat u)\,\sgn(\hat u-k_h)\,\d\mathbf z
	\\
	&=
	\int_{\Omega_s}
	\Bigl(
	\nabla\!\cdot\mathbf F(u_{\hat\theta})\,\sgn(u_{\hat\theta}-k_h)
	-\nabla\!\cdot\mathbf F(\hat u)\,\sgn(\hat u-k_h)
	\Bigr)\,\d\mathbf z\\
	&\quad+\mathcal J_{\mathrm{ent}}(\hat u;k_h)
	-\int_{\Omega_r}\nabla\!\cdot\mathbf F(\hat u)\,\sgn(\hat u-k_h)\,\d\mathbf z.
\end{align*}
Hence,
\begin{equation}\label{eq:proof_I2_reduction}
	I_2-\mathcal J_{\mathrm{ent}}(\hat u;k_h)
	\le
	\int_{\Omega_s}
	\Bigl(
	\nabla\!\cdot\mathbf F(u_{\hat\theta})\,\sgn(u_{\hat\theta}-k_h)
	-\nabla\!\cdot\mathbf F(\hat u)\,\sgn(\hat u-k_h)
	\Bigr)\,\d\mathbf z
	+\int_{\Omega_r} |\nabla\!\cdot\mathbf F(\hat u)|\,\d\mathbf z.
\end{equation}

Now apply the cellwise identity \eqref{eq:cellwise_entropy_ibp} on each
$\widetilde K:=K\cap\Omega_s$, $K\in\Lambda_h$, first with $v=u_{\hat\theta}$ and then with $v=\hat u$.
Subtracting the two identities gives
\begin{equation}\label{eq:proof_T123}
	\int_{\Omega_s}
	\Bigl(
	\nabla\!\cdot\mathbf F(u_{\hat\theta})\,\sgn(u_{\hat\theta}-k_h)
	-\nabla\!\cdot\mathbf F(\hat u)\,\sgn(\hat u-k_h)
	\Bigr)\,\d\mathbf z
	= T_1+T_2+T_3,
\end{equation}
where
\begin{align*}
	T_1&:=\sum_{K\in\Lambda_h}\int_{\partial \widetilde K}
	\bigl(\mathbf F(u_{\hat\theta})-\mathbf F(\hat u)\bigr)\,\sgn(u_{\hat\theta}-k_h)\cdot \mathbf n\,\d s,\\
	T_2&:=\sum_{K\in\Lambda_h}\int_{\partial \widetilde K}
	\bigl(\mathbf F(\hat u)-\mathbf F(k_h)\bigr)\,
	\bigl(\sgn(u_{\hat\theta}-k_h)-\sgn(\hat u-k_h)\bigr)\cdot \mathbf n\,\d s,\\
	T_3&:=\sum_{K\in\Lambda_h}\int_{\widetilde K}
	\nabla\!\cdot\mathbf F(k_h)\,
	\bigl(\sgn(u_{\hat\theta}-k_h)-\sgn(\hat u-k_h)\bigr)\,\d\mathbf z.
\end{align*}

Using the Lipschitz continuity \eqref{eq:proof_flux_lip}, the bound $|\sgn|\le 1$, and the fact $\partial \widetilde K \subset (\partial K\cap\Omega_s)\cup(K\cap\partial\Omega_s)$, we obtain
\begin{align}
	|T_1|
	&\le \sum_{K\in\Lambda_h}\int_{\partial \widetilde K}
	|\mathbf F(u_{\hat\theta})-\mathbf F(\hat u)|\,\d s\leq C_F \varepsilon \sum_{K\in\Lambda_h}|\partial \widetilde K|
	\le C_F\varepsilon
	\Bigl(\sum_{K\in\Lambda_h} |\partial K| + |\partial\Omega_s|\Bigr).
	\label{eq:proof_T1a}
\end{align}
By quasi-uniformity of $\Lambda_h$, $\sum_{K\in\Lambda_h} |\partial K|\lesssim h^{-1}$, and from the assumption \eqref{eq:assum_Omega_s} on $\Omega_s$, we have $|\partial\Omega_s|\lesssim C_{\Omega}$. Therefore,
\begin{equation}\label{eq:proof_T1}
	|T_1|\le C\varepsilon h^{-1}\le Ch.
\end{equation}

For $T_2$, note that
\[
\sgn(u_{\hat\theta}-k_h)\neq \sgn(\hat u-k_h)
\quad\Longrightarrow\quad
|\hat u-k_h|\le |u_{\hat\theta}-\hat u|\le \varepsilon.
\]
Hence, on the set where the sign changes, \eqref{eq:proof_flux_lip} yields
\[
|\mathbf F(\hat u)-\mathbf F(k_h)|
\le C_F |\hat u-k_h|
\le C_F\varepsilon.
\]
Therefore,
\begin{align}
	|T_2|
	&\le 2\sum_{K\in\Lambda_h}\int_{\partial \widetilde K\cap\{|\hat u-k_h|\le \varepsilon\}}
	|\mathbf F(\hat u)-\mathbf F(k_h)|\,\d s
	\notag\\
	&\le  2C_{F}\varepsilon \sum_{K\in\Lambda_h}|\partial \widetilde K|
	\le C\varepsilon h^{-1}
	\le Ch.
	\label{eq:proof_T2}
\end{align}

Finally, since $k_h\in V_h^c$, inverse estimates on the quasi-uniform mesh $\Lambda_h$ give
\[
\|\nabla k_h\|_{L^\infty(K)}\lesssim h^{-1}
\qquad\text{for all }K\in\Lambda_h.
\]
Because $\|k_h\|_{L^\infty(\Omega_T)}\le c$, we also have
$\|\mathbf f'(k_h)\|_{L^\infty(\Omega_T)}\le C$, and thus
\[
\|\nabla\!\cdot\mathbf F(k_h)\|_{L^\infty(\Omega_T)}\lesssim h^{-1}.
\]
Using $|\Omega_s|\le C_\Omega h^2$ in \eqref{eq:assum_Omega_s}, we infer
\begin{equation}\label{eq:proof_T3}
	|T_3|
	\le 2\int_{\Omega_s} |\nabla\!\cdot\mathbf F(k_h)|\,\d\mathbf z
	\le C h^{-1} |\Omega_s|
	\le Ch.
\end{equation}

Combining \eqref{eq:proof_I2_reduction}--\eqref{eq:proof_T3}, we obtain
\[
I_2
\le \mathcal J_{\mathrm{ent}}(\hat u;k_h)
+ \int_{\Omega_r} |\nabla\!\cdot\mathbf F(\hat u)|\,\d\mathbf z
+ Ch.
\]
Together with \eqref{eq:proof_I1}, this yields
\[
\mathcal J_{\mathrm{ent}}(u_{\hat\theta};k_h)
\le
\mathcal J_{\mathrm{ent}}(\hat u;k_h)
+ C h
+ C\int_{\Omega_r} |\nabla\!\cdot\mathbf F(\hat u)|\,\d\mathbf z,
\qquad \forall\,k_h\in V_h^c,
\]
after absorbing the $O(h^2)$ term into $Ch$. Taking the supremum over $k_h\in V_h^c$ gives
\begin{equation}\label{eq:proof_ent_final}
	\sup_{k_h\in V_h^c}\mathcal J_{\mathrm{ent}}(u_{\hat\theta};k_h)
	\le
	\sup_{k_h\in V_h^c}\mathcal J_{\mathrm{ent}}(\hat u;k_h)
	+ C h
	+ C\int_{\Omega_r} |\nabla\!\cdot\mathbf F(\hat u)|\,\d\mathbf z.
\end{equation}

Finally, by \eqref{eq:proof_bc}, \eqref{eq:proof_reg}, and \eqref{eq:proof_ent_final},
\begin{align*}
	\mathcal L(u_{\hat\theta})
	&=
	\mathcal L_{ibc}(u_{\hat\theta})
	+\mathcal L_{\mathrm{reg}}(u_{\hat\theta})
	+\sup_{k_h\in V_h^c}\mathcal J_{\mathrm{ent}}(u_{\hat\theta};k_h)
	\\
	&\le
	\mathcal L_{ibc}(\hat u)
	+\mathcal L_{\mathrm{reg}}(\hat u)
	+\sup_{k_h\in V_h^c}\mathcal J_{\mathrm{ent}}(\hat u;k_h)
	+ Ch
	+ C\int_{\Omega_r} |\nabla\!\cdot\mathbf F(\hat u)|\,\d\mathbf z
	\\
	&=
	\mathcal L(\hat u)
	+ Ch
	+ C\int_{\Omega_r} |\nabla\!\cdot\mathbf F(\hat u)|\,\d\mathbf z.
\end{align*}
After a harmless redefinition of the constant \(C\), this is precisely the asserted estimate.
\hfill\qed

\subsection{Proof of Theorem~\ref{thm:shock_smooth_initial} (shock formation with smooth initial data)}
\label{appendix:proof_smooth_init}

\begin{proof}
Let \((x_q,\tau_q)\) be the shock-formation point from Assumption~\ref{assump:piecewise_smooth}\textup{(iv)}. Set
\[
u_q:=u(x_q,\tau_q),\qquad
a_q:=f'(u_q),\qquad
\overline\gamma_q(t):=x_q+a_q(t-\tau_q),
\]
and let
\[
\Gamma:=\{(x,t)\in\Omega_T:\ x=\gamma_q^b(t),\ \tau_q<t\le T\}
\]
be the newly formed shock curve. Away from \(\Gamma\), the solution is classical and satisfies \(\nabla\!\cdot\mathbf F(u)=0\); across \(\Gamma\), the one-sided traces satisfy the Rankine--Hugoniot relation and the entropy condition \eqref{eq:reformulated_entropy_condition}.

\medskip
\noindent\textbf{Step 1: singular bounds near the birth point.}
Set
\[
\hat t:=t-\tau_q,\qquad
\hat x:=x-\overline\gamma_q(t),
\]
and define
\[
G(\hat t,\hat x):=\bigl(|\hat t|^3+\hat x^2\bigr)^{-5/6},
\qquad
H(\hat t,\hat x):=\bigl(|\hat t|^3+\hat x^2\bigr)^{-1/3}.
\]
By Assumption~\ref{assump:piecewise_smooth}\textup{(iv)}, and by the uniform \(W^{2,\infty}\) regularity away from the birth point,
\begin{equation}\label{eq:shock_birth_local_bounds}
|\nabla^2u(x,t)|\le C\bigl(G(\hat t,\hat x)+1\bigr)
\qquad\text{in }\Omega_T\setminus\Gamma,
\end{equation}
and, for the post-birth one-sided states,
\begin{equation}\label{eq:shock_birth_local_bounds_grad}
|\nabla u^\pm(x,t)|\le C\bigl(H(\hat t,\hat x)+1\bigr),
\qquad t>\tau_q .
\end{equation}

Let
\[
E_h:=\Bigl([x_q-c_0h,x_q+c_0h]\times[\tau_q-h,\tau_q+h]\Bigr)\cap\Omega_T,
\]
where \(c_0>|a_q|+\sup_{|v|\le \|u\|_{L^\infty(\Omega_T)}}|f'(v)|+1\). Enlarge \(E_h\), if necessary, to the union of cells of \(\Lambda_h\) intersecting it. Then \(|E_h|\lesssim h^2\), and on \(\Omega_T\setminus E_h\) one has either \(|\hat t|\ge h\) or \(|\hat x|\ge h\). Therefore,
\begin{equation}\label{eq:shock_birth_D2_log}
\int_{\Omega_T\setminus E_h}|\nabla^2u|\,\d\z
\lesssim
\int_{\Omega_T\setminus E_h}\bigl(G(\hat t,\hat x)+1\bigr)\,\d\z
\lesssim |\ln h|.
\end{equation}
Similarly, since \(H(\hat t,\hat x)\lesssim |\hat t|^{-1}\) on \(\Gamma\),
\begin{equation}\label{eq:shock_birth_D1_log}
\int_{\Gamma\setminus E_h}
\bigl(|\nabla u^-|+|\nabla u^+|\bigr)\,\d s
\lesssim |\ln h|.
\end{equation}

\medskip
\noindent\textbf{Step 2: CPwL competitor and small loss.}
Set \(\varepsilon:=h^2\). As in Lemma~\ref{lem:Lu_shock}, after an \(O(h)\) translation if needed, assume that \(\Gamma\setminus E_h\) does not significantly intersect the mesh skeleton. Build an \(\varepsilon\)-thin strip \(\widetilde\Lambda_\Gamma\) around \(\Gamma\setminus E_h\), with piecewise linear boundaries \(\widetilde\Gamma^\pm\), and triangulate it by anisotropic triangles with tangential length \(O(h)\) and normal thickness \(O(\varepsilon)\). Outside \(E_h\cup\widetilde\Lambda_\Gamma\), use a shape-regular mesh of size \(O(h)\). Inside \(E_h\), use a bounded CPwL extension matching the boundary nodal values.

Define \(\hat u\) by the nodal \(P^1\) interpolant of the corresponding smooth branch of \(u\) outside \(E_h\cup\widetilde\Lambda_\Gamma\), by linear connection of the two one-sided traces across \(\widetilde\Lambda_\Gamma\), and by the bounded extension inside \(E_h\). Then
\[
\|\hat u(\cdot,0)-u_0\|_{L^1(\Omega)}\lesssim h^2,
\qquad
\|\hat u\|_{L^1(\partial\Omega\times(0,T))}=0.
\]
Let
\[
\Omega_s:=E_h\cup\widetilde\Lambda_\Gamma,\qquad
\Omega_r:=\Omega_T\setminus\Omega_s.
\]
Using \eqref{eq:shock_birth_D2_log} and the standard interpolation estimate on \(\Omega_r\),
\begin{equation}\label{eq:shock_birth_regular_residual}
\int_{\Omega_r}|\nabla\!\cdot\mathbf F(\hat u)|\,\d\z
\lesssim h|\ln h|.
\end{equation}
Moreover, the strip and \(E_h\) have total measure \(O(h^2)\), and the possible large gradients are supported only there; hence
\[
h\int_{\Omega_s}|\nabla\!\cdot\mathbf F(\hat u)|\,\d\z\lesssim h.
\]
Using the cellwise identity \eqref{eq:cellwise_entropy_ibp}, the entropy condition on \(\Gamma\), and the trace bound \eqref{eq:shock_birth_D1_log}, the same argument as in Lemma~\ref{lem:Lu_shock} yields
\begin{equation}\label{eq:shock_birth_hat_loss}
\sup_{k_h\in V_h^c}\J_{\mathrm{ent}}(\hat u;k_h)
\lesssim h|\ln h|.
\end{equation}
Combining these bounds gives
\[
\L(\hat u)\lesssim h|\ln h|.
\]
By Lemma~\ref{lem:loss_cpwl_approx}, there exists a clipped \(\tanh\) neural network \(u_{\hat\theta}\) of the stated size such that
\[
\L(u_{\hat\theta})
\le
\L(\hat u)+Ch+\int_{\Omega_r}|\nabla\!\cdot\mathbf F(\hat u)|\,\d\z
\lesssim h|\ln h|.
\]
Since \(\theta^*\) minimizes the loss,
\begin{equation}\label{eq:shock_birth_min_loss}
\J_{\mathrm{ent}}(u_{\theta^*};k_h)
\le
\mathscr L(\theta^*)
\le
\L(u_{\hat\theta})
\lesssim h|\ln h|,
\qquad \forall k_h\in V_h^c,
\end{equation}
and
\begin{equation}\label{eq:shock_birth_residual_bound}
\int_{\Omega_T}|\nabla\!\cdot\mathbf F(u_{\theta^*})|\,\d\z
\lesssim |\ln h|.
\end{equation}

\medskip
\noindent\textbf{Step 3: DPwP comparison and shifted-interface estimate.}
Let \(t_h:=\tau_q+h\). Starting from the portion of \(\Gamma\) with \(t\ge t_h\), construct \(M\) disjoint shifted strips of width \(O(h)\), as in the proof of Theorem~\ref{thm:shock}. By \eqref{eq:shock_birth_residual_bound}, one strip, denoted by \(\Omega_j\), satisfies
\begin{equation}\label{eq:shock_birth_selected_strip}
\int_{\Omega_j}|\nabla\!\cdot\mathbf F(u_{\theta^*})|\,\d\z
\lesssim \frac{|\ln h|}{M}.
\end{equation}
Let \(\Gamma_j\) be its mesh-aligned left boundary and let \(\widetilde\Gamma_j\) be the shifted copy of the true shock curve inside the strip.

Define \(\tilde u_h\in V_h^c\) as follows. On the left of \(\Gamma_j\), take the cellwise \(Q_1\) interpolant of the corresponding smooth branch of \(u\). On the right of \(\Gamma_j\), take the cellwise \(Q_1\) interpolant of a smooth extension of the right post-shock branch. Inside the \(O(h)\) neighborhood of the birth point, use any bounded DPwP extension. Then \(\|\tilde u_h\|_{W_h^{1,\infty}}\le c\).

Taking \(k_h=\tilde u_h\) in \eqref{eq:shock_birth_min_loss} and applying \eqref{eq:cellwise_entropy_ibp} gives
\begin{equation}\label{eq:shock_birth_error_identity}
\begin{aligned}
\|u_{\theta^*}(\cdot,T)-\tilde u_h(\cdot,T)\|_{L^1(\Omega)}
\le\;&
\|u_{\theta^*}(\cdot,0)-\tilde u_h(\cdot,0)\|_{L^1(\Omega)}  \\
&+
\left|\int_{\Omega_T}\nabla\!\cdot\mathbf F(\tilde u_h)
\sgn(u_{\theta^*}-\tilde u_h)\,\d\z\right|
+ I_h
+ Ch|\ln h|,
\end{aligned}
\end{equation}
where \(I_h\) denotes the jump integral over the mesh-aligned interface \(\Gamma_j\). The lateral boundary contribution is bounded by \(C\L_{\mathrm{ibc}}(u_{\theta^*})\) and has been absorbed into the last term.

Using \eqref{eq:shock_birth_D2_log}, the volume term satisfies
\begin{equation}\label{eq:shock_birth_volume_tilde}
\left|\int_{\Omega_T}\nabla\!\cdot\mathbf F(\tilde u_h)
\sgn(u_{\theta^*}-\tilde u_h)\,\d\z\right|
\lesssim h|\ln h|+Mh|\ln h|+h.
\end{equation}
To estimate \(I_h\), transfer the jump integral from \(\Gamma_j\) to \(\widetilde\Gamma_j\) through \(\Omega_j\). By \eqref{eq:shock_birth_selected_strip}, the interpolation estimate, and \eqref{eq:shock_birth_D2_log},
\[
\text{transfer error}
\lesssim
\frac{|\ln h|}{M}+h|\ln h|+h.
\]
On \(\widetilde\Gamma_j\), compare the shifted traces with the true traces on \(\Gamma\). Since the shift is \(O(Mh)\), \eqref{eq:shock_birth_D1_log} gives
\[
\int_{\widetilde\Gamma_j}
\Bigl(
|\tilde u_h^- - u^-|+|\tilde u_h^+ - u^+|
\Bigr)\,\d s
\lesssim h|\ln h|+Mh|\ln h|.
\]
Using the entropy condition \eqref{eq:reformulated_entropy_condition} on the true shock curve, we obtain
\begin{equation}\label{eq:shock_birth_interface_term}
I_h
\lesssim
\frac{|\ln h|}{M}
+h|\ln h|
+Mh|\ln h|
+h.
\end{equation}
Substituting \eqref{eq:shock_birth_volume_tilde} and \eqref{eq:shock_birth_interface_term} into \eqref{eq:shock_birth_error_identity} yields
\[
\|u_{\theta^*}(\cdot,T)-\tilde u_h(\cdot,T)\|_{L^1(\Omega)}
\le
\|u_{\theta^*}(\cdot,0)-\tilde u_h(\cdot,0)\|_{L^1(\Omega)}
+
C\left(
\frac{|\ln h|}{M}
+h|\ln h|
+Mh|\ln h|
+h
\right).
\]
Choose \(M=\lceil h^{-1/2}\rceil\). Then
\[
\|u_{\theta^*}(\cdot,T)-\tilde u_h(\cdot,T)\|_{L^1(\Omega)}
\lesssim
\|u_{\theta^*}(\cdot,0)-\tilde u_h(\cdot,0)\|_{L^1(\Omega)}
+
h^{1/2}|\ln h|.
\]
Since \(u_0\) is smooth,
\[
\|\tilde u_h(\cdot,0)-u_0\|_{L^1(\Omega)}\lesssim h,
\qquad
\|u_{\theta^*}(\cdot,0)-u_0\|_{L^1(\Omega)}
\le \L_{\mathrm{ibc}}(u_{\theta^*})
\lesssim h|\ln h|.
\]
Hence
\[
\|u_{\theta^*}(\cdot,0)-\tilde u_h(\cdot,0)\|_{L^1(\Omega)}
\lesssim h|\ln h|.
\]
Finally, because \(T>\tau_q+\delta_*\), the terminal-time one-sided states are uniformly \(W^{2,\infty}\) near \(\Gamma\cap\{t=T\}\). Therefore,
\[
\|\tilde u_h(\cdot,T)-u(\cdot,T)\|_{L^1(\Omega)}
\lesssim Mh+h
\lesssim h^{1/2}.
\]
The triangle inequality gives
\[
\|u_{\theta^*}(\cdot,T)-u(\cdot,T)\|_{L^1(\Omega)}
\lesssim h^{1/2}|\ln h|,
\]
which proves \eqref{eq:shock_smooth_initial}.
\end{proof}
\subsection{Proof of Theorem~\ref{thm:main_1d} (main result in 1D)}\label{appendix:proof_1d_thm}
Let \(\{\mathcal P_q\}_{q=1}^Q\) be the finite family from Assumption~\ref{assump:piecewise_smooth}. The local estimates in the proofs of Theorems~\ref{thm:shock}, \ref{thm:shock_interaction}, \ref{thm:rarefaction}, \ref{thm:compound_wave}, and \ref{thm:shock_smooth_initial} depend only on the corresponding local regularity, the Rankine--Hugoniot relation, the entropy condition, and the local geometry of the wave pattern. Hence the same constructions can be applied on each piece \(\mathcal P_q\), after a translation of coordinates used only to simplify notation.

\noindent\textbf{Step 1: Construction of a global CPwL competitor.}
For each non-smooth piece $\mathcal P_q$ (that is, a regular shock piece, a rarefaction piece, a
compound-wave piece, or a regular interaction piece), choose an open set
$\mathcal U_q$ with piecewise $C^2$ boundary such that $\overline{\mathcal U_q}\subset \mathcal P_q$ 
and $\mathcal U_q$ contains the unique wave/event carried by $\mathcal P_q$. Since the number of pieces is
finite and no piece contains two distinct local events, the sets $\mathcal U_q$ can be chosen pairwise
disjoint. Set $\Omega_{\rm reg}:=\Omega_T\setminus \bigcup_q \mathcal U_q$, 
where the union runs over all non-smooth pieces. By
Assumption~\ref{assump:piecewise_smooth}, $u\in W^{2,\infty}\cap C^2$ on each connected component of
$\Omega_{\rm reg}$.

On each $\mathcal U_q$ we choose the local CPwL competitor from the corresponding earlier proof:
\begin{itemize}
\item on a regular shock piece, the construction from Lemma~\ref{lem:Lu_shock};
\item on a rarefaction piece, the construction from Lemma~\ref{lem:Lu_rarefaction};
\item on a compound-wave piece, the local construction as in 
Theorem~\ref{thm:compound_wave};
\item on a regular interaction piece, the local construction as in 
Theorem~\ref{thm:shock_interaction};
\item on a shock-birth piece, the local construction from Theorem~\ref{thm:shock_smooth_initial};
\end{itemize}
On each connected component of $\Omega_{\rm reg}$ we take the nodal $P^1$ interpolant of $u$.

Because every $\partial\mathcal U_q$ lies in a smooth region, the above local CPwL constructions can be
chosen to match the exact nodal values on $\partial\mathcal U_q$. After a finite conforming refinement along
the boundaries of the $\mathcal U_q$'s, they patch together into a global continuous CPwL function
$\hat u$ on a conforming simplicial partition $\mathcal T_h$ of $\Omega_T$. We have $\#\mathcal T_h = O(h^{-2})$, 
and the patch complexity of $\mathcal T_h$ is uniformly bounded. Indeed, in a neighborhood of each \(\partial\mathcal U_q\) the exact
solution is smooth, and the local constructions reduce there to the
ordinary nodal interpolant of the same smooth trace of \(u\). Hence the
nodal values agree on the common boundary. The conforming refinement
needed along the finitely many boundaries \(\partial\mathcal U_q\)
introduces only \(O(h^{-1})\) additional simplices, because each
\(\partial\mathcal U_q\) has uniformly bounded \(\mathcal H^1\)-measure.
This is dominated by the existing \(O(h^{-2})\) elements and does
not affect the loss estimates or the uniformly bounded patch
complexity.

Let $\Omega_s$ be the union of the local singular regions appearing in the above constructions, namely the
thin shock layers, rarefaction-tip neighborhoods, interaction patches, and shock-birth neighborhoods, and set $\Omega_r:=\Omega_T\setminus \Omega_s$. 
Each local singular region has measure $O(h^2)$, and there are only finitely many of them; hence $|\Omega_s|\lesssim h^2$. 
Moreover, $\hat u$ is piecewise linear on $\Omega_r$ over a shape-regular mesh of size $O(h)$. 

Summing the local estimates from the preceding one-dimensional constructions yields
\[
\L(\hat u)\lesssim h|\ln h|,
\quad
\int_{\Omega_r} |\nabla\!\cdot\mathbf F(\hat u)|\,\d\z \lesssim h|\ln h|.
\]
Indeed, smooth, regular-shock, and regular-interaction pieces contribute $O(h)$, while rarefaction,
compound-wave pieces contribute $O(h|\ln h|)$. By construction of the CPwL function $\hat u$, we have $\|\hat u\|_{L^\infty(\Omega_T)} \le c/2 - \delta_c$, 
so the assumptions of Lemma~\ref{lem:loss_cpwl_approx} are satisfied.
Hence, there exists a clipped \(\tanh\) neural network \(u_{\hat\theta}\) of the stated size such that
\[
\L(u_{\hat\theta})
\le \L(\hat u)+Ch+\int_{\Omega_r} |\nabla\!\cdot\mathbf F(\hat u)|\,\d\z
\lesssim h|\ln h|.
\]
Since $\theta^\ast$ minimizes the loss, we obtain
\[
\J_{\mathrm{ent}}(u_{\theta^\ast};k_h)\le \mathscr{L}(\theta^\ast)\le \L(u_{\hat\theta})\lesssim h|\ln h|,
\quad \forall\, k_h\in V_h^c.
\]

\noindent\textbf{Step 2: Construction of a global DPwP test function.}
We now construct $\tilde u_h\in V_h^c$. In every shifted-interface argument we choose the same strip parameter $M:=\left\lceil h^{-1/2}\right\rceil $. 
For each non-smooth piece $\mathcal P_q$, enlarge $\mathcal U_q$ (if necessary) to the union of the cells of
$\Lambda_h$ intersecting it; this changes the measure by at most $O(h)$, which is negligible for the final
estimate. For all sufficiently small $h$ these enlarged sets remain pairwise disjoint.

On each enlarged $\mathcal U_q$ we define $\tilde u_h$ by the local DPwP construction used in the
corresponding proof:
\begin{itemize}
\item on a regular shock piece, the shifted-interface construction from the proof of Theorem~\ref{thm:shock}; on a regular interaction piece, the corresponding local construction from the proof of Theorem~\ref{thm:shock_interaction};
\item on a rarefaction piece, the fan-adapted DPwP as in 
Theorem~\ref{thm:rarefaction};
\item on a compound-wave piece, the local construction as in 
Theorem~\ref{thm:compound_wave};
\item on a shock-birth piece, the shifted-interface construction from the proof of Theorem~\ref{thm:shock_smooth_initial};
\end{itemize}
On the remaining cells, namely on $\Omega_T\setminus\bigcup_q \mathcal U_q$, we take the cellwise $Q_1$
interpolant of $u$.

By the standing choice of \(c\) made in Section~\ref{sec:notation}, the local
DPwP constructions satisfy the uniform bound
\[
        \|\tilde u_h\|_{W_h^{1,\infty}}\le c
\] and agree with the same smooth trace data on the boundaries of the $\mathcal U_q$'s. Let $\Gamma_h$ denote the union of the mesh-aligned jump interfaces of $\tilde u_h$; the number of connected components of $\Gamma_h$ is bounded independently of $h$.

Taking $k_h=\tilde u_h$ in the previous bound and applying the cellwise identity
\eqref{eq:cellwise_entropy_ibp}, we get
\[
\begin{aligned}
& \|u_{\theta^\ast}(\cdot,T)-\tilde u_h(\cdot,T)\|_{L^1(\Omega)} \\
\le & C h|\ln h| +
\|u_{\theta^\ast}(\cdot,0)-\tilde u_h(\cdot,0)\|_{L^1(\Omega)} + 
\left|\int_{\Omega_T}\nabla\!\cdot\mathbf F(\tilde u_h)\,
\sgn\!\left(u_{\theta^\ast}-\tilde u_h\right)\,\d\z\right|
\\
+ & \left| \int_{\partial\Omega\times(0,T)}
\left(\mathbf f(u_{\theta^*})-\mathbf f(\tilde u_h)\right)
\sgn(u_{\theta^*}-\tilde u_h)\cdot\mathbf n_{\partial\Omega}\,\d s \right| +
\int_{\Gamma_h}
\llbracket (\mathbf F(\tilde u_h)-\mathbf F(u_{\theta^\ast}))
\sgn\!\left(\tilde u_h-u_{\theta^\ast}\right)\rrbracket\cdot \n\,\d s
.
\end{aligned}
\]
By the Lipschitz continuity of $\mathbf f$ and the construction of $\tilde u_h$ on the boundary,
\begin{equation*}
| \int_{\partial\Omega\times(0,T)}
\left(\mathbf f(u_{\theta^*})-\mathbf f(\tilde u_h)\right)
\sgn(u_{\theta^*}-\tilde u_h)\cdot\mathbf n_{\partial\Omega}\,\d s|
\le
C\|u_{\theta^*}\|_{L^1(\partial\Omega\times(0,T))}
\le
C\L(u_{\hat\theta})\lesssim h|\ln h| .
\end{equation*}

We now sum the local estimates from the proofs of Theorems~\ref{thm:shock}, \ref{thm:shock_interaction}, \ref{thm:rarefaction}, \ref{thm:compound_wave}, and \ref{thm:shock_smooth_initial}. On smooth pieces, we have the standard interpolation bound \(O(h)\). With the common choice \(M=\lceil h^{-1/2}\rceil\), regular shocks and regular interactions contribute \(O(h^{1/2})\), rarefaction pieces contribute \(O(h|\ln h|)\), compound-wave pieces contribute at most \(O(h^{1/2}|\ln h|)\), and shock-birth pieces contribute \(O(h^{1/2}|\ln h|)\). Hence the global contribution is \(O(h^{1/2}|\ln h|)\). Hence
\[
\left|\int_{\Omega_T}\nabla\!\cdot\mathbf F(\tilde u_h)\,
\sgn\!\left(u_{\theta^\ast}-\tilde u_h\right)\,\d\z\right|
+
\int_{\Gamma_h}
\llbracket (\mathbf F(\tilde u_h)-\mathbf F(u_{\theta^\ast}))
\sgn\!\left(\tilde u_h-u_{\theta^\ast}\right)\rrbracket\cdot \n\,\d s
\lesssim h^{1/2}|\ln h|.
\]
Therefore,
\[
\|u_{\theta^\ast}(\cdot,T)-\tilde u_h(\cdot,T)\|_{L^1(\Omega)}
\le
\|u_{\theta^\ast}(\cdot,0)-\tilde u_h(\cdot,0)\|_{L^1(\Omega)}
+ C h^{1/2}|\ln h|.
\]

\noindent\textbf{Step 3: Endpoint estimates and conclusion.}
By the same local endpoint estimates, together with the standard \(Q_1\) interpolation estimate on \(\Omega_T\setminus\bigcup_q \mathcal U_q\), we have
\[
\|\tilde u_h(\cdot,0)-u_0\|_{L^1(\Omega)}
+
\|\tilde u_h(\cdot,T)-u(\cdot,T)\|_{L^1(\Omega)}
\lesssim h^{1/2}|\ln h|.
\]
Indeed, smooth pieces contribute \(O(h)\), regular-shock and regular-interaction pieces contribute \(O(h^{1/2})\), rarefaction pieces contribute \(O(h|\ln h|)\), compound-wave pieces contribute \(O((h|\ln h|)^{1/2})\), and shock-birth pieces contribute \(O(h^{1/2}|\ln h|)\).

Moreover,
\[
\|u_{\theta^\ast}(\cdot,0)-u_0\|_{L^1(\Omega)}
\le
\L_{\mathrm{ibc}}(u_{\theta^\ast})
\le
\mathscr L(\theta^\ast)
\lesssim h|\ln h|
\lesssim h^{1/2}|\ln h|.
\]
Therefore,
\[
\|u_{\theta^\ast}(\cdot,0)-\tilde u_h(\cdot,0)\|_{L^1(\Omega)}
\lesssim h^{1/2}|\ln h|.
\]
Combining this with the previous estimate and the triangle inequality gives
\[
\|u_{\theta^\ast}(\cdot,T)-u(\cdot,T)\|_{L^1(\Omega)}
\lesssim h^{1/2}|\ln h|.
\]
\hfill\qed

\subsection{Proof of Lemma~\ref{lem:Lu_shock_3d}}\label{appendix:proof_lemma_nd}
As in Section~\ref{sec:shock_wave}, we assume that the discontinuity surface \(\Gamma\) does not significantly intersect the mesh skeleton \(\partial\Lambda_h\); more precisely,
    \[
        \mathcal{H}^{d}\!\left(\bigl\{(\x,t)\in \Gamma:\dist((\x,t),\partial \Lambda_h)\lesssim h^2\bigr\}\right)\lesssim h.
    \]
    If this condition fails, the argument applies after a harmless spatial shift of the solution by \(\delta=O(h)\). Indeed, as in the one-dimensional case, a standard averaging argument over all spatial shifts \(\delta\in[0,h]^d\) shows that there exists at least one such shift for which the translated discontinuity surface satisfies the above bound. Since any CPwL function admits a neural network approximation by Lemma~\ref{lem:loss_cpwl_approx}, it suffices to construct a CPwL function whose loss \eqref{eq:loss_func} is \(O(h)\).
	
Let $\Sigma^\varepsilon =
	\big\{(\x,t)\in\Omega_T:\operatorname{dist}((\x,t),\Sigma)\lesssim h^2\big\}$, and 
\begin{gather*} 
\Lambda_\Sigma:=\{K\in\Lambda_h:K\cap\Sigma^\varepsilon\neq\emptyset\},
	\quad
	\Lambda_\Gamma:=\{K\in\Lambda_h\setminus\Lambda_\Sigma:K\cap\Gamma\neq\emptyset\},
	\quad
	\Lambda_h^\circ:=\Lambda_h\setminus(\Lambda_\Sigma\cup\Lambda_\Gamma).
\end{gather*}
	Since each cell has diameter $O(h)$, 
	the union $\Omega_{\Sigma,h} := \bigcup_{K \in \Lambda_\Sigma} K$ 
	is contained in a tubular neighborhood of $\Sigma$ 
	with radius $Ch$. 
	By the piecewise $C^2$ regularity of $\Sigma$ 
	and the boundedness of its $\mathcal{H}^{d-1}$ measure, 
	we obtain
	\begin{equation}\label{eq:LambdaSigma_card}
		|\Omega_{\Sigma,h}| \;\lesssim\; h^2,
		\quad
		\#\Lambda_\Sigma \;\lesssim\; h^{-(d-1)}.
	\end{equation}
    By the same reasoning, the boundedness of $\mathcal H^d(\Gamma)$ implies $\#\Lambda_\Gamma\le C h^{-d}$.
	
	We now define a continuous piecewise affine function $\hat u$ on a conforming simplicial refinement of $\Lambda_h$.
	\begin{itemize}
		\item For \(K\in\Lambda_h^\circ\), the exact solution \(u\) is smooth in a neighborhood of \(K\) by Assumption~\ref{assump:shock_3d}; we therefore define \(\hat u\) on \(K\) as the linear nodal interpolant of \(u\).
		\item If \(K\in\Lambda_\Gamma\), then \(K\cap\Gamma\neq\emptyset\). In general,
        \(K\cap\Gamma\) may contain several connected smooth pieces. Let \(\Gamma_K^{(j)}\) denote these smooth pieces. Since \(K\cap\Sigma^\varepsilon=\emptyset\), Assumption~\ref{assump:shock_3d}\textup{(iii)} implies that distinct smooth pieces of \(K\cap\Gamma\) are separated by at least \(c_{\rm sep}h^2\). Hence, choosing
        \[
            \varepsilon=c'h^2,\qquad c'<c_{\rm sep}/2,
        \]
        we may, for each \(\Gamma_K^{(j)}\), choose two planes parallel to a fixed tangent plane of \(\Gamma_K^{(j)}\) and at distance \(O(\varepsilon)\) from it so that the corresponding thin layers \(K_\Gamma^{(j)}\subset K\) are pairwise disjoint. For each such layer,
        \[
            |K_\Gamma^{(j)}|\lesssim \varepsilon h^d.
        \]

        Since these layers do not intersect, the construction can be carried out independently on each of them: outside \(\bigcup_j K_\Gamma^{(j)}\), we take the nodal interpolants of the corresponding one-sided traces of \(u\), while inside each \(K_\Gamma^{(j)}\) we connect the two traces linearly in the normal direction. Moreover, away from \(\Sigma^\varepsilon\), the discontinuity set \(\Gamma\) consists of only finitely many smooth branches, so the number of smooth pieces intersecting any given cell is bounded uniformly in \(h\). Therefore,
        \begin{align}\label{eq:Gamma_K}
	       |K_\Gamma| =\left|\bigcup_j K_\Gamma^{(j)}\right| \lesssim \varepsilon h^d.
        \end{align}

        Finally, these thin layers need not be confined to a single cell: if one of them crosses a common face of two neighboring cells, the construction is performed consistently in both cells using the same one-sided trace data. In particular, the same trace values are assigned at shared nodes on the common face. Hence, the affine traces agree on cell interfaces, and the resulting piecewise linear function \(\hat u\) is globally continuous.
		\item On $\Omega_{\Sigma,h}$ we choose a bounded CPwL extension that matches the traces prescribed on the boundary of neighboring cells.
	\end{itemize}
	The resulting function $\hat u$ is continuous, piecewise linear, and is defined on a simplicial mesh with $O(h^{-(d+1)})$ elements.
	
We claim that $\L(\hat u)\lesssim h$. First, note that  $\hat u=0$ on $\partial\Omega\times(0,T)$ and the initial and boundary error can be bounded as
\begin{equation}\label{eq:hatu_bc_md}
\|\hat u(\cdot,0)-u_0(\cdot)\|_{L^1(\Omega)}\lesssim h,\quad \|\hat u\|_{L^1(\partial\Omega\times(0,T))}=0.
\end{equation}
	Second, the \(\mathcal L_{\mathrm{reg}}\) term satisfies
	\begin{equation}\label{eq:hatu_reg_md}
		h^{-1}\L_{reg}(\hat{u})=\int_{\Omega_T}|\nabla\cdot\mathbf F(\hat u)|\,\d\z\lesssim 1.
	\end{equation}
	Indeed, outside $\Omega_{\Sigma,h}\cup\bigcup_{K\in\Lambda_\Gamma}K_\Gamma$ the standard interpolation estimate on smooth regions yields a total contribution $O(h)$. The transition layers contribute $O(1)$ because their total measure is $O(\varepsilon)$ (since $|K_{\Gamma}|\lesssim \varepsilon h^{d}$) while $|\nabla\hat u|\lesssim \varepsilon^{-1}$ there. Finally, on $\Omega_{\Sigma,h}$ we have $|\nabla\hat u|\lesssim h^{-1}$, and therefore \eqref{eq:LambdaSigma_card} implies a contribution $O(h)$.
	
	It remains to estimate the entropy residual part. Fix $k_h\in V_h^c$ and decompose
	\begin{equation}\label{eq:Lu_shock_decompose_3d_revised2}
		\begin{aligned}
			&\sum_{K\in\Lambda_h}\int_K \nabla\cdot\mathbf F(\hat u)\sgn(\hat u-k_h)\,\d\z \\
			=&
			\sum_{K\in\Lambda_h^\circ}\int_K \nabla\cdot\mathbf F(\hat u)\sgn(\hat u-k_h)\,\d\z
			+\sum_{K\in\Lambda_\Gamma}\int_K \nabla\cdot\mathbf F(\hat u)\sgn(\hat u-k_h)\,\d\z \\
			&+\sum_{K\in\Lambda_\Sigma}
			\left(
			\int_{\partial K}(\mathbf F(\hat u)-\mathbf F(k_h))\sgn(\hat u-k_h)\cdot\mathbf n\,\d s
			+\int_K \nabla\cdot\mathbf F(k_h)\sgn(\hat u-k_h)\,\d\z
			\right),
		\end{aligned}
	\end{equation}
    where we have used the cellwise identity \eqref{eq:cellwise_entropy_ibp} for each $K\in \Lambda_{\Sigma}$.
    
	The same argument as in Section~\ref{sec:shock_wave} gives
	\begin{equation}\label{eq:Lu_3d_1_revised2}
		\sum_{K\in\Lambda_h^\circ}\int_K \nabla\cdot\mathbf F(\hat u)\sgn(\hat u-k_h)\,\d\z
		+
		\sum_{K\in\Lambda_\Gamma}\int_K \nabla\cdot\mathbf F(\hat u)\sgn(\hat u-k_h)\,\d\z
		\lesssim h.
	\end{equation}
	For the remaining term, using the bound $\sup_{\z\in\Omega_T}|\hat u(\z)|
	\lesssim
	\sup_{\z\in\Omega_T}|u(\z)|$, the piecewise \(W^{1,\infty}\)-regularity of \(k_h\), and \(\#(\Lambda_\Sigma)\lesssim h^{-(d-1)}\), we obtain
	\begin{align}\label{eq:Lu_3d_2_revised2}
		&\sum_{K\in\Lambda_\Sigma}
		\left(
		\int_{\partial K}(\mathbf F(\hat u)-\mathbf F(k_h))\sgn(\hat u-k_h)\cdot\mathbf n\,\d s
		+\int_K \nabla\cdot\mathbf F(k_h)\sgn(\hat u-k_h)\,\d\z
		\right)\notag\\
		&\lesssim
		\sum_{K\in\Lambda_\Sigma}
		\left(
		\int_{\partial K}|\mathbf F(\hat u)-\mathbf F(k_h)|\,\d s
		+\int_K |\nabla\cdot\mathbf F(k_h)|\,\d\z
		\right)\notag\\
		&\lesssim \#(\Lambda_\Sigma)\Big(h^d\cdot C(\|\hat{u}\|_{L^{\infty}}, \|k_h\|_{L^{\infty}})+h^{d+1}\cdot\|\nabla k_h\|_{L^{\infty}}\Big)
		\lesssim h.
	\end{align}
	Combining~\eqref{eq:Lu_shock_decompose_3d_revised2}-\eqref{eq:Lu_3d_2_revised2}, we find
	\begin{equation}\label{eq:md_hatJ}
		\sup_{k_h\in V_h^c}\J_{\mathrm{ent}}(\hat u;k_h)\lesssim h.
	\end{equation}
	Together with \eqref{eq:hatu_bc_md} and \eqref{eq:hatu_reg_md}, this proves
	\begin{equation}\label{eq:md_hatloss}
		\L(\hat u)\lesssim h.
	\end{equation}
Let \(\Omega_s:=\Omega_{\Sigma,h}\cup\bigcup_{K\in\Lambda_\Gamma}K_\Gamma\) and \(\Omega_r:=\Omega_T\setminus\Omega_s\).
By construction, \(\Omega_s\) consists of a tubular neighborhood of \(\Sigma\) together with a thickened piecewise linear approximation of \(\Gamma\); hence, using also \eqref{eq:LambdaSigma_card}, we have
\[
|\Omega_s|\lesssim h^2,
\qquad
\mathcal H^d(\partial\Omega_s)\lesssim 1.
\]
Moreover, the constructed CPwL function \(\hat u\) satisfies
\[
\|\nabla \hat u\|_{L^1(\Omega_T)}\le C.
\]
Indeed, this follows from the same argument as in the previous constructions: \(\hat u\) has uniformly bounded gradient on the regular region, while on the transition layers and on \(\Omega_{\Sigma,h}\) the gradient may be large but is supported on a set of measure \(O(h^2)\). Therefore Lemma~\ref{lem:loss_cpwl_approx} applies.

Since \(\Omega_r\) excludes both the transition layers and \(\Omega_{\Sigma,h}\), the standard interpolation estimate yields
\[
\int_{\Omega_r}|\nabla\cdot\mathbf F(\hat u)|\,d\mathbf z\lesssim h.
\]
By construction of the CPwL function $\hat u$, we have $\|\hat u\|_{L^\infty(\Omega_T)} \le c/2 - \delta_c$, 
so the assumptions of Lemma~\ref{lem:loss_cpwl_approx} are satisfied.
Hence, there exists a clipped \(\tanh\) neural network \(u_{\hat\theta}\) of the stated size such that
\[
\L(u_{\hat\theta})\le \L(\hat u)+Ch+\int_{\Omega_r}|\nabla\cdot\mathbf F(\hat u)|\,d\mathbf z\lesssim h.
\]
This finishes the proof of Lemma~\ref{lem:Lu_shock_3d}.

\end{document}